\renewcommand\subsection{\@startsection{subsection}{2}%
  \z@{.5\linespacing\@plus.7\linespacing}{-.5em}%
  {\normalfont\scshape}}
\numberwithin{equation}{section}
\newcommand{\R}{\mathbb{R}}
\newcommand{\C}{\mathbb{C}}
\newcommand{\K}{\mathbb{K}}
\newcommand{\T}{\mathbb{T}}
\newcommand{\N}{\mathbb{N}}
\newcommand{\natu}{\mathbb{N}}
\newcommand{\F}{\mathcal{F}}
\newcommand{\V}{\mathcal V}
\DeclareMathOperator{\diam}{diam\,}
\DeclareMathOperator{\co}{co}
\DeclareMathOperator{\conv}{co}
 \DeclareMathOperator{\cconv}{\overline{\conv}}
 \DeclareMathOperator{\cco}{\overline{\conv}}
\DeclareMathOperator{\re}{Re}
\DeclareMathOperator{\supp}{supp}
 \newcommand{\Id}{\mathrm{Id}}
\newcommand{\ext}[1]{\operatorname{ext}\left(#1\right)}
\newcommand{\preext}[1]{\operatorname{pre-ext}\left(#1\right)}
\newcommand{\dent}[1]{\operatorname{dent}\left(#1\right)}
\newcommand{\norm}[1]{\left\Vert#1\right\Vert}
\newcommand{\abs}[1]{\left\vert#1\right\vert}
\newcommand{\eps}{\varepsilon}
\DeclareMathOperator{\spn}{span}
\DeclareMathOperator{\cspan}{\overline{\spn}}
\renewcommand{\geq}{\geqslant}
\renewcommand{\leq}{\leqslant}
\theoremstyle{plain}
\newtheorem{thm}{Theorem}[section]
\newtheorem{cor}[thm]{Corollary}
\newtheorem{lem}[thm]{Lemma}
\newtheorem{prop}[thm]{Proposition}
\newtheorem{theorem}[thm]{Theorem}
\newtheorem{lemma}[thm]{Lemma}
\newtheorem{proposition}[thm]{Proposition}
\newtheorem{corollary}[thm]{Corollary}
\theoremstyle{definition}
\newtheorem{defn}[thm]{Definition}
\newtheorem{remark}[thm]{Remark}
\newtheorem{rem}[thm]{Remark}
\newtheorem{expl}[thm]{Example}
\newtheorem{example}[thm]{Example}
\newtheorem{quest}[thm]{Question}
\newtheorem{obs}[thm]{Observation}
\begin{document}

\title{Diametral notions for elements of the unit ball of a Banach space}

\date{January 11th, 2023}

\author[Mart\'{\i}n]{Miguel Mart\'{\i}n}
\address[Mart\'{\i}n]{Universidad de Granada, Facultad de Ciencias.
Departamento de An\'{a}lisis Matem\'{a}tico, 18071 Granada, Spain \newline
	\href{http://orcid.org/0000-0003-4502-798X}{ORCID: \texttt{0000-0003-4502-798X} }}
\email{mmartins@ugr.es}
\urladdr{\url{https://www.ugr.es/local/mmartins}}

\author[Perreau]{Yo\"{e}l Perreau}
\address[Perreau]{University of Tartu, Institute of Mathematics and Statistics, Narva mnt 18, 51009 Tartu linn, Estonia \newline
	\href{https://orcid.org/0000-0002-2609-5509}{ORCID: \texttt{0000-0002-2609-5509}}}
 \email{\texttt{yoel.perreau@ut.ee}}

\author[Rueda Zoca]{Abraham Rueda Zoca}
\address[Rueda Zoca]{Universidad de Granada, Facultad de Ciencias.
Departamento de An\'{a}lisis Matem\'{a}tico, 18071 Granada, Spain
 \newline
\href{https://orcid.org/0000-0003-0718-1353}{ORCID: \texttt{0000-0003-0718-1353} }}
\email{\texttt{abrahamrueda@ugr.es}}
\urladdr{\url{https://arzenglish.wordpress.com}}

\thanks{The first and third named authors were supported by grant PID2021-122126NB-C31 funded by MCIN/AEI/ 10.13039/501100011033 and “ERDF A way of making Europe”, by Junta de Andaluc\'ia I+D+i grants P20\_00255 and FQM-185, and by ``Maria de Maeztu'' Excellence Unit IMAG, reference CEX2020-001105-M funded by MCIN/AEI/10.13039/501100011033. The second named author was supported by the Estonian Research Council grant SJD58.}

\begin{abstract}
We introduce extensions of $\Delta$-points and Daugavet points in which slices are replaced by relative weakly open subsets (super $\Delta$-points and super Daugavet points) or by convex combinations of slices (ccs $\Delta$-points and ccs Daugavet points). These notions represent the extreme opposite to denting points, points of continuity, and strongly regular points. We first give a general overview on these new concepts and provide some isometric consequences on the spaces. As examples: if a Banach space contains a super $\Delta$-point, then it does not admit an unconditional FDD (in particular, unconditional basis) with suppression constant smaller than two; if a real Banach space contains a ccs $\Delta$-point, then it does not admit a one-unconditional basis; if a Banach space contains a ccs Daugavet point, then every convex combination of slices of its unit ball has diameter two. We next characterize the notions in some classes of Banach spaces showing, for instance, that all the notions coincide in $L_1$-predual spaces and that all the notions but ccs Daugavet points coincide in $L_1$-spaces. We next remark on some examples which have previously appeared in the literature and provide some new intriguing examples: examples of super $\Delta$-points which are as closed as desired to strongly exposed points (hence failing to be Daugavet points in an extreme way); an example of a super $\Delta$-point which is strongly regular (hence failing to be a ccs $\Delta$-point in the strongest way); a super Daugavet point which fails to be a ccs $\Delta$-point. The extensions of the diametral notions to point in the open unit ball and the consequences on the spaces are also studied. Last, we investigate the Kuratowski measure of relative weakly open subsets and of convex combinations of slices in the presence of super $\Delta$-points or ccs $\Delta$-points, as well as for spaces enjoying diameter 2 properties. We conclude the paper with a section on open problems.
\end{abstract}

\maketitle



\hypersetup{linkcolor=black}

\makeatletter \def\l@subsection{\@tocline{2}{0pt}{1pc}{5pc}{}} \def\l@subsection{\@tocline{2}{0pt}{3pc}{6pc}{}} \makeatother

\tableofcontents

\hypersetup{linkcolor=blue}

\parskip=1ex

\section{Introduction}\label{sec:introduction}
It is fair to say that one of the most studied properties of Banach spaces is the \textit{Radon-Nikod\'{y}m property (RNP)} because it has shown to be very useful; due to the large amount of its geometric, analytic, and measure theoretic characterisations; in several fields of Banach space theory such as representation of bounded linear operators, representation of dual spaces or representation of certain tensor product spaces (see \cite{bour,dis}).

A famous geometric characterization of the Radon-Nikod\'{y}m property is related to the size of slices. Recall that a \emph{slice} of a bounded non-empty subset $C$ of a Banach space $X$ is simply the (nonempty) intersection of $C$ with a half-space. A Banach space $X$ has the RNP if and only if every non-empty closed and bounded subset of $X$ admits slices of arbitrarily small diameter (see e.g.\ \cite{bour}).

A closely related and equally important geometric property of Banach spaces is the point of continuity property. Recall that a Banach space $X$ has the \textit{point of continuity property (PCP)} if every non-empty closed and bounded subset of $X$ admits non-empty relatively weakly open subsets of arbitrarily small diameter. Let us emphasize here as an example the striking equivalence between the Radon-Nikod\'{y}m property and the weak$^*$ version of the point of continuity property for dual spaces, and the related characterization of Asplund spaces as preduals of RNP spaces (see e.g.\ \cite{DGZ}).
In his proof of the determination of the Radon-Nikod\'{y}m property by subspaces with a finite dimensional decomposition (FDD) in \cite{Bourgain80}, Bourgain also introduced an important weakening of the point of continuity property, that he called property ``$(*)$'', and that is nowadays referred to as the convex point of continuity property. Recall that a Banach space $X$ has the \textit{convex point of continuity property (CPCP)} if every non-empty closed, convex and bounded subset of $X$ admits non-empty relatively weakly open subsets of arbitrarily small diameter.

In fact, Bourgain implicitly used in his work the notion of strong regularity which, as he showed, is implied by the CPCP. Recall that a Banach space $X$ is \textit{strongly regular (SR)} if every non-empty closed, convex and bounded subset of $X$ contains \emph{convex combinations of slices} of arbitrarily small diameter. Observe that the convexity of the subset is required in this definition in order to guarantee that it contains all the convex combinations of its slices. It later turned out that strong regularity had important applications to the famous (still open) question of the equivalence between the Radon-Nikod\'{y}m property and the Krein-Milman property. Recall that a Banach space $X$ has the \emph{Krein-Milman property (KMP)} if every non-empty closed, convex and bounded subset $C$ of $X$ admits an extreme point. The RNP implies the KMP (see e.g.\ \cite[Theorem~3.3.6]{bour}), and it follows from \cite{Schachermayer87} that every strongly regular space with the KMP has the RNP. Also recall that it was proved in \cite{HM} the RNP and the KMP are equivalent in dual spaces.

From the definitions it follows that RNP$\Rightarrow$PCP$\Rightarrow$CPCP and it is also known that CPCP$\Rightarrow$SR. None of the above implications reverse (see e.g \cite{Schachermayer88} and references therein). In order to show that strong regularity is implied by the CPCP, Bourgain made an important geometric observation, namely that in every non-empty bounded and convex subset of a Banach space $X$, every non-empty relatively weakly open subset contains a convex combination of slices. We will discuss this ``Bougain Lemma'' and its applications to the subject of the present paper in more details in Section \ref{sec:notation_and_prelimiary_results}.

Another classical refinement of the above characterization of the Radon-Nikod\'{y}m property is related to the notion of denting points. Recall that a point $x_0$ of a bounded subset $C$ of $X$ is a \textit{denting point} of $C$ if there are slices of $C$ containing $x_0$ of arbitrarily small diameter. A Banach space $X$ has the RNP if and only if every closed, convex and bounded subset contains a denting point. Actually, every nonempty closed, convex and bounded subset $C$ of a Banach space $X$ with the RNP is equal to the closure of the convex hull of the set of its denting points (see e.g.\ \cite[Corollary 3.5.7]{bour}).

For the PCP and the CPCP, a similar role is played by points of weak-to-norm continuity. Given a bounded subset $C$ of $X$, we say that a point $x_0\in C$ is a \textit{point of weak-to-norm continuity} (\emph{point of continuity} in short) if the identity mapping $i\colon (C,w)\longrightarrow (C,\tau)$ is continuous at the point $x_0$ or, equivalently, if $x_0$ belongs to relatively weakly open subsets of $C$ of arbitrarily small diameter. Note that a classical result by Lin-Lin-Troyanski \cite{llt88} establishes that a point $x_0\in C$ is a denting point if, and only if, $x_0$ is simultaneously a point of continuity and a extreme point of $C$. In a space with the PCP every non-empty closed and bounded subset contains a point of continuity; and the set of all points of continuity of a given closed, convex and bounded subset $C$ of a Banach space $X$ with the CPCP is weakly dense in $C$ (see e.g.\ \cite[Theorem 1.13]{ew}).

In relation to strong regularity, a point $x_0$ of a bounded, convex subset $C$ of $X$ is a \textit{point of strong regularity} if there are convex combinations of slices of $C$ containing $x_0$ of arbitrarily small diameter. Then the set of all points of strong regularity of a given closed, convex and bounded subset $C$ of a strongly regular Banach space $X$ is norm dense in $C$ (see \cite[Theorem 3.6]{ggms}). Let us observe that points of strong regularity may be in the interior of a set, while denting points (and points of continuity in the infinite-dimensional case) belong always to the border of the set.

In \cite{ahlp20} the extreme opposite notion to denting point of the unit ball was introduced in the following sense: an element $x$ in the unit sphere of a Banach space $X$ is a \emph{$\Delta$-point} if we can find in every slice of $B_X$ containing $x$ points which are at distance from $x$ as close as we wish to the maximal possible distance in the ball (distance $2$). A similar yet stronger notion appeared simultaneously in relation to another quite famous property of Banach spaces, the Daugavet property. Recall that a Banach space $X$ has the \textit{Daugavet property (DPr)} if the Daugavet equation
\begin{equation}\label{DE}\tag{DE}
\Vert \Id+T\Vert=1+\Vert T\Vert
\end{equation}
holds for every rank-one operator $T\colon X\longrightarrow X$,
where $\Id$ denotes the identity operator. In this case, all weakly compact operators also satisfy \eqref{DE}. We refer the reader to the seminal paper \cite{KSSW} for background. Recent results can be found in \cite{mr22} and references therein. The Daugavet property admits a beautiful geometric characterization involving slices related to the notion of Daugavet points: an element $x$ on the unit sphere of a Banach space $X$ is a \emph{Daugavet point} if in every slice of $B_X$ (not necessarily containing the point $x$) there are points which are at distance from $x$ as close as we wish to $2$. With this definition in mind, \cite[Lemma 2.1]{KSSW} states that $X$ has the DPr if and only if all elements in $S_X$ are Daugavet points. Let us comment that the Daugavet property imposes severe restriction on the Banach space: if $X$ is a Banach space with the DPr, then it fails the RNP and it has no unconditional basis (actually, it cannot be embedded into a Banach space with unconditional basis).

On the other hand, $\Delta$- and Daugavet points have proved to be far more flexible than the global properties that they define. For example, there exists a Banach space with the RNP and a Daugavet point \cite{veefree} (see paragraph~\ref{subsubsec:extreme_molecules}), there exists a Banach space with a one-unconditional basis and a large subset of Daugavet points \cite{almt21} (see paragraph~\ref{subsubsec:oneunconditionalbasis}), and there is an MLUR Banach space for which all elements in its unit sphere are $\Delta$-points, which contains convex combinations of slices of arbitrarily small diameter, but satisfying that every convex combination of slices intersecting its unit sphere has diameter two \cite{ahntt16} (see paragraph~\ref{subsubsect:MLUR}). Nonetheless, it has been recently proved that $\Delta$-points have some influence on the isometric structure of the space. For example, it is shown in \cite{almp22} that \emph{uniformly non-square} spaces do not contain $\Delta$-points; actually, it has been very recently proved in \cite{KaLeeTag2022} that a $\Delta$-point cannot be a \emph{locally uniformly non-square point}. Also, combining the results from  \cite{almp22} and \cite{veelipfunc}, \emph{asymptotic uniformly smooth} spaces and their duals do not contain $\Delta$-points. However, it is still an important open problem to understand whether $\Delta$- or Daugavet point have any influence on the isomorphic structure of the space.

In this paper, our main aim is to study natural strengthening of the notions of Daugavet- and $\Delta$-points obtained by replacing slices by non-empty relatively weakly open subsets (``super points'') or convex combination of slices (``ccs points'') in order to provide new diametral notions which are extreme opposites to points of continuity and to strongly regular points, respectively. See Definitions \ref{def:defiDauganot} and \ref{def:defiDeltanot} for details. Our main goal will be to understand the influence, for a given Banach space, of the existence of such points on its geometry, and to study the different diametral notions in several families of Banach spaces. A particular emphasis will be put on trying to distinguish between all the various formally different notions.

Let us end this section by giving a brief description about the organization of the paper and the main results obtained. Section~\ref{sec:notation_and_prelimiary_results} contains the necessary notation (which is standard, anyway), needed definitions, and some preliminary results. We include in Section~\ref{sec:characterizations_and_geometric_consequences} some characterizations of the newer diametral point notions and some necessary conditions on the existence of such points. In particular, we study the existence of super $\Delta$-points and ccs $\Delta$-points in spaces with a one-unconditional basis. We first give an analogue for ccs $\Delta$-points to a result from \cite{almt21} which implicitly states that such spaces contain no super $\Delta$-points. Second, we provide sharper and improved versions of this super $\Delta$ result in the context of unconditional FDDs with a small unconditional constant, and more generally in the context of spaces in which special families of operators are available. The section finishes with the study of the behaviour of super $\Delta$-points and super Daugavet points with respect to absolute sums somehow analogous to the known one for $\Delta$-points and Daugavet points; however, not all the results extend to ccs $\Delta$-points and ccs Daugavet points, but we also give some partial results. Section~\ref{sec:examples_and_counterexamples} is devoted to examples and counterexamples. We first characterize the diametral notions in some families of classical Banach spaces: we show that all notions are equivalent in $L_1$-preduals and M\"{u}ntz spaces (Subsection~\ref{subsec:L_1_preduals}); all notions but ccs Daugavet points also coincide in $L_1$-spaces (Subsection~\ref{subsec:L_1_spaces}). We next give in Subsection~\ref{subsect:fromliterature} some remarks on examples which have previously appeared in the journal literature, discussing the new  diametral notions on them, and showing that they may help to distinguish between the diametral notions. The most complicated and tricky examples are produced in the last three subsection of this section: super $\Delta$-points which are as closed as desired to strongly exposed points (hence failing to be Daugavet points in an extreme way) in Subsection~\ref{subsect:superdeltanotdauga}; a super $\Delta$-point which is strongly regular (hence failing to be ccs $\Delta$-point in an extreme way) in Subsection~\ref{subsect:superDeltastronglyregular}; super Daugavet points which belong to convex combinations of slices of diameter as small as desired (hence failing to be ccs $\Delta$-points in an extreme way). We finish this subsection with a summary of relations between all the diametral notions. The idea in Section~\ref{sec:inside_of_the_unit_ball} is to generalize the diametral notions to elements of the open unit ball, and use these notions to characterize some geometric properties. In particular, we properly localize the result by Kadets that the DSD2P is equivalent to the Daugavet property.  Section~\ref{section:Kuratowski_measure} deals with Kuratowki index of non-compactness of slices, relative weakly open subsets, and convex combinations of slices. We get that every relative weakly open subset (respectively, every convex combination of slices) in a space with the diameter 2 property (respectively, with the strong diameter 2 property) has Kuratowski measure 2; these results extends the analogous result for slices and the the local diameter 2 property proved in \cite[Proposition~3.1]{DGKR}. Also, we show that every relative weakly open subset that contains a super $\Delta$-point has Kuratowski measure 2, and a similar result is obtained with convex combinations of relative weakly open subsets containing a ccw $\Delta$-point; these results extend  \cite[Corollary~2.2]{veelipfunc}.
Finally, Section~\ref{section:conclusionopenquestions} is devoted to collect some interesting open questions and some remarks on them.

\section{Notation and preliminary results} \label{sec:notation_and_prelimiary_results}
We will use standard notation as in the books \cite{alka}, \cite{checos}, and \cite{fab}, for instance. Given a Banach space $X$, $B_X$ (respectively, $S_X$) stands for the closed unit ball (respectively, the unit sphere) of $X$. We denote by $X^*$ the topological dual of $X$ and we write $J_X\colon X\longrightarrow X^{**}$ for the canonical injection. We denote by $\dent{B_X}$ and $\ext{B_X}$ the sets
of all denting points of $B_X$ and of all extreme points of $B_X$, respectively. The set of preserved extreme points of $B_X$ (i.e.\ those $x\in B_X$ such that $J_X(x)\in \ext{B_{X^{**}}}$) is denoted by $\preext{B_X}$. For Banach spaces $X$ and $Y$, $\mathcal{L}(X,Y)$, $\mathcal{F}(X,Y)$, $\mathcal{K}(X,Y)$ denote, respectively, the set of all (bounded linear) operator, the finite-rank operators, and the compact operators.
The properties in which we are interested only deal with the real structure of the involved Banach spaces, but we do not restrict the study to real spaces in order to consider real or complex examples. We will use the notation $\K$ to denote either $\R$ or $\C$, $\re(z)$ to denote the real part of $z$ (which is just the identity when dealing with a real space), and $\T$ to represent the set of scalars of modulus one.

Given a non-empty subset $C$ of $X$, we will denote by $\conv(C)$ the convex hull of $C$ and by $\spn(C)$ the linear hull of $C$. Also we denote by $\cconv(C)$ (respectively, $\cspan(C)$) the norm closure of the convex hull (respectively, of the linear hull) of $C$. By a slice of $C$ we will mean any subset of $C$ of the form
$$
S(x^*,\delta;C):=\left\{x\in C\colon \re x^*(x)>M-\delta\right\}
$$
where $x^*\in X^*$ is a continuous linear functional on $X$, $\delta>0$ is a positive real number, and $M:=\sup_{x\in C} \re x^*(x)$. For slices of the unit ball we will simply write $S(x^*,\delta):=S(x^*,\delta;B_X)$. By a \emph{relatively weakly open subset} of $C$ we mean as usual any subset of $C$ obtained as the (non-empty) intersection of $C$ with an open set of $X$ in the weak topology.

If $C$ is assumed to be convex we will mean by a \textit{convex combination of slices}\index{slice!convex combination of slices} of $C$ (\emph{ccs} of $C$ in short) any subset of $C$ of the form
$$\sum\nolimits_{i=1}^n \lambda_i S_i,$$
where $\lambda_1,\ldots, \lambda_n\in (0,1]$ are such that $\sum_{i=1}^n \lambda_i=1$ and $S_i$ is a slice of $C$ for every $i\in\{1,\ldots, n\}$. Observe that convex combinations of slices are convex sets. We define in the same way \emph{convex combinations of relatively weakly open subsets of $C$} (\emph{ccw} of $C$ in short).

The following lemma from \cite{ik} is a very useful tool when working with $\Delta$-points.

\begin{lemma}[\textrm{\cite[Lemma 2.1]{ik}}]\label{lemma:diminition} Let $X$ be a Banach space, and let $x^*\in S_{X^*}$ and $\alpha>0$. For every $x\in S(x^*,\alpha)$ and every $0<\beta<\alpha$ there exists $y^*\in S_{X^*}$ such that
$$x\in S(y^*,\beta)\subseteq S(x^*,\alpha).$$
\end{lemma}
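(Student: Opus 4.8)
The plan is a ``tilting'' construction whose parameter is pinned down by an intermediate value argument. Write $a:=\re x^*(x)$. Since $\|x^*\|=1$ we have $\sup_{B_X}\re x^*=1$, so $S(x^*,\alpha)=\{z\in B_X\colon \re x^*(z)>1-\alpha\}$ and the hypothesis reads $a>1-\alpha$; I will also use $x\in S_X$ (the setting in which the lemma is applied), so $\|x\|=1$. If $a>1-\beta$ there is nothing to prove: $y^*:=x^*$ works, since then $x\in S(x^*,\beta)\subseteq S(x^*,\alpha)$. So assume $1-\alpha<a\leq 1-\beta$. Fix $z^*\in S_{X^*}$ with $\re z^*(x)=\|x\|=1$ (Hahn--Banach), and for $t\in[0,1]$ set
$$\phi_t:=(1-t)x^*+t\,z^*,\qquad m_t:=\norm{\phi_t},\qquad y^*_t:=\phi_t/m_t\in S_{X^*}.$$

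For fixed $t$, unravelling the definitions yields two sufficient conditions. If $z\in S(y^*_t,\beta)$ then $\re\phi_t(z)>(1-\beta)m_t$, hence, using $\re z^*(z)\leq 1$ and $t<1$, $\re x^*(z)>\bigl((1-\beta)m_t-t\bigr)/(1-t)$, so
$$S(y^*_t,\beta)\subseteq S(x^*,\alpha)\quad\text{whenever}\quad (1-\beta)m_t\geq 1-\alpha+\alpha t;$$
and $\re y^*_t(x)=\bigl((1-t)a+t\bigr)/m_t$, so
$$x\in S(y^*_t,\beta)\quad\text{whenever}\quad (1-\beta)m_t< (1-t)a+t.$$
The target interval $\bigl[\,1-\alpha+\alpha t,\ (1-t)a+t\,\bigr)$ for the number $(1-\beta)m_t$ is non-empty for every $t<1$, precisely because $a>1-\alpha$: one has $(1-t)a+t-(1-\alpha+\alpha t)=(a-1+\alpha)(1-t)>0$. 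The heart of the proof is to choose $t$ so that $(1-\beta)m_t$ lands in this interval, and here I would invoke continuity. The function
$$\psi(t):=(1-\beta)\,\norm{(1-t)x^*+t\,z^*}-(1-\alpha+\alpha t),\qquad t\in[0,1],$$
is continuous, with $\psi(0)=(1-\beta)-(1-\alpha)=\alpha-\beta>0$ and $\psi(1)=(1-\beta)-1=-\beta<0$; by the intermediate value theorem there is $t_0\in(0,1)$ with $\psi(t_0)=0$, i.e.\ $(1-\beta)m_{t_0}=1-\alpha+\alpha t_0$ (and $m_{t_0}>0$).

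Then I would take $y^*:=y^*_{t_0}$ and check the two inclusions at this parameter. For the first, given $z\in S(y^*,\beta)$,
$$\re x^*(z)>\frac{(1-\beta)m_{t_0}-t_0}{1-t_0}=\frac{(1-\alpha+\alpha t_0)-t_0}{1-t_0}=\frac{(1-\alpha)(1-t_0)}{1-t_0}=1-\alpha,$$
so $z\in S(x^*,\alpha)$. For the second,
$$\re y^*(x)=\frac{(1-t_0)a+t_0}{m_{t_0}}=\frac{(1-\beta)\bigl((1-t_0)a+t_0\bigr)}{1-\alpha+\alpha t_0}>1-\beta,$$
since the displayed inequality is equivalent to $(1-t_0)a+t_0>1-\alpha+\alpha t_0$, i.e.\ to $(a-1+\alpha)(1-t_0)>0$, which holds. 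Hence $x\in S(y^*,\beta)\subseteq S(x^*,\alpha)$. I expect the only genuinely delicate point to be the tension between the two demands on $t$ — tilting more toward $z^*$ pushes $x$ deeper into the new slice but threatens to drive that slice out of $S(x^*,\alpha)$ — and the clean resolution is exactly the observation above: at the value $t_0$ where $(1-\beta)m_{t_0}=1-\alpha+\alpha t_0$, \emph{both} requirements collapse to the single strict inequality $\re x^*(x)>1-\alpha$ built into the hypothesis.
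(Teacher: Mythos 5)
The paper does not prove this lemma --- it is quoted directly from Ivakhno--Kadets \cite{ik} --- so there is no in-paper argument to compare against; your proof stands as a self-contained verification. The tilting construction (normalising a convex combination $(1-t)x^*+t\,z^*$ of $x^*$ with a functional $z^*$ norming $x$, and pinning the parameter via the intermediate value theorem) is the natural approach, and the computations check out: at the IVT root $t_0$ the condition $(1-\beta)m_{t_0}=1-\alpha+\alpha t_0$ makes the inclusion $S(y^*_{t_0},\beta)\subseteq S(x^*,\alpha)$ tight while the membership $x\in S(y^*_{t_0},\beta)$ collapses exactly to the hypothesis $\re x^*(x)>1-\alpha$. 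Two small remarks. First, your insistence on $x\in S_X$ is not just convenience --- as stated (``for every $x\in S(x^*,\alpha)$'') the lemma is literally false, since if $\|x\|\leq 1-\beta$ then no $y^*\in S_{X^*}$ can have $\re y^*(x)>1-\beta$; the extra hypothesis $\|x\|=1$ is harmless because in every application in the paper the point is on the sphere, but it is worth saying out loud. Second, the parenthetical ``$(m_{t_0}>0)$'' deserves a word of justification so that $y^*_{t_0}$ is well-defined: from $m_{t_0}\geq\re\phi_{t_0}(x)=(1-t_0)a+t_0>(1-t_0)(1-\alpha)+t_0=1-\alpha+\alpha t_0=(1-\beta)m_{t_0}$ one gets $\beta\,m_{t_0}>0$, hence $m_{t_0}>0$. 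With these two points made explicit the argument is complete.
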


We also often rely on the following result, due to Bourgain, and that we already mentioned in the introduction. We provide a proof below, following the one from \cite[Lemma II.1]{ggms}, for the sake of completeness and for further discussions.

\begin{lem}[\textrm{Bourgain}]\label{lem:bourgain_lemma}
Let $X$ be a Banach space and let $C$ be a bounded convex closed subset of $X$. Then, every non-empty relatively weakly open subset $W$ of $C$ contains a convex combination of slices of $C$.
\end{lem}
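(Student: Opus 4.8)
The plan is to prove Bourgain's Lemma by a compactness/finite-dimensional reduction together with a separation argument. First I would fix a non-empty relatively weakly open subset $W$ of $C$ and a point $x_0 \in W$. By the definition of the weak topology, I may assume that $W$ contains a basic weakly open neighbourhood of $x_0$ relative to $C$, that is, a set of the form $\{x \in C : |x_i^*(x - x_0)| < \eps \text{ for } i = 1, \ldots, n\}$ for some $x_1^*, \ldots, x_n^* \in X^*$ and $\eps > 0$. So it suffices to show that such a set contains a convex combination of slices of $C$.

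The key idea is to pass to the finite-dimensional space $\R^n$ via the map $T \colon X \to \R^n$, $T(x) = (\re x_1^*(x), \ldots, \re x_n^*(x))$ (or the corresponding complex version, doubling coordinates if $\K = \C$). Then $K := T(C)$ is a bounded convex subset of $\R^n$, and the weakly open set we are after is (the $C$-preimage of) a small cube around $T(x_0)$. The crucial point is that $T(x_0)$ lies in $K$, and that a bounded convex set in $\R^n$ is the convex hull of its own slices in a quantitatively controlled way: more precisely, one shows that $T(x_0)$ can be written as a convex combination $\sum_{j} \lambda_j y_j$ of points $y_j \in K$, each of which lies in a slice $\Sigma_j$ of $K$ of small diameter (small enough that each $\Sigma_j$ is contained in the cube of radius $\eps$ centred at $T(x_0)$). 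This is where finite-dimensionality is used essentially: in $\R^n$ every point of a compact convex set is a convex combination of at most $n+1$ extreme points (Carathéodory), and around each extreme point there are arbitrarily small slices; a limiting/perturbation argument then handles non-closed or merely bounded $K$. The slices $\Sigma_j$ of $K \subseteq \R^n$ pull back, via the linear functionals defining them composed with $T$, to slices $S_j$ of $C$ in $X$, and one checks $T(S_j) \subseteq \Sigma_j$ (up to shrinking), so that $\sum_j \lambda_j S_j \subseteq W$.

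The last step is to verify that $\sum_j \lambda_j S_j$ is non-empty, which is immediate since each $S_j$ is a non-empty slice of $C$, and that it is genuinely a convex combination of slices \emph{of $C$} in the sense of the paper — this is why we needed $C$ convex and closed and bounded, so that the $S_j$ are legitimate slices. I would also note that each $S_j$ being contained in the preimage under $T$ of the $\eps$-cube around $T(x_0)$ gives $|x_i^*(x) - x_i^*(x_0)| < \eps$ for all $x \in S_j$ and all $i$, hence $\sum_j \lambda_j S_j \subseteq \{x \in C : |x_i^*(x - x_0)| < \eps\} \subseteq W$.

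The main obstacle I anticipate is the quantitative finite-dimensional statement: writing $T(x_0)$ as a convex combination of points each sitting in a small slice of the convex set $K \subseteq \R^n$. One clean way to organize this is to first reduce to the case where $K$ is compact (replace $C$ by a suitable closed bounded convex set, or take closures — the slices only shrink), then use that $\ext(K)$ is non-empty and that $K = \conv(\ext K)$; apply Carathéodory to write $T(x_0) = \sum_{j=0}^{n} \lambda_j e_j$ with $e_j \in \ext K$; and finally, for each extreme point $e_j$ of the finite-dimensional compact convex set $K$, choose a slice of $K$ containing $e_j$ of diameter less than $\eps$ (possible precisely because $e_j$ is a denting point of $K$ in $\R^n$, all extreme points of finite-dimensional compact convex sets being denting). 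Pulling these back through $T$ and assembling the convex combination completes the proof. The bookkeeping to make every pulled-back slice a slice of $C$ (rather than of $T(C)$) and to keep all diameters under control is the part that requires care but no deep idea.
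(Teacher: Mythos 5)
Your argument is correct, and it follows a genuinely different route from the one in the paper, which is the route in Ghoussoub--Godefroy--Maurey--Schachermayer: the paper compactifies by passing \emph{upward} to $\widetilde{C}=\overline{J_X(C)}^{w^*}\subset X^{**}$, uses the Krein--Milman theorem there to find a convex combination of extreme points of $\widetilde C$ inside the lifted weak$^*$ open set, applies Choquet's lemma to wrap those extreme points in small weak$^*$ slices, and then restricts back to $C$. You instead compactify by passing \emph{downward}: you project $C$ to a bounded convex set $K=T(C)\subset\R^n$ (taking the closure to get compactness), use Minkowski--Carath\'eodory to write $T(x_0)$ as a finite convex combination of extreme points of $\overline K$, and use the elementary fact that extreme points of compact convex sets in $\R^n$ are denting to wrap them in small slices of $\overline K$. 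The verification that these slices pull back to bona fide slices of $C$ is just as you indicate (since $\sup_{x\in C}\re g(Tx)=\sup_{\overline K}\re g$ and $K$ is dense in $\overline K$, the slice meets $K$), and the diameter control gives $\sum_j\lambda_j S_j\subseteq W$. Your route avoids the bidual (hence Banach--Alaoglu/Goldstine) entirely at the price of the closure bookkeeping in $\R^n$; the paper's route is cleaner bookkeeping at the price of working in $X^{**}$. Notably, both proofs produce a ccs located near the convex combination of extreme points rather than one containing the original point $x_0$, which is exactly the non-localizability phenomenon the paper then highlights in Remark~\ref{rem:bourgain}.
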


\begin{proof}
Assume with no loss of generality that $W:=\bigcap\limits_{i=1}^m S(f_i,\alpha_i,C)$, write $\widetilde{C}=\overline{J_X(C)}^{w^*}\subset X^{**}$, and
$$
W^{**}:=\bigcap\limits_{i=1}^m S\bigl(J_{X^*}(f_i),\alpha_i;\widetilde{C}\bigr),
$$
which is a non-empty relatively weak$^*$ open subset of $\widetilde{C}$. By the Krein-Milman theorem (see e.g.\ \cite[Theorem 3.37]{fab}), it follows that $\widetilde{C}=\overline{\co(\ext{B_{X^{**}}})}^{w^*}$, so $\co(\ext{B_{X^{**}}})\cap W^{**}\neq \emptyset$. Pick a convex combination of extreme points $\sum_{i=1}^n \lambda_i e_i^{**}$ contained in $W^{**}$. By the continuity of the sum we can find, for every $1\leq i\leq n$, a weak-star open subset $W_i^{**}$ with $e_i^{**}\in W_i^{**}$ and such that $\sum_{i=1}^n\lambda_{i}W_i^{**}\subset W^{**}$.

Now, since each $e_i^{**}$ is an extreme point of $\widetilde{C}$, we have by Choquet's lemma (see \cite[Lemma~3.40]{fab}, for instance) that there are weak-star slices $S\bigl(J_{X^*}(g_i),\beta_i;\widetilde{C}\bigr)$ with $e_i^{**}\in S\bigl(J_{X^*}(g_i),\beta_i;\widetilde{C}\bigr)\subseteq W_i^{**}$ for every $i\in\{1,\dots,n\}$. Henceforth, $\sum_{i=1}^n \lambda_i S\bigl(J_{X^*}(g_i),\beta_i;\widetilde{C}\bigr)\subseteq \sum_{i=1}^n \lambda_i W_i^{**}\subseteq W^{**}$. Now, if we take
$$U:=\sum_{i=1}^n \lambda_i S(g_i,\beta_i,C)$$
it is not difficult to prove that $U\subseteq W$, as desired.
\end{proof}

\begin{remark}\label{rem:bourgain}
Observe that, in general, it is unclear from the above proof whether or not, if we fix $x\in W$, we can guarantee that there exists a convex combination of slices $U$ of $C$ such that $x\in U\subseteq W$.\parskip=0ex

On the other hand, the result holds true if $x\in W\cap \co(\preext{C})$ in view of the above proof. Indeed, if such situation, if we write $x=\sum_{i=1}^n \lambda_i x_i\in W$ satisfying that $x_1,\ldots, x_n\in \preext{C}$ and $\lambda_1,\ldots, \lambda_n\in (0,1]$ with $\sum_{i=1}^n \lambda_i=1$, by the weak continuity of the sum, we can find, for every $1\leq i\leq n$, a non-empty relatively weakly open subset $V_i$ with $x_i\in V_i$ for every $i$ and such that $x=\sum_{i=1}^n \lambda_i x_i\in \sum_{i=1}^n \lambda_i V_i\subseteq W$. Now, observe that, since each $x_i$ is a preserved extreme point of $C$, slices of $C$ containing $x_i$ are a neighbourhood basis for $x_i$ in the weak topology. Hence, we can find, for $1\leq i\leq n$, a slice $S_i$ of $C$ with $x_i\in S_i\subseteq V_i$, and so $x=\sum_{i=1}^n \lambda_i x_i\in \sum_{i=1}^n\lambda_i S_i\subseteq \sum_{i=1}^n \lambda_i V_i\subseteq W$, so $U:=\sum_{i=1}^n\lambda_i S_i$ is the desired convex combination of slices.
\end{remark}

Throughout the text, we will often be discussing various ``diameter 2 properties''. We use the notation introduced in \cite{aln13}. A Banach space $X$ has the \emph{local or slice diameter 2 property} (\emph{LD2P}) if every slice of $B_X$ has diameter 2; $X$ has the \emph{diameter two property} (\emph{D2P}) if every non-empty relatively weakly open subset of $B_X$ has diameter 2; finally, $X$ has the \emph{strong diameter 2 property} (\emph{SD2P}) whenever every ccs of $B_X$ has diameter 2 (and then, every ccw has diameter 2 due to Lemma~\ref{lem:bourgain_lemma}). For definitions and for examples concerning those properties, we refer to \cite{ahntt16, blradv, blr14, pirkthesis}. In particular, let us comment that the three properties are different, a result which was not easy to show, see \cite{blradv}. Our paper is closely related to the diametral versions of those properties which have been implicitly studied for a long time in the literature, but whose formal definitions and names where fixed in \cite{blr18}. A Banach space $X$ has the \emph{diametral local diameter $2$ property} (\emph{DLD2P}) if for every slice $S$ of $B_X$ and every $x\in S\cap S_X$, $\sup_{y\in S}\|x-y\|=2$; if slices are replaced by non-empty relatively weakly open subsets of $B_X$, we obtain the \emph{diametral diameter $2$ property} (\emph{DD2P}). It is immediate that these properties are not satisfied by any finite-dimensional space. Clearly, DLD2P implies LD2P, DD2P implies D2P (and none of these implications reverses, e.g.\ $X=c_0$), and DD2P implies DLD2P. It is unknown whether the DLD2P and the DD2P are equivalent; in fact it is even unknown whether the DLD2P implies the D2P. For the analogous definition using ccs, we have to discuss a little bit. Even for an infinite-dimensional space $X$, it is not true that every ccs of $B_X$ intersects $S_X$; actually, this happens if and only if $X$ has a property stronger than the SD2P (see \cite[Theorem 3.4]{lmr19}). Thus, the definition of the \emph{diametral strong diameter 2 property} (\emph{DSD2P}) given in \cite{blr18} deals with all points in $B_X$ as follows: for every ccs $C$ and every $x\in C$, $\sup_{y\in C}\|x-y\|=\|x\|+1$. This definition allows to show that DSD2P implies the SD2P. But, actually, it has been recently shown by V.\ Kadets \cite{kadets20} that the DSD2P is equivalent to the Daugavet property. We will discuss this in detail in Section \ref{sec:inside_of_the_unit_ball}. On the other hand, we will use the following property which is weaker than the DSD2P: a Banach space $X$ has the \emph{restricted DSD2P} if for every ccs $C$ and every $x\in C\cap S_X$, $\sup_{y\in C}\|x-y\|=2$.  This property is strictly weaker than the DSD2P, see Paragraph~\ref{subsubsect:MLUR}.

Let us now introduce all the notions of diametral points that we will consider in the text. Let us start with the more closely related ones to the definitions above.

\begin{defn}\label{def:defiDeltanot}
Let $X$ be a Banach space and let $x\in S_X$. We say that
\begin{enumerate}
\item \cite{ahlp20} $x$ is a \emph{$\Delta$-point}  if  $\sup_{y\in S}\norm{x-y}=2$ for every slice $S$ of $B_X$ containing $x$,
\item$x$ is a \emph{super $\Delta$-point} if  $\sup_{y\in V}\norm{x-y}=2$ for every non-empty relatively weakly open subset $V$ of $B_X$ containing $x$,
\item $x$ is a \emph{ccs $\Delta$-point} if  $\sup_{y\in C}\norm{x-y}=2$ for every slice ccs $C$ of $B_X$ containing $x$.
\end{enumerate}
\end{defn}

$\Delta$-points were introduced in \cite{ahlp20} as a natural localization of the DLD2P (i.e.\ $X$ has the DLD2P if and only if every element of $S_X$ is a $\Delta$-point). The other two definitions are new. Clearly, super $\Delta$-points are the natural localization of the DD2P: $X$ has the DD2P if and only if every element of $S_X$ is a super $\Delta$-point. Besides, ccs $\Delta$-points are the localization of the restricted DSD2P: $X$ has the restricted DSD2P if and only if every element of $S_X$ is a ccs $\Delta$-point.

In relation with the Daugavet property, we have the following notions for points.

\begin{defn}\label{def:defiDauganot} Let $X$ be a Banach space and let $x\in S_X$. We say that
\begin{enumerate}
\item \cite{ahlp20} $x$ is a \emph{Daugavet point} if $\sup_{y\in S}\norm{x-y}=2$ for every slice $S$ of $B_X$,
\item $x$ is a \emph{super Daugavet point} if $\sup_{y\in V}\norm{x-y}=2$ for every non-empty relatively weakly open subset $V$ of $B_X$,
\item $x$ is a \emph{ccs Daugavet point} if $\sup_{y\in C}\norm{x-y}=2$ for every ccs $C$ of $B_X$.
\end{enumerate}
\end{defn}

Let us recall that Daugavet points were introduced in \cite{ahlp20} as a natural localization of the \emph{Daugavet property} in the sense that a Banach space $X$ has the Daugavet property if and only if every point in $S_X$ is a Daugavet point (\cite[Lemma~2.1]{KSSW}). From the geometric characterization given in \cite[Lemma~3]{shv00} and the implicit result contained in its proof, it follows that super Daugavet points as well as ccs Daugavet points are also natural localizations of the Daugavet property.

Since every slice of $B_X$ is relatively weakly open, and since by Bourgain's lemma (see Lemma~\ref{lem:bourgain_lemma}) every non-empty relatively weakly open subset of $B_X$ contains a ccs of $B_X$, we clearly have the diagram of Figure~\ref{figure01}.

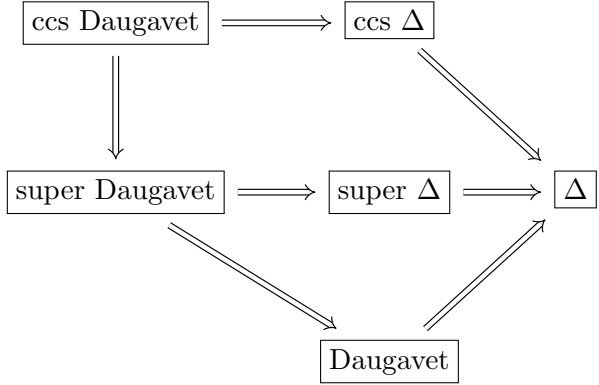
\begin{figure}\centering
    \begin{tikzcd}\centering
\fbox{\text{ccs Daugavet}} \arrow[dd, Rightarrow] \arrow[r, Rightarrow] & \fbox{\text{ccs $\Delta$}} \arrow[ddr, Rightarrow]&  & & \\
& & & & \\
\fbox{\text{super Daugavet}}  \arrow[r, Rightarrow] \arrow[ddr, Rightarrow]& \fbox{\text{super $\Delta$}} \arrow[r, Rightarrow] & \fbox{\text{$\Delta$}} \\
& & & & \\
& \fbox{\text{Daugavet}} \arrow[uur, Rightarrow] & &  &
\end{tikzcd}
\caption{Relations between the diametral notions}
\label{figure01}
\end{figure}

We will show throughout the text that none of the above implications reverses, see Subsection~\ref{subsection:schemeofcounterexamples} for a description of all the relations and the counterexamples. However, let us point out right away that we do not know whether there exists ccs $\Delta$-points which are not super $\Delta$. In view of Remark~\ref{rem:bourgain} such examples may exist since Bourgain's lemma is not localizable. Also let us point out that it follows again from Bourgain's lemma that a ccs Daugavet point $x\in S_X$ also satisfies $\sup_{y\in D}\norm{x-y}=2$ for every ccw $D$ of $B_X$. Again this is not clear for ccs $\Delta$-points and we could thus naturally distinguish between ccs $\Delta$-points and ``ccw $\Delta$-points''. Since we do not have concrete examples at hand, we will focus on convex combination of slices and specifically point out any available ccw behavior throughout the text.

Let us also comment that it is clear that if every ccs of the unit ball of a given Banach space is weakly open (respectively, has non-empty relative weak interior),  then every super $\Delta$-point (respectively, every super Daugavet point) in this space is a ccs $\Delta$-point (respectively, a ccs Daugavet point). Several properties of this kind where introduced and studied in \cite{abhlp}, \cite{al18}, and \cite{lmr19}. We refer to those papers for some background and for examples.

\begin{rem}\label{rem:weak*_diametral_points}\parskip=0ex
There are natural weak$^*$ versions in dual spaces of all the notions of diametral-points introduced in the present section where slices and relatively weakly open subsets are respectively replaced with weak$^*$ slices (i.e.\ slices defined by elements of the predual) and relatively weak$^*$ open subsets. With obvious terminology, it then follows from \cite[Lemma~2.1]{KSSW} and from \cite[Lemma~3]{shv00} that a Banach space $X$ has the Daugavet property if and only if every element in $S_{X^*}$ is a weak$^*$ Daugavet point if and only if every element in $S_{X^*}$ is a weak$^*$ ccs Daugavet point. It also follows from \cite[Theorem~3.6]{ahntt16} that $X$ has the DLD2P if and only if every point in $S_{X^*}$ is a weak$^*$ $\Delta$-point. However, the relationship between the DD2P in $X$ and weak$^*$ super $\Delta$-points in $S_{X^*}$ is currently unknown.

Observe that a direct consequence of those results is that weak$^*$ diametral points and their weak counterparts might differ in a very strong way since, for instance, the unit ball of the space $C[0,1]^*$ admits denting points. Yet clearly all the results from the following sections concerning the different notions of diametral-points admit obvious analogues for their weak$^*$ counterparts. We leave the details to the reader to avoid unnecessary repetitions, but let us still point out that it follows from Goldstine's theorem and from the lower weak$^*$ semicontinuity of the norm in dual spaces that there is a natural correspondence between diametral-properties of points in $S_X$ and weak$^*$ properties of their image in the bidual under the canonical embedding $J_X$. Namely:
\begin{enumerate}
    \item $x\in S_X$ is a Daugavet point (respectively, a ccs Daugavet point) if and only if $J_X(x)$ is a weak$^*$ Daugavet point (respectively, a weak$^*$ ccs Daugavet point).
    \item $x\in S_X$ is a super Daugavet point if and only if $J_X(x)$ is a weak$^*$ super Daugavet point if and only if for every $y\in B_X$ there exists a net $(y_s^{**})$ in $B_{X^{**}}$ which converges to $J_X(y)$ in the weak$^*$ topology and such that $\norm{\pi_X(x)-y_s^{**}}\longrightarrow 2$ (see Section \ref{sec:characterizations_and_geometric_consequences}).
    \item $x\in S_X$ is a $\Delta$-point (respectively, a super $\Delta$-point) if and only if $J_X(x)$ is a $\Delta$-point (respectively, a super $\Delta$-point).
    \item $x\in S_X$ is a ccs $\Delta$-point if and only if $J_X(x)$ is a weak$^*$ ccs $\Delta$-point.
\end{enumerate}
Let us point out that $(3)$ essentially follows from the obvious fact that $\Delta$-points and super $\Delta$-points naturally pass to superspaces, that is if $Y$ is a subspace of $X$ and if $x\in S_Y$ if a $\Delta$-point (respectively, a super $\Delta$-point) in $Y$, then $x$ is a $\Delta$-point (respectively, a super $\Delta$-point) in $X$ . This property is unclear for ccs $\Delta$-points, so the assertion (4) is not analogous to assertion (3). 
\end{rem}

\section{Characterisations of diametral-notions and implications on the geometry of the ambient space}\label{sec:characterizations_and_geometric_consequences}

In view of the definitions of diametral-points, it is natural to expect that the presence of any kind of Daugavet- or $\Delta$-element in a given Banach space will affect, by the severe restrictions it inflicts on the nature of the considered point, its global isometric geometry or even its topological structure. However, previous studies in the context have shown that the situation is much more complicated than one could expect at first sight. For example, let us comment that a Banach space $X$ with the RNP and admitting a Daugavet point, and a Banach space with a one-unconditional basis and admitting a weakly dense subset of Daugavet points, were respectively constructed in \cite{veefree} and in \cite{almt21}. In this section, we provide useful characterizations of the new diametral notions, and investigate the immediate effect of the presence of such points on the geometry of the considered space.


We start by an intuitive but not completely trivial observation.

\begin{obs}\label{obs:existeninfidim}
By definition, it is clear that super $\Delta$-points do not exist in finite dimensional spaces because the weak and norm topology coincide in this context. Also, it was proved in \cite[Theorem 4.4]{almp22} that finite dimensional spaces do also fail to contain $\Delta$-points (hence ccs $\Delta$-points). In fact they fail to contain them in a stronger way, see \cite[Corollary 6.10]{almp22}. Consequently, the study of diametral-notions only makes sense in infinite dimension, and from now on we will assume unless otherwise stated that all the Banach spaces we consider are infinite dimensional.
\end{obs}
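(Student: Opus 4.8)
The statement is essentially a bookkeeping observation, so the plan is to unwind the three non-existence claims one at a time and then combine them.

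First I would dispose of super $\Delta$-points by a direct topological argument. In a finite-dimensional space the weak and the norm topologies agree, so a non-empty relatively weakly open subset of $B_X$ is just a non-empty relatively norm-open one. Hence, given any $x\in S_X$ and any $\eps\in(0,2)$, the set $V:=\{y\in B_X:\norm{y-x}<\eps\}$ is a non-empty relatively weakly open subset of $B_X$ containing $x$ with $\sup_{y\in V}\norm{x-y}\leq\eps<2$, so $x$ fails the defining condition of a super $\Delta$-point in Definition~\ref{def:defiDeltanot}. As $x\in S_X$ was arbitrary, $X$ contains no super $\Delta$-point, and a fortiori no super Daugavet point by Figure~\ref{figure01}.

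Next, the absence of $\Delta$-points is the only genuinely non-trivial ingredient, and here I would simply quote \cite[Theorem~4.4]{almp22}; the stronger quantitative version in \cite[Corollary~6.10]{almp22} is recorded for the reader's benefit but is not needed for the present assertion. Granting this, the case of ccs $\Delta$-points is immediate: since every slice of $B_X$ is a (one-term) convex combination of slices of $B_X$, a ccs $\Delta$-point is in particular a $\Delta$-point — this is precisely the implication displayed in Figure~\ref{figure01} — so a space with no $\Delta$-points has no ccs $\Delta$-point, and hence, again by Figure~\ref{figure01}, no ccs Daugavet point and no Daugavet point. Combining the three cases, all the diametral notions of Definitions~\ref{def:defiDeltanot} and~\ref{def:defiDauganot} are vacuous in finite dimension, which justifies the standing assumption. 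The only real obstacle is thus the $\Delta$-point case, which is imported from \cite{almp22}; every other part is a one-line verification.
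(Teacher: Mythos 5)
Your proposal is correct and follows exactly the paper's own reasoning: the super $\Delta$ case is the coincidence of weak and norm topologies in finite dimension, the $\Delta$ case is imported from \cite[Theorem~4.4]{almp22}, and the ccs $\Delta$ (and remaining Daugavet) cases drop out of the implication diagram. Nothing to add.
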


Let us next prove a bunch of characterisations for super Daugavet- and super $\Delta$-points.

Let $X$ be a Banach space. For every $x\in S_X$ and for every $\eps>0$, let us define $$\Delta_\eps(x):=\{y\in B_X\colon \norm{x-y}> 2-\eps\}.$$ We recall the following characterization of Daugavet- and $\Delta$-points from \cite{ahlp20}.

\begin{lem}[\mbox{\textrm{\cite[Lemma 2.1 and 2.2]{ahlp20}}}]\label{lem:Delta_set_characterizations}
Let $X$ be a Banach space.
\begin{enumerate}
\item An element $x\in S_X$ is a Daugavet point if and only if $B_X=\cconv\Delta_\eps(x)$ for every $\eps>0$.
\item An element $x\in S_X$ is a $\Delta$-point if and only if $x\in \cconv\Delta_\eps(x)$ for every $\eps>0$.
\end{enumerate}
\end{lem}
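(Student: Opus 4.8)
The plan is to prove both equivalences by the standard Hahn--Banach separation argument, using repeatedly the elementary fact that for a nonempty bounded set $A\subseteq X$ and a functional $f\in X^*$ one has $\sup_{a\in\cconv A}\re f(a)=\sup_{a\in A}\re f(a)$, together with the observation that $\Delta_\eps(x)$ is always nonempty (it contains $-x$, since $\norm{x-(-x)}=2>2-\eps$), so that $\cconv\Delta_\eps(x)$ is a nonempty closed convex subset of $B_X$.

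\textbf{The ``if'' directions.} Assume $B_X=\cconv\Delta_\eps(x)$ for every $\eps>0$, fix a slice $S=S(x^*,\delta)$ of $B_X$ and $\eps>0$, and put $M=\sup_{y\in B_X}\re x^*(y)$. Then $\sup_{y\in\Delta_\eps(x)}\re x^*(y)=\sup_{y\in\cconv\Delta_\eps(x)}\re x^*(y)=M$, so some $y\in\Delta_\eps(x)$ satisfies $\re x^*(y)>M-\delta$, i.e.\ $y\in S$ with $\norm{x-y}>2-\eps$; letting $\eps\to0$ gives $\sup_{y\in S}\norm{x-y}=2$, so $x$ is a Daugavet point. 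For $(2)$ the argument is identical, starting instead from a slice $S=S(x^*,\delta)$ \emph{containing} $x$: from $x\in\cconv\Delta_\eps(x)$ and $x\in S$ we get $\sup_{y\in\cconv\Delta_\eps(x)}\re x^*(y)\geq\re x^*(x)>M-\delta$, hence again there is $y\in\Delta_\eps(x)\cap S$ with $\norm{x-y}>2-\eps$.

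\textbf{The ``only if'' directions (by contraposition).} For $(1)$, if $B_X\neq\cconv\Delta_\eps(x)$ for some $\eps>0$, pick $z\in B_X\setminus\cconv\Delta_\eps(x)$ and separate strictly: there are $x^*\in X^*$ and $\alpha\in\R$ with $\re x^*(z)>\alpha\geq\sup_{y\in\cconv\Delta_\eps(x)}\re x^*(y)=\sup_{y\in\Delta_\eps(x)}\re x^*(y)$. Since $M:=\sup_{y\in B_X}\re x^*(y)\geq\re x^*(z)>\alpha$, the number $\delta:=M-\alpha$ is positive and $S(x^*,\delta)=\{y\in B_X:\re x^*(y)>\alpha\}$ is a nonempty slice; every $y$ in it has $\re x^*(y)>\alpha\geq\sup_{\Delta_\eps(x)}\re x^*$, so $y\notin\Delta_\eps(x)$, i.e.\ $\norm{x-y}\leq2-\eps$. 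Thus $\sup_{y\in S(x^*,\delta)}\norm{x-y}\leq2-\eps<2$ and $x$ is not a Daugavet point. For $(2)$, if $x\notin\cconv\Delta_\eps(x)$ for some $\eps>0$, separate $x$ from $\cconv\Delta_\eps(x)$ to get $x^*\in X^*$ and $\alpha\in\R$ with $\re x^*(x)>\alpha\geq\sup_{y\in\Delta_\eps(x)}\re x^*(y)$, and set $M=\sup_{y\in B_X}\re x^*(y)\geq\re x^*(x)>\alpha$. Here one must choose the slice width $\delta>0$ so that $S(x^*,\delta)$ both contains $x$ and misses $\Delta_\eps(x)$: since $M-\re x^*(x)<M-\alpha$, any $\delta$ with $M-\re x^*(x)<\delta\leq M-\alpha$ works, because it yields $\re x^*(x)>M-\delta\geq\alpha$, so $x\in S(x^*,\delta)$ while every $y\in S(x^*,\delta)$ has $\re x^*(y)>M-\delta\geq\alpha$, hence $y\notin\Delta_\eps(x)$ and $\norm{x-y}\leq2-\eps$; this contradicts $x$ being a $\Delta$-point.

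\textbf{Main obstacle.} There is no genuine difficulty: the whole proof reduces to Hahn--Banach separation plus the invariance of $\sup\re f$ under passing to the closed convex hull. The only point requiring care is the choice of the slice parameter $\delta$ in the ``only if'' part of $(2)$, where the slice must simultaneously contain $x$ and avoid $\Delta_\eps(x)$; this is exactly what the strict separation inequality $\re x^*(x)>\alpha$ (equivalently $M-\re x^*(x)<M-\alpha$) provides.
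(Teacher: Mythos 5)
Your proof is correct, and it is the standard Hahn--Banach separation argument; the paper itself does not reprove the lemma but simply cites \cite[Lemmas 2.1 and 2.2]{ahlp20}, where the same separation-plus-$\sup$-over-closed-convex-hull approach is used. The one spot that requires care, namely choosing the slice width $\delta$ in the contrapositive of part~(2) so that the slice both contains $x$ and avoids $\Delta_\eps(x)$, you handle correctly via the strict inequality $M-\re x^*(x)<M-\alpha$.
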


We have similar characterisations for super points.

\begin{lem}\label{lem:super_Delta_set_characterizations}
Let $X$ be a Banach space.
\begin{enumerate}
\item An element $x\in S_X$ is a super Daugavet point if and only if $B_X=\overline{\Delta_\eps(x)}^w$ for every $\eps>0$.
\item An element $x\in S_X$ is a super $\Delta$-point if and only if $x\in \overline{\Delta_\eps(x)}^w$ for every $\eps>0$.
\end{enumerate}
\end{lem}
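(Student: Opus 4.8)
The plan is to mimic the proof of Lemma~\ref{lem:Delta_set_characterizations} from \cite{ahlp20}, replacing slices by relatively weakly open neighbourhoods and the closed convex hull by the weak closure. For assertion $(2)$: if $x$ is a super $\Delta$-point, then for every $\eps>0$ and every non-empty relatively weakly open subset $V$ of $B_X$ containing $x$ there is $y\in V$ with $\norm{x-y}>2-\eps$, i.e.\ $V\cap\Delta_\eps(x)\neq\emptyset$; since this holds for every weak neighbourhood $V$ of $x$ in $B_X$, by definition $x\in\overline{\Delta_\eps(x)}^w$. Conversely, if $x\in\overline{\Delta_\eps(x)}^w$ for every $\eps>0$, then given any relatively weakly open $V\ni x$ and any $\eps>0$, the set $V$ meets $\Delta_\eps(x)$, which produces $y\in V$ with $\norm{x-y}>2-\eps$; hence $\sup_{y\in V}\norm{x-y}\geq 2-\eps$ for all $\eps$, so $\sup_{y\in V}\norm{x-y}=2$ (the reverse inequality being automatic since $V\subseteq B_X$ and $x\in S_X$). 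This is essentially a tautological reformulation of the definition of the weak closure.

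For assertion $(1)$: if $x$ is a super Daugavet point, the same argument shows $V\cap\Delta_\eps(x)\neq\emptyset$ for \emph{every} non-empty relatively weakly open subset $V$ of $B_X$ (not just those containing $x$); a subset of a topological space whose closure is the whole space is exactly one that meets every non-empty open set, so $\overline{\Delta_\eps(x)}^w=B_X$. Conversely, if $\overline{\Delta_\eps(x)}^w=B_X$ for every $\eps>0$, then every non-empty relatively weakly open $V\subseteq B_X$ meets $\Delta_\eps(x)$ for every $\eps>0$, whence $\sup_{y\in V}\norm{x-y}=2$ and $x$ is a super Daugavet point.

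The statement contains no real obstacle: unlike the $\Delta$-point case, where Lemma~\ref{lem:Delta_set_characterizations} genuinely requires the Hahn--Banach separation theorem to convert ``$x$ lies in no slice avoiding $\Delta_\eps(x)$'' into ``$x\in\cconv\Delta_\eps(x)$'', here the relatively weakly open sets already form a neighbourhood basis for the weak topology, so membership in the weak closure \emph{is} by definition the statement that every weak neighbourhood meets $\Delta_\eps(x)$. Thus the only thing to be slightly careful about is that $\Delta_\eps(x)$ need not be convex, so one must use the plain weak closure $\overline{\,\cdot\,}^w$ rather than $\cconv(\cdot)$, and that relatively weakly open subsets of $B_X$ are precisely the traces on $B_X$ of weak neighbourhoods, which is built into the definitions recalled in Section~\ref{sec:notation_and_prelimiary_results}. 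No further input is needed.
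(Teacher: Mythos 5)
Your proof is correct and uses essentially the same approach as the paper: both arguments amount to the tautological observation that membership of $y$ in $\overline{\Delta_\eps(x)}^w$ is precisely the statement that every relatively weakly open subset of $B_X$ containing $y$ meets $\Delta_\eps(x)$, which for $y=x$ gives the super $\Delta$-point characterization and for all $y\in B_X$ gives the super Daugavet characterization. The paper states this once for a general $y\in B_X$ and then reads off both conclusions, while you spell out the two directions for each item separately; the content is identical.
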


\begin{proof}
Observe that for given $x\in S_X$, $y\in B_X$, and $\eps>0$, we have that $y$ belongs to the weak closure of the set $\Delta_\eps(x)$ if and only if $\Delta_\eps(x)$ has non-empty intersection with any neighborhood of $y$ in the relative weak topology of $B_X$. Thus $y$ belongs  to $\overline{\Delta_\eps(x)}^w$ for every $\eps>0$ if and only if $\sup_{z\in V} \norm{x-z}=2$ for every relatively weakly open subset $V$ of $B_X$ containing $y$. The conclusion easily follows.
\end{proof}

For any given $x\in B_X$, we denote by $\V(x)$ the set of all neighborhoods of $x$ for the relative weak topology of $B_X$. We can provide characterizations of super points using nets which is just a localization of \cite[Proposition 2.5]{blr18}.

\begin{prop}\label{prop:net_characterizations}
Let $X$ be an infinite-dimensional Banach space.
\begin{enumerate}
\item An element $x\in S_X$ is a super Daugavet point if and only if for every $y\in B_X$ there exists a net $(y_s)$ in $B_X$ which converges weakly to $y$ and such that $\norm{x-y_s}\longrightarrow 2$.
\item An element $x\in S_X$ is a super $\Delta$-point if and only if there exists a net $(x_s)$ in $B_X$ which converges weakly to $x$ and such that $\norm{x-x_s}\longrightarrow 2$.
\end{enumerate}
In both cases we can moreover force the nets to be in $S_X$.
\end{prop}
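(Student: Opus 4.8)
The plan is to derive everything from the weak-closure description of super points in Lemma~\ref{lem:super_Delta_set_characterizations}, trading the assertion ``$x$ (resp.\ a prescribed $y\in B_X$) lies in $\overline{\Delta_\eps(x)}^w$ for every $\eps>0$'' for the existence of the desired net. The easy implications are immediate: if a net $(x_s)\subseteq B_X$ satisfies $x_s\to x$ weakly and $\norm{x-x_s}\to 2$, then for each $\eps>0$ and each $V\in\V(x)$ one has $x_s\in V$ and $\norm{x-x_s}>2-\eps$ eventually, whence $V\cap\Delta_\eps(x)\neq\emptyset$; so $x\in\overline{\Delta_\eps(x)}^w$ for all $\eps>0$ and $x$ is a super $\Delta$-point by Lemma~\ref{lem:super_Delta_set_characterizations}(2). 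The same computation with a fixed $y\in B_X$ in place of $x$, quantified over all $y\in B_X$, gives the easy implication in (1); and since $S_X\subseteq B_X$, these implications are unaffected if the nets are required to lie in $S_X$.

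For the converse in (2), assume $x$ is a super $\Delta$-point and index a net by the directed set $\Lambda:=\V(x)\times(0,\infty)$, ordered by reverse inclusion in the first coordinate and reverse order in the second (it is directed because the intersection of two weak neighborhoods of $x$ is again one). For $s=(V,\eps)\in\Lambda$, Lemma~\ref{lem:super_Delta_set_characterizations}(2) lets us choose $x_s\in V\cap\Delta_\eps(x)$; the resulting net is eventually inside any prescribed $U\in\V(x)$, hence $x_s\to x$ weakly, and $2\geq\norm{x-x_s}>2-\eps$ forces $\norm{x-x_s}\to 2$. For the converse in (1) one repeats this for each fixed $y\in B_X$ with $\Lambda:=\V(y)\times(0,\infty)$, using Lemma~\ref{lem:super_Delta_set_characterizations}(1) to choose $y_s\in V\cap\Delta_\eps(x)$.

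It remains to arrange that the nets lie in $S_X$, which is the only point requiring genuine work. The key observation is that every $y_0\in\Delta_\eps(x)$ has norm close to $1$: from $\norm{x-y_0}>2-\eps$ and $\norm{y_0}\leq 1$ we get $\norm{y_0}>1-\eps$, so its normalization $z:=y_0/\norm{y_0}\in S_X$ satisfies $\norm{z-y_0}=1-\norm{y_0}<\eps$. Using this I would upgrade Lemma~\ref{lem:super_Delta_set_characterizations} by replacing $\Delta_\eps(x)$ with $\Delta_\eps(x)\cap S_X$ in both statements; only one inclusion needs checking. Fix $\eps>0$, a point $p$ (namely $p=x$ in (2), an arbitrary $p=y\in B_X$ in (1)), and a basic weak neighborhood $V=\{w\in B_X:\abs{f_i(w-p)}<\delta,\ i\leq n\}$ with $f_i\in S_{X^*}$; put $\eps':=\frac{1}{2}\min(\eps,\delta)$, use the (non-upgraded) Lemma~\ref{lem:super_Delta_set_characterizations} to pick $y_0\in\Delta_{\eps'}(x)$ with $\abs{f_i(y_0-p)}<\delta/2$ for all $i$, and verify that $z=y_0/\norm{y_0}$ lies in $V\cap\Delta_\eps(x)\cap S_X$: indeed $\abs{f_i(z-p)}\leq\norm{z-y_0}+\abs{f_i(y_0-p)}<\eps'+\delta/2\leq\delta$ and $\norm{x-z}\geq\norm{x-y_0}-\norm{z-y_0}>2-2\eps'\geq 2-\eps$. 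Feeding this strengthened characterization into the net construction above produces nets contained in $S_X$. The only subtlety is that normalization is not weak-to-weak continuous, so one must shrink \emph{both} the diametral parameter and the weak neighborhood before normalizing; choosing $\eps'$ simultaneously below $\eps/2$ and below $\delta/2$ handles it, and this little bookkeeping is really the main (mild) obstacle in the whole argument.
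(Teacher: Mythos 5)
Your argument is correct, and the core net construction (indexing by $\V(x)\times(0,\infty)$, resp.\ $\V(y)\times(0,\infty)$, and picking points in $V\cap\Delta_\eps(x)$) is exactly what the paper does. The only place you diverge is the upgrade to nets in $S_X$. The paper handles this more quickly: it observes that $B_X\setminus\Delta_\eps(x)=\{y\in B_X:\norm{x-y}\le 2-\eps\}$ is weakly closed by lower semicontinuity of the norm, so $\Delta_\eps(x)$ is itself a relatively weakly open subset of $B_X$; hence $V\cap\Delta_\eps(x)$ is a nonempty relatively weakly open set, and in an infinite-dimensional space such a set must meet $S_X$ (this is where infinite-dimensionality enters). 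Your route instead normalizes a point $y_0\in\Delta_{\eps'}(x)$, using $\norm{y_0}>1-\eps'$ to control both the perturbation in norm and, after shrinking $\eps'$ and the weak neighborhood, the perturbation against the finitely many defining functionals. Both are valid; yours is a bit more hands-on and self-contained, while the paper's exploits a cleaner structural observation about $\Delta_\eps(x)$ that is also used later in the paper. Either way, the proposition is proved.
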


\begin{proof}
Let us fix $x\in S_X$. Given any $y\in B_X$, it is clear that if there exists a net $(y_s)$ in $B_X$ which converges weakly to $y$ and such that $\norm{x-y_s}\longrightarrow 2$, then $y$ belongs to the weak closure of $\Delta_\eps(x)$ for every $\eps>0$.  Conversely let us pick $y\in B_X$ satisfying this property. We turn $S:=\V(y)\times (0,\infty)$ into a directed set by $(V,\eps)\leq (V',\eps')$ if and only if $V'\subset V$ and $\eps'\leq \eps$. By the assumptions we have that $V\cap \Delta_\eps(x)$ is a non-empty subset of $B_X$ for every couple $s:=(V,\eps)$ in $S$. Picking any $y_s$ in this set will then provide the desired net.

Finally observe that for $x\in B_X$ and $\eps>0$, we have that $B_X\backslash \Delta_\eps(x)=\{y\in B_X\colon \norm{x-y}\leq 2-\eps\}$ is weakly closed by the lower semi-continuity of the norm, so that $\Delta_\eps(x)$ is a relatively weakly open subset of $B_X$. Thus we have that $V\cap \Delta_\eps(x)$ is a non-empty relatively weakly open subset of $B_X$ for every couple $s:=(V,\eps)$ in $S$. Since $X$ is infinite dimensional, this set has to intersect $S_X$, and we can actually pick $y_s$ in  $V\cap \Delta_\eps(x)\cap S_X$. \end{proof}

\begin{rem} In \cite{kw04} an example of a Banach space satisfying simultaneously the Daugavet property and the Schur property was provided. Such example shows that there is no hope to get a version of the above result involving sequences.
\end{rem}

Observe that the following result, similar to \cite[Lemma 2.1 and 2.2]{jr22}, is included in the preceding proof.

\begin{prop}
Let $X$ be a Banach space and let $x \in S_X$.
\begin{enumerate}
\item If $x$ is a super Daugavet point, then for every $\eps>0$ and every non-empty relatively weakly open subset $V$ of $B_X$  we can find a non-empty relatively weakly open subset $U$ of $B_X$ which is contained in $V$ and such that $\norm{x-y}>2-\eps$ for every $y\in U$.
\item If $x$ is a super $\Delta$-point, then for every $\eps>0$ and every non-empty relatively weakly open subset $V$ of $B_X$ containing $x$  we can find a non-empty relatively weakly open subset $U$ of $B_X$ which is contained in $V$ and such that $\norm{x-y}>2-\eps$ for every $y\in U$.
\end{enumerate}
\end{prop}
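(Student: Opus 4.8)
The plan is to observe that the set $U$ asked for can simply be taken to be $V\cap\Delta_\eps(x)$, so the whole statement reduces to one topological fact that is already isolated inside the proof of Proposition~\ref{prop:net_characterizations}: for each $x\in S_X$ and each $\eps>0$, the set $\Delta_\eps(x)=\{y\in B_X\colon\norm{x-y}>2-\eps\}$ is a relatively weakly open subset of $B_X$. First I would record why this is so: the complement $B_X\setminus\Delta_\eps(x)=\{y\in B_X\colon\norm{x-y}\le 2-\eps\}$ is convex and norm-closed, hence weakly closed, so $\Delta_\eps(x)$ is relatively weakly open in $B_X$.

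Next I would check the non-emptiness of $V\cap\Delta_\eps(x)$. For item~(1), if $x$ is a super Daugavet point and $V$ is any non-empty relatively weakly open subset of $B_X$, then $\sup_{y\in V}\norm{x-y}=2$ by definition, so there is some $y_0\in V$ with $\norm{x-y_0}>2-\eps$, i.e.\ $V\cap\Delta_\eps(x)\neq\emptyset$. For item~(2) the argument is identical: the additional hypothesis $x\in V$ triggers the super $\Delta$-point definition, which again gives $\sup_{y\in V}\norm{x-y}=2$ and hence $V\cap\Delta_\eps(x)\neq\emptyset$.

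Finally I would conclude by setting $U:=V\cap\Delta_\eps(x)$ in both cases. It is relatively weakly open as the intersection of two such sets; it is non-empty by the previous step; it is contained in $V$ trivially; and every $y\in U$ satisfies $\norm{x-y}>2-\eps$ by the very definition of $\Delta_\eps(x)$. This proves both assertions simultaneously.

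I do not expect a genuine obstacle here: this proposition is really just a convenient repackaging of material already present in the proof of Proposition~\ref{prop:net_characterizations}. The only two points worth stressing are the weak openness of $\Delta_\eps(x)$, which hinges on the weak lower semicontinuity of the norm, and the elementary remark that the supremum defining the super Daugavet (respectively super $\Delta$) condition is attained to within $\eps$ already by a \emph{single} point of $V$, which then automatically forces the entire relatively weakly open neighbourhood $V\cap\Delta_\eps(x)$ to lie inside $\Delta_\eps(x)$.
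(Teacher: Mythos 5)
Your proof is correct and follows the same approach as the paper: both take $U := V\cap\Delta_\eps(x)$ and rely on the fact that $\Delta_\eps(x)$ is relatively weakly open in $B_X$, which is already recorded in the proof of Proposition~\ref{prop:net_characterizations}. The only cosmetic difference is that you justify the weak openness via Mazur's theorem (norm-closed convex sets are weakly closed), whereas the paper invokes the weak lower semicontinuity of the norm; these are of course the same observation.
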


\begin{proof}
Fix any  $x\in S_X$ and any $y\in B_X$ which belongs to the weak closure of $\Delta_\eps(x)$ for every $\eps>0$. Then, for every $V\in \V(y)$ and every $\eps>0$, we have that $U:=V\cap \Delta_\eps(x)$ is a non-empty relatively weakly open subset of $B_X$. \end{proof}

It is clear from the definition that denting points of $B_X$ cannot be $\Delta$-points. Also it was first observed in \cite[Proposition 3.1]{jr22} that every Daugavet point in a Banach space $X$ has to be at distance $2$ from every denting point of the unit ball of $X$.  This elementary observation turned out to play an important role in the study of Daugavet points in Lipschitz-free spaces in \cite{jr22} and \cite{veefree}. We have similar observations for super points.

\begin{lem}
Let $X$ be a Banach space and let $x\in S_X$. If $x$ is a super $\Delta$-point, then $x$ cannot be a point of continuity. If, moreover, $x$ is a super Daugavet point, then $x$ has to be at distance $2$ from every point of continuity of $B_X$.
\end{lem}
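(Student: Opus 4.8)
The plan is to use directly the defining feature of a point of continuity $x_0\in B_X$, namely that $x_0$ admits relatively weakly open subsets of $B_X$ of arbitrarily small diameter containing it, and to play this against the super-diametral hypotheses on $x$.

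For the first assertion I would argue by contradiction. Suppose $x$ is simultaneously a super $\Delta$-point and a point of continuity of $B_X$. Fix $\eps\in(0,2)$. By the point-of-continuity property there is a relatively weakly open subset $V$ of $B_X$ with $x\in V$ and $\diam V<\eps$. On the other hand, the super $\Delta$-point condition gives $\sup_{y\in V}\norm{x-y}=2$, so $\diam V\ge 2>\eps$, a contradiction. (Equivalently, one may invoke Lemma~\ref{lem:super_Delta_set_characterizations}(2): since $x\in\overline{\Delta_\eps(x)}^w$ for every $\eps>0$, every weak neighbourhood of $x$ in $B_X$ meets $\Delta_\eps(x)$ and hence has diameter at least $2-\eps$, which is incompatible with weak-to-norm continuity at $x$.)

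For the second assertion, assume moreover that $x$ is a super Daugavet point, and let $x_0$ be an arbitrary point of continuity of $B_X$. Fix $\eps>0$ and choose, by the point-of-continuity property, a relatively weakly open subset $V$ of $B_X$ with $x_0\in V$ and $\diam V<\eps$. Since $x$ is a super Daugavet point, $\sup_{y\in V}\norm{x-y}=2$, so we may pick $y\in V$ with $\norm{x-y}>2-\eps$. Then $\norm{x_0-y}\le\diam V<\eps$, and the triangle inequality yields $\norm{x-x_0}\ge\norm{x-y}-\norm{y-x_0}>2-2\eps$. Letting $\eps\to0$ gives $\norm{x-x_0}\ge2$, hence $\norm{x-x_0}=2$ because $x,x_0\in B_X$.

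I do not expect any serious obstacle here: the only mild point to keep track of is that in the second part the relatively weakly open set $V$ need not contain $x$, but this causes no trouble since the super Daugavet point condition applies to \emph{every} non-empty relatively weakly open subset of $B_X$. The statement is in fact the exact ``super'' analogue of the two elementary facts recalled just before it (denting points cannot be $\Delta$-points, and Daugavet points lie at distance $2$ from every denting point), and the proof is the natural translation of those arguments with slices replaced by relatively weakly open subsets.
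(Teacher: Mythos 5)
Your proof is correct and follows exactly the same approach as the paper's (which is stated more tersely): use the defining property of a point of continuity, namely membership in relatively weakly open subsets of arbitrarily small diameter, and play it against the super $\Delta$ / super Daugavet hypothesis, with the triangle inequality doing the work in the second part. Nothing to add.
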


\begin{proof}
If and element $y$ of $B_X$ is a point of continuity, then it is contained in relatively weakly open subsets of $B_X$ of arbitrarily small diameter. Clearly no super $\Delta$-point can have this property, and any super Daugavet point has to be at distance $2$ from any such points.
\end{proof}

This lemma provides quite a few examples of Banach spaces which fail to contain super points. Following \cite{ht14} let us recall that $X$ has the \emph{Kadets property} if the norm topology and the weak topology coincide on $S_X$, and that $X$ has the \emph{Kadets-Klee} property if weakly convergent sequences in $S_X$ are norm convergent. Let us also recall that any LUR space has the Kadets-Klee property, and that any space with the Kadets-Klee property which fails to contain $\ell_1$ has the Kadets property.  By proposition \ref{prop:net_characterizations} we clearly have the following result.

\begin{prop}\label{prop:kadetsproperty}
If $X$ has the Kadets property, then  $X$ fails to contain super $\Delta$-points.
\end{prop}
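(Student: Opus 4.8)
The plan is a short argument by contradiction based on the net description of super $\Delta$-points. Suppose, towards a contradiction, that some $x\in S_X$ is a super $\Delta$-point. Then the last assertion of Proposition~\ref{prop:net_characterizations} provides a net $(x_s)$ lying \emph{in $S_X$} which converges weakly to $x$ and satisfies $\norm{x-x_s}\longrightarrow 2$.

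Now I would simply invoke the definition of the Kadets property: since the weak and the norm topologies agree on $S_X$, the net $(x_s)$, which lies in $S_X$ and converges weakly to $x\in S_X$, must also converge to $x$ in norm. Hence $\norm{x-x_s}\longrightarrow 0$, which is incompatible with $\norm{x-x_s}\longrightarrow 2$. This contradiction shows that $X$ contains no super $\Delta$-point.

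The only point that deserves a moment's attention is that the approximating net has to be taken inside the unit sphere and not merely inside the unit ball, since the Kadets property gives no information on the weak topology of $B_X$ away from $S_X$; but this is precisely the additional conclusion recorded in Proposition~\ref{prop:net_characterizations} (which itself relies on $X$ being infinite dimensional), so no genuine difficulty arises here. Thus the main ``obstacle'' is really only this bit of bookkeeping, and the statement follows at once.
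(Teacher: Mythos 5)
Your argument is correct and is exactly the one the paper intends: the paper merely states ``By Proposition~\ref{prop:net_characterizations} we clearly have the following result,'' which is precisely your contradiction via a net in $S_X$ converging weakly but not in norm. Your extra remark about needing the net to lie in $S_X$ rather than $B_X$ is the right point to flag, and it is indeed supplied by the last sentence of Proposition~\ref{prop:net_characterizations}.
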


As a corollary we obtain the following. Recall that a Banach space is \emph{asymptotic uniformly convex} (\emph{AUC} in short) \cite{jlps} if its modulus of asymptotic uniform convexity $$\overline{\delta}_X(t):=\inf_{x\in S_X}\sup_{\dim X/Y <\infty}\inf_{y\in S_Y} \norm{x+ty}-1$$ is strictly positive for every $t>0$.

\begin{cor}
Let $X$ be AUC. Then, $X$ fails to contain super $\Delta$-points.
\end{cor}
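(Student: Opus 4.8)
The plan is to show that every AUC space has the Kadets property and then to invoke Proposition~\ref{prop:kadetsproperty}; equivalently, and this is the route I would actually write down, to argue directly from the net characterisation of super $\Delta$-points in Proposition~\ref{prop:net_characterizations}. Let me stress first why something more than a sequential argument is needed: it would not suffice to recall that AUC implies the Kadets--Klee property and then to remove $\ell_1$ as in the discussion preceding Proposition~\ref{prop:kadetsproperty}, because $\ell_1$ itself is AUC, with $\overline{\delta}_{\ell_1}(t)=t$; and the purely sequential version is also not enough because of the Daugavet--Schur example in the remark after Proposition~\ref{prop:net_characterizations}. So I would suppose, towards a contradiction, that some $x\in S_X$ is a super $\Delta$-point, and use Proposition~\ref{prop:net_characterizations} to fix a net $(x_s)$ in $S_X$ with $x_s\to x$ weakly and $\norm{x-x_s}\to 2$.

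The heart of the matter is a standard estimate coming from asymptotic uniform convexity, which I would isolate as a lemma: \emph{if $x\in S_X$, $t>0$, and $(u_s)$ is a net in $S_X$ with $u_s\to 0$ weakly, then $\liminf_s\norm{x+tu_s}\geq 1+\overline{\delta}_X(t)$.} To prove it, fix $\eps>0$ and, from the definition of $\overline{\delta}_X(t)$, choose a finite-codimensional subspace $Y_0=\bigcap_{i=1}^m\ker f_i$ (with $f_1,\dots,f_m\in X^*$) such that $\inf_{y\in S_{Y_0}}\norm{x+ty}\geq 1+\overline{\delta}_X(t)-\eps$. Since $f_i(u_s)\to 0$ for every $i$, the images of the $u_s$ in the finite-dimensional quotient $X/Y_0$ tend to $0$ in norm; lifting these images with controlled norm produces a net $(z_s)$ in $Y_0$ with $\norm{u_s-z_s}\to 0$, and hence $\norm{z_s}\to 1$. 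For $s$ large enough $z_s\neq 0$ and $z_s/\norm{z_s}\in S_{Y_0}$, so $\norm{x+t\,z_s/\norm{z_s}}\geq 1+\overline{\delta}_X(t)-\eps$, while $\norm{u_s-z_s/\norm{z_s}}\leq\norm{u_s-z_s}+\bigl|\norm{z_s}-1\bigr|\to 0$; therefore $\liminf_s\norm{x+tu_s}\geq 1+\overline{\delta}_X(t)-\eps$, and letting $\eps\to 0$ finishes the lemma.

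To conclude I would set $y_s:=x_s-x$, so that $y_s\to 0$ weakly and $\norm{y_s}\to 2$; in particular $\norm{y_s}>1$ eventually, and then $\tilde y_s:=y_s/\norm{y_s}$ is a net in $S_X$ converging weakly to $0$. The lemma with $t=1$ gives $\liminf_s\norm{x+\tilde y_s}\geq 1+\overline{\delta}_X(1)>1$, using that $X$ is AUC. On the other hand,
$$x+\tilde y_s=\tfrac{1}{\norm{y_s}}(x+y_s)+\Bigl(1-\tfrac{1}{\norm{y_s}}\Bigr)x=\tfrac{1}{\norm{y_s}}\,x_s+\Bigl(1-\tfrac{1}{\norm{y_s}}\Bigr)x$$
is, whenever $\norm{y_s}\geq 1$, a convex combination of the unit vectors $x_s$ and $x$, so $\norm{x+\tilde y_s}\leq 1$, a contradiction. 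Running the same computation on an arbitrary net in $S_X$ that converges weakly but not in norm yields along the way that AUC implies the Kadets property, which is the conceptual content behind the statement. The only point that is not completely formal is the perturbation of a weakly null net into a finite-codimensional subspace; everything else reduces to the definition of $\overline{\delta}_X$ and one convexity inequality, so I do not expect a genuine obstacle.
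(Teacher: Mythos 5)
Your proof is correct, and it takes a genuinely more self-contained route than the paper. The paper's one-line proof invokes \cite[Proposition~2.6]{jlps} to get that every element of $S_X$ in an AUC space is a point of continuity, and then applies the lemma (just before Proposition~\ref{prop:kadetsproperty}) that a super $\Delta$-point cannot be a point of continuity. You instead work directly from the definition of $\overline{\delta}_X$ and the net characterization in Proposition~\ref{prop:net_characterizations}, which avoids the external citation at the price of redoing the standard AUC estimate. Both proofs are ultimately the same underlying geometry, but yours makes explicit what is black-boxed in the reference.

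On the one step you flag as ``not completely formal'': it is fine. Writing $Y_0=\bigcap_{i=1}^m\ker f_i$, the functionals $f_i$ descend to a total family on the finite-dimensional quotient $X/Y_0$, so $f_i(u_s)\to 0$ for all $i$ forces $\norm{q(u_s)}_{X/Y_0}\to 0$, where $q$ is the quotient map; since $\norm{q(u_s)}=\dist(u_s,Y_0)$ by definition of the quotient norm, you can pick $z_s\in Y_0$ with $\norm{u_s-z_s}\leq 2\norm{q(u_s)}\to 0$, and the rest of your estimate goes through. You are also right that the Kadets--Klee route (via the discussion before Proposition~\ref{prop:kadetsproperty}) does not directly apply, since $\ell_1$ is AUC, so a direct argument (or the citation the paper uses) is genuinely needed; and your final observation that the same computation shows AUC implies the Kadets property is correct and recovers the conceptual content of the cited result.
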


\begin{proof}
In an AUC space, every element of the unit sphere is a point of continuity of $B_X$, see \cite[Proposition~2.6]{jlps}.
\end{proof}

\begin{rem}
It was proved in \cite[Theorem 3.4]{almp22} that any \emph{reflexive} AUC space fails to contain $\Delta$-points. Also, combining the observations from \cite[End of Section 4]{almp22} about weak$^*$ quasi-denting points in the unit ball of AUC$^*$ duals and \cite[Corollary 2.4]{veelipfunc} about the maximality of the Kuratowski index of weak$^*$ slices containing weak$^*$ $\Delta$-points, we have that every AUC$^*$ dual space fails to contain weak$^*$ $\Delta$-points. However, note that it is currently unknown whether non-reflexive AUC spaces (and, in particular, whether the dual of the James tree spaces JT$^*$) may contain Daugavet- or $\Delta$-points.
\end{rem}

It turns out that Daugavet points are characterized by this distance to denting points in RNP spaces (because the unit ball of an RNP space $X$ can be written as the closed convex hull of the set of its denting points) as well as in Lipschitz-free spaces (\cite[Theorem~3.2]{jr22} for compact metric spaces and \cite[Theorem~2.1]{veefree} for a general statement). In the same way we can characterize super Daugavet points in terms of this distance to points of continuity of $B_X$ is spaces with the CPCP.

\begin{prop}
If a Banach space $X$ has the CPCP, then a point $x\in S_X$ is a super Daugavet point if and only if it is at distance $2$ from any point of continuity of $B_X$.
\end{prop}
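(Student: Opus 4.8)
The plan is to treat the two implications separately, the forward one being essentially immediate. If $x$ is a super Daugavet point, then by the Lemma just above (which requires no hypothesis on $X$) it lies at distance $2$ from every point of continuity of $B_X$. So all the work is in the converse, and this is where the CPCP is used.

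For the converse, assume $x\in S_X$ satisfies $\norm{x-p}=2$ for every point of continuity $p$ of $B_X$, and let us verify Definition~\ref{def:defiDauganot}(2) directly: we fix an arbitrary non-empty relatively weakly open subset $V$ of $B_X$ and an $\eps>0$, and look for $y\in V$ with $\norm{x-y}>2-\eps$. The key input is the fact recalled in the introduction (see \cite[Theorem~1.13]{ew}): since $X$ has the CPCP and $B_X$ is closed, convex and bounded, the set of points of continuity of $B_X$ is weakly dense in $B_X$. In particular, the non-empty relatively weakly open set $V$ contains some point of continuity $p$ of $B_X$.

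Now we use that $p$ is a point of continuity: the identity $(B_X,w)\longrightarrow(B_X,\norm{\cdot})$ is continuous at $p$, so there is a non-empty relatively weakly open subset $U_0$ of $B_X$ with $p\in U_0$ and $\norm{z-p}<\eps$ for all $z\in U_0$. Then $U:=U_0\cap V$ is a non-empty (it contains $p$) relatively weakly open subset of $B_X$ contained in $V$, and for every $y\in U$ the triangle inequality gives $\norm{x-y}\geq \norm{x-p}-\norm{p-y}>2-\eps$. Hence $\sup_{y\in V}\norm{x-y}\geq 2-\eps$; letting $\eps\to 0$ yields $\sup_{y\in V}\norm{x-y}=2$, and since $V$ was arbitrary this shows $x$ is a super Daugavet point (alternatively, the same estimate gives $B_X=\overline{\Delta_\eps(x)}^w$ for every $\eps>0$, so Lemma~\ref{lem:super_Delta_set_characterizations}(1) applies).

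I do not expect a genuine obstacle here: once the weak density of points of continuity in $B_X$ under the CPCP is invoked, the argument is a two-line triangle-inequality estimate. The only mild point of care is to arrange the small relatively weakly open neighborhood of $p$ to sit inside $V$, which is handled by intersecting with $V$; and one should check that the notion of ``point of continuity of $B_X$'' appearing in the hypothesis is exactly the one for which the density result is stated, which it is.
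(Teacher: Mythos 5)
Your proof is correct and follows essentially the same approach as the paper's: the forward direction is immediate from the preceding lemma, and the converse uses the weak density of points of continuity of $B_X$ under the CPCP to find a point of continuity inside any given relatively weakly open set. The only remark is that the final triangle-inequality step is superfluous: once you have a point of continuity $p\in V$ with $\norm{x-p}=2$, you already have $\sup_{y\in V}\norm{x-y}\geq \norm{x-p}=2$ by taking $y=p$, so there is no need to shrink to a smaller neighborhood $U$.
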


\begin{proof} If $X$ has the CPCP, then the set of all points of continuity of $B_X$ is weakly dense in $B_X$ (see for example \cite[Proposition 3.9]{ew}), that is, every non-empty relatively weakly open subset of $B_X$ contains a point of continuity. The conclusion follows easily.
\end{proof}

For ccs points, the situation is quite different. Indeed, although ccs $\Delta$-points can clearly not be points of strong regularity, we have by
\cite[Theorem 3.1]{lmr19} that $X$ has the SD2P if and only if every convex combination of slices of $B_X$ contains elements of norm arbitrarily close to $1$. It readily follows that any space $X$ which contains a ccs Daugavet point satisfies the SD2P, so it is very far from being strongly regular. We will provide more details on this topic in Section \ref{sec:inside_of_the_unit_ball}, but for later reference let us state the following.

\begin{prop}\label{prop:ccs_Daugavet_SR}
Let $X$ be a Banach space. If $X$ contains a ccs Daugavet point, then it has the SD2P (it fails to be strongly regular).
\end{prop}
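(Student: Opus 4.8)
The plan is to route everything through the characterization of the SD2P recalled just above the statement: by \cite[Theorem~3.1]{lmr19}, $X$ has the SD2P if and only if every convex combination of slices of $B_X$ contains elements of norm arbitrarily close to $1$. So I would fix a ccs Daugavet point $x\in S_X$, take an arbitrary ccs $C$ of $B_X$ and an arbitrary $\eps>0$, and aim to produce a point $y\in C$ with $\norm{y}>1-\eps$.

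This will be immediate from the definition together with the triangle inequality. Since $x$ is a ccs Daugavet point we have $\sup_{z\in C}\norm{x-z}=2$, so there is $y\in C$ with $\norm{x-y}>2-\eps$; then $\norm{y}\geq\norm{x-y}-\norm{x}>2-\eps-1=1-\eps$. As $C$ and $\eps$ were arbitrary, every ccs of $B_X$ contains points of norm as close to $1$ as desired, and \cite[Theorem~3.1]{lmr19} gives the SD2P. For the parenthetical remark, if $X$ were strongly regular then $B_X$, being closed, convex and bounded, would contain a ccs of diameter smaller than $1$, contradicting the SD2P; hence $X$ fails to be strongly regular.

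The point worth flagging is that there is essentially no obstacle here \emph{once} one uses the ``norm close to $1$'' reformulation of the SD2P. The more naive route — trying to directly exhibit two points of a given ccs $C$ at mutual distance close to $2$ — does not work in an obvious way, because a ccs Daugavet point only produces a single point of $C$ far from $x$, and two points of $B_X$ both far from the fixed $x\in S_X$ need not be far from each other (e.g.\ $x=e_1$ and two points near $-e_1$ in $\ell_1$). Invoking \cite[Theorem~3.1]{lmr19} sidesteps this completely, so the only genuine ingredient in the proof is that cited characterization.
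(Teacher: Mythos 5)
Your proof is correct and is essentially the argument the paper gives (in the paragraph immediately preceding the statement): both route the claim through the characterization in \cite[Theorem~3.1]{lmr19} that the SD2P is equivalent to every ccs of $B_X$ containing points of norm arbitrarily close to $1$, and obtain such points from the ccs Daugavet point via the triangle inequality. Your remark about why the naive ``two far-apart points'' approach would not work is a sensible observation, but it does not change the fact that the proof matches the paper's.
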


Next, we show that extreme points have a nice behaviour with respect to diametral notions.

\begin{proposition}\label{prop:extreme_diametral_delta}
Let $X$ be a Banach space and let $x\in S_X$.
\begin{enumerate}
    \item If $x\in \preext{B_X}$ and it is a $\Delta$-point, then $x$ is a super $\Delta$-point.
    \item If $x\in \ext{B_X}$ and it is a super $\Delta$-point, then $x$ is a ccs $\Delta$-point.
    \item In particular, if $x\in \preext{B_X}$ is a $\Delta$-point, then $x$ is a super $\Delta$-point as well as a ccs $\Delta$-point.
\end{enumerate}
\end{proposition}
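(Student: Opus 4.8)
The plan is to treat the three items in order, with (3) being an immediate consequence of (1) and (2) applied consecutively. So the real content is proving (1) and (2).

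For item (1), suppose $x\in\preext{B_X}$ is a $\Delta$-point. By Lemma~\ref{lem:super_Delta_set_characterizations}(2), it suffices to show that $x\in\overline{\Delta_\eps(x)}^w$ for every $\eps>0$; equivalently, by the same lemma applied in reverse (or directly by Definition~\ref{def:defiDeltanot}(2)), that every non-empty relatively weakly open subset $V$ of $B_X$ containing $x$ satisfies $\sup_{y\in V}\norm{x-y}=2$. First I would fix such a $V$ and an $\eps>0$. Since $x$ is a preserved extreme point of $B_X$, the slices of $B_X$ containing $x$ form a neighbourhood basis of $x$ in the relative weak topology (this is exactly the fact used in Remark~\ref{rem:bourgain}; it follows from Choquet's lemma applied in the bidual together with the fact that $J_X(x)\in\ext{B_{X^{**}}}$). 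Hence there is a slice $S$ of $B_X$ with $x\in S\subseteq V$. Because $x$ is a $\Delta$-point, $\sup_{y\in S}\norm{x-y}=2$, and since $S\subseteq V$ this gives $\sup_{y\in V}\norm{x-y}=2$. As $V$ was arbitrary, $x$ is a super $\Delta$-point.

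For item (2), suppose $x\in\ext{B_X}$ is a super $\Delta$-point. By Definition~\ref{def:defiDeltanot}(3) I must show $\sup_{y\in C}\norm{x-y}=2$ for every ccs $C=\sum_{i=1}^n\lambda_i S_i$ of $B_X$ containing $x$. Fix such a $C$; write $x=\sum_{i=1}^n\lambda_i x_i$ with $x_i\in S_i$. Since $x$ is an extreme point of $B_X$ and the $x_i$ lie in $B_X$ with the $\lambda_i$ a convex combination, we get $x_i=x$ for every $i$, so $x\in S_i$ for each $i$. Now each $S_i$ is in particular a non-empty relatively weakly open subset of $B_X$ containing $x$, so by the super $\Delta$-point property there is, for each $\eps>0$ and each $i$, a point $y_i\in S_i$ with $\norm{x-y_i}>2-\eps$. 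Then $y:=\sum_{i=1}^n\lambda_i y_i\in C$ and, by the triangle inequality, $\norm{x-y}=\norm{\sum_i\lambda_i(x-y_i)}\leq\sum_i\lambda_i\norm{x-y_i}$ — which is the wrong direction. Instead I would pick a norming functional: choose $x^*\in S_{X^*}$ and (using that $\sup_{z\in B_X}\norm{x-z}=2$ via, say, the Hahn–Banach separation / the fact that $\Delta_\eps(x)\ne\emptyset$) select for each $i$ a point $y_i\in S_i$ with $\re x^*(x-y_i)>2-\eps$; this is possible precisely because $\Delta_\eps(x)\cap S_i$ is non-empty (as $S_i$ is weakly open and contains $x$, which lies in $\overline{\Delta_{\eps'}(x)}^w$ for all $\eps'$), and on $\Delta_\eps(x)$ one can first fix a functional nearly norming the difference. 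Averaging, $y=\sum_i\lambda_iy_i\in C$ satisfies $\re x^*(x-y)=\sum_i\lambda_i\re x^*(x-y_i)>2-\eps$, hence $\norm{x-y}>2-\eps$. Letting $\eps\to 0$ yields $\sup_{y\in C}\norm{x-y}=2$.

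The main obstacle is the subtlety in item (2): a ccs of slices $S_i$ containing $x$ forces $x\in S_i$ only because $x$ is extreme, and then one must upgrade "each $S_i$ meets $\Delta_\eps(x)$" to "$C$ meets $\Delta_\eps(x)$", which requires choosing the points $y_i$ in a coordinated way (via a common almost-norming functional for the differences $x-y_i$) rather than independently, since the triangle inequality alone does not propagate the lower bound $2-\eps$ through a convex combination. The cleanest way to organise this is to first observe that $x$ being a super $\Delta$-point means $\Delta_\eps(x)$ is weakly dense near $x$, pick $x^*$ with $\norm{x^*}=1$ and $x^*$ nearly attaining the value $2$ on some element of $\Delta_\eps(x)$ close to $x$, and then use weak density to find, inside each $S_i$, a point on which $\re x^*(x-\cdot)$ is close to $2$. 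I would present this carefully, and note in passing that (1) combined with (2) gives (3) since a preserved extreme point is in particular an extreme point.
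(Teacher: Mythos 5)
Your proof of item (1) is correct and is exactly the paper's argument: Choquet's lemma gives that slices of $B_X$ containing a preserved extreme point $x$ form a neighbourhood basis of $x$ in the relative weak topology, so the $\Delta$-point property on such slices passes to all relative weak neighbourhoods. Your reduction of (3) to (1) and (2) is likewise fine.

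For item (2) you have the right starting point --- $x$ extreme implies $x\in\bigcap_{i=1}^n S_i$ --- but the common-functional argument you build on top of it has a genuine gap and is in any case not needed. As stated, you propose to fix $x^*\in S_{X^*}$ ``nearly norming the difference'' on $\Delta_\eps(x)$ and then locate in each $S_i$ a point where $\re x^*(x-\cdot)$ is nearly $2$. But the norming functionals of $x-y$ vary wildly as $y$ ranges over $\Delta_\eps(x)$, and the fact that each $S_i$ meets $\Delta_\eps(x)$ gives you, for each $i$, some $y_i\in S_i$ with $\|x-y_i\|>2-\eps$ together with its \emph{own} norming functional; it does not give you a single $x^*$ that works for all $i$. (Your phrase ``some element of $\Delta_\eps(x)$ close to $x$'' also signals a confusion, since elements of $\Delta_\eps(x)$ are at norm distance greater than $2-\eps$ from $x$; they are only weak-close.) The missing observation that makes everything collapse is that $W:=\bigcap_{i=1}^n S_i$ is itself a non-empty relatively weakly open subset of $B_X$ containing $x$, and that $W\subseteq C$, since any $z\in W$ can be written as $z=\sum_{i=1}^n\lambda_i z$ with each copy of $z$ lying in the corresponding $S_i$. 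Applying the super $\Delta$-point property directly to $W$ therefore gives $\sup_{y\in C}\|x-y\|\geq\sup_{y\in W}\|x-y\|=2$, with no functional needed. This is precisely the paper's one-line argument; equivalently, your scheme becomes correct once you pick a \emph{single} $z\in W\cap\Delta_\eps(x)$ and note that $z$ itself lies in $C$, at which point the auxiliary $x^*$ and the per-slice points $y_i$ are superfluous.
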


\begin{proof}
It follows from Choquet's lemma (see for example \cite[Lemma 3.69]{checos}) that slices form neighborhood bases in the relative weak topology of the unit ball of a Banach space for its preserved extreme points, so (1) immediately follows. For (2), if $x$ is extreme and belongs to a ccs $C:=\sum_{i=1}^n \lambda_iS_i$ of $B_X$ then $x\in \bigcap_{i=1}^n S_i$, which is a relatively weakly open subset of $B_X$.
\end{proof}

\begin{rem}
Observe that, in fact, any extreme super $\Delta$-point is  ``ccw $\Delta$-point'' as we discussed in Section \ref{sec:notation_and_prelimiary_results}. Also, Choquet's lemma implies that every extreme weak$^*$ $\Delta$-point in a dual space is weak$^*$ ccw $\Delta$-point.
\end{rem}

\subsection{Spaces with a one-unconditional basis and beyond}\label{subsec:one-unconditional_bases}

In \cite{almt21}, it was proved that no real Banach space with a subsymmetric basis contains a $\Delta$-point. On the other hand, an example of a Banach space with a one-unconditional basis that contains a $\Delta$-point was provided, and a more involved example of a Banach space with a one-unconditional basis that contains many Daugavet-points was constructed. We will discuss this second example in detail in Subsection~\ref{subsubsec:oneunconditionalbasis}.

In the process, it was also implicitly shown that real Banach spaces with a one-unconditional basis cannot contain super $\Delta$-points. In the present subsection, we prove that the same goes for ccs $\Delta$-points. Also, we provide sharper and more general versions of \cite[Proposition~2.12]{almt21}. In the first part of this section, we follow \cite{almt21} and restrict ourselves to real Banach spaces. 

Let $X$ be a real Banach space with a Schauder basis $(e_i)_{i\geq 1}$. We denote by $(e_i^*)_{i\geq 1}$ the corresponding sequence of biorthogonal functionals. Recall that $(e_i)_{i\geq 1}$ is said to be \emph{unconditional} if the series $\sum_{i\geq 1}e_i^*(x)e_i$ converges unconditionally for every $x\in X$.  Also, recall that an unconditional basis $(e_i)_{i\geq 1}$ is said to be \emph{one-unconditional} if $$\norm{\sum_{i\geq 1}\theta_ie_i^*(x)e_i}=\norm{\sum_{i\geq 1}e_i^*(x)e_i}$$ for every $(\theta_i)_{i\geq 1}\in \{-1,1\}^{\N}$ and for every $x\in X$. Moreover, if $$\norm{\sum_{i\geq 1}\theta_ie_i^*(x)e_{n_i}}=\norm{\sum_{i\geq 1}e_i^*(x)e_i}$$ for every $(\theta_i)_{i\geq 1}\in \{-1,1\}^{\N}$, for every $x\in X$, and for every strictly increasing sequence $(n_i)_{i\geq 1}$ in $\N$, then the basis is called \emph{subsymmetric}.

Observe that for spaces with a one-unconditional basis, it is enough, in order to study the various Daugavet- and $\Delta$-notions, to work in the \emph{positive sphere} $$S_X^+:=\{x\in S_X\colon e_i^*(x)\geq 0\ \forall i\}$$ of the space $X$. Also, the following result is well known. 

\begin{lem}\label{lem:fundamental_lemma_one-unconditional-bases}

Let $X$ be a real Banach space with a one-unconditional basis $(e_i)_{i\geq 1}$, and let $(a_i)_{i\geq 1}$ and $(b_i)_{i\geq 1}$ be sequences of real numbers.  If the series $\sum_{i\geq 1}b_ie_i$ converges, and if $\abs{a_i}\leq \abs{b_i}$ for every $i$, then $\sum_{i\geq 1}a_ie_i$ converges as well, and we have $$\norm{\sum_{i\geq 1}a_ie_i}\leq \norm{\sum_{i\geq 1}b_ie_i}.$$
    
\end{lem}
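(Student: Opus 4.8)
The statement is a standard domination fact for one-unconditional bases, and the plan is to reduce it to the finite-dimensional case via the partial sum projections and then handle the sign and magnitude changes separately. First I would recall that for a one-unconditional basis the basis constant is $1$, so the partial sum projections $P_n\colon \sum_i c_i e_i\mapsto \sum_{i=1}^n c_i e_i$ have norm $1$; more is true, namely that for any bounded sequence of signs $(\theta_i)$ the diagonal operator $M_\theta\colon \sum_i c_i e_i\mapsto \sum_i \theta_i c_i e_i$ is an isometry by definition of one-unconditionality, and by a routine convexity/averaging argument (writing any multiplier with entries in $[-1,1]$ as an average of sign multipliers, using Krein--Milman on $[-1,1]^n$ or simply the one-dimensional fact $t = \frac{1+t}{2}\cdot 1 + \frac{1-t}{2}\cdot(-1)$ applied coordinatewise) the diagonal multiplier operator $M_\varphi$ with $\varphi=(\varphi_i)$, $|\varphi_i|\le 1$, satisfies $\norm{M_\varphi}\le 1$ on $X$.

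With that tool in hand the argument is short. Given $(a_i)$ and $(b_i)$ with $\sum_i b_i e_i$ convergent and $|a_i|\le |b_i|$ for all $i$, set $\varphi_i := a_i/b_i$ whenever $b_i\neq 0$ and $\varphi_i:=0$ otherwise; then $|\varphi_i|\le 1$ for all $i$, and $\varphi_i b_i = a_i$ in every case (including $b_i=0$, which forces $a_i=0$). For each $n$ the vector $\sum_{i=1}^n a_i e_i = M_\varphi\bigl(\sum_{i=1}^n b_i e_i\bigr)$ has norm at most $\norm{\sum_{i=1}^n b_i e_i}$. Since $\sum_i b_i e_i$ converges, its partial sums form a Cauchy sequence; applying $M_\varphi$ (a bounded operator of norm $\le 1$) to the partial sums shows that $\bigl(\sum_{i=1}^n a_i e_i\bigr)_n$ is also Cauchy, hence convergent, and passing to the limit in $\norm{\sum_{i=1}^n a_i e_i}\le \norm{\sum_{i=1}^n b_i e_i}$ together with continuity of the norm gives $\norm{\sum_i a_i e_i}\le \norm{\sum_i b_i e_i}$.

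The only point requiring a little care — and thus the ``main obstacle'', though it is quite mild — is justifying that $M_\varphi$ is a well-defined bounded operator on all of $X$ with norm at most $1$, since a priori it is only clearly defined on finitely supported vectors. This follows because $M_\varphi$ is norm-$1$-bounded on the dense subspace of finitely supported vectors (by the averaging argument above, noting $M_\varphi = M_{\varphi\cdot\mathbf 1_{[1,n]}}$ on vectors supported in $[1,n]$ and $\varphi\cdot\mathbf 1_{[1,n]}$ still has entries in $[-1,1]$), so it extends uniquely to a norm-$1$ operator on $X$; one then checks the extension does send $\sum_i c_i e_i$ to $\sum_i \varphi_i c_i e_i$ by continuity, which is exactly what is used above. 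Alternatively, one can avoid discussing $M_\varphi$ on all of $X$ altogether and argue purely with the finite partial sums as in the previous paragraph, invoking the norm-$1$ bound of $M_\varphi$ only on each finite-dimensional span $[e_1,\dots,e_n]$; this is the cleanest route and is the one I would write up.
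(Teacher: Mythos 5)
The paper states this lemma as ``well known'' and gives no proof, so there is nothing to compare against; your argument is the standard one and it is correct. Two small things worth flagging to keep the write-up airtight: (i) when you average over sign vectors, the cleanest formulation on $[e_1,\dots,e_n]$ is $M_{\varphi} = \sum_{\theta\in\{-1,1\}^n} w_\theta M_\theta$ with $w_\theta = \prod_{i=1}^n \tfrac{1+\theta_i\varphi_i}{2}\ge 0$, $\sum_\theta w_\theta = 1$, which makes $\norm{M_\varphi}\le 1$ immediate since each $M_\theta$ is an isometry; (ii) to get convergence of $\sum a_i e_i$ you want to apply the norm-$1$ bound to the differences $\sum_{i=m}^{n} a_i e_i = M_\varphi\bigl(\sum_{i=m}^{n} b_i e_i\bigr)$, not just to the partial sums from $1$ to $n$, so that the Cauchy property transfers; you essentially say this but it is worth writing explicitly. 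With those minor clarifications the proof is complete and is indeed the route one would expect.
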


Let us now recall a few notation and preliminary results from \cite{almt21}. Let $X$ be a real Banach space with a normalized one-unconditional basis $(e_i)_{i\geq 1}$. For every subset $A$ of $\N$, we denote by $P_A$ the projection on $\cspan\{e_i,\ i\in A\}$. Then for every $x\in X$, we define $$M(x):=\{A\subset \N\colon \norm{P_A(x)}=\norm{x},\ \text{and } \norm{P_A(x)-e_j^*(x)e_j}<\norm{x}\ \forall j\in A\}. $$ The set $M(x)$ can be seen as the set of all \emph{minimal norm-giving subsets} of the support of $x$. We denote respectively by $M^{\F}(x)$ and $M^{\infty}(x)$ the subsets of all finite and infinite elements of $M(x)$. It follows from \cite[Lemma~2.7]{almt21} that the set $M(x)$ is never empty, and from \cite[Proposition~2.15]{almt21} that no element $x\in S_X$ satisfying $M^{\infty}(x)=\emptyset$ can be a $\Delta$-point.

For every non-empty ordered subset $A:=\{a_1<a_2<\dots \}$ of $\N$, and for every $n\in \N$ smaller than or equal to $\abs{A}$, we denote by $A(n):=\{a_1,\dots, a_n\}$ the subset consisting of the $n$ first elements of $A$. We will implicitly assume in the following that the elements of $M(x)$ are ordered subsets of $\N$. The next two results were proved in \cite[Lemma~2.8 and Lemma~2.11]{almt21}.

\begin{lem}\label{lem:first_elements_of_norm_giving_subsets}

Let $X$ be a real Banach space with a normalized one-unconditional basis $(e_i)_{i\geq 1}$ and let $x\in S_X$. For every $n\in\N$, the sets 
$$
\bigl\{A\in M(x)\colon \abs{A}\leq n\bigr\}\quad \text{ and } \quad \bigl\{A(n)\colon A\in M(x),\ \text{and } \abs{A}>n\bigr\}
$$
are both finite.
    
\end{lem}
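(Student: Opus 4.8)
The plan is to argue both statements by contradiction, exploiting the compactness of the finite-dimensional sphere together with the strict inequality built into the definition of $M(x)$. First I would fix $x \in S_X$ and $n \in \N$, and recall that for any $A \in M(x)$ with $\abs{A} \leq n$, or any initial segment $A(n)$ of some larger $A \in M(x)$, the relevant vector $y := P_{A(n)}(x)$ (equal to $P_A(x)$ in the first case) lives in the $n$-dimensional cube $\{ \sum_{i \in F} t_i e_i : F \subset \N,\ \abs{F}\leq n,\ 0 \leq t_i \leq \abs{e_i^*(x)} \}$. Since the supports of such vectors sit inside the (infinite but) fixed coefficient bound coming from $x$, and the norm of each is exactly $\norm{x} = 1$, the key observation is that there are only finitely many \emph{support sets} of size $\leq n$ that can occur: indeed, if infinitely many distinct support sets $F$ occurred, one could extract a sequence of such $y$'s with pairwise almost-disjoint supports, and then a standard gliding-hump / one-unconditionality argument (using Lemma 2.13, i.e. \cite[Lemma~2.7]{almt21} type monotonicity, and the convergence of $\sum_i \abs{e_i^*(x)}^{?}$-tails — more precisely, the fact that the tails $\norm{P_{\{k,k+1,\dots\}}(x)} \to 0$) would force a contradiction with $\norm{y} = 1$ for all of them, since vectors supported far out in the basis must be small. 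So it suffices to fix the finite collection of admissible support sets and, for each fixed support $F$ of size $\leq n$, show that only finitely many coefficient vectors $(t_i)_{i \in F}$ arise.

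For a fixed finite support $F$, the set of admissible coefficient vectors is a bounded subset of $\R^F$, hence relatively compact. Suppose toward a contradiction that infinitely many distinct vectors $y_m = \sum_{i \in F} t_i^{(m)} e_i$ arise from elements of $M(x)$ (in either of the two families). Passing to a convergent subsequence, $y_m \to y_\infty$ in the $\abs{F}$-dimensional space, and by continuity $\norm{y_\infty} = 1$. Here is where the minimality (the strict inequality $\norm{P_A(x) - e_j^*(x) e_j} < \norm{x}$ for all $j \in A$, in the first family) must be brought in: for each $m$ and each $j \in F$ we have $\norm{y_m - t_j^{(m)} e_j} < 1$, and since there are finitely many indices $j \in F$, I would want a \emph{uniform} gap, i.e. $\max_{j \in F} \norm{y_m - t_j^{(m)} e_j} \leq 1 - \eta$ for some $\eta > 0$ independent of $m$ — but this is exactly what could fail, so the right move is instead to use that the $t_j^{(m)}$ themselves take values in a \emph{discrete-looking} set once the support and norm are pinned down. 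The cleanest route: since $F$ is finite and the coefficients are constrained by $0 \le t_i^{(m)} \le \abs{e_i^*(x)}$ together with the norm-one condition and the minimality strict inequalities, and since by one-unconditionality these conditions are ``monotone'' in each coordinate, I expect one can show directly that the admissible coefficient vector is \emph{unique} for each fixed $F$ (or at least that the possibilities form a finite set), which immediately gives finiteness of both families.

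For the second family, $\{A(n) : A \in M(x),\ \abs{A} > n\}$, the same reduction to finitely many support sets applies verbatim (the support of $A(n)$ has size exactly $n$, and must lie within the bounded-coefficient / small-tail constraint), and for a fixed support $F = A(n)$ the coefficients are simply $e_i^*(x)$ for $i \in A(n)$ — these are literally determined by $F$, since $P_{A(n)}(x) = \sum_{i \in F} e_i^*(x) e_i$. So in this case the ``uniqueness of coefficients'' is automatic, and finiteness follows purely from finiteness of the admissible supports. The main obstacle, and the step deserving the most care, is the gliding-hump argument that caps the number of admissible support sets of size $\leq n$: one must make precise that because $\sum_i \abs{e_i^*(x)}\, e_i$ converges, any norm-one vector obtained by restricting $x$ to a set of $\leq n$ coordinates cannot have all of those coordinates far out in the basis — quantitatively, if $\min F > N$ with $N$ chosen so that $\norm{P_{\{N+1, N+2, \dots\}}(x)} < 1$ (available by Lemma 2.12, i.e. the one-unconditional domination lemma, applied to the tail), then $\norm{P_F(x)} \leq \norm{P_{\{N+1,\dots\}}(x)} < 1$, contradicting $\norm{P_F(x)} = 1$; hence every admissible $F$ meets $\{1, \dots, N\}$, and iterating this observation on the ``next'' coordinates bounds the spread of $F$ and yields only finitely many possibilities. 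I would present this iteration carefully as the technical heart of the argument, with the rest following by the compactness/uniqueness remarks above.
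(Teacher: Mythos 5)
The paper does not prove this lemma; it cites \cite[Lemma~2.8 and Lemma~2.11]{almt21}, so there is no in-paper argument to compare against. Your overall strategy---reduce both families to showing that only finitely many support sets of bounded size can occur, via a tail/gliding-hump argument---is the natural one and presumably close to the cited source. However, your write-up contains a misconceived detour and a genuine gap at the very place you yourself call ``the technical heart.'' The detour: for a fixed finite support $F$, the vector $P_F(x)=\sum_{i\in F}e_i^*(x)e_i$ is uniquely determined by $F$ and $x$, so there is no collection of coefficient vectors $(t_i^{(m)})_{i\in F}$ to compactify over; the entire discussion of convergent subsequences in $\R^F$, uniform gaps, and ``discrete-looking'' coefficient sets is vacuous. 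You notice this for the second family (``the uniqueness of coefficients is automatic'') but hedge for the first, where it is equally automatic.

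The gap: your base observation is correct---any admissible $A$ must meet $\{1,\dots,N\}$ once $N$ is large enough that $\|P_{\{N+1,N+2,\dots\}}(x)\|<1$---and it uses only $\|P_A(x)\|=1$ together with the domination Lemma~\ref{lem:fundamental_lemma_one-unconditional-bases}. But ``iterating this observation on the next coordinates'' cannot be a literal repetition. Once $a_1=\min A$ is pinned down, the minimality built into $M(x)$ gives $\|P_{A\setminus\{a_1\}}(x)\|<1$, so the tail argument does not apply to $A\setminus\{a_1\}$. Bounding the second and subsequent coordinates requires a structurally different step that uses the strict inequality in the definition of $M(x)$ in an essential way: if $A_k\in M(x)$ have $\min A_k=a_1$ fixed and second element escaping to infinity, then $\|P_{A_k\setminus\{a_1\}}(x)\|\to 0$, hence $\|P_{\{a_1\}}(x)\|=\lim_k\|P_{A_k}(x)\|=1$; but then for any $j\in A_k\setminus\{a_1\}$ one has $\|P_{A_k\setminus\{j\}}(x)\|\geq\|P_{\{a_1\}}(x)\|=1$, contradicting the defining strict inequality. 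Each later coordinate is controlled by the same pattern: an escaping coordinate forces the stabilized prefix to carry full norm, and then removing that escaping coordinate fails to strictly decrease the norm. (Equivalently, pass to a $\Delta$-system of admissible sets with fixed core $C$ and petals escaping to infinity, deduce $\|P_C(x)\|=1$, and violate minimality on any petal element.) You invoke minimality only in the vacuous fixed-support branch, never where it actually does the work, so the iteration as you propose it does not close.
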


\begin{lem}\label{lem:essential_subset_for_the_norm}

Let $X$ be a real Banach space with a normalized one-unconditional basis and let $x\in S_X$. For every subset $E$ of $\N$ such that $E\cap A\neq \emptyset$ for every $A\in M(x)$, we have $\norm{x-P_E(x)}<1$.
    
\end{lem}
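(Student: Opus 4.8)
The plan is to argue by contradiction, exploiting the one-unconditionality of the basis and the structure of the family $M(x)$ of minimal norm-giving subsets. Suppose $E\subset\N$ meets every $A\in M(x)$ but $\norm{x-P_E(x)}=1=\norm{x}$ (the inequality $\leq 1$ being automatic from Lemma~\ref{lem:fundamental_lemma_one-unconditional-bases}, since $x-P_E(x)=P_{\N\setminus E}(x)$ has coordinates dominated in modulus by those of $x$). Write $F:=\N\setminus E$, so that $x-P_E(x)=P_F(x)$ and $\norm{P_F(x)}=\norm{x}=1$; thus $F$ itself is a norm-giving subset of the support of $x$.

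The key step is then to extract from $F$ a \emph{minimal} norm-giving subset, i.e.\ an element of $M(x)$ contained in $F$. First I would pass to a minimal element by the following argument: among all subsets $B\subseteq F$ with $\norm{P_B(x)}=1$, one wants one for which removing any single index strictly decreases the norm. If $F$ contains a finite such $B$, a finite descent finishes the job and produces $A\in M(x)$ with $A\subseteq B\subseteq F$. If no finite subset of $F$ is norm-giving, then any norm-giving $B\subseteq F$ is infinite, and one runs the descent transfinitely/by a compactness-type argument: order $B=\{b_1<b_2<\dots\}$ and for each $j$ either $\norm{P_{B\setminus\{b_j\}}(x)}<1$ — in which case $b_j$ is ``essential'' — or $=1$ — in which case we may discard $b_j$ and continue with $B\setminus\{b_j\}$; the set of essential indices obtained this way is an element of $M(x)$ still contained in $F$. (This is exactly the content of ``$M(x)\ne\emptyset$'' from \cite[Lemma~2.7]{almt21}, applied relative to the norming set $F$ rather than to all of $\N$, so I would either invoke that lemma directly on $P_F(x)$ or repeat its short proof.) Either way we obtain some $A\in M(x)$ with $A\subseteq F=\N\setminus E$, hence $A\cap E=\emptyset$, contradicting the hypothesis on $E$.

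The main obstacle is the transfinite/limiting extraction of a minimal norm-giving subset inside $F$ when no finite subset of $F$ norms $x$: one must be careful that discarding countably many ``inessential'' indices does not itself destroy the norm, and that the leftover set is genuinely minimal. The clean way around this is not to re-prove it but to apply \cite[Lemma~2.7]{almt21} verbatim to the normalized vector $P_F(x)/\norm{P_F(x)} = P_F(x)$, whose support lies in $F$; that lemma guarantees $M(P_F(x))\ne\emptyset$, and one checks immediately from the definitions that, because $P_F(x)$ agrees with $x$ on all of $F\supseteq \supp(P_F(x))$ and $\norm{P_F(x)}=\norm{x}$, every element of $M(P_F(x))$ is also an element of $M(x)$. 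This reduces the whole lemma to the already-established non-emptiness of $M(\cdot)$ together with the elementary monotonicity Lemma~\ref{lem:fundamental_lemma_one-unconditional-bases}, and the proof is complete.
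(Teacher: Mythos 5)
Your argument is correct. Note that the paper itself gives no proof of this lemma---it is cited as \cite[Lemma~2.11]{almt21}---so there is no in-text proof to compare against; what you give is a self-contained derivation from the cited non-emptiness result. The ``clean'' version in your final paragraph is sound: if $\norm{x-P_E(x)}=1$, then $P_F(x)\in S_X$ with $F:=\N\setminus E$, and \cite[Lemma~2.7]{almt21} produces $A\in M(P_F(x))$; minimality forces $A\subseteq\supp(P_F(x))\subseteq F$ (any $j\in A$ with $e_j^*(P_F(x))=0$ would make the strict inequality in the definition of $M$ fail), and since $P_F(x)$ and $x$ agree on $F$ while $\norm{P_F(x)}=\norm{x}$, one reads off directly that $A\in M(x)$ with $A\cap E=\emptyset$, contradicting the hypothesis. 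Your first paragraph's ``transfinite descent'' sketch is shakier (removability of an index depends on the history of removals, so the ``set of essential indices'' is not a priori well-defined), but you correctly flag this and route around it by invoking \cite[Lemma~2.7]{almt21} rather than re-deriving it, so the proof as a whole stands.
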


With those tools at hand, we can now prove an analogue to \cite[Proposition~2.13]{almt21} for convex combination of slices.

\begin{prop}\label{prop:one-unconditional-basis_no_ccs-delta}

Let $X$ be a Banach space with a normalized one-unconditional basis and $x\in S_X^+$. Then, there exists $\delta>0$ and a ccs $C$ of $B_X$ containing $x$ such that $\sup_{y\in C}\norm{x-y}\leq 2-\delta$.    
\end{prop}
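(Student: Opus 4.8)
The plan is to show that a point $x\in S_X^+$ can always be shielded from distance $2$ by a suitable convex combination of slices, using the structure of minimal norm-giving subsets. Since $x\in S_X^+$, by \cite[Lemma~2.7]{almt21} the set $M(x)$ is non-empty. If $M^{\infty}(x)=\emptyset$, then $x$ is not even a $\Delta$-point by \cite[Proposition~2.15]{almt21}, hence certainly not a ccs $\Delta$-point, and actually there is already a single slice doing the job; so the interesting case is $M^{\infty}(x)\neq\emptyset$. I would fix a large integer $n$ and look at the finitely many sets $A(n)$ with $A\in M(x)$ and $\abs{A}>n$ (finite by Lemma~\ref{lem:first_elements_of_norm_giving_subsets}), together with the finitely many $A\in M(x)$ with $\abs{A}\leq n$; call $F$ the union of all of these, a finite set. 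The key point is that for $y\in B_X^+$, if $y$ has large coordinates on enough of $F$ in the right pattern, then $\norm{x-y}$ cannot be close to $2$: one uses one-unconditionality (Lemma~\ref{lem:fundamental_lemma_one-unconditional-bases}) to estimate $\norm{x-y}$ from below would-be largeness via $\norm{P_E(x-y)}$ on a set $E$ meeting every $A\in M(x)$ (Lemma~\ref{lem:essential_subset_for_the_norm}), and the triangle inequality $\norm{x-y}\le \norm{x-P_F x}+\norm{P_F x - P_F y}+\norm{y-P_F y}$ combined with $\norm{P_F x - P_F y}\le \norm{P_F x}+\norm{P_F y}$ is too crude — instead I expect one controls $\norm{x-y}$ on the complement of $F$ by $\max(\norm{P_{\N\setminus F}x},\norm{P_{\N\setminus F}y})$ (again by one-unconditionality, since $x,y\ge 0$ coordinatewise and $x-y$ has coordinates bounded by this max in modulus on that complement — wait, that's false; rather one splits into where $y_i\le x_i$ and where $y_i>x_i$).

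More carefully, I would follow the architecture of \cite[Proposition~2.13]{almt21}: choose functionals $g_1,\dots,g_N$ (essentially sums of biorthogonal functionals $e_i^*$ over carefully chosen finite blocks extracted from the finitely many ``initial segments'' $A(n)$) and positive weights $\lambda_j$ so that $x$ lies in each slice $S(g_j,\beta;B_X)$, while membership in the convex combination $C=\sum_j\lambda_j S(g_j,\beta;B_X)$ forces any $y\in C$ to have, on a subset $E$ hitting every minimal norm-giving set of $x$, coordinates close to those of $x$; Lemma~\ref{lem:essential_subset_for_the_norm} then gives $\norm{P_E(x-y)}$ small is not quite it — rather $\norm{x-P_E x}<1$, so $\norm{x-y}\le \norm{P_E x - P_E y}+\norm{x-P_E x}+\norm{y - P_E y}$ and the outer terms must be bounded away from $1$ uniformly, which is where the finiteness in Lemma~\ref{lem:first_elements_of_norm_giving_subsets} and a compactness/pigeonhole argument over the finitely many possible patterns of $A(n)$ enters. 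The upshot is a uniform $\delta>0$ with $\sup_{y\in C}\norm{x-y}\le 2-\delta$.

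The main obstacle, as in \cite{almt21}, is the bookkeeping: a convex combination of slices is much weaker than a single slice, so from $y=\sum_j\lambda_j y_j\in C$ one only controls the \emph{average} $\sum_j\lambda_j g_j(y_j)$, not each $y_j$ individually, and one must design the functionals $g_j$ and the integer $n$ so that this averaged information still pins down enough coordinates of $y$ on a set meeting every $A\in M(x)$. The trick (already present in \cite{almt21}) is to use disjointly supported blocks and the fact that there are only finitely many candidate initial segments $A(n)$, so one can arrange that for \emph{every} way of choosing slice-members $y_j$, the resulting $y$ is $\eps$-close to $x$ on at least one full $A\in M(x)$ — and hence, by one-unconditionality, $\norm{x-y}$ is bounded away from $2$. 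I would then just need to verify the inequality $\norm{x-y}\le 2-\delta$ by the splitting argument above, tracking that $\norm{x-P_E x}\le 1-\eta$ for a uniform $\eta>0$ coming from the finite list in Lemma~\ref{lem:first_elements_of_norm_giving_subsets}; combining $\eta$ and $\eps$ produces the final $\delta$. Everything after the choice of $n$ and the blocks is routine one-unconditional estimation.
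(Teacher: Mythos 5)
Your outline identifies the right toolbox (Lemmas~\ref{lem:first_elements_of_norm_giving_subsets} and~\ref{lem:essential_subset_for_the_norm}, one-unconditionality via Lemma~\ref{lem:fundamental_lemma_one-unconditional-bases}), and you correctly flag the central obstacle: in a ccs $C=\sum_j\lambda_j S_j$ one only controls averages, so one cannot hope to control each $y_j$ individually. But your proposed way around it --- arranging, via a pigeonhole over the finitely many initial segments $A(n)$, that every $y\in C$ is $\eps$-close to $x$ on some full $A\in M(x)$ --- is not carried out, and in fact it is not what the paper does, nor is it clear that such a claim even holds: you never produce concrete functionals $g_j$, weights, or a threshold $n$, and you explicitly confess uncertainty at the crucial estimation step (``wait, that's false'', ``I would then just need to verify''). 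As it stands the proposal is a plan aimed at a stronger intermediate claim than is needed, with the key inequality left unverified.

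The paper's actual proof sidesteps the averaging problem entirely, and is considerably simpler than what you sketch. Take $E:=\bigcup_{A\in M(x)}A(1)$, the union of the \emph{first} elements only (not the deeper initial segments $A(n)$); this is finite by Lemma~\ref{lem:first_elements_of_norm_giving_subsets} and meets every $A\in M(x)$, so Lemma~\ref{lem:essential_subset_for_the_norm} gives $\norm{x-P_E x}\leq 1-\gamma$ for some $\gamma>0$. The slices are simply $S_i:=S\bigl(e_i^*,\,1-\tfrac{e_i^*(x)}{2}\bigr)$ for $i\in E$, with equal weights $\tfrac{1}{|E|}$; clearly $x\in\bigcap_{i\in E}S_i\subset C$. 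The decisive trick is the decomposition
$$
x=\Bigl(1-\tfrac{1}{|E|}\Bigr)x+\tfrac{1}{|E|}\bigl(x-P_E x\bigr)+\tfrac{1}{|E|}P_E x,
$$
whose last summand $\tfrac{1}{|E|}P_E x=\tfrac{1}{|E|}\sum_{i\in E}e_i^*(x)e_i$ has exactly the same weights $\tfrac{1}{|E|}$ as $y=\tfrac{1}{|E|}\sum_{i\in E}y^i\in C$. One then estimates termwise: membership in $S_i$ gives $e_i^*(y^i)>\tfrac{e_i^*(x)}{2}\geq 0$, whence $|e_i^*(y^i)-e_i^*(x)|\leq e_i^*(y^i)$, so by one-unconditionality $\norm{e_i^*(x)e_i-y^i}\leq\norm{y^i}\leq 1$. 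No claim that $y$ is close to $x$ on any $A$ is needed. The three pieces contribute $1-\tfrac{1}{|E|}$, $\tfrac{1-\gamma}{|E|}$, and $1$ respectively, giving $\norm{x-y}\leq 2-\tfrac{\gamma}{|E|}$. Your sketch would profit from this weight-matching idea, which eliminates the bookkeeping you were worried about.
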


\begin{proof}

Let $x\in S_X^+$, and define $E=\bigcup_{A\in M(x)}A(1)$. From Lemma~\ref{lem:first_elements_of_norm_giving_subsets} and Lemma~\ref{lem:essential_subset_for_the_norm}, we have that $E$ is a finite subset of $\N$ and that $\norm{x-P_E(x)}<1$. In particular, there exists $\gamma>0$ such that $\norm{x-P_E(x)}\leq 1-\gamma$. For every $i\in E$, we define $$S_i:=S\left(e_i^*,1-\frac{e_i^*(x)}{2}\right).$$ Then we consider the ccs $$C:=\frac{1}{\abs{E}}\sum_{i\in E}S_i.$$ Since $x\in S_X^+$, we clearly have that $x\in \bigcap_{i\in E}S_i$ and, in particular, that $x\in C$. 

So let us pick $y:=\frac{1}{\abs{E}}\sum_{i\in E}y^i$ in $C$. Then we have $e_i^*(y^i)>\frac{e_i^*(x)}{2}$ for every $i$. In particular, $e_i^*(y^i)\geq 0$, and $\abs{e_i^*(y^i)-e_i^*(x)}\leq e_i^*(y^i)$. Indeed, for any given non-negative real numbers $\alpha$ and $\beta$ with $\beta\geq \frac{\alpha}{2}$, we have $$\abs{\beta-\alpha}=\beta-\alpha\leq \beta$$ if $\beta\geq \alpha$, and $$\abs{\beta-\alpha}=\alpha-\beta\leq \alpha-\frac{\alpha}{2}=\frac{\alpha}{2}\leq \beta$$ if $\beta\leq \alpha$. So in either case, $\abs{\beta-\alpha}\leq \beta$ as desired. 

It then follows from Lemma~\ref{lem:fundamental_lemma_one-unconditional-bases} that $\norm{y^i-e_i^*(x)e_i}\leq \norm{y^i}\leq 1$ and, finally, \begin{align*}
    \norm{x-y}&\leq \norm{x-\frac{x}{\abs{E}}}+\norm{\frac{x}{\abs{E}}-\frac{P_E(x)}{\abs{E}}}+\norm{\frac{P_E(x)}{\abs{E}}-y} \\
    &\leq 1-\frac{1}{\abs{E}}+\frac{1-\gamma}{\abs{E}}+\frac{1}{\abs{E}}\sum_{i\in E}\norm{e_i^*(x)e_i-y^i} \leq 2-\frac{\gamma}{\abs{E}}.
    \end{align*} The conclusion follows with $\delta:=\frac{\gamma}{\abs{E}}$. In particular, note that since $x$ belongs to the relative weakly open set $\bigcap_{i\in E}S_i \subset C$, we also get that $x$ is not super $\Delta$, recovering the result from \cite{almt21}. 
\end{proof}

So combining \cite[Proposition~2.13]{almt21} and Proposition~\ref{prop:one-unconditional-basis_no_ccs-delta}, we immediately get that spaces with a normalized one-unconditional basis fail to contain super $\Delta$-points and ccs $\Delta$-points. So let us state the following here for future reference.

\begin{thm}\label{thm:one-unconditional_no_ccs_nor_super_Delta}

Let $X$ be a real Banach space with a normalized one-unconditional basis. Then $X$ does not contain super $\Delta$-points, and $X$ does not contain ccs $\Delta$-points. 

\end{thm}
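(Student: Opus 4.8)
The plan is to reduce the whole statement to the positive sphere $S_X^+$ and then to read off both conclusions from Proposition~\ref{prop:one-unconditional-basis_no_ccs-delta} and from the relatively weakly open set that its proof already produces inside a convex combination of slices.

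First I would record the standard one-unconditional reduction. Given $x\in S_X$, pick signs $\theta_i\in\{-1,1\}$ with $\theta_ie_i^*(x)=\abs{e_i^*(x)}$, and let $T_\theta\colon X\to X$ be the linear map sending $\sum_i a_ie_i$ to $\sum_i\theta_i a_ie_i$. By Lemma~\ref{lem:fundamental_lemma_one-unconditional-bases} this series converges for every $x\in X$ and $\norm{T_\theta x}\leq\norm{x}$; since $T_\theta$ is its own inverse, $T_\theta$ is a surjective linear isometry, and $T_\theta(x)=\sum_i\abs{e_i^*(x)}e_i\in S_X^+$. A surjective linear isometry carries slices to slices, non-empty relatively weakly open subsets to non-empty relatively weakly open subsets, and convex combinations of slices to convex combinations of slices, all while preserving distances; hence $x$ is a super $\Delta$-point (resp.\ a ccs $\Delta$-point) if and only if $T_\theta(x)$ is. So it is enough to argue for $x\in S_X^+$.

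Next, fix $x\in S_X^+$ and apply Proposition~\ref{prop:one-unconditional-basis_no_ccs-delta}: there are $\delta>0$ and a ccs $C$ of $B_X$ with $x\in C$ and $\sup_{y\in C}\norm{x-y}\leq 2-\delta$. Using $C$ itself in the definition of ccs $\Delta$-point shows at once that $x$ is not a ccs $\Delta$-point. For the super $\Delta$ assertion I would exploit that the convex combination $C=\frac{1}{\abs{E}}\sum_{i\in E}S_i$ constructed there satisfies $\bigcap_{i\in E}S_i\subseteq C$ and contains $x$; thus $W:=\bigcap_{i\in E}S_i$ is a finite intersection of slices, hence a non-empty relatively weakly open subset of $B_X$ containing $x$, and $\sup_{y\in W}\norm{x-y}\leq\sup_{y\in C}\norm{x-y}\leq 2-\delta<2$, so $x$ is not a super $\Delta$-point. (Alternatively, for this half one may simply quote \cite[Proposition~2.13]{almt21}.)

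At this stage there is essentially no obstacle: the genuine difficulty has been absorbed into Lemma~\ref{lem:first_elements_of_norm_giving_subsets}, Lemma~\ref{lem:essential_subset_for_the_norm}, and Proposition~\ref{prop:one-unconditional-basis_no_ccs-delta}. The only two points I would write out carefully are that $T_\theta$ is a genuine surjective linear isometry (which is exactly what one-unconditionality plus Lemma~\ref{lem:fundamental_lemma_one-unconditional-bases} provide) and that the nesting $W\subseteq C$ is invoked with the same $\delta$ furnished by Proposition~\ref{prop:one-unconditional-basis_no_ccs-delta}; both are routine.
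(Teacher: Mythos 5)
Your proposal is correct and takes essentially the same route as the paper: the paper's own proof is exactly to combine \cite[Proposition~2.13]{almt21} with Proposition~\ref{prop:one-unconditional-basis_no_ccs-delta} (whose proof already observes that the relatively weakly open set $\bigcap_{i\in E}S_i\subseteq C$ containing $x$ rules out super $\Delta$ as well). The only difference is that you spell out the reduction to $S_X^+$ via the sign-change isometry $T_\theta$, which the paper delegates to the remark that one may always work in the positive sphere.
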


In the rest of the subsection, we aim at providing sharper and improved versions of  \cite[Proposition~2.13]{almt21}. In particular we will go back to working with either real or complex Banach spaces. The main result of this study is the following proposition.

\begin{prop}\label{prop:operators_and_super_Delta_points}
Let $X$ be a Banach space, and let us assume that there exists a subset $\mathcal{A}\subseteq \mathcal{F}(X,X)$ satisfying that $\sup\bigl\{\norm{\Id-T}\colon T\in \mathcal{A}\bigr\}<2$ and that for every $\eps>0$ and every $x\in X$, there exists $T\in\mathcal{A}$ such that $\norm{x-T x}<\eps$. Then, $X$ contains no super $\Delta$-point.
\end{prop}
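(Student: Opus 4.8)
The plan is to argue by contradiction, exploiting the fact that a finite-rank operator is weak-to-norm continuous on its (finite-dimensional) range. Suppose that some $x\in S_X$ is a super $\Delta$-point, and set $M:=\sup\bigl\{\norm{\Id-T}\colon T\in\mathcal{A}\bigr\}<2$. Fix an arbitrary $\eps>0$ and, using the hypothesis on $\mathcal{A}$, choose $T\in\mathcal{A}$ with $\norm{x-Tx}<\eps$.

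The next step is to produce a weakly open neighbourhood of $x$ on which $T$ is essentially constant. Since $T$ has finite rank, the map $y\mapsto Ty$ is continuous from $(X,w)$ into the finite-dimensional subspace $T(X)$ equipped with its norm topology (the weak and norm topologies agreeing there). Consequently,
$$
V:=\bigl\{y\in B_X\colon \norm{Ty-Tx}<\eps\bigr\}
$$
is a non-empty relatively weakly open subset of $B_X$ that contains $x$.

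Now I would invoke the defining property of a super $\Delta$-point: $\sup_{y\in V}\norm{x-y}=2$, so there exists $y\in V$ with $\norm{x-y}>2-\eps$. On the other hand, for every $y\in V$ the triangle inequality gives
$$
\norm{x-y}\leq \norm{x-Tx}+\norm{Tx-Ty}+\norm{Ty-y}<\eps+\eps+\norm{\Id-T}\,\norm{y}\leq 2\eps+M,
$$
where the last inequality uses $y\in B_X$. Combining the two estimates yields $2-\eps<2\eps+M$, i.e.\ $2<3\eps+M$. As $\eps>0$ was arbitrary, we conclude $2\leq M$, contradicting $M<2$; hence $X$ contains no super $\Delta$-point.

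The only point requiring genuine care is the claim that $V$ is relatively weakly open, and this is precisely where the finite-rank assumption on the operators in $\mathcal{A}$ is used: it is what lets a norm-control condition ($\norm{Ty-Tx}<\eps$) cut out a weak neighbourhood of $x$; the rest is a single triangle inequality. (Equivalently, one could run the same computation along the net provided by Proposition~\ref{prop:net_characterizations}: if $(x_s)\subseteq B_X$ converges weakly to $x$ with $\norm{x-x_s}\to 2$, then $Tx_s\to Tx$ in norm for each fixed finite-rank $T$, and the displayed estimate applied eventually along the net yields the same contradiction.)
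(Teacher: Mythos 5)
Your proof is correct and follows essentially the same line as the paper's. The paper first isolates a localization lemma (if $\norm{x-Tx}+\norm{\Id-T}<2$ for some finite-rank $T$ then $x$ is not super $\Delta$), constructing the weakly open neighbourhood explicitly from the functionals $f_1,\dots,f_N$ appearing in the representation $Tz=\sum_n f_n(z)w_n$; you instead invoke weak-to-norm continuity of the finite-rank operator $T$ directly to see that $V=\{y\in B_X\colon\norm{Ty-Tx}<\eps\}$ is relatively weakly open, and then run the identical triangle inequality through $x-Tx$, $Tx-Ty$, $Ty-y$. Same idea, marginally cleaner packaging.
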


Let us provide a lemma which is a localization of the above result from which its proof is immediate.

\begin{lemma}\label{lemma:operators_and_super_Delta_points_localization}
Let $X$ be a Banach space, and let $x\in S_X$. If there exists a finite-rank operator $T$ on $X$ such that $\norm{x-Tx}+\norm{\Id-T}<2$, then $x$ is not a super $\Delta$-point.
\end{lemma}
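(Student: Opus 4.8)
The plan is to exploit the finite-rank operator $T$ provided by the hypothesis to build, around any point $x$ which is a candidate super $\Delta$-point, a small relative weakly open neighbourhood on which the distance to $x$ stays bounded away from $2$. The key observation is that $T$, being finite-rank, is weak-to-norm continuous on bounded sets, so if a net $(x_s)$ in $B_X$ converges weakly to $x$, then $Tx_s \to Tx$ in norm; this is exactly what lets us convert the weak approximation of $x$ (available from Proposition~\ref{prop:net_characterizations}) into a norm estimate that contradicts $\norm{x-x_s}\to 2$.

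More precisely, I would argue by contradiction. Suppose $x\in S_X$ is a super $\Delta$-point and let $T\in\mathcal{F}(X,X)$ satisfy $\norm{x-Tx}+\norm{\Id-T}<2$; set $\eta:=2-\norm{x-Tx}-\norm{\Id-T}>0$. By Proposition~\ref{prop:net_characterizations}, there is a net $(x_s)$ in $B_X$ with $x_s\to x$ weakly and $\norm{x-x_s}\to 2$. For each $s$ estimate
\begin{align*}
\norm{x-x_s}&\leq \norm{x-Tx}+\norm{Tx-Tx_s}+\norm{Tx_s-x_s}\\
&\leq \norm{x-Tx}+\norm{Tx-Tx_s}+\norm{\Id-T}.
\end{align*}
Since $T$ is finite-rank it is weak-to-norm continuous on $B_X$ (the coordinates in a finite-dimensional range vary continuously for the weak topology), so $\norm{Tx-Tx_s}\to 0$; hence eventually $\norm{Tx-Tx_s}<\eta/2$, which forces $\norm{x-x_s}< 2-\eta/2$ eventually, contradicting $\norm{x-x_s}\to 2$. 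This proves Lemma~\ref{lemma:operators_and_super_Delta_points_localization}.

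For Proposition~\ref{prop:operators_and_super_Delta_points}, I would deduce it immediately from the lemma: fix any $x\in S_X$ and put $c:=\sup\{\norm{\Id-T}\colon T\in\mathcal{A}\}<2$; choosing $\eps>0$ with $c+\eps<2$, the hypothesis gives $T\in\mathcal{A}$ with $\norm{x-Tx}<\eps$, whence $\norm{x-Tx}+\norm{\Id-T}<\eps+c<2$, and the lemma applies. Since $x$ was arbitrary, $X$ contains no super $\Delta$-point.

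The only genuinely non-routine point is the weak-to-norm continuity of a finite-rank operator on bounded sets, and making sure the estimate is applied along a net (so that one cannot appeal to sequences, which matters in non-separable or Schur-type situations — compare the remark after Proposition~\ref{prop:net_characterizations}); everything else is the triangle inequality. I would also remark that the same computation, using a net $(x_s)$ converging weakly to an arbitrary $y\in B_X$, shows more generally that $x$ cannot be a super Daugavet point under the weaker assumption $\inf_{T\in\mathcal A}\bigl(\norm{y-Ty}+\norm{\Id - T}\bigr)<2$ for some $y$, but since the stated results only concern super $\Delta$-points I would keep the write-up to the version above.
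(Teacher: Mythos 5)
Your argument is correct and is essentially the paper's proof, phrased contrapositively: the paper writes $T(z)=\sum_{n=1}^N f_n(z)w_n$ and directly exhibits a relative weak neighbourhood $W=\{y\in B_X\colon |f_n(x-y)|<\eps/2^{n+1}\}$ of $x$ on which $\sup_{y\in W}\norm{x-y}<2$, whereas you reach the same conclusion by assuming $x$ is super $\Delta$, invoking the net characterization and the weak-to-norm continuity of $T$ on bounded sets. The underlying estimate $\norm{x-y}\leq\norm{x-Tx}+\norm{Tx-Ty}+\norm{y-Ty}$ is identical in both write-ups.
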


\begin{proof}
Consider $\eps>0$ such that $K:=\norm{x-Tx}+\norm{\Id-T}+\eps<2$. Since $T$ has finite rank, we can find $N\geq 1$, $w_1,\dots, w_N\in S_X$ and $f_1,\dots,f_N\in X^*$ such that $T(z)=\sum_{n=1}^Nf_n(z)w_n$ for every $z\in X$. Let us consider
$$
W:=\left\{y\in B_X\colon \abs{f_n(x-y)}<\frac{\eps}{2^{n+1}}\ \forall n\in\{1,\dots, N\}\right\}.
$$
$W$ is a neighborhood of $x$ in the relative weak topology of $B_X$, and for every $y\in W$, we have
\begin{align*}
    \norm{x-y}&\leq \norm{x-T x}+\norm{T x- T y}+\norm{y-T y} \\
    &\leq \norm{x-Tx} + \norm{\Id-T} +\sum_{n=1}^N\abs{f_n(x-y)}\norm{w_n} \\
    &\leq \norm{x-Tx} + \norm{\Id-T} + \eps\sum_{n=1}^N\frac{1}{2^{n+1}} \leq K<2.\qedhere
\end{align*}
\end{proof}

N.B. It is unclear whether an analogue to Lemma~\ref{lemma:operators_and_super_Delta_points_localization} can be given for ccs $\Delta$-points. So we do not know whether Proposition~\ref{prop:operators_and_super_Delta_points} extends to this notion.

As particular cases of Proposition~\ref{prop:operators_and_super_Delta_points}, we have the following ones. Recall that a sequence $(E_n)_{n\geq 1}$ of finite dimensional subspaces of a given Banach space $X$ is called a \emph{finite dimensional decomposition} (\emph{FDD}) for $X$ if every element $x\in X$ can be represented in a unique way as a series $x:=\sum_{n\geq 1}x_n$ with $x_n\in E_n$ for every $n\geq 1$. Such an FDD is said to be $\emph{unconditional}$ if the above series converges unconditionally for every $x\in X$. In this case, it is well known that the family $(P_A)_{A\subset \N}$, where $P_A$ is the projection given by $P_A(x):=\sum_{n\in A}x_n$, is uniformly bounded, and the constant $K_S:=\sup_{A\subset \N}\norm{P_A}$ is called the \emph{suppression-unconditional constant} of the FDD. We refer to \cite[Section~1.g]{LT} for the details and to \cite[Section~3.1]{alka} for the particular case of unconditional bases.

\begin{cor}\label{coro:oneunconditionalandmore}
A Banach space $X$ fails to have super $\Delta$-points provided one of the following conditions is satisfied.
\begin{enumerate}
\item There exists a family $\mathcal{A}\subseteq \mathcal{F}(X,X)$ satisfying that $\sup\bigl\{\norm{\Id-T}\colon T\in \mathcal{A}\bigr\}<2$ and that the identity mapping belongs to its strong operator topology (SOT) closure.
\item There exists a family $\{P_\lambda\}_{\lambda_\in\Lambda}$ of finite rank \emph{projections} on $X$ such that $X=\overline{\bigcup_{\lambda\in\Lambda}P_\lambda(X)}$, and such that $\sup_{\lambda\in \Lambda} \norm{\Id-P_\lambda}<2$.
\item The space $X$ admits a FDD with suppression-unconditional constant less than $2$. In particular, if $X$ admits an unconditional basis with suppression-unconditional constant less than $2$.
\end{enumerate}
\end{cor}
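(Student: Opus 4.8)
The plan is to reduce all three items to Proposition~\ref{prop:operators_and_super_Delta_points} by producing, in each situation, a family $\mathcal{A}\subseteq\mathcal{F}(X,X)$ with $\sup\{\norm{\Id-T}\colon T\in\mathcal{A}\}<2$ and such that for every $x\in X$ and every $\eps>0$ there is some $T\in\mathcal{A}$ with $\norm{x-Tx}<\eps$. Once such a family is at hand, the conclusion is immediate from that proposition (which itself reduces to Lemma~\ref{lemma:operators_and_super_Delta_points_localization}: for $x\in S_X$ pick $\eps<2-\sup\{\norm{\Id-T}\colon T\in\mathcal{A}\}$ and a corresponding $T\in\mathcal{A}$ so that $\norm{x-Tx}+\norm{\Id-T}<2$). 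So the whole proof is a matter of exhibiting the right $\mathcal{A}$ and checking its two defining properties.

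For (1) I would take $\mathcal{A}$ to be the given family directly. The uniform bound $\sup\{\norm{\Id-T}\colon T\in\mathcal{A}\}<2$ is part of the hypothesis, and saying that $\Id$ belongs to the SOT-closure of $\mathcal{A}$ means precisely that every basic SOT-neighborhood of $\Id$ meets $\mathcal{A}$; specializing to the neighborhood $\{T\colon\norm{Tx-x}<\eps\}$ determined by a single vector $x$ and a number $\eps>0$ gives exactly the pointwise approximation needed. For (2) I would take $\mathcal{A}=\{P_\lambda\}_{\lambda\in\Lambda}$, which consists of finite-rank operators with $\sup_\lambda\norm{\Id-P_\lambda}<2$ by assumption. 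The only point to verify is the approximation property: given $x\in X$ and $\eps>0$, the density $X=\overline{\bigcup_{\lambda}P_\lambda(X)}$ provides some $\lambda$ and $y\in P_\lambda(X)$ with $\norm{x-y}$ small; since $P_\lambda$ is a projection we have $P_\lambda y=y$, hence $\norm{x-P_\lambda x}\leq\norm{x-y}+\norm{P_\lambda}\norm{y-x}=(1+\norm{P_\lambda})\norm{x-y}$, and $\norm{P_\lambda}\leq 1+\norm{\Id-P_\lambda}\leq 1+\sup_\mu\norm{\Id-P_\mu}$ is uniformly bounded, so choosing $y$ close enough to $x$ forces $\norm{x-P_\lambda x}<\eps$.

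For (3) I would take $\mathcal{A}=\{P_{\{1,\dots,N\}}\colon N\in\N\}$, where $P_{\{1,\dots,N\}}(x)=\sum_{n=1}^N x_n$ for the decomposition $x=\sum_{n\geq1}x_n$ associated with the FDD $(E_n)_{n\geq1}$. Each of these operators has finite rank because the $E_n$ are finite dimensional, and $\Id-P_{\{1,\dots,N\}}=P_{\N\setminus\{1,\dots,N\}}$, so $\norm{\Id-P_{\{1,\dots,N\}}}\leq K_S<2$, where $K_S$ is the suppression-unconditional constant. The approximation property is just the convergence of the series $x=\sum_{n\geq1}x_n$. The statement about unconditional bases is the particular case $E_n=\spn\{e_n\}$.

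There is no genuine obstacle here: the substantive work is contained in Proposition~\ref{prop:operators_and_super_Delta_points} and Lemma~\ref{lemma:operators_and_super_Delta_points_localization}, and the corollary only requires recognizing, in each case, the natural family of finite-rank operators and unwinding the definitions. The one computation that is not completely automatic is the uniform bound on $\norm{P_\lambda}$ (equivalently, controlling $\norm{x-P_\lambda x}$ by $\norm{x-y}$) in item (2); everything else is routine.
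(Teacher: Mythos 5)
Your proof is correct and takes exactly the approach the paper intends: the paper gives no explicit proof of this corollary, presenting it as a direct consequence of Proposition~\ref{prop:operators_and_super_Delta_points}, and your unwinding of the three cases (including the uniform bound on $\norm{P_\lambda}$ in item~(2) and the identification $\Id-P_{\{1,\dots,N\}}=P_{\N\setminus\{1,\dots,N\}}$ in item~(3)) supplies precisely the routine verifications that were left implicit.
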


Let us observe that the value $2$ in the above results is sharp in several ways.

\begin{rem}
\begin{enumerate}
      \item The space $C[0,1]$ admits a monotone Schauder basis, so there exists a sequence $\{P_n\}_{n\geq 1}$ of norm one finite rank projections on this space which converges to $\Id$ in SOT topology. As $C[0,1]$ has the Daugavet property, all elements in $S_X$ are super Daugavet points. Observe that $\norm{\Id-P_n}=2$ for every $n\geq 1$ by the DPr.
      \item Let $X$ be an arbitrary Banach space. For every $x\in S_X$ choose $f_x\in S_{X^*}$ such that $f_x(x)=1$, and define $P_x(z)=f_x(z)x$ for every $z\in X$. Then  $\{P_x\colon x\in S_X\}$ is a family of norm one rank-one projections on $X$,  $X=\bigcup_{x\in S_X}P_x(X)$, and $\norm{\Id-P_x}\leq 2$ for every $x\in S_X$.
      \item  The space $c$ admits ccs Daugavet points (hence super Daugavet points), see Theorem~\ref{thm:C(K)-deltaimpliesccsDaugavet}, but it is easy to check that its usual basis is $3$-unconditional and $2$-suppression unconditional. 
      \item It is shown in \cite{ik} that a Banach space has the DLD2P if and only if $\|\Id-P\|\geq 2$ for every rank-one projection $P$. It follows that the suppression constant of an unconditional basis on a Banach space with the DLD2P has to be greater than or equal to $2$. Let us mention here that there is no local version of this result, as there are Banach spaces with one-unconditional basis and containing many Daugavet points \cite{almt21} (see Paragraph~\ref{subsubsec:oneunconditionalbasis}).
  \end{enumerate}
\end{rem}

\subsection{Absolute sums}\label{subsec:absolutesums}

In this subsection we look at the transfer of the diametral points through absolute sums of Banach spaces. Let us first recall the following definition.

\begin{defn}
A norm $N$ on $\R^2$ is \emph{absolute} if $N(a,b)=N(\abs{a},\abs{b})$ for every $(a,b)\in \R^2$ and \emph{normalized} if $N(0,1)=N(1,0)=1$.
\end{defn}

If $X$ and $Y$ are Banach spaces, and if $N$ is an absolute normalized norm on $\R^2$, we denote by $X\oplus_NY$ the product space $X\times Y$ endowed with the norm $\norm{(x,y)}=N(\norm{x},\norm{y})$.  It is easy to check that $X\oplus_NY$ is a Banach space, and that its dual can be expressed as $(X\oplus_N Y)^*\equiv X^*\oplus_{N^*}Y^*$ where $N^*$ is the absolute norm given by the formula $N^*(c,d)=\max_{N(a,b)=1}\abs{ac}+\abs{bd}$.  Classical examples of absolute normalized norms on $\R^2$ are the $\ell_p$ norms for $p\in[1,\infty]$.  Information on absolute norms can be found in \cite[\S 21]{BD} and \cite{MenaPayaRodr1989} and references therein, for instance. Let us recall that for every absolute normalized sum $N$, given non-negative $a,b,c,d$ in $\R$ with $a\leq b$ and $c\leq d$ we have $N(a,b)\leq N(c,d)$. In particular, $\norm{\cdot}_\infty\leq N \leq \norm{\cdot}_1$.

Similar to the DD2P (see \cite[Theorem 2.11]{blr18}) and to $\Delta$-points \cite{hpv21}, super $\Delta$-points transfer very well through absolute sums.

\begin{prop}\label{prop:super_Delta_abs_sums}
Let $X$ and $Y$ be Banach spaces, and let $N$ be an absolute normalized norm.
\begin{enumerate}
  \item If $x\in S_X$ and $y\in S_Y$ are super $\Delta$-points, then $(ax,by)$ is a super $\Delta$-point in $X\oplus_NY$ for every $(a,b)\in \R^2$ with $N(a,b)=1$.
  \item If $x\in S_X$ is a super $\Delta$-point, then $(x,0)$ is a super $\Delta$-point in $X\oplus_NY$. If $y\in S_Y$ is a super $\Delta$-point, then $(0,y)$ is a super $\Delta$-point in $X\oplus_NY$.
\end{enumerate}
\end{prop}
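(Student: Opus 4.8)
The plan is to reduce everything to the net characterization of super $\Delta$-points from Proposition~\ref{prop:net_characterizations}(2): it suffices to produce, given one of the claimed points $z$ in $X\oplus_N Y$, a net $(z_s)$ in $B_{X\oplus_N Y}$ converging weakly to $z$ with $\norm{z-z_s}\to 2$. The weak topology on an absolute sum is the product of the weak topologies, so a net converges weakly in $X\oplus_N Y$ exactly when both coordinate nets converge weakly. This is what makes the transfer work.

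\textbf{Step 1 (the pure cases, part (2)).} First I would treat $(x,0)$ with $x\in S_X$ a super $\Delta$-point. By Proposition~\ref{prop:net_characterizations}(2) pick a net $(x_s)$ in $S_X$ with $x_s\to x$ weakly and $\norm{x-x_s}\to 2$. Then $(x_s,0)$ lies in $B_{X\oplus_N Y}$ (indeed in its sphere, since $N$ is normalized), converges weakly to $(x,0)$, and $\norm{(x,0)-(x_s,0)}=N(\norm{x-x_s},0)=\norm{x-x_s}\to 2$. So $(x,0)$ is a super $\Delta$-point; symmetrically for $(0,y)$. Note this also covers the endpoints $a=0$ or $b=0$ of part (1).

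\textbf{Step 2 (the mixed case, part (1)).} Now take $(a,b)$ with $N(a,b)=1$, $a,b>0$ (WLOG $a,b>0$ by Step 1 and by absoluteness of $N$ we may assume $a,b\ge 0$), and let $x\in S_X$, $y\in S_Y$ be super $\Delta$-points. Choose nets $(x_s)$ in $S_X$ and $(y_t)$ in $S_Y$ as above. The key trick is to move only one coordinate at a time: consider the net indexed by $s$ given by $z_s:=(-ax_s,\, by)$. Then $z_s\in B_{X\oplus_N Y}$ since $N(a\norm{x_s},b\norm{y})=N(a,b)=1$; it converges weakly to $(-ax,by)$... which is not $(ax,by)$. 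The right move instead is $z_s:=(a x_s', b y)$ where $x_s'\in S_X$ is chosen so that $x_s'\to x$ weakly and $\norm{x - x_s'}\to 2$ — wait, that only gives distance $2a$ in the first coordinate. The correct approach: one must push the \emph{first} coordinate toward a far point and simultaneously \emph{keep} enough of the second, but since $N(a,b)=1$ with both positive, moving just the $X$-coordinate by (almost) $2$ gives
$$
\norm{(ax,by)-(ax_s,by)} = N(a\norm{x-x_s},\,0)=a\norm{x-x_s}\to 2a,
$$
which is $<2$ unless $b=0$. So genuinely both coordinates must be moved: take $z_s:=(ax_s, by_s)$ along the product directed set $S\times T$. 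Then $z_s\to(ax,by)$ weakly, $z_s\in S_{X\oplus_N Y}$, and
$$
\norm{(ax,by)-(ax_s,by_s)}=N\bigl(a\norm{x-x_s},\,b\norm{y-y_s}\bigr)\longrightarrow N(2a,2b)=2,
$$
using that $N(2a,2b)=2N(a,b)=2$ and continuity of $N$. This gives the claim. (One should double-check the index set: taking the product net $(s,t)\in S\times T$ with the product order, both $\norm{x-x_s}\to 2$ and $\norm{y-y_t}\to 2$ hold along it, and weak convergence of each coordinate holds along it, so $z_{(s,t)}\to (ax,by)$ weakly with norm of the difference tending to $N(2a,2b)=2$.)

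\textbf{Main obstacle.} The only real subtlety is the one I flagged in Step 2: it is \emph{not} enough to perturb a single coordinate, because for an interior point $(ax,by)$ with $a,b>0$ a far point in one factor only reaches distance $2a<2$ (or $2b<2$); one must exploit that both $x$ and $y$ are themselves super $\Delta$-points and perturb in both coordinates simultaneously along a product net, then invoke $N(2a,2b)=2N(a,b)=2$ together with the joint weak continuity of the evaluation functionals to conclude. Everything else is the routine translation between the net characterization and the product structure of the weak topology on $X\oplus_N Y$, plus the elementary fact that $\norm{(u,v)}_{X\oplus_N Y}=N(\norm{u},\norm{v})$.
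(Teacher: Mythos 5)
Your proposal is correct and follows essentially the same approach as the paper: invoke the net characterization of super $\Delta$-points, take the product net $(ax_s,by_t)$ indexed by $S\times T$, and use that the distance $N\bigl(a\norm{x-x_s},b\norm{y-y_t}\bigr)\to N(2a,2b)=2$. The only cosmetic difference is that the paper proves (1) first and obtains (2) by specializing $(a,b)=(1,0)$ or $(0,1)$, whereas you treat the pure case separately; the substance is identical.
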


\begin{proof}
(1). We can find two nets $(x_s)_{s\in S}$  and $(y_t)_{t\in T}$ respectively in $S_X$ and $S_Y$ such that $x_s\overset{w}{\longrightarrow} x$, $y_t\overset{w}{\longrightarrow} y$, and $\norm{x-x_s},\norm{y-y_t}\longrightarrow 2$. Now, if we take $(a,b)\in \R^2$ with $N(a,b)=1$ we clearly have $(ax_s,by_t)\overset{w}{\underset{(s,t)\in S\times T}{\longrightarrow}} (ax,by)$ and $\norm{(ax,by)-(ax_s,by_t)}=N\left(a\norm{x-x_s},b\norm{y-y_t}\right)\longrightarrow 2N(a,b)=2$, so $(ax,by)$ is a super $\Delta$-point in $X\oplus_N Y$. For (2), we just repeat the previous proof with $a=1$ and $b=0$ or with $a=0$ and $b=1$ and so we only need one of the points to be super $\Delta$-point.
\end{proof}

For super Daugavet points the situation is more complicated and we need to distinguish between different kinds of absolute norms. The following definitions can be found, for instance, in \cite{hpv21}. 

\begin{defn}
Let $N$ be an absolute normalized norm on $\R^2$.
\begin{enumerate}
\item $N$ has \emph{property $(\alpha)$} if for every $a,b\in \R_+$ with $N(a,b)=1$ we can find a neighborhood $W$ of $(a,b)$ in $\R^2$ with $\sup_{(c,d)\in W }c<1$ or $\sup_{(c,d)\in W} d<1$ and such that any couple $(c,d)\in \R_+^2$ satisfying $N(c,d)=1$ and $N\left((a,b)+(c,d)\right)=2$ belongs to $W$.
\item $N$ is \emph{$A$-octahedral} if there exists $a,b\in \R_+$ such that $N(a,b)=1$ and $N\left((a,b)+(c,d)\right)=2$ for $$c=\max\{e\in \R_+\colon N(e,1)=1\}\ \ \text{ and }\ \ d=\max\{f\in\R_+:\ N(1,f)=1\}.$$
\item $N$ is \emph{positively octahedral} if there exists $a,b\in\R_+$ such that $N(a,b)=1$ and $$N\left((a,b)+(0,1)\right)=N\left((a,b)+(1,0)\right)=2.$$
\end{enumerate}
\end{defn}

Positively octahedral norms where introduced in \cite{hln18} in order to characterize the absolute norms for which the corresponding absolute sum is \emph{octahedral}. It is clear that property $(\alpha)$ and $A$-octhaedrality exclude each other and that every positively octahedral absolute normalized norm is $A$-octahedral (while there clearly exists absolute $A$-octahedral norms which are not positively octahedral). Moreover it was proved in \cite[Proposition 2.5]{hpv21} that every absolute normalized norm on $\R^2$ must either satisfy property $(\alpha)$ or be $A$-octahedral. For $\ell_p$-norms, we have that $\norm{\cdot}_1$ and $\norm{\cdot}_{\infty}$ are both positively octahedral, and that $\norm{\cdot}_p$ satisfies property $(\alpha)$ for every $p\in(1,\infty)$.

Observe that if an absolute normalized norm $N$ on $\R^2$ is positively octahedral, and if $(a,b)$ is as in the above definition, then the intersection of the unit sphere of $N$ with the positive quadrant of $\R^2$ is equal to the union of the segments $[(1,0),(a,b)]$ and $[(0,1),(a,b)]$ (see \cite[Section 3.3.1]{pirkthesis} for pictures).  In particular, it follows that $N((a,b)+(c,d))=2$ for every non-negative $c,d$ with $N(c,d)=1$. Similar to the results from \cite[Section 4]{ahlp20} concerning Daugavet points, we have the following.

\begin{prop}\label{prop:absolutesums-Daugavetpoints}
Let $X$ and $Y$ be Banach spaces, and let $N$ be an absolute normalized norm.
\begin{enumerate}
\item \cite[Proposition 4.6]{ahlp20} If $N$ has property $(\alpha)$, then $X\oplus_N Y$ has no Daugavet point (hence, in particular, no super Daugavet points).
\item If $N$ is positively octahedral and if $x\in S_X$ and $y\in S_Y$ are super Daugavet points, then $(ax,by)$ is a super Daugavet point in $X\oplus_N Y$ for every $(a,b)\in\R_+^2$ as in the above definition.
\end{enumerate}
\end{prop}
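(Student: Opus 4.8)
The strategy is to use the net characterization of super Daugavet points from Proposition~\ref{prop:net_characterizations}(1) together with the key geometric fact about positively octahedral norms recalled just before the statement: if $N$ is positively octahedral with witnessing couple $(a,b)$, then $N\bigl((a,b)+(c,d)\bigr)=2$ for \emph{every} non-negative $c,d$ with $N(c,d)=1$. So fix such $(a,b)$, let $x\in S_X$ and $y\in S_Y$ be super Daugavet points, and set $p:=(ax,by)\in S_{X\oplus_N Y}$. We must show that for every $z=(u,v)\in B_{X\oplus_N Y}$ there is a net in $B_{X\oplus_N Y}$ converging weakly to $z$ with distances to $p$ tending to $2$. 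It suffices (by density of $S_{X\oplus_N Y}$-valued approximations and by the moreover part of Proposition~\ref{prop:net_characterizations}) to produce, for each such $z$ and each $\eps>0$ and each weak neighborhood of $z$, a point of $B_{X\oplus_N Y}$ in that neighborhood at distance $>2-\eps$ from $p$.

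First I would reduce to the case where $z$ is a ``product'' target: a weak neighborhood of $z=(u,v)$ in $X\oplus_N Y$ contains a set of the form $\{(u',v'): u'\in V_X, v'\in V_Y, \|u'\|\le\|u\|+\eta, \|v'\|\le\|v\|+\eta\}$ for weak neighborhoods $V_X$ of $u$ in $X$ and $V_Y$ of $v$ in $Y$ — here one uses that the coordinate projections and the norms are weakly continuous, resp.\ weakly lower semicontinuous; a small inflation of the norm bounds keeps us inside $B_{X\oplus_N Y}$ after the next step. Now apply the super Daugavet property of $x$ in $X$: there is $u'\in B_X$ (indeed in $S_X$) with $u'\in V_X$ and $\|x-u'\|>2-\delta$; similarly there is $v'\in B_Y\cap S_Y$ with $v'\in V_Y$ and $\|y-v'\|>2-\delta$, for a $\delta$ to be chosen small in terms of $\eps$. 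The candidate point is $q:=(c\,u', d\,v')$ where $c,d\ge 0$ are chosen with $N(c,d)=1$, $c$ close to $\max\{e\ge 0: N(e,1)=1\}$ and $d$ close to $\max\{f\ge 0: N(1,f)=1\}$ — this guarantees $\|q\|=N(c\|u'\|,d\|v'\|)=N(c,d)=1\le 1$ and, crucially, that $q$ still lies in the prescribed weak neighborhood of $z$ provided $c,d$ were taken large enough relative to $\|u\|,\|v\|$ (this is where positive octahedrality of $N$ earns its keep: such $c,d$ with $N(c,d)=1$ exist and can be taken to dominate any prescribed pair of values $\le 1$ after the inflation). Then estimate
\[
\|p-q\|=N\bigl(\|ax-cu'\|,\ \|by-dv'\|\bigr)\ \ge\ N\bigl(a\|x\| - a\|x-u'\|\ \text{type bounds}\bigr),
\]
more carefully: $\|ax-cu'\|\ge a\|x-u'\| - \|au'-cu'\| = a\|x-u'\| - |a-c|$ and likewise $\|by-dv'\|\ge b\|y-v'\| - |b-d|$, so that $\|p-q\|\ge N\bigl(a(2-\delta)-|a-c|,\ b(2-\delta)-|b-d|\bigr)$, which tends to $N(2a,2b)=2$ as $\delta\to0$ and $(c,d)\to(a,b)$...

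Actually here is the cleaner route, and the point I would be careful about: rather than pushing $(c,d)$ toward the extreme values, push it toward $(a,b)$ itself when that already lies in the relevant neighborhood, and otherwise use that along the two segments forming the positive part of the unit sphere of $N$ one can interpolate; the geometric fact $N((a,b)+(c,d))=2$ for all admissible $(c,d)$ is exactly what lets the ``coordinates'' be chosen freely on the sphere while keeping the distance to $p$ maximal. Concretely, choosing $q=(c u', d v')$ with $(c,d)$ on the $N$-unit sphere, $\|p-q\|\ge N(a\|x-u'\|-|a-c|,\, b\|y-v'\|-|b-d|)$, and by monotonicity of $N$ on the positive quadrant this is $\ge N(a(2-\delta)-|a-c|, b(2-\delta)-|b-d|) \xrightarrow{} N(2a,2b)=2N(a,b)=2$. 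Assembling: given $\eps>0$ and a weak neighborhood $W$ of $z$, pick $(c,d)$ on the $N$-sphere in the positive quadrant so that $(cu',dv')\in W$ is achievable for suitable $u'\in V_X\cap S_X$, $v'\in V_Y\cap S_Y$, then pick $\delta$ small, obtaining $q\in B_{X\oplus_N Y}\cap W$ with $\|p-q\|>2-\eps$. Directing the index set by pairs (neighborhood, $\eps$) as in the proof of Proposition~\ref{prop:net_characterizations} yields the required net, and forcing $u',v'$ into the spheres (possible since $X,Y$ are infinite dimensional, or directly from the moreover clause) forces the net into $S_{X\oplus_N Y}$.

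The main obstacle is the bookkeeping in the second paragraph: ensuring simultaneously that $q=(cu',dv')$ (i) has norm $\le 1$, (ii) lies in the prescribed weak neighborhood of $z=(u,v)$, and (iii) is at distance $>2-\eps$ from $p=(ax,by)$. Requirements (i) and (iii) want $(c,d)$ near $(a,b)$ and $N(c,d)=1$ exactly; requirement (ii) wants the $X$- and $Y$-coordinates of $q$ near those of $z$, which in general forces $(c\|u'\|, d\|v'\|)$ near $(\|u\|,\|v\|)$ — and these may be incompatible unless one exploits that weak neighborhoods only constrain finitely many functionals plus (semicontinuously) the norm, so that a point of $S_X$ of essentially arbitrary norm-direction lies in $V_X$. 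The positive octahedrality of $N$ is used precisely to decouple the norm-size of the coordinates from the distance estimate, via the identity $N((a,b)+(c,d))=2$ on the whole positive part of the sphere. I expect the write-up to mirror, almost verbatim, the argument for Daugavet points in \cite[Section~4]{ahlp20}, with ``there exist points in every slice at distance near $2$'' upgraded to ``there exist points in every relatively weakly open set at distance near $2$'' and the slice-intersection arguments replaced by weak-neighborhood intersections, which is legitimate since finite intersections of relatively weakly open sets are relatively weakly open.
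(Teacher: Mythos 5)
Your high-level plan is the same as the paper's — use Proposition~\ref{prop:net_characterizations}, combine nets from $X$ and $Y$, reduce to a target on the unit sphere, and invoke the geometric fact $N\bigl((a,b)+(c,d)\bigr)=2$ for all nonnegative $(c,d)$ on the $N$-sphere — but the central norm estimate you propose is not sharp enough, and the device you float to compensate for it is not legitimate, so there is a genuine gap.

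You write $ax-cu'=a(x-u')+(a-c)u'$ and deduce $\norm{ax-cu'}\geq a\norm{x-u'}-\abs{a-c}$; as $\norm{x-u'}\to2$ this yields only $2a-\abs{a-c}$, which equals $a+c$ when $a\geq c$ but drops to $3a-c<a+c$ when $c>a$. Since weak convergence of $q=(cu',dv')$ to $z=(u,v)$ forces $(c,d)=(\norm{u},\norm{v})$, and since $\norm{u}>a$ is perfectly possible (take $N=\norm{\cdot}_1$, $(a,b)=(1/2,1/2)$, and $z=(u,0)$ with $\norm{u}=1$: your bound then caps at $N(1/2,1/2)=1$, not $2$), you cannot reach the required value $2=N(a+\norm{u},b+\norm{v})$ this way; and sending $(c,d)\to(a,b)$, as you suggest, destroys the weak convergence $q\to z$. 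The paper instead rearranges $ax-\norm{u}u_s=(x-u_s)-\bigl[(1-a)x-(1-\norm{u})u_s\bigr]$, giving $\norm{ax-\norm{u}u_s}\geq\norm{x-u_s}-(1-a)-(1-\norm{u})=a+\norm{u}-(2-\norm{x-u_s})\to a+\norm{u}$ exactly; this uses the full size of $\norm{x-u_s}$ rather than $a$ times it, after which positive octahedrality delivers $N(a+\norm{u},b+\norm{v})=2$ once one has restricted (legitimately, by weak density of the sphere in the ball) to $z\in S_{X\oplus_NY}$. Replacing your coefficient $a$ by $\max(a,c)$ in the decomposition, or using the paper's identity outright, would repair the argument; as written it does not close.
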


\begin{proof}
(2). Assume that $N$ is positively octahedral, take $(a,b)\in \R_+^2$ as in the  definition, and let $x\in S_X$ and $y\in S_Y$ be super Daugavet points. For any given $(u,v)\in X\oplus_N Y$ of norm $\norm{(u,v)}=1$ we can find two nets $(u_s)_{s\in S}$ and  $(v_t)_{t\in T}$ respectively in $S_X$ and $S_Y$ such that $\|u\|u_s\overset{w}{\longrightarrow} u$, $\|v\|v_t\overset{w}{\longrightarrow} v$, and $\norm{x-u_s},\norm{y-v_t}\longrightarrow 2$. Then $(\norm{u}u_s,\norm{v}v_t)\overset{w}{\underset{(s,t)\in S\times T}{\longrightarrow}} (u,v)$. Since
\begin{align*}
\norm{ax-\norm{u}u_s} &= \norm{(x-u_s)-[(1-a)x-(1-\norm{u})u_s]} \\
&\geq \norm{x-u_s}-(1-a+1-\norm{u}), \\
&= a+\norm{u}-(2-\norm{x-u_s}),
\end{align*}
and, in the same way, $$\norm{by-\norm{v}v_t}\geq b+\norm{v}-(2-\norm{y-v_t}),$$
 we have \begin{align*}
\norm{\left(ax-\norm{u}u_s,by-\norm{v}v_t\right)} &= N\left(\norm{ax-\norm{u}u_s},\norm{by-\norm{v}v_t}\right) \\
&\geq N\left(a+\norm{u}-(2-\norm{x-u_s}),b+\norm{v}-(2-\norm{y-v_t})\right) \\
&\longrightarrow N\left((a+\norm{u},b+\norm{v}\right)=2.
\end{align*} This shows that $(ax,by)$ is a super Daugavet point in $X\oplus_N Y$.
\end{proof}

\begin{rem}\label{rem:l_1_sums}
Note that if $(a,b)=(1,0)$ (respectively, $(a,b)=(0,1)$) in the previous statement (for example, when $N=\norm{\cdot}_1$), then we only need to assume that $x$ (respectively, $y$) is super Daugavet in order to get that $(x,0)$ (respectively, $(0,y)$) is super Daugavet in $X\oplus_N Y$. Also, if $N=\norm{\cdot}_\infty$, then we only need to assume that $x$ (respectively, $y$) is super Daugavet in order to obtain that $(x,\beta y)$ (respectively, $(\alpha x,y)$) is super Daugavet in $X\oplus_N Y$ for every $\beta \in[0,1]$ (respectively, $\alpha\in[0,1]$).
\end{rem}

In \cite[Theorem 2.2]{hpv21} it is proved that regular Daugavet points do also transfer through A-octahedral sums. We do not know if a similar result can be obtained for super Daugavet points. Indeed, observe that if $N$ is an A-octahedral norm, and if $c$, $d$, and $(a,b)$ are as in the above definition, then the intersection of the unit sphere of $N$ with the positive quadrant of $\R^2$ is equal to the union of the segments $[(1,0),(1,d)]$, $[(1,d),(a,b)]$, $[(0,1),(c,1)]$ and $[(c,1),(a,b)]$. In particular, $N((a,b)+(e,f))=2$ for every couple $(e,f)$ on the segments $[(1,d),(a,b)]$ and $[(c,1),(a,b)]$, but this is no longer true on the segments $[(1,0),(1,d)]$ and $[(0,1),(c,1)]$ and the argument in the above proof does not work anymore.

The situation for ccs $\Delta$-points and ccs Daugavet point is not clear and the proofs of the above results do not seem to admit easy extensions. For instance, it follows from the next result that Remark~\ref{rem:l_1_sums} is not valid for ccs Daugavet points.

\begin{proposition}\label{prop:oplus1stronglyexposedpoint}
Let $X$ be an arbitrary Banach space, let $Y$ be a Banach space containing an strongly exposed point $y_0\in S_Y$, and let $E:=X\oplus_1 Y$. Then, there are convex combinations of slices of $B_E$ around $0$ of arbitrarily small diameter. In particular, $E$ fails to contain ccs Daugavet points and also fails to have the SD2P.
\end{proposition}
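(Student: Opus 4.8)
The plan is to build, for each $\eta>0$, a convex combination of slices of $B_E$ that contains $0$ and has diameter less than $\eta$, by exploiting the strongly exposed point $y_0$ in the second coordinate while controlling the first coordinate through the $\ell_1$-sum structure. Write $g\in S_{Y^*}$ for a functional that strongly exposes $y_0$, so that $g(y_0)=1$ and $\diam\bigl(S(g,\delta;B_Y)\bigr)\to 0$ as $\delta\to 0^+$. Since $E^*=X^*\oplus_\infty Y^*$, the functional $(0,g)\in S_{E^*}$ has norm one, and for small $\delta>0$ the slice $S\bigl((0,g),\delta;B_E\bigr)$ consists of pairs $(x,y)\in B_E$ with $\re g(y)>1-\delta$; in particular such a pair has $\norm{y}>1-\delta$, hence $\norm{x}<\delta$, and $y$ lies in a slice of $B_Y$ of small diameter. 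Thus this single slice already has small diameter in $E$, but it sits near $(0,y_0)$, not near $0$. To recenter at $0$, I will take a symmetric-type average: consider $(0,-g)$ as well, whose associated slice clusters near $(0,-y_0)$, and form the ccs $C:=\tfrac12 S\bigl((0,g),\delta;B_E\bigr)+\tfrac12 S\bigl((0,-g),\delta;B_E\bigr)$.

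The key computations are then: first, $0\in C$, because $(0,y_0)\in S((0,g),\delta;B_E)$ and $(0,-y_0)\in S((0,-g),\delta;B_E)$ and $\tfrac12(0,y_0)+\tfrac12(0,-y_0)=0$; second, $\diam(C)$ is small. For the diameter estimate, take two points of $C$, say $z=\tfrac12(x_1,y_1)+\tfrac12(x_2,y_2)$ and $z'=\tfrac12(x_1',y_1')+\tfrac12(x_2',y_2')$ with $(x_1,y_1),(x_1',y_1')\in S((0,g),\delta;B_E)$ and $(x_2,y_2),(x_2',y_2')\in S((0,-g),\delta;B_E)$. Each of $\norm{x_i},\norm{x_i'}$ is less than $\delta$, so the first coordinate of $z-z'$ has norm at most $2\delta$. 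The second coordinate of $z-z'$ is $\tfrac12(y_1-y_1')+\tfrac12(y_2-y_2')$; here $y_1,y_1'$ both lie in the small-diameter slice $S(g,\delta;B_Y)$, so $\norm{y_1-y_1'}\le\diam S(g,\delta;B_Y)=:\omega(\delta)$, and similarly $y_2,y_2'$ both lie in $S(-g,\delta;B_Y)$, which has the same diameter $\omega(\delta)$ by symmetry of $B_Y$. Hence $\norm{z-z'}\le 2\delta+\omega(\delta)$, and since $\omega(\delta)\to 0$ as $\delta\to 0$ by the strong-exposedness of $y_0$, choosing $\delta$ small enough makes $\diam(C)<\eta$.

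This proves the first assertion. The consequences are then immediate from results already in the excerpt: since $0\in C$ and $\diam(C)$ can be made arbitrarily small while $0\notin S_E$ is irrelevant — what matters is that no ccs Daugavet point can exist in $E$, because a ccs Daugavet point $z_0\in S_E$ would have to satisfy $\sup_{w\in C}\norm{z_0-w}=2$ for the ccs $C$ above, yet $\sup_{w\in C}\norm{z_0-w}\le\norm{z_0}+\sup_{w\in C}\norm{w}\le 1+(\diam(C)+\norm{0})$... more cleanly: $\sup_{w\in C}\norm{w}\le\diam(C)$ since $0\in C$, so $\sup_{w\in C}\norm{z_0-w}\le 1+\diam(C)<2$ once $\diam(C)<1$, a contradiction. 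Likewise $E$ fails the SD2P since it has a ccs of diameter strictly less than $2$. I expect no serious obstacle here; the only mild subtlety is making sure the two ``opposite'' slices genuinely have the same small diameter, which follows from the central symmetry of $B_Y$ (the map $y\mapsto -y$ carries $S(g,\delta;B_Y)$ onto $S(-g,\delta;B_Y)$ isometrically), and being careful that $0$ is really a convex combination of one point from each slice rather than needing the whole slices to be symmetric.
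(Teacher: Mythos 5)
Your proof is correct and follows essentially the same route as the paper: take $f=(0,g)$ with $g$ strongly exposing $y_0$, form the ccs $\tfrac12 S(f,\delta;B_E)+\tfrac12 S(-f,\delta;B_E)$, note it contains $0$, and use the $\ell_1$-sum structure plus strong exposure to bound its size. The paper bounds the norm of every point of this set directly (showing $C\subset 2\eps B_E$) rather than estimating the diameter as you do, but the two computations are interchangeable.
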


\begin{proof}
Let $y_0^*\in S_{Y^*}$ strongly exposes $y_0$. Given $\eps>0$, there is $0<\delta<\eps$ such that $\|y-y_0\|<\eps$ whenever $y\in B_Y$ satisfies $\re y_0^*(y)>1-\delta$. Consider $f=(0,y_0^*)\in S_{E^*}$ and write
$$
C:=\frac{1}{2}\left(S(f,\delta;B_E)+S(-f,\delta,B_E)\right)
$$
Take $u:=\frac{1}{2}(u_1+u_2)\in C$ with $u_1\in S(f,\delta;B_E)$ and $u_2\in S(-f,\delta;B_E)$. So if write $u_1:=(x_1,y_1)$ and $u_2:=(x_2,y_2)$, we have
$$
\re y_0^*(y_1)=\re f(x_1,y_1)>1-\delta \  \text{ and } \ \re y_0^*(y_2)=\re f(x_2,y_2)<-1+\delta.
$$
On the one hand, it follows that $\|y_1-y_0\|<\eps$ and $\|y_2+y_0\|<\eps$. On the other hand, $\|y_1\|,\|y_2\|>1-\delta$, hence $\|x_1\|<\delta<\eps$ and $\|x_2\|<\delta<\eps$. Summarizing, we have
\begin{equation*}
\|u\|=\frac12\bigl(\|x_1+x_2\| + \|y_1+y_2\|\bigr)\leq \frac12(2\eps + 2\eps)=2\eps.\qedhere
\end{equation*}
\end{proof}

\begin{remark}
It is straightforward to adapt the previous proof to $\ell_p$-sums for $1<p<\infty$.
\end{remark}

However, note that the situation is very different for $\ell_\infty$-sums.

\begin{theorem}\label{thm:ccs_Daugavet_infinity_sums}
    Let $X$ and $Y$ be Banach spaces, and let $E:=X\oplus_\infty Y$. If $x\in S_X$ is a ccs Daugavet point, then $(x,y)\in S_E$ is a ccs Daugavet point for every $y\in B_Y$.
\end{theorem}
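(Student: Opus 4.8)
The plan is to transfer the ccs Daugavet property of $x$ in $X$ to the point $(x,y)$ in $E=X\oplus_\infty Y$ by reducing an arbitrary convex combination of slices of $B_E$ to a convex combination of slices of $B_X$ via the first coordinate projection. First I would fix a ccs $C=\sum_{i=1}^n\lambda_i S(f_i,\alpha_i;B_E)$ of $B_E$ and some $\eps>0$. Each functional $f_i\in S_{E^*}$ decomposes as $f_i=(g_i,h_i)$ with $\norm{g_i}+\norm{h_i}=1$ (since $E^*=X^*\oplus_1 Y^*$), and I would normalize $g_i$ (handling separately the degenerate case $g_i=0$, which forces $\norm{h_i}=1$; I will address this below). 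The key observation is that the supremum of $\re f_i$ over $B_E$ is $\sup_{B_X}\re g_i+\sup_{B_Y}\re h_i=\norm{g_i}+\norm{h_i}=1$, so that $\widehat g_i:=g_i/\norm{g_i}$ defines a genuine slice $S(\widehat g_i,\beta_i;B_X)$ of $B_X$ for a suitable $\beta_i>0$ (roughly $\beta_i=\alpha_i/\norm{g_i}$), and moreover if $z\in S(\widehat g_i,\beta_i;B_X)$ then $(z,w)\in S(f_i,\alpha_i;B_E)$ for any $w\in B_Y$ with $\re h_i(w)$ close enough to $\norm{h_i}$.

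Next I would apply the hypothesis that $x$ is a ccs Daugavet point in $X$ to the ccs $D:=\sum_{i=1}^n\lambda_i S(\widehat g_i,\beta_i;B_X)$ of $B_X$: there exists $z=\sum_{i=1}^n\lambda_i z_i\in D$ with $z_i\in S(\widehat g_i,\beta_i;B_X)$ and $\norm{x-z}>2-\eps$. I then want to lift this to an element of $C$ that is far from $(x,y)$. The natural candidate is $(z_i,y)$ in each slice $S(f_i,\alpha_i;B_E)$, but one must check $(z_i,y)$ actually lies in that slice; this is where the degenerate-$g_i$ case and the $h_i$-part need care. The clean fix: in the slice $S(f_i,\alpha_i;B_E)$, choose $w_i\in B_Y$ realizing $\re h_i(w_i)$ within $\alpha_i/2$ of $\norm{h_i}$ and simultaneously arrange the $X$-coordinate to be $z_i$ scaled slightly if necessary; taking the point $(z_i, w_i)\in S(f_i,\alpha_i;B_E)$ (valid since $\re g_i(z_i)>\norm{g_i}-\alpha_i/2$ when $g_i\neq 0$, and trivially when $g_i=0$) gives $u:=\sum_i\lambda_i(z_i,w_i)=(z,\sum_i\lambda_i w_i)\in C$. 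Finally, using the $\ell_\infty$-norm, $\norm{(x,y)-u}=\max\{\norm{x-z},\norm{y-\sum_i\lambda_i w_i}\}\geq\norm{x-z}>2-\eps$, which completes the argument since $\eps$ was arbitrary.

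The main obstacle I anticipate is bookkeeping around the functionals $f_i=(g_i,h_i)$: ensuring the slices $S(\widehat g_i,\beta_i;B_X)$ are well-defined and nonempty (the supremum computation $M_i=\norm{g_i}+\norm{h_i}$ is the crux), choosing $\beta_i$ so that membership in the $X$-slice forces membership in the $E$-slice regardless of the $Y$-coordinate chosen, and cleanly disposing of the case $g_i=0$ where no $X$-slice arises but any $(z_i,w_i)$ with $\re h_i(w_i)>\norm{h_i}-\alpha_i=1-\alpha_i$ already sits in $S(f_i,\alpha_i;B_E)$ for every $z_i\in B_X$ — so one simply takes $z_i=x$ there, which only helps since it contributes $0$ to $\norm{x-z}$... wait, that would hurt the estimate. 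Instead, in the $g_i=0$ case I would still be free to put any $z_i\in B_X$ in the first coordinate, so I distribute the "far" mass onto the indices with $g_i\neq 0$; since $\sum_i\lambda_i$ over those indices is a fixed positive number, one gets a ccs of $B_X$ of possibly smaller total weight, renormalize, apply the ccs Daugavet hypothesis of $x$ there, and then pad back the $g_i=0$ slices with copies of a far point. This renormalization subtlety — guaranteeing the "far" element of $B_X$ can be chosen in a convex combination of the reduced family of slices — is the one genuinely delicate point, and it is handled precisely because $x$ being a ccs Daugavet point applies to every ccs of $B_X$ containing... no: here one needs that $x$ is far from some point of every ccs, which is exactly the ccs Daugavet hypothesis (no containment of $x$ required), so the reduction goes through.
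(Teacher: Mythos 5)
Your plan is essentially the paper's proof: decompose each $f_i=(g_i,h_i)$ via $E^*\equiv X^*\oplus_1 Y^*$, project to a ccs of $B_X$ built from the $g_i$'s, apply the ccs Daugavet property of $x$ there to produce $z=\sum_i\lambda_i z_i$ far from $x$, pair each $z_i$ with a $w_i\in B_Y$ with $\re h_i(w_i)$ near $\norm{h_i}$, and conclude from the $\ell_\infty$ lower bound $\norm{(x,y)-(z,t)}\geq\norm{x-z}$. The only place you overcomplicate is the degenerate case $g_i=0$: the renormalize-then-pad scheme breaks down if \emph{every} $g_i=0$ (you would divide by $\Lambda=0$), and even when $\Lambda>0$ the phrase ``pad back with copies of a far point'' only works if the padding element at the $g_i=0$ indices is the same $z'$ you found (or a close approximation), not an arbitrary far point such as $-x$, since otherwise the convex combination $\Lambda z'+(1-\Lambda)s'$ need not be far from $x$. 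The paper sidesteps all of this by taking $\tilde S_i:=\{s\in B_X:\re x_i^*(s)>\norm{x_i^*}-\delta_i/2\}$ directly, without normalizing; when $x_i^*=0$ this is $B_X$, which the paper's definition of a slice (permitting $x^*=0$) still allows, so $\sum_i\lambda_i\tilde S_i$ is a genuine ccs of $B_X$ and the hypothesis applies at once. Equivalently, when $g_i=0$ the $i$-th slice of $B_E$ places no constraint on the $X$-coordinate, so you may replace that entry by \emph{any} slice of $B_X$ without affecting anything downstream.
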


\begin{proof}
Let $C:=\sum_{i=1}^n \lambda_i S_i$ be a ccs of $B_E$. For every $i\in \{1,\dots,n\}$, we can write $S_i:=S(f_i,\delta_i)$ with $f_i:=(x_i^*,y_i^*)\in S_{E^*}$ satisfying  $1=\norm{f_i}=\norm{x_i^*}+\norm{y_i^*}$. Consider on the one side $$\tilde{S_i}:=\left\{s\in B_X\colon \re x_i^*(s)>\norm{x_i^*}-\frac{\delta_i}{2}\right\},$$ and pick on the other side any $t_i\in B_Y$ such that $\re y_i^*(t_i)>\norm{y_i^*}-\frac{\delta_i}{2}$. Since $\tilde{C}:=\sum_{i=1}^n \lambda_i\tilde{S_i}$ is a ccs of $B_X$, we can find for every $\eps>0$ an element $s:=\sum_{i=1}^n\lambda_is_i$ in $\tilde{C}$ such that $\norm{x-s}>2-\eps$. Then, if we let $t:=\sum_{i=1}^n\lambda_it_i$, we get $(s_i,t_i)\in B_E$ and 
$$\re f_i(s_i,t_i)= \re x_i^*(s_i)+y_i^*(t_i)>\norm{x_i^*}+\norm{y_i^*}-\delta_i=1-\delta_i$$ for every $i$, so that $(s_i,t_i)\in S_i$, and $(s,t)=\sum_{i=1}^n\lambda_i(s_i,t_i)\in C$. Finally, $$\norm{(x,y)-(s,t)}\geq \norm{x-s}>2-\eps,$$ so $(x,y)$ is a ccs Daugavet point as stated.  
\end{proof}

\section{Examples and counterexamples of diametral elements} \label{sec:examples_and_counterexamples}

In this section we aim to include a number of examples and counterexamples of diametral elements on the unit sphere of Banach spaces. We first characterize the notion in some spaces which have natural relations with the Daugavet property, such as $L_1$-preduals spaces, M\"{u}ntz spaces, and $L_1$-spaces. Next, we will remark on some examples which have previously appear in the literature, including some improvements in some cases (as for Lipschitz free spaces). Finally, we will include some complicated examples which will be needed to see that no implication in Figure~\ref{figure01} in page~\pageref{figure01} reverses and also to negate some other possible implications between the notions. A summary of all the relations between properties will be included in Subsection~\ref{subsection:schemeofcounterexamples}.

\subsection{Characterization in $C(K)$-spaces,  $L_1$-preduals, and M\"{u}ntz spaces}\label{subsec:L_1_preduals}
It was shown in \cite[Theorems 3.4 and 3.7]{ahlp20} that the notions of $\Delta$-point and Daugavet point coincide for $L_1$-preduals. The authors first characterize the $\Delta$-points in $C(K)$ spaces and then get the result for $L_1$-preduals by using the principle of local reflexivity. Later on, a characterization of $\Delta$-points (equivalently, Daugavet points) of $L_1$-preduals was provided in \cite[Theorem 3.2]{mr22} which implicitly prove that actually $\Delta$-points,  and super Daugavet points coincides in this setting. Let us state this result here for further reference. Let us observe that the authors of \cite{ahlp20} works with real Banach spaces, but it is immediate that the proof of \cite[Theorems 3.4]{ahlp20} works in the complex case as well; the paper \cite{mr22} works in both the real and the complex case.

\begin{proposition}[\mbox{\textrm{\cite[Theorems 3.4 and 3.7]{ahlp20}, \cite[Theorem 3.2]{mr22}}}]\label{proposition:delta-superDaugavet-L1predual-old}
Let $X$ be an $L_1$-predual and let $x\in S_X$. The following assertions are equivalent.
\begin{enumerate}
\item $x$ is a Daugavet point.
\item $x$ is a $\Delta$-point
\item For every $\delta>0$, the weak$^*$ slice $S(J_X(x),\delta;B_{X^*})$ contain infinitely many pairwise linearly independent extreme points of $B_{X^*}$.
\item For every element $y\in B_X$, there exists a sequence $(x_n^{**})$ in $B_{X^{**}}$ such that $\norm{x-x_n^{**}}\longrightarrow 2$ and $$\norm{\sum_{n\geq 1} a_n(y-x_n^{**})}\leq 2 \norm{a}_\infty$$ for every $a:=(a_n)\in c_{00}$.
\item For every element $y\in B_X$, there exists a sequence $(x_n^{**})$ in $B_{X^{**}}$ which converges weak$^*$ to $y$ and such that $\norm{x-x_n^{**}}\longrightarrow 2$.
\end{enumerate}
In the case that $X=C(K)$ for a Hausdorff topological space $K$, the above is also equivalent to:
\begin{enumerate}
\item[(6)] $x$ attains its norm at an accumulation point of $K$.
\end{enumerate}
\end{proposition}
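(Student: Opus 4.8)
\emph{Strategy and the $C(K)$ case.} The plan is to isolate the case $X=C(K)$, which carries all the geometric content, and then to deduce the general $L_1$-predual case by passing to the bidual. For $X=C(K)$ one has $X^*=M(K)$ with $\ext{B_{M(K)}}=\{\lambda\delta_t\colon\lambda\in\T,\ t\in K\}$, and I would first establish $(1)\Leftrightarrow(2)\Leftrightarrow(6)$ by two explicit constructions. For $\neg(6)\Rightarrow\neg(2)$: if $x$ attains its norm only at points that are not accumulation points of $K$, then $F:=\{t\colon\abs{x(t)}=1\}$ is a closed set all of whose points are isolated in $K$, hence finite, and $\gamma:=1-\sup_{K\setminus F}\abs{x}>0$ (a net in $K\setminus F$ along which $\abs{x}\to 1$ would cluster at a point of $F$ that is then an accumulation point of $K$); the functional $f:=\abs{F}^{-1}\sum_{t\in F}\overline{x(t)}\,\delta_t\in S_{M(K)}$ then defines, for $\delta$ small enough, a slice containing $x$ on which $\norm{x-g}\leq 2-\gamma$ for every $g$ (since $\re\int g\,df>1-\delta$ forces $\abs{g(t)-x(t)}$ small at each $t\in F$, while $\abs{x(t)-g(t)}\leq\abs{x(t)}+1\leq 2-\gamma$ for $t\notin F$), so $x$ is not a $\Delta$-point. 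For $(6)\Rightarrow(1)$: fix an accumulation point $t_0$ with $x(t_0)=\theta$, $\abs\theta=1$; given any slice $S(\mu,\delta)$ with $\mu\in S_{M(K)}$, pick $g_0\in B_{C(K)}$ nearly norming $\mu$ and, using that $t_0$ is an accumulation point while only finitely many points carry large $\abs\mu$-mass, a point $t_1$ close to $t_0$ with $\abs{x(t_1)-\theta}$ small together with an open $V\ni t_1$ with $t_0\notin\overline{V}$ and $\abs\mu(V)$ small; a continuous modification $g$ of $g_0$ that agrees with $g_0$ off $V$ and equals $-\theta$ at $t_1$ stays in $S(\mu,\delta)$ while $\norm{x-g}\geq\abs{x(t_1)+\theta}$ is close to $2$. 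With the trivial $(1)\Rightarrow(2)$, this yields $(1)\Leftrightarrow(2)\Leftrightarrow(6)$ for $C(K)$.

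\emph{Reduction of the general case and the sequential conditions.} Since $X^*\cong L_1(\mu)$, the bidual $X^{**}\cong L_\infty(\mu)$ is a commutative $C^{*}$-algebra, hence isometric to $C(\widehat K)$ for a suitable compact Hausdorff $\widehat K$. By Remark~\ref{rem:weak*_diametral_points}(3), $x$ is a $\Delta$-point in $X$ iff $J_X(x)$ is a $\Delta$-point in $C(\widehat K)$; by the first paragraph this is equivalent to $J_X(x)$ being a Daugavet point of $C(\widehat K)$, in particular a weak$^*$ Daugavet point of $X^{**}$, which by Remark~\ref{rem:weak*_diametral_points}(1) means precisely that $x$ is a Daugavet point in $X$. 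As $(1)\Rightarrow(2)$ is trivial we get $(1)\Leftrightarrow(2)$, and both are equivalent to $J_X(x)$ attaining its norm at some accumulation point $t_0$ of $\widehat K$. The sequences demanded by $(4)$ and $(5)$ then come from the dipping construction of the first paragraph carried out in $C(\widehat K)$: choosing pairwise disjoint open neighbourhoods $V_n$ of distinct points $s_n\to t_0$ and letting $x_n^{**}\in B_{C(\widehat K)}$ agree with $J_X(y)$ off $V_n$ and equal $-\theta$ at $s_n$ gives $\norm{x-x_n^{**}}\to 2$, $x_n^{**}\to J_X(y)$ weak$^*$ (since $\abs\mu(V_n)\to 0$), and the $\ell_\infty$-estimate of $(4)$ by disjointness of the supports of the $J_X(y)-x_n^{**}$. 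Conversely, for $(5)\Rightarrow(1)$ — and likewise $(4)\Rightarrow(1)$, after first passing to arithmetic means to place infinitely many $x_n^{**}$ in the given weak$^*$ slice — I would fix a slice $S(f,\delta;B_X)$, pick $y$ in it and the sequence $(x_n^{**})$ from $(5)$; for large $n$, $x_n^{**}\in S(f,\delta;B_{X^{**}})$ and $\norm{x-x_n^{**}}>2-\eps$, and Goldstine's theorem with the weak$^*$ lower semicontinuity of the norm on $X^{**}$ produces a point of $S(f,\delta;B_X)$ at distance $>2-\eps$ from $x$.

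\emph{Condition $(3)$ and the main obstacle.} It remains to recast ``$J_X(x)$ attains its norm at an accumulation point of $\widehat K$'' as the intrinsic statement $(3)$ about the weak$^*$ slice $S(J_X(x),\delta;B_{X^*})$, and this is where the structure theory of $L_1$-preduals is essential: the extreme points of $B_{X^*}$ are the normalized indicators of the atoms of $\mu$ (the ``generalized evaluation functionals''), a weak$^*$-cluster point of extreme points of $B_{X^*}$ is again an extreme point or $0$ by Lindenstrauss's theorems, and combining this with the principle of local reflexivity lets one match, inside a weak$^*$ slice, linearly independent extreme functionals on $X$ with distinct points of $\widehat K$ converging to an accumulation point at which $J_X(x)$ is unimodular. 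Making this correspondence precise — i.e.\ proving $(2)\Leftrightarrow(3)$ — is the principal technical obstacle; the remainder just combines the two explicit constructions above with Goldstine's theorem.
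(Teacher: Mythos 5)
The paper does not prove this proposition at all: it only states it for reference, citing \cite[Theorems 3.4 and 3.7]{ahlp20} and \cite[Theorem 3.2]{mr22}, and its surrounding discussion (and the proof of Corollary~\ref{corollary:L1-preduals-deltaiffccs}) describes the cited strategy as: characterize $\Delta$-points in $C(K)$, then transfer to a general $L_1$-predual via the bidual (which is a $C(K)$-space) together with local reflexivity/Goldstine. Your overall strategy matches this cited route, and most of it is sound. In particular, your two $C(K)$ constructions are correct: showing $F=\{t\colon\abs{x(t)}=1\}$ is a finite set of isolated points when $(6)$ fails, building the averaged point-mass functional, and obtaining the uniform bound $\|x-g\|\le 2-\gamma$ on a small slice; and the ``dipping'' construction for $(6)\Rightarrow(1)$, choosing $t_1$ near $t_0$ carrying little $\abs{\mu}$-mass and modifying a nearly-norming $g_0$ on a small neighbourhood of $t_1$. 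Your reduction through $X^{**}\cong C(\widehat K)$ and Remark~\ref{rem:weak*_diametral_points}(1),(3), and the disjoint-support construction of the sequence $(x_n^{**})$ giving $(4)$ and $(5)$ (with weak$^*$ convergence from $\sum_n\abs{\mu}(V_n)<\infty$), are all correct; so is the Goldstine argument for $(5)\Rightarrow(1)$.

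The genuine gap is $(2)\Leftrightarrow(3)$, which you acknowledge but do not prove, and the specific hint you offer for bridging it does not work. You invoke that ``a weak$^*$-cluster point of extreme points of $B_{X^*}$ is again an extreme point or $0$ by Lindenstrauss's theorems.'' That statement is \emph{not} valid for arbitrary $L_1$-preduals: weak$^*$-closedness of $\ext{B_{X^*}}\cup\{0\}$ is Lindenstrauss's \emph{characterization of $G$-spaces}, a proper subclass of $L_1$-preduals (it fails, for instance, for $A(S)$ when $S$ is a non-Bauer simplex such as the Poulsen simplex). Since $(3)$ is the only item of the proposition stated intrinsically at the level of $X^*$ rather than $X^{**}$, the equivalence $(2)\Leftrightarrow(3)$ is precisely where the structure theory has to do real work, and the bridge you sketch would need to be replaced by a different argument (e.g.\ relating atoms of the $L_1$-representation of $X^*$ to isolated points of $\widehat K$, or working directly with the Lazar--Lindenstrauss selection machinery as in the cited sources). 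As it stands, $(3)$ remains unproved and the proposed route to it is flawed, even though the remaining five equivalences are essentially complete.
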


We will show that, in fact, $\Delta$-points also coincide with the ccs versions for $L_1$ preduals. Our approach will be analogous to the one used in \cite{ahlp20} for $\Delta$-points and Daugavet points: we first prove the result for $C(K)$ spaces and then deduce it for all $L_1$-preduals using that the bidual of an $L_1$-predual is a $C(K)$-space. In the case of $C(K)$ spaces, we first prove a sufficient condition for ccs Daugavet points which, for the same price, can be proved for vector-valued spaces. Recall that given a compact Hausdorff topological space $K$ and a Banach space $X$, $C(K,X)$ denotes the Banach space of those continuous functions from $K$ to $X$ endowed with the supremum norm.

\begin{theorem}\label{thm:C(K)-deltaimpliesccsDaugavet}
Let $K$ be a compact Hausdorff topological space, $X$ a Banach space, and let $t_0$ be an accumulation point of $K$. If a function $f\in S_{C(K,X)}$ satisfies $\norm{f(t_0)}=1$, then $f$ is a ccs Daugavet point.
\end{theorem}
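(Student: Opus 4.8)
The plan is to prove this directly from the definition, unwinding a convex combination of slices of $B_{C(K,X)}$ and using the accumulation point $t_0$ to produce, inside the ccs, an element that is close to $-f(t_0)$ in the fibre over $t_0$ while staying compatible with the slice constraints. First I would fix a ccs $C := \sum_{i=1}^n \lambda_i S_i$ of $B_{C(K,X)}$ with $S_i := S(\Lambda_i, \delta_i)$ for norm-one functionals $\Lambda_i \in C(K,X)^*$, and fix $\eps > 0$. For each $i$ pick $g_i \in S_i$; each $\Lambda_i$ can be represented (via the Riesz--Singer theorem) by a regular vector measure, but the cleaner route is to avoid representing $\Lambda_i$ explicitly: instead, note that $\re \Lambda_i(g_i) > 1 - \delta_i$, and that for any perturbation of $g_i$ that changes its values only on a small neighbourhood $U$ of $t_0$ (keeping the function in $B_{C(K,X)}$), the functional value $\re \Lambda_i(\cdot)$ changes by at most roughly $2|\Lambda_i|(U)$, which can be made as small as we like by shrinking $U$, by regularity/countable additivity of the representing measure.

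Next I would carry out the perturbation. Since $t_0$ is an accumulation point of $K$, there are distinct points $t_1, \dots$ accumulating at $t_0$, and one can find a small open neighbourhood $U$ of $t_0$ on which $|\Lambda_i|(\overline U)$ is smaller than $\delta_i/4$ for every $i = 1, \dots, n$. Using Urysohn's lemma, take a continuous $\varphi : K \to [0,1]$ with $\varphi(t_0) = 1$ and $\varphi \equiv 0$ off $U$. Pick a norm-one $x^* \in X^*$ with $x^*(f(t_0)) = 1$ (here $\|f(t_0)\| = 1$ is used), and define, for each $i$,
\[
h_i(t) := g_i(t) - \varphi(t)\,\bigl(g_i(t_0) + f(t_0)\bigr).
\]
Wait — this needs care to stay in the ball; instead define $h_i(t) := (1-\varphi(t))\,g_i(t) - \varphi(t) f(t_0)$, so that $h_i \in B_{C(K,X)}$ (a convex-type combination pointwise, using $\|g_i(t)\| \le 1$ and $\|f(t_0)\| = 1$), $h_i$ agrees with $g_i$ outside $U$, and $h_i(t_0) = -f(t_0)$. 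Then $\re \Lambda_i(h_i) = \re \Lambda_i(g_i) + \re \Lambda_i(h_i - g_i)$, and since $h_i - g_i$ is supported in $U$ with values of norm at most $2$, we get $|\Lambda_i(h_i - g_i)| \le 2|\Lambda_i|(\overline U) < \delta_i/2$, so $\re \Lambda_i(h_i) > 1 - \delta_i - \delta_i/2$. This is not quite inside $S_i$; the fix is to run the earlier step with the threshold $\delta_i/2$ from the outset (choose $g_i \in S(\Lambda_i, \delta_i/2)$ and $U$ with $|\Lambda_i|(\overline U) < \delta_i/4$), so that $h_i \in S_i$ genuinely. Set $h := \sum_{i=1}^n \lambda_i h_i \in C$. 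Then $h(t_0) = -f(t_0)$, so
\[
\|f - h\| \ge \|f(t_0) - h(t_0)\| = \|2 f(t_0)\| = 2,
\]
and hence $\sup_{y \in C}\|f - y\| = 2$, which is exactly the claim that $f$ is a ccs Daugavet point.

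The main obstacle I anticipate is the measure-theoretic bookkeeping that makes "changing $g_i$ only near $t_0$ barely moves $\Lambda_i$" precise and uniform over $i = 1, \dots, n$: one must invoke a suitable representation of functionals on $C(K,X)$ (or at least the regularity of the scalar measures $A \mapsto \Lambda_i(\mathbbm 1_A \cdot v)$ uniformly in $v \in B_X$) to secure a single neighbourhood $U$ that works for all $i$ simultaneously. In the scalar case $C(K)$ this is just regularity of a Radon measure; in the vector-valued case one should cite the Dinculeanu--Singer representation or argue via the boundedness of $\Lambda_i$ together with the fact that $\Lambda_i$ is determined by a regular $X^*$-valued (or operator-valued) measure of bounded variation, so that $|\Lambda_i|$ is a finite regular positive measure and $|\Lambda_i|(\{t_0\}) $ can be absorbed — indeed, shrinking $U$ down to the singleton is where an accumulation-point hypothesis versus an isolated-point hypothesis would matter, though here a plain point suffices since we may allow $|\Lambda_i|(\{t_0\})>0$ as long as we also perturb on that atom, which the formula above does automatically since $h_i(t_0) = -f(t_0)$ regardless. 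A secondary point to double-check is that $h_i$ is genuinely continuous and norm-bounded by $1$: continuity is clear since $\varphi$, $g_i$, and $f(t_0)$ are, and pointwise $\|h_i(t)\| \le (1 - \varphi(t))\|g_i(t)\| + \varphi(t)\|f(t_0)\| \le 1$. Everything else is routine.
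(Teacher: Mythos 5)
Your construction has a genuine gap precisely at the step you flag as a "secondary point," and the fix you propose there does not work. The trouble is the estimate $|\Lambda_i(h_i-g_i)|\le 2|\Lambda_i|(\overline U)$ combined with the wish to make $|\Lambda_i|(\overline U)$ small: this is impossible whenever the representing measure of some $\Lambda_i$ has an atom at $t_0$, because then $|\Lambda_i|(\overline U)\ge|\Lambda_i|(\{t_0\})>0$ for every neighbourhood $U$. You acknowledge this possibility and claim it is harmless because "the formula perturbs on the atom automatically," but that is exactly what ruins the argument. Take the concrete test case $\Lambda_i(g):=x^*(g(t_0))$, a norm-one functional on $C(K,X)$ concentrated entirely at $t_0$, and $g_i:=f\in S(\Lambda_i,\delta_i)$. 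Your formula gives $h_i(t_0)=-f(t_0)$, hence $\re\Lambda_i(h_i)=\re x^*(-f(t_0))=-1$, so $h_i\notin S_i$. The perturbation at $t_0$, far from being harmless, pushes $h_i$ out of the slice whenever the slice "sees" $t_0$ through an atom. A related warning sign is that your argument never genuinely invokes the accumulation-point hypothesis (the list $t_1,t_2,\dots$ you mention is never used), yet that hypothesis is indispensable: for isolated $t_0$, with $K=\{t_0,t_1\}$ and $X=\R$, the function $f=(1,0)\in S_{\ell_\infty^2}$ certainly fails to be a ccs Daugavet point.

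The paper's proof avoids this pitfall by never perturbing \emph{at} $t_0$. It builds a nested sequence of neighbourhoods $U_n$ of $t_0$ and Urysohn bumps $p_n$ supported on annular regions $\overline{U_n}\setminus\overline{U_{n+3}}$, which vanish at $t_0$ and converge pointwise (hence weakly) to $0$. The perturbed functions $g_{i,n}$ therefore converge weakly to $g_i$, so for $n$ large they lie in the slices $S_i$ \emph{regardless of atoms}; one does not need any uniform smallness of $|\Lambda_i|$ near $t_0$. The final distance estimate is read off not at $t_0$ but at a point $t\in F_{n+1}=\overline{U_{n+1}}\setminus U_{n+2}$, where continuity gives $\re x^*(f(t))\ge 1-1/n$ and the bump makes the perturbed functions nearly $-1$ there. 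Producing such a $t$ distinct from $t_0$ is precisely where the accumulation-point hypothesis is used. To repair your proof you would need to replace the single hard perturbation at $t_0$ by a weakly null sequence of perturbations supported away from $t_0$, which essentially reconstructs the paper's argument.
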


\begin{proof}
Pick $x^*\in S_{X^*}$ such that $\re x^*(f(t_0))=\norm{f(t_0)}=1$. Let $C:=\sum_{i=1}^L\lambda_iS_i$ be a convex combination of slices of $B_{C(K)}$. For every $i\in \{1,\dots, L\}$, pick a function $g_i\in S_i$. Since $K$ is compact and $t_0$ is an accumulation point of $K$ we have the following.

\emph{Claim.} There exists a sequence $(U_n)_{n\geq 0}$ of open neighborhoods of $t_0$ such that:
\begin{enumerate}
\item $U_0=K$,
\item $\overline{U_{n+1}}$ is a proper subset of $U_n$ for every $n\geq 0$,
\item $\re (x^*\circ f)_{\lvert \overline{U_n}}\geq 1-\frac{1}{n}$ and $\norm{{g_i}_{\lvert \overline{U_n}}-g_i(t_0)}\leq \frac{1}{n}$ for every $i\in \{1,\dots,L\}$ and every $n\geq 1$.
\end{enumerate}
Indeed, we construct the sequence inductively. Let $U_0:=K$ and assume that $U_0,\dots, U_n$ are constructed for some $n\geq 0$. Since $K$ is normal, we can find an open subset $U$ of $U_n$ such that $t_0\in U\subset \overline{U} \subset U_n$. Also since $t_0$ is an accumulation point of $K$ and since $K$ is Hausdorff, we can find an open subset $V$ of $U$ such that $\overline{V}$ is a proper subset of $\overline{U}$ (pick any point in $U$ distinct from $t_0$ and separate the two points with open sets). By continuity of $f$ and of the finitely many $g_i's$, we can then find an open subset $W$ of $V$ such that $$\re (x^*\circ f)_{\lvert W}> 1-\frac{1}{n+1} \ \ \text{ and } \norm{{g_i}_{\lvert W}-g_i(t_0)}< \frac{1}{n+1}$$ for every $i\in \{1,\dots,L\}$. The set $U_{n+1}:=W$ does the job.

Now, let us pick $(U_n)_{n\geq 0}$ as in the claim and let us define $F_n:=\overline{U_n}\backslash U_{n+1}$ for every $n\geq 0$. By construction, the $F_n's$ are closed non-empty subsets of $K$ and cover $K\backslash \left( \bigcap_{n\geq 0}U_n\right)$, and each $F_n$ may only intersects its neighbors $F_{n-1}$ and $F_{n+1}$. By Urysohn's lemma, for every $n\geq 1$ we can find a function $p_n\in C(K)$ satisfying: \begin{enumerate}
\item $0\leq p_n\leq 1$,
\item ${p_n}_{\lvert F_{n+1}}=1$,
\item ${p_n}_{\lvert F_0\cup \dots \cup F_{n-1}\cup \overline{U_{n+3}}}=0$.
\end{enumerate}
The sequence $(p_n)$ is normalized and converges pointwise to $0$, so it converges weakly to $0$. Moreover, observe that  $$\norm{g_i-(1+g_i(t_0))p_n}_\infty\leq 1+\frac{1}{n}$$ for every $i\in\{1,\dots, L\}$ since $\norm{{g_i}_{\lvert \overline{U_n}}-g_i(t_0)}\leq \frac{1}{n}$ and ${p_n}_{\lvert (K\backslash\overline{U_n})}=0$ by construction. So all the functions $$g_{i,n}:=\frac{n}{n+1}\left(g_i-(1+g_i(t_0))p_n\right)$$ belong to $B_{C(K)}$ and the sequences $(g_{i,n})_{n\in \N}$ converges weakly to $g_i$ for every $i\in\{1,\dots, L\}$. Since the finitely many $S_i's$ are all weakly open, we may thus find some $N\geq 1$ such that $g_{i,n}\in S_i$ for every $i$ and every $n\geq N$. In particular, the function $$g_n:=\sum_{i=1}^L\lambda_ig_{i,n}$$ belongs to $C$ for every $n\geq N$.  To conclude, fix $t\in F_{n+1} \subset \overline{U_{n+1}}\subset \overline{U_n}$ and observe that $$\re x^* f(t)\geq 1-\frac{1}{n}$$
and that
\begin{align*} \re x^*g_{i,n}(t) &= \frac{n}{n+1}\re x^*\left(g_i(t)-(1+g_i(t_0)\right) \\
&\leq \frac{n}{n+1}\re x^*\left(g_i(t_0)+\frac{1}{n}-(1+g_i(t_0))\right) =-1+\frac{2}{n+1}
\end{align*} for every $i\in\{1,\dots L\}$. Hence,
\begin{align*} \norm{f-g_n}_\infty &\geq \re x^*\left( f(t)-\sum_{i=1}^L\lambda_i\re(g_{i,n}(t)) \right)\geq 2-\frac{1}{n}-\frac{2}{n+1}.\qedhere
\end{align*}
\end{proof}

Combining the previous result with Proposition~\ref{proposition:delta-superDaugavet-L1predual-old}, we get the promised characterization of diametral points in $C(K)$ spaces.

\begin{corollary}\label{corollary:sixnotionsequalsinC(K)}
Let $K$ be a Hausdorff topological compact space. Then the six concepts of diametral points are equivalent in $C(K)$.  
\end{corollary}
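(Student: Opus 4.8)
The plan is to show that, for $C(K)$, all six diametral notions collapse to the single condition ``$x$ attains its norm at an accumulation point of $K$'', which already appears in Proposition~\ref{proposition:delta-superDaugavet-L1predual-old}(6). The strategy is to close the loop in Figure~\ref{figure01} for $C(K)$ by combining the implications that hold in every Banach space with the two ``hard'' implications supplied by the results just proved.

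First I would recall the trivial part of the diagram: in any Banach space one has ccs Daugavet $\Rightarrow$ ccs $\Delta$, ccs Daugavet $\Rightarrow$ super Daugavet $\Rightarrow$ super $\Delta$ $\Rightarrow$ $\Delta$, ccs $\Delta$ $\Rightarrow$ $\Delta$, and super Daugavet $\Rightarrow$ Daugavet $\Rightarrow$ $\Delta$ (Figure~\ref{figure01}). Thus it suffices to prove that, in $C(K)$, being a $\Delta$-point (the formally weakest notion) already forces being a ccs Daugavet point (the formally strongest one). Here is where the two key inputs enter. On one hand, Proposition~\ref{proposition:delta-superDaugavet-L1predual-old} applied to the $L_1$-predual $X=C(K)$ gives that $x\in S_{C(K)}$ is a $\Delta$-point if and only if $x$ attains its norm at an accumulation point $t_0$ of $K$ (equivalence of (2) and (6)). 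On the other hand, Theorem~\ref{thm:C(K)-deltaimpliesccsDaugavet}, specialised to $X=\K$ (scalars) so that $C(K,X)=C(K)$, says precisely that if $f\in S_{C(K)}$ satisfies $|f(t_0)|=1$ for some accumulation point $t_0$ of $K$, then $f$ is a ccs Daugavet point.

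Putting these together: let $x\in S_{C(K)}$ be a $\Delta$-point. By Proposition~\ref{proposition:delta-superDaugavet-L1predual-old}, $x$ attains its norm at an accumulation point $t_0$ of $K$, i.e.\ $|x(t_0)|=1=\norm{x}$. Then Theorem~\ref{thm:C(K)-deltaimpliesccsDaugavet} yields that $x$ is a ccs Daugavet point. Hence in $C(K)$ we have $\Delta\text{-point}\Rightarrow$ ccs Daugavet point, and since ccs Daugavet is the strongest of the six notions and $\Delta$ the weakest, all six notions coincide. I would also remark that this simultaneously reproves the equivalence of (1), (2), and the super/ccs Daugavet conditions in $C(K)$ that was asserted in the discussion preceding the corollary.

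There is essentially no remaining obstacle: the proof is a two-line deduction from already-established results, and the only point to be slightly careful about is the reduction to the scalar case in Theorem~\ref{thm:C(K)-deltaimpliesccsDaugavet} (taking $X=\K$, where $\norm{f(t_0)}$ is just $|f(t_0)|$ and $x^*$ is a unimodular scalar), together with noting that ``Hausdorff topological compact space'' is exactly the setting in which both cited results apply and in which $C(K)$ is an $L_1$-predual. All the heavy lifting — the delicate Urysohn-function construction in Theorem~\ref{thm:C(K)-deltaimpliesccsDaugavet} and the bidual/extreme-point analysis behind Proposition~\ref{proposition:delta-superDaugavet-L1predual-old} — has already been done, so the corollary itself is just the act of assembling the diagram.
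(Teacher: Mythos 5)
Your proof is correct and follows essentially the same route as the paper: the paper also obtains the corollary by combining Theorem~\ref{thm:C(K)-deltaimpliesccsDaugavet} (attaining the norm at an accumulation point implies ccs Daugavet) with the equivalence of (2) and (6) in Proposition~\ref{proposition:delta-superDaugavet-L1predual-old}, closing the loop in Figure~\ref{figure01}.
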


For vector-valued spaces, the situation is not that easy, but we may provide with some results. Observe that, clearly, if $t_0$ is an isolated point of a compact Hausdorff topological space $K$ and $X$ is a Banach space, then $C(K,X)=C(K\setminus \{t_0\},X)\oplus_\infty X$.

\begin{remark}\label{remark:C(K)-vector-valued}
Let $K$ be a Hausdorff topological compact space, let $X$ be a Banach space. and let $f\in C(K,X)$ be a function with $\|f\|=1$.
\begin{enumerate}
    \item[(1)] If $f\in C(K,X)$ with $\|f\|=1$ attains its norm at an accumulation point of $K$, then $f$ is a ccs Daugavet point (by Theorem~\ref{thm:C(K)-deltaimpliesccsDaugavet}) and hence, $f$ satisfies the six diametral notions.
    \item[(2)] If $f\in C(K,X)$ with $\|f\|=1$ attains its norm at an isolated point $t_0$ and $f(t_0)$ is a Daugavet (respectively, super Daugavet, ccs Daugavet) point, then $f$ is a Daugavet (respectively, super Daugavet, ccs Daugavet) point (by \cite[Section 4]{ahlp20}, Remark~\ref{rem:l_1_sums}, and Theorem~\ref{thm:ccs_Daugavet_infinity_sums}, respectively).
    \item[(3)] Suppose that $K$ contains an isolated point $t_0$, let $x_0\in S_X$, and let $f\in C(K,X)$ be given by $f(t_0)=x_0$ and $f(t)=0$ for every $t\in K\setminus\{t_0\}$. Then:
    \begin{enumerate}
        \item[(3.1)] If $x_0$ is a $\Delta$- (respectively, super $\Delta$-) point of $X$, then $f$ is a $\Delta$- (respectively, super $\Delta$-) point of $C(K,X)$ (by \cite[Section 4]{ahlp20} and Proposition~\ref{prop:super_Delta_abs_sums}, respectively).
        \item[(3.2)] If $x_0$ is a Daugavet (respectively, super Daugavet, ccs Daugavet) point of $X$, then $f$ is a Daugavet (respectively, super Daugavet, ccs Daugavet) point of $C(K,X)$ (by  \cite[Proposition~3.3.11]{pirkthesis}, Remark~\ref{rem:l_1_sums} Theorem~\ref{thm:ccs_Daugavet_infinity_sums}, respectively).
        \item[(3.3)] If $f$ is a $\Delta$- (respectively, Daugavet) point of $C(K,X)$, then $x_0$ is a $\Delta$- (respectively, Daugavet) point of $X$ (by \cite[Theorem~3.4.4]{pirkthesis}, \cite[Theorem~3.3.13]{pirkthesis}, respectively).
    \end{enumerate}
    \item[(4)] It is now easy to show that the six diametral notions do not coincide in $C(K,X)$ spaces. Indeed, let $K$ be a compact Hausdorff topological space containing an isolated point $t_0$, let $X$ a Banach space containing a $\Delta$-point $x_0$ which is not a Daugavet point (e.g.\ any $x_0$ in the unit sphere of $X=C[0,1]\oplus_2 C[0,1])$, see Propositions \ref{prop:super_Delta_abs_sums} and \ref{prop:absolutesums-Daugavetpoints}), and consider the function $f\in C(K,X)$ given by $f(t_0)=x_0$ and $f(t)=0$ for every $t\in K\setminus\{t_0\}$. Then, $f$ is a $\Delta$-point by (3.1) but it is not a Daugavet point by (3.3).
\end{enumerate}
\end{remark}

We are now ready to extend Corollary~\ref{corollary:sixnotionsequalsinC(K)} to general $L_1$-predual spaces.

\begin{corollary}\label{corollary:L1-preduals-deltaiffccs}
Let $X$ be an $L_1$-predual and let $x\in S_X$ be a $\Delta$-point. Then, $x$ is a ccs Daugavet point. Hence the six diametral notions are equivalent for $L_1$-preduals.
\end{corollary}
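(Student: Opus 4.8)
The plan is to reduce the statement about a general $L_1$-predual $X$ to the $C(K)$ case already settled in Corollary~\ref{corollary:sixnotionsequalsinC(K)}, exactly in the spirit of the reduction used in \cite{ahlp20} for $\Delta$- and Daugavet points. First I would recall that if $X$ is an $L_1$-predual, then $X^{**}$ is isometric to a space $C(K)$ for some compact Hausdorff $K$ (this is the classical Lindenstrauss result on $L_1$-preduals); moreover $J_X(B_X)$ is weak$^*$-dense in $B_{X^{**}}$ by Goldstine's theorem, and $X$ is locally reflexive (in fact every $L_1$-predual has the metric approximation property and is locally reflexive, but plain local reflexivity suffices).

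Next I would run the argument in two directions. The easy direction is that a ccs Daugavet point is a $\Delta$-point, which is already in Figure~\ref{figure01}, so nothing is needed there. For the substantive direction, assume $x\in S_X$ is a $\Delta$-point. By Proposition~\ref{proposition:delta-superDaugavet-L1predual-old} this is equivalent to $x$ being a Daugavet point, hence $J_X(x)$ is a weak$^*$ Daugavet point of $X^{**}=C(K)$, and by Remark~\ref{rem:weak*_diametral_points}(1) $J_X(x)$ is then a Daugavet point of $C(K)$. Now invoke Corollary~\ref{corollary:sixnotionsequalsinC(K)}: in a $C(K)$-space the six notions coincide, so $J_X(x)$ is in fact a ccs Daugavet point of $C(K)=X^{**}$. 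It then remains to push this property back down from $X^{**}$ to $X$, i.e.\ to show that $x$ is a ccs Daugavet point of $X$. This is the step that needs the principle of local reflexivity: given a ccs $C=\sum_{i=1}^n\lambda_i S(f_i,\delta_i;B_X)$ of $B_X$ and $\eps>0$, one looks at the corresponding ccs $\widetilde{C}=\sum_{i=1}^n\lambda_i S(J_{X^*}(f_i),\delta_i;B_{X^{**}})$ of $B_{X^{**}}$, uses that $J_X(x)$ is a ccs Daugavet point to find $z^{**}\in\widetilde{C}$ with $\|J_X(x)-z^{**}\|>2-\eps$, writes $z^{**}=\sum_i\lambda_i z_i^{**}$ with $z_i^{**}\in S(J_{X^*}(f_i),\delta_i;B_{X^{**}})$, and then applies local reflexivity to the finite-dimensional subspace $F=\spn\{x,z_1^{**},\dots,z_n^{**}\}$ and the finitely many functionals $f_1,\dots,f_n$ to obtain an operator $T\colon F\to X$ with $\|T\|,\|T^{-1}\|$ close to $1$, $T(x)=x$, and $f_i(T z_j^{**})$ close to $f_i(z_j^{**})$; then $y_i:=T z_i^{**}\in S(f_i,\delta_i;B_X)$ (after a harmless renormalisation/enlargement of $\delta_i$, or by choosing the slices slightly smaller at the start), and $y:=\sum_i\lambda_i y_i\in C$ satisfies $\|x-y\|=\|T(x-z^{**})\|$ close to $\|x-z^{**}\|>2-\eps$.

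The main obstacle I expect is purely bookkeeping in that last local-reflexivity transfer: one has to be careful that the slices $S(f_i,\delta_i;B_X)$ on which the $y_i$ land are the \emph{same} slices appearing in $C$ (not merely slightly larger ones), which is handled in the standard way by first passing to strictly smaller slices $S(f_i,\delta_i';B_X)$ with $\delta_i'<\delta_i$ and absorbing the local-reflexivity error into $\delta_i-\delta_i'$; and that the norm estimate $\|x-y\|=\|T(x-z^{**})\|\geq \|T^{-1}\|^{-1}\|x-z^{**}\|$ survives, which it does since $\|T^{-1}\|$ can be taken as close to $1$ as desired. Once this is in place, $x$ is a ccs Daugavet point of $X$; combining with the trivial implications of Figure~\ref{figure01} (ccs Daugavet $\Rightarrow$ everything) and with Proposition~\ref{proposition:delta-superDaugavet-L1predual-old} (which already collapses $\Delta$, Daugavet, and super Daugavet), we conclude that all six diametral notions coincide for $L_1$-preduals, as claimed. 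I would also remark that, alternatively, the descent from $X^{**}$ to $X$ for the ccs Daugavet property can be read off directly from Remark~\ref{rem:weak*_diametral_points}(1) together with the observation that a ccs of $B_{X^{**}}$ built from predual functionals is a weak$^*$-ccs, which makes the local-reflexivity argument essentially a restatement of the weak$^*$-to-weak correspondence already recorded there.
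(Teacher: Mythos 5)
Your overall strategy — ascend to $X^{**}\cong C(K)$, apply the $C(K)$ result, then descend to $X$ — is exactly the one the paper uses. The descent you give (principle of local reflexivity, or equivalently the weak$^*$ correspondence in Remark~\ref{rem:weak*_diametral_points}) is fine, and you correctly observe that the paper already presents both of these as interchangeable routes. The problem is in the ascent.

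You write that $x$ being a Daugavet point of $X$ gives, via Remark~\ref{rem:weak*_diametral_points}(1), that $J_X(x)$ is a Daugavet point of $C(K)=X^{**}$. But item (1) of that remark only yields that $J_X(x)$ is a \emph{weak$^*$} Daugavet point of $X^{**}$, i.e.\ it controls the distance to slices of $B_{X^{**}}$ cut out by functionals coming from $X^*$. A Daugavet point of $X^{**}=C(K)$ has to handle all slices, including those given by $C(K)^*$-functionals not in $X^*$, and there is no general implication from ``weak$^*$ Daugavet in a bidual'' to ``Daugavet in the bidual'' (this asymmetry is precisely why Remark~\ref{rem:weak*_diametral_points} records the Daugavet and ccs~Daugavet notions with a weak$^*$ on the bidual side in items (1) and (4), while the $\Delta$-notions in item (3) are \emph{not} weakened to weak$^*$). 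You would be invoking Corollary~\ref{corollary:sixnotionsequalsinC(K)} on $J_X(x)$ under a hypothesis you have not actually established.

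The fix is exactly what the paper does and is one-line: go up through $\Delta$-points rather than Daugavet points. Since $\Delta$-points pass to superspaces and $X$ sits isometrically inside $X^{**}$, item (3) of Remark~\ref{rem:weak*_diametral_points} gives directly that $J_X(x)$ is a $\Delta$-point of $X^{**}$ (no weak$^*$ weakening). Then, as $X^{**}\cong C(K)$, Theorem~\ref{thm:C(K)-deltaimpliesccsDaugavet} (or Corollary~\ref{corollary:sixnotionsequalsinC(K)}) makes $J_X(x)$ a ccs Daugavet point of $X^{**}$, which is in particular a weak$^*$ ccs Daugavet point, and your descent argument then finishes. So the proof is salvageable, but as written the ascent step is unjustified. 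Everything else — the use of Proposition~\ref{proposition:delta-superDaugavet-L1predual-old}, the local-reflexivity bookkeeping with the shrunken slices $S(f_i,\delta_i';B_X)$, the final collapse of all six notions via Figure~\ref{figure01} — is correct and matches the paper.
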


\begin{proof}
If $x$ is a $\Delta$-point in $X$, then as mentioned in item (3) of Remark \ref{rem:weak*_diametral_points}, we have that $J_X(x)$ is a $\Delta$-point in $X^{**}$. Now, $X^{**}$ is isometric to a $C(K)$ space so Theorem~\ref{thm:C(K)-deltaimpliesccsDaugavet} gives that $J_X(x)$ is a ccs Daugavet point in $X^{**}$. Then, using now item (4) of Remark \ref{rem:weak*_diametral_points} (or using a straightforward argument based on the principle of local reflexivity as in \cite[Theorem 3.7]{ahlp20}), we get that $x$ is a ccs Daugavet point in $X$.
\end{proof}

Let us observe that the proof of Theorem~\ref{thm:C(K)-deltaimpliesccsDaugavet} also works for M\"untz spaces (by using \cite[Lemma 3.10]{ahlp20} to provide suitable replacements for the functions $p_n$). We recall that given an
an increasing sequence  $\Lambda = (\lambda_n)_{n=0}^\infty$
of non-negative real numbers with $\lambda_0=0$ such that  $\sum_{i=1}^{\infty} \frac{1}{\lambda_i} < \infty$, then the real Banach space
$$
M(\Lambda) := \cspan \{t^{\lambda_n}\colon n\geq 0\} \subseteq C[0,1]
$$
is called the \emph{M\"{u}ntz space} associated with $\Lambda$.
Excluding the constant functions form $M(\Lambda)$, we have the subspace $M_0(\Lambda) := \cspan \{t^{\lambda_n}\colon n\geq 1\}$ of $M(\Lambda)$.

So, adapting the proof of Theorem~\ref{thm:C(K)-deltaimpliesccsDaugavet} to M\"untz spaces (for real scalar-valued functions attaining its norm at $1\in [0,1]$) and also using \cite[Proposition~3.12]{ahlp20}, we get the following result analogous to Corollaries \ref{corollary:sixnotionsequalsinC(K)} and \ref{corollary:L1-preduals-deltaiffccs}.

\begin{corollary}\label{coro:Muntz}
Let $X=M(\Lambda)$ or $X=M_0(\Lambda)$ for an increasing sequence $\Lambda$ of non-negative real numbers with $\lambda_0=0$ such that  $\sum_{i=1}^{\infty} \frac{1}{\lambda_i} < \infty$. Then, every $\Delta$-point of $X$ is a ccs Daugavet point (and hence the six diametral notions are equivalent).
\end{corollary}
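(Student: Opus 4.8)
The plan is to mimic the proof of Theorem~\ref{thm:C(K)-deltaimpliesccsDaugavet} but replace the Urysohn-type functions $p_n$ supported near the accumulation point $1$ by suitable elements of the M\"untz space. First I would reduce, exactly as in the $C(K)$-preduals case, to showing that a $\Delta$-point of $X=M(\Lambda)$ (or $M_0(\Lambda)$) is in fact a ccs Daugavet point; the remaining implications in Figure~\ref{figure01} are automatic, and the equivalence ``$\Delta$-point $\Leftrightarrow$ Daugavet point'' for M\"untz spaces is \cite[Proposition~3.12]{ahlp20}. By that same proposition a $\Delta$-point $f$ of a M\"untz space is precisely a norm-one function attaining its norm at $t_0=1$ (which is an accumulation point of $[0,1]$), so it suffices to prove: if $f\in S_X$ with $\abs{f(1)}=1$, then $f$ is a ccs Daugavet point.

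Next I would run the argument of Theorem~\ref{thm:C(K)-deltaimpliesccsDaugavet} verbatim up to the point where the functions $p_n$ are introduced: fix $x^*\in S_{\K^*}$ (just a unimodular scalar, or $\re$-normalisation in the real case) witnessing $\re x^* f(1)=1$, fix a ccs $C=\sum_{i=1}^L\lambda_i S_i$ of $B_X$, pick $g_i\in S_i$, and build the nested neighbourhoods $U_n$ of $1$ in $[0,1]$ on which $\re(x^*\circ f)\geq 1-\frac1n$ and $\abs{g_i-g_i(1)}\leq \frac1n$. The role of the $p_n$ is played by the functions coming from \cite[Lemma~3.10]{ahlp20}: that lemma provides, for M\"untz spaces, normalized functions that are close to $0$ on a prescribed closed subset of $[0,1]$ away from $1$ and close to $1$ near $1$ (equivalently, peaking functions at $1$). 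Using these in place of $p_n$, I form $g_{i,n}:=\frac{n}{n+1}\bigl(g_i-(1+g_i(1))q_n\bigr)$ where $q_n$ is the M\"untz replacement for $p_n$; since these $q_n$ converge pointwise (hence weakly, being bounded) to $0$, the $g_{i,n}$ converge weakly to $g_i$, so $g_{i,n}\in S_i$ eventually, and then $g_n:=\sum_i\lambda_i g_{i,n}\in C$. Evaluating near $1$, where $\re x^* f$ is close to $1$ and each $\re x^* g_{i,n}$ is close to $-1$, gives $\norm{f-g_n}$ close to $2$, exactly as in the $C(K)$ computation.

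The main obstacle is purely technical: the $p_n$ in the $C(K)$ proof are not just peaking functions at $1$ but have carefully arranged overlapping supports on the shells $F_n=\overline{U_n}\setminus U_{n+1}$, chosen so that the estimate $\norm{g_i-(1+g_i(1))p_n}_\infty\leq 1+\frac1n$ holds globally on $[0,1]$, not merely near $1$. In a M\"untz space one does not have Urysohn partitions of unity, so I need to check that the functions furnished by \cite[Lemma~3.10]{ahlp20} still satisfy $0\le q_n\le 1$, vanish (approximately) outside a small neighbourhood of $1$, and are (approximately) $1$ on an even smaller neighbourhood on which $t=1$ is still an accumulation point — this is where one spends the real work, verifying that the lemma's output can be slotted into the shell construction. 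Once that compatibility is confirmed, the diametral estimate is identical to the one in Theorem~\ref{thm:C(K)-deltaimpliesccsDaugavet}, and the analogue for $M_0(\Lambda)$ follows since $1$ is still attained there and the same lemma applies. Finally, combining with \cite[Proposition~3.12]{ahlp20} yields that all six diametral notions coincide for $M(\Lambda)$ and $M_0(\Lambda)$, as stated.
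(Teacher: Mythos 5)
Your proposal matches the paper's (very terse) treatment exactly: the paper simply remarks that the proof of Theorem~\ref{thm:C(K)-deltaimpliesccsDaugavet} carries over to M\"untz spaces once the Urysohn functions $p_n$ are replaced by the peak-like functions supplied by \cite[Lemma~3.10]{ahlp20}, and then invokes \cite[Proposition~3.12]{ahlp20} to identify $\Delta$-points of $M(\Lambda)$ and $M_0(\Lambda)$ with norm-one functions attaining their norm at $t_0=1$; these are precisely the two ingredients you use, and you correctly flag the only real work (checking that the M\"untz replacements fit into the shell-support estimate) which the paper likewise leaves implicit.
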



\subsection{Characterization in $L_1$-spaces}\label{subsec:L_1_spaces}
In \cite[Theorem~3.1]{ahlp20} the equivalence between the notions of Daugavet point and $\Delta$-point was obtained for elements of $\sigma$-finite $L_1$-spaces in the real case. Actually, it is not complicated to extend the results to arbitrary measures and also to the complex case. 

\begin{proposition}[\mbox{\textrm{\cite[Theorem~3.1]{ahlp20} for the $\sigma$-finite real case}}]
\label{prop:L1-spaces-previous}
Let $(\Omega,\Sigma,\mu)$ be a measure space, and let $f$ be a norm one element in $L_1(\mu)$. Then, the following assertions are equivalent.
\begin{enumerate}
\item $f$ is a Daugavet point.
\item $f$ is a $\Delta$-point.
\item The support of the function $f$ contains no atom.
\end{enumerate}
\end{proposition}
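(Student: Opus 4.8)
The plan is to prove the cycle of implications $(3)\Rightarrow(1)\Rightarrow(2)\Rightarrow(3)$, where $(1)\Rightarrow(2)$ is trivial from the definitions (Figure~\ref{figure01}). For $(3)\Rightarrow(1)$, suppose the support $A$ of $f$ is non-atomic, and fix a slice $S(g,\delta)$ of $B_{L_1(\mu)}$ with $g\in S_{L_\infty(\mu)}$ and some $h_0\in S(g,\delta)$; I want to produce $h\in S(g,\delta)$ with $\norm{f-h}_1>2-\eps$. The key observation is that on the non-atomic support of $f$ we can split $A$ into finitely (or countably) many pieces of small $\abs{f}$-measure and move mass of $h_0$ around inside the level sets of $\re(\overline{\operatorname{sgn}(g)}\cdot)$ so as to keep $\re\int g\,h\,d\mu$ essentially unchanged while making $h$ supported (up to small error) on a subset $B\subseteq A$ with $\int_B\abs{f}\,d\mu<\eps$; then $\norm{f-h}_1\geq \int_{A\setminus B}\abs{f}\,d\mu + \int_{A\setminus B}\abs{h}\,d\mu$ plus the contribution off $A$ can be pushed close to $\norm{f}_1+\norm{h}_1=2$. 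Concretely I would first perturb $h_0$ to a norm-one $h_1$ that is a finite step modification supported where $g$ is close to its norm on the relevant sets, using non-atomicity of $\mu$ restricted to $A$ to partition and redistribute; this is exactly the mechanism behind \cite[Theorem~3.1]{ahlp20}, and it extends verbatim to non-$\sigma$-finite $\mu$ because only the restriction of $\mu$ to the ($\sigma$-finite) support of $f$ together with a single piece carrying the mass of $h_0$ is used, and to the complex case by working with $\re(\overline{\operatorname{sgn} g}\,\cdot)$ and carrying an extra unimodular phase factor.

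For $(2)\Rightarrow(3)$, I argue by contraposition: assume the support of $f$ contains an atom $A_0$, so $\mu(A_0)>0$, $\mu$ has no subset of $A_0$ of strictly smaller positive measure, and $c:=\abs{f(A_0)}\mu(A_0)>0$ where $f$ is (a.e.) constant equal to some $\alpha$ on $A_0$ with $\abs{\alpha}\mu(A_0)=c$. Put $g:=\operatorname{sgn}(\alpha)\mathbbm{1}_{A_0}\in B_{L_\infty(\mu)}$, a norm-one functional, and consider the slice $S:=S(J\text{-type functional }g,\delta)$ for small $\delta$. One checks $f\in S$ since $\re\int g f\,d\mu=c$, and $\sup_{h\in B_{L_1}}\re\int g h\,d\mu=1$ (attained on functions concentrated on $A_0$). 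For any $h\in S$ we have $\re\int_{A_0} \operatorname{sgn}(\alpha)h\,d\mu>1-\delta$, hence $\int_{A_0}\abs{h}\,d\mu>1-\delta$ and $\int_{\Omega\setminus A_0}\abs{h}\,d\mu<\delta$, and on $A_0$ (where $h$ is a.e.\ constant, $A_0$ being an atom) $h$ is close in direction to $\operatorname{sgn}(\alpha)$; a direct estimate then gives $\norm{f-h}_1\leq (1-c) + (1-(1-\delta)) + \text{small}$, that is $\sup_{h\in S}\norm{f-h}_1\leq 2-c+o(1)<2$, so $f$ is not a $\Delta$-point.

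The main obstacle is the $(3)\Rightarrow(1)$ direction: getting, inside a given slice, elements \emph{far} from $f$ requires the redistribution argument to be done carefully so that (i) the modified function stays in the unit ball of $L_1$, (ii) it remains in the slice, and (iii) its mass is concentrated away from the bulk of $\abs{f}$. The non-$\sigma$-finiteness is a red herring once one notes the argument localizes to the $\sigma$-finite set $\operatorname{supp} f \cup \operatorname{supp} h_0$, and the complex case only adds bookkeeping of phases; so in the write-up I would state that the proof of \cite[Theorem~3.1]{ahlp20} carries over with these two cosmetic adjustments, and include the short $(2)\Rightarrow(3)$ atom computation in full since that is where the dichotomy "atom vs.\ no atom" is genuinely used. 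I would also note in passing that, by Proposition~\ref{prop:net_characterizations} together with the identification of $L_1(\mu)^{**}$, the same functions witnessing $(1)$ can be taken as a sequence weakly convergent to any prescribed $y\in B_{L_1(\mu)}$, which foreshadows the stronger ``super'' and ``ccs'' statements proved next in the subsection.
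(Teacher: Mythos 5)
Your overall strategy (the cycle $(3)\Rightarrow(1)\Rightarrow(2)\Rightarrow(3)$) matches the paper, but both nontrivial legs diverge from the paper's route, and the $(2)\Rightarrow(3)$ leg has a gap.

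For $(3)\Rightarrow(1)$ the paper does not redo the redistribution argument of \cite{ahlp20}: it restricts to the (automatically $\sigma$-finite) support $S$ of $f$, observes that $L_1(\mu|_S)$ has the Daugavet property because $S$ is non-atomic, so $f$ is a \emph{super Daugavet} point there, and then transfers along $L_1(\mu)=L_1(\mu|_S)\oplus_1 L_1(\mu|_{\Sigma\setminus S})$ using the $\ell_1$-sum results of Subsection~\ref{subsec:absolutesums}. This is shorter than your redistribution plan and yields the stronger super Daugavet conclusion for free; your remark that non-$\sigma$-finiteness is harmless because everything localizes to the support is exactly the observation that makes this reduction legitimate, so you are close, but you should just invoke the DPr of $L_1$ over a non-atomic measure rather than reprove it.

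The $(2)\Rightarrow(3)$ argument as you wrote it does not go through. You set $g:=\operatorname{sgn}(\alpha)\mathbbm{1}_{A_0}$ and claim $f\in S(g,\delta)$ ``for small $\delta$'' because $\re\int gf\,d\mu=c$. But $\sup_{h\in B_{L_1}}\re\int gh\,d\mu=1$, so $f\in S(g,\delta)$ requires $\delta>1-c$, which is \emph{not} small unless $c$ is close to $1$. If you instead take $\delta$ close to $1$, the slice is almost all of $B_{L_1}$ and one can exhibit $h\in S(g,\delta)$ with $\norm{f-h}_1$ arbitrarily close to $2$ (put tiny mass on $A_0$ to stay in the slice and concentrate the rest with sign opposite to $f$ away from $A_0$), so the claimed bound $2-c+o(1)$ is false. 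The argument can be rescued by taking $\delta$ only slightly larger than $1-c$ and, crucially, coupling the two estimates via $s:=\int_{A_0}|h|>1-\delta$: then $\int_{A_0}|f-h|=|c-s|$ and $\int_{\Omega\setminus A_0}|h|\leq 1-s$, and these telescope to give $\norm{f-h}_1\leq\max\{2-2c,\,2\delta\}\to 2-2c<2$. The paper avoids these slice estimates entirely: it writes $L_1(\mu)=L_1(\mu|_{\Omega\setminus A_0})\oplus_1\K$ (integrable functions are constant on the atom), decomposes $f=(f_1,c)$ with $c\neq 0$, and invokes the transfer lemma for $\Delta$-points in $\ell_1$-sums (\cite[Theorem~3.4.4]{pirkthesis}), which would force $c/|c|\in\K$ to be a $\Delta$-point --- impossible in a one-dimensional space. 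This is cleaner and handles the $\norm{f_1}=0$ case uniformly.
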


Observe that (1) implies (2) is immediate. For (2) implies (3), suppose that $f$ is a $\Delta$-point and let $A$ be an atom of finite measure (the only ones that can be contained in the support of an integrable function). Then, we clearly have that $L_1(\mu)=L_1(\mu|_{\Omega\setminus A}) \oplus_1 \K$ (as integrable functions are constant on atoms), and we may write $f=(f_1,c)$ for suitable $f_1\in L_1(\mu|_{\Omega\setminus A})$ and $c=f(A)\in \K$. If $c\neq 0$, then $\|f_1\|\neq 1$ and it follows from \cite[Theorem~3.4.4]{pirkthesis} that $1\in \mathbb{K}$ is a $\Delta$-point, a contradiction. This shows that the support of $f$ does not contain any atom.

To get that (3) implies (1), we actually prove the following more general result. Recall that given a measured space $(\Omega,\Sigma,\mu)$ and a Banach space $X$, $L_1(\mu,X)$ denotes the Banach space of all Böchner-integrable functions from $\Omega$ to $X$.

\begin{theorem}\label{thrm:L_1_ccs_Delta}
Let $(\Omega,\Sigma,\mu)$ be a measured space, let $X$ be a Banach space, and let $f$ be a norm one element in $L_1(\mu,X)$. If the support of the function $f$ contains no atom, then $f$ is a super Daugavet point.
\end{theorem}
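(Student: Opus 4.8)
The plan is to use the net characterization of super Daugavet points from Proposition~\ref{prop:net_characterizations}(1): given an arbitrary $g\in B_{L_1(\mu,X)}$ and $\eps>0$, I must produce $h\in B_{L_1(\mu,X)}$ close to $g$ in the weak topology with $\norm{f-h}>2-\eps$. The guiding idea is classical: since the support of $f$ is non-atomic, one can split it into finitely many pieces of small $L_1$-mass of $f$, and on each such piece the restriction of $f$ carries ``free'' mass that can be spread out thinly (using non-atomicity to subdivide further) to build small perturbations that go weakly to $0$ while keeping norm control. Concretely, first fix $\delta>0$ small. Using non-atomicity of $\operatorname{supp}(f)$ together with absolute continuity of $\nu(A):=\int_A \norm{f}\,d\mu$ and $\tilde\nu(A):=\int_A\norm{g}\,d\mu$, partition $\operatorname{supp}(f)$ (or a large-measure subset of it) into $N$ disjoint measurable sets $A_1,\dots,A_N$ with $\int_{A_j}\norm{f}\,d\mu<\delta$ and $\int_{A_j}\norm{g}\,d\mu<\delta$ for each $j$; also arrange that $\int_{\Omega\setminus\bigcup A_j}\norm{f}\,d\mu<\delta$, which is possible because $f$ is supported on $\bigcup A_j$ up to null sets if we exhaust the support.

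Next, on each $A_j$ I want to define a function $u_j$ supported on $A_j$ with $\int_{A_j}\norm{u_j}\,d\mu$ close to $\int_{A_j}\norm{f}\,d\mu$, pointing ``opposite'' to $f$ in a suitable averaged sense, and oscillating so that $u_j\to 0$ weakly along the construction. The cleanest route: pick a unit vector $x_j^*\in S_{X^*}$ that nearly norms $\int_{A_j} f\,d\mu$ (or work pointwise), and on $A_j$ further subdivide into $2^k$ pieces of equal $\norm{f}\,d\mu$-measure using non-atomicity, assigning alternating signs to build a Rademacher-type perturbation $p_j^{(k)}$ with $|p_j^{(k)}|=\norm{f}$ pointwise on $A_j$ and $\int_{A_j} p_j^{(k)}\phi\,d\mu\to 0$ for every $\phi$ in a weak$^*$-dense enough subset — actually it is cleaner to let $p_j^{(k)}\to 0$ weakly in $L_1$ directly by a Riemann–Lebesgue/martingale-difference argument. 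Then set
$$h_k:=g\cdot\mathbf 1_{\Omega\setminus\bigcup A_j}\;+\;\sum_{j=1}^N v_{j,k},$$
where $v_{j,k}$ is supported on $A_j$, has $\norm{v_{j,k}(\cdot)}\le \norm{f(\cdot)}+\norm{g(\cdot)}$ pointwise (so $\norm{h_k}_{1}\le 1+2N\delta$, then renormalize), is designed so that $\norm{f-h_k}$ picks up, on each $A_j$, essentially $\int_{A_j}(\norm{f}+\norm{f})\,d\mu=2\int_{A_j}\norm{f}\,d\mu$, and $v_{j,k}\rightharpoonup 0$ weakly as $k\to\infty$. Summing over $j$ gives $\norm{f-h_k}\ge 2(1-O(\delta))-O(\delta)$, and $h_k\to g\cdot\mathbf 1_{\Omega\setminus\bigcup A_j}$ weakly, which is within $\delta$ of $g$; a diagonal/net argument over shrinking $\delta$ and the neighborhood filter then yields the required net.

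The main obstacle I anticipate is the bookkeeping that makes the three competing demands compatible \emph{simultaneously}: (i) $h_k$ must stay in $B_{L_1(\mu,X)}$ (hence the pointwise envelope $\norm{f}+\norm{g}$ and the need to keep all the error terms $O(\delta)$ so renormalization costs nothing in the limit), (ii) $\norm{f-h_k}$ must approach $2$, which requires that on most of $\operatorname{supp}(f)$ the perturbation be genuinely ``$-f$-like'' in norm, not merely small, and (iii) $h_k$ must converge weakly to something within $\eps$ of $g$, which forces the genuinely large pieces $v_{j,k}$ to oscillate away to $0$ weakly — and weak convergence in $L_1(\mu,X)$ is delicate (one cannot invoke reflexivity). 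I would handle (iii) by testing against $L_\infty(\mu,X^*)$ and using that on each $A_j$ the functions $v_{j,k}$ are martingale differences with respect to a refining sequence of finite partitions, so $\int_{A_j}\langle v_{j,k},\psi\rangle\,d\mu\to 0$ for $\psi$ simple, then density. Alternatively, and perhaps more in the spirit of the earlier $C(K)$ and $L_1$ arguments, one can first reduce to the case where $X$ is one-dimensional on each $A_j$ by freezing a norming functional, thereby replacing $L_1(\mu,X)$-weak convergence on $A_j$ by scalar $L_1$-weak convergence, which is standard. I would also double-check that the hypothesis ``support contains no atom'' is used exactly where non-atomicity is invoked to subdivide the $A_j$ into arbitrarily many equal-mass pieces (Lyapunov/Sierpiński), and nowhere else.
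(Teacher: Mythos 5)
Your proposal attempts a direct, hands-on net construction, whereas the paper takes a completely different and much shorter route: it simply observes that, $S:=\supp f$ being atomless, the space $L_1(\mu|_S,X)$ has the Daugavet property (this is a classical result, cited from Werner's survey), so \emph{every} norm-one element of it is a super Daugavet point; then it uses the decomposition $L_1(\mu,X)=L_1(\mu|_S,X)\oplus_1 L_1(\mu|_{\Sigma\setminus S},X)$ together with the $\ell_1$-sum transfer result for super Daugavet points (Remark~\ref{rem:l_1_sums}, the case $(a,b)=(1,0)$ of Proposition~\ref{prop:absolutesums-Daugavetpoints}). In effect you are trying to reprove the Daugavet property of atomless $L_1$ from scratch, carrying the extra baggage of an arbitrary target $g\in B_{L_1(\mu,X)}$ simultaneously across $\supp f$ and its complement, when it is cleaner to isolate $\supp f$ first.

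As written, your argument has a genuine gap in the weak-convergence step. You set $h_k:=g\cdot\mathbf 1_{\Omega\setminus\bigcup A_j}+\sum_j v_{j,k}$ with $v_{j,k}\rightharpoonup 0$, and claim that the weak limit $g\cdot\mathbf 1_{\Omega\setminus\bigcup A_j}$ is ``within $\delta$ of $g$''. This is false in general: the sets $A_1,\dots,A_N$ exhaust $\supp f$, and your control is $\int_{A_j}\norm{g}\,d\mu<\delta$ for \emph{each} $j$, not for the sum; if $g$ is supported on $\supp f$, then $\bigl\lVert g\cdot\mathbf 1_{\bigcup A_j}\bigr\rVert_1$ can be as large as $1$. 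So letting $\delta\to 0$ does not push the weak limit towards $g$; it pushes it towards $g\cdot\mathbf 1_{\Omega\setminus\supp f}$. To fix this you must have $v_{j,k}\rightharpoonup g\cdot\mathbf 1_{A_j}$ on each piece (i.e.\ perturb \emph{around} $g$ on $A_j$, not towards $0$), which then complicates the pointwise norm envelope you are relying on for $\norm{h_k}\le 1$. You also correctly flag that weak convergence in $L_1(\mu,X)$ is not controlled by testing against $L_\infty(\mu,X^*)$ unless $X^*$ has the RNP; your proposed reduction to scalar oscillations by freezing a norming functional on each $A_j$ is the right instinct, but it needs to be actually carried out, and once one goes down that road, one is essentially rewriting the proof that atomless $L_1(\mu,X)$ has the Daugavet property. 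I would strongly recommend citing that fact and then using the $\ell_1$-sum transfer, as the paper does.
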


\begin{proof}
Let us write $S:=\supp f$ which contains no atom by hypothesis.
Let us first prove that $f$ is a super Daugavet point. Since $S$ contains no atoms, we have that $L_1(\mu_{\lvert S},X)$ satisfies the Daugavet property (see e.g.\ \cite[Example in p.~81]{werner}). In particular, $f$ is a super Daugavet point in this space. Since $L_1(\mu,X)=L_1(\mu_{\lvert S},X)\oplus_1 L_1(\mu_{\lvert \Sigma\backslash S},X)$, we get that $f$ is a super Daugavet point in $L_1(\mu,X)$ by the transfer results from Subsection~\ref{subsec:absolutesums} (see Remark \ref{rem:l_1_sums}).
\end{proof}

Our next goal is to discuss the relationship with the ccs diametral notions. For real $L_1(\mu)$-spaces, and using a result from \cite{abhlp}, we may actually get that real-valued integrable functions with atomless support are ccs $\Delta$-points.

\begin{proposition}\label{proposition:realcaseL1mu-ccs-delta}
Let $(\Omega,\Sigma,\mu)$ be a measured space and let $f$ be a norm one element in \emph{the real space} $L_1(\mu)$. If the support of the function $f$ contains no atom, then $f$ is a ccs $\Delta$-point.   
\end{proposition}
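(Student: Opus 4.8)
The plan is to reduce the statement to a known result from \cite{abhlp} by showing that in the real space $L_1(\mu)$, every convex combination of slices of $B_{L_1(\mu)}$ containing $f$ must contain the atomless part of $f$ in a suitable sense, and then exploit the fact that atomless $L_1$-spaces have the strong diameter 2 property in a diametral way. More concretely, I would first recall that if $S:=\supp f$ is atomless, then $L_1(\mu)=L_1(\mu_{\lvert S})\oplus_1 L_1(\mu_{\lvert\Omega\setminus S})$ and $f=(f,0)$ with $\|f\|=1$, so it suffices (by a transfer result for $\ell_1$-sums analogous to Remark~\ref{rem:l_1_sums}, but for the ccs $\Delta$ notion) to prove the statement when $\mu$ itself is atomless and $\supp f=\Omega$. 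The key point is then that $L_1(\mu)$ for atomless $\mu$ should satisfy the \emph{diametral strong diameter 2 property} restricted to norm-one points, i.e.\ the restricted DSD2P, which by the localization discussed after Definition~\ref{def:defiDeltanot} says exactly that every norm-one element is a ccs $\Delta$-point.

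The main step is therefore to establish that atomless $L_1(\mu)$ has the restricted DSD2P, or equivalently that every $f\in S_{L_1(\mu)}$ with atomless support is a ccs $\Delta$-point. Here is where I would invoke the cited result from \cite{abhlp}: the relevant fact from that paper is (presumably) that in atomless $L_1(\mu)$ every convex combination of slices of the unit ball is weakly open, or at least that it has nonempty relative weak interior and, more to the point, contains points arbitrarily close in norm to any prescribed element of a slice appearing in the combination. Given such a structural result, I would argue as follows: let $C=\sum_{i=1}^n\lambda_i S_i$ be a ccs of $B_{L_1(\mu)}$ with $f\in C$, write $f=\sum_{i=1}^n\lambda_i f_i$ with $f_i\in S_i$, and fix $\eps>0$. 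Using the atomlessness, split the support into finitely many pieces of small measure and use sign-flips on one of the pieces together with the $\ell_1$-additivity of the norm to produce, inside each $S_i$, an element $g_i$ with $\|f_i-g_i\|$ small but $\|f-g_i\|$ close to $2$ — the classical "Daugavet trick" in $L_1$ where one keeps $f$ fixed on most of the support and reflects a competitor across a thin set. Averaging the $g_i$ gives $g=\sum\lambda_i g_i\in C$ with $\|f-g\|>2-\eps$; the delicate part is that the competitor must simultaneously lie in \emph{all} the slices $S_i$, which is exactly what the weak-openness / local structure of ccs in atomless $L_1$ from \cite{abhlp} guarantees.

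The hard part will be controlling the interaction between the convex combination structure and the atomlessness: a naive Daugavet-type perturbation of a single slice does not obviously survive averaging, and one cannot perturb each $f_i$ independently on disjoint thin sets without leaving some of the slices. The cleanest route is to let \cite{abhlp} do the heavy lifting — pass from the ccs $C$ to a relative weak(ly open) set, or to a set of the form $\bigcap_i S_i'$ with slightly enlarged slices, using their theorem that ccs in atomless $L_1$ are weakly open (so that $C$ itself is relatively weakly open). Then $f$ lies in a relatively weakly open subset of $B_{L_1(\mu)}$, and since atomless $L_1(\mu)$ has the Daugavet property (hence the DD2P and in particular every norm-one point is a super $\Delta$-point by Theorem~\ref{thrm:L_1_ccs_Delta} applied with $X=\K$), $\sup_{y\in C}\|f-y\|=2$. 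That is, once \cite{abhlp} is in hand, the proof collapses to: atomless support $\Rightarrow$ (after splitting off the purely atomic part) reduce to atomless $\mu$; ccs of $B_{L_1(\mu)}$ is weakly open by \cite{abhlp}; $f$ is a super Daugavet point by Theorem~\ref{thrm:L_1_ccs_Delta}; therefore $f$ is a ccs $\Delta$-point (indeed a ccs Daugavet point is too strong, but the super $\Delta$ plus weak-openness of ccs yields ccs $\Delta$). I would write the proof in that order, flagging that the real-scalar hypothesis is needed precisely because the weak-openness of ccs in $L_1(\mu)$ from \cite{abhlp} is a real-scalar phenomenon.
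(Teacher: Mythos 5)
Your instinct to combine the structural result from \cite{abhlp} with Theorem~\ref{thrm:L_1_ccs_Delta} is exactly the paper's strategy, but the reduction you use at the outset does not work, and this is a genuine gap, not a cosmetic one. You write that, after splitting $L_1(\mu)=L_1(\mu_{\lvert S})\oplus_1 L_1(\mu_{\lvert\Omega\setminus S})$, ``it suffices (by a transfer result for $\ell_1$-sums analogous to Remark~\ref{rem:l_1_sums}, but for the ccs $\Delta$ notion) to prove the statement when $\mu$ itself is atomless.'' There is no such transfer result: the paper explicitly records, in the sentence immediately following this proof, that ``it is not clear whether ccs $\Delta$-points transfer through absolute sums, but we have used specific geometric properties of $L_1$-spaces in the previous proof.'' Indeed, the second summand $L_1(\mu_{\lvert\Omega\setminus S})$ is exactly where the atoms of $\mu$ live, and Proposition~\ref{prop:oplus1stronglyexposedpoint} shows that $\oplus_1$-sums with an atom on one side destroy ccs Daugavet points; whether the weaker ccs $\Delta$ notion survives is precisely the open issue, so you cannot cite an ``analogous'' transfer into existence. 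Once the reduction to atomless $\mu$ with $\supp f=\Omega$ is taken away, the rest of your argument (where you observe that such a space has the Daugavet property, hence DSD2P, hence every norm-one point is trivially a ccs $\Delta$-point) has nothing to stand on.

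A second, smaller problem is $\sigma$-finiteness. The result you want from \cite{abhlp} (their Theorem~5.5, about when an element of a ccs of $B_{L_1(\mu)}$ is a relative weak interior point) is a statement for $\sigma$-finite real $L_1$-spaces, and your reduction to atomless $\mu$ does not make $\mu$ $\sigma$-finite. The paper handles both issues at once with a different and more local reduction: writing $f=\sum_{i=1}^n\lambda_i g_i$ with $g_i\in S_i$, it restricts the whole picture to $\tilde S:=\supp f\cup\bigcup_i\supp g_i$, which is automatically $\sigma$-finite (integrable functions have $\sigma$-finite support), and observes that the ccs $D$ induces a ccs $\tilde D$ of $B_{L_1(\tilde\mu)}$ containing $\tilde f$. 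Crucially $\tilde\mu$ may still have atoms (coming from the $g_i$'s), but $\tilde f$ has atomless support, and that is exactly what \cite[Theorem~5.5]{abhlp} requires to conclude that $\tilde f$ is a relative weak interior point of $\tilde D$. Since $\tilde f$ is a super $\Delta$-point by Theorem~\ref{thrm:L_1_ccs_Delta}, one gets $\tilde g\in\tilde D$ with $\bigl\|\tilde f-\tilde g\bigr\|>2-\eps$, and extending $\tilde g$ by $0$ back to $\Omega$ lands it in $D$. This subspace-restriction trick (a) avoids the $\ell_1$-sum transfer you assumed, (b) takes care of $\sigma$-finiteness, and (c) uses the precise form of the \cite{abhlp} theorem, which is pointwise in the element, not a blanket weak-openness of ccs.
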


\begin{proof}
Take $\eps>0$ and $D:=\sum_{i=1}^n\lambda_i S_i$ a ccs of $B_{L_1(\mu)}$ containing $f$. Write $f:=\sum_{i=1}^n\lambda_i g_i$ with $g_i\in S_i$ for every $i$. Consider the measurable subset $\tilde{S}:=\supp f\cup\bigcup_{i=1}^n \supp g_i$ of $\Omega$ and let $\tilde{\mu}$ be the $\sigma$-finite measure $\tilde{\mu}:=\mu_{\lvert \tilde{S}}$ on $(\tilde{S},\Sigma_{\lvert \tilde{S}})$. Then $D$ induces a ccs $\tilde{D}$ of $B_{L_1(\tilde{\mu})}$ by restriction of the support which contains the function $\tilde{f}$ which is just $f$ viewed as an element of $L_1(\tilde{\mu})$ and hence, the support of $\tilde{f}$ does not contains atoms. Since $\tilde{f}$ belongs to the unit sphere of the real space $L_1(\tilde{\mu})$, we have by \cite[Theorem 5.5]{abhlp} that $\tilde{f}$ is an interior point of $\tilde{D}$ for the relative weak topology of $B_{L_1(\tilde{\mu})}$. As we have already shown that $\tilde{f}$ is a super Daugavet point in Theorem~\ref{thrm:L_1_ccs_Delta} (and hence a super $\Delta$-point), we can find $\tilde{g}\in \tilde{D}$ such that $\bigl\|\tilde{f}-\tilde{g}\bigr\|>2-\eps$. By just considering the extension $g$ of $\tilde{g}$ to the whole $\Omega$ by $0$, we get that $g\in D$ and that $\norm{f-g}=\bigl\|\tilde{f}-\tilde{g}\bigr\|>2-\eps$.
\end{proof}

Let us comment that it is not clear whether ccs $\Delta$-points transfer through absolute sums, but we have used specific geometric properties of $L_1$-spaces in the previous proof.

\begin{rem}
Observe that since \cite[Theorem 5.5]{abhlp} is also valid for convex combination of relative weakly open subsets of $B_{L_1(\mu)}$, we in fact have that every $\Delta$-point in a real $L_1(\mu)$ space is actually a ccw $\Delta$-point.
\end{rem}

Putting together Proposition~\ref{prop:L1-spaces-previous}, Theorem~\ref{thrm:L_1_ccs_Delta}, and Proposition~\ref{proposition:realcaseL1mu-ccs-delta}, we get the following corollary.

\begin{corollary}
Let $(\Omega,\Sigma,\mu)$ be a measured space and let $f$ be a norm one element in $L_1(\mu)$. Then, the following notions are equivalent for $f$: $\Delta$-points, Daugavet point, super $\Delta$-point, and super Daugavet point. Moreover, in the real case, the previous four notions are also equivalent to being ccs $\Delta$-point.
\end{corollary}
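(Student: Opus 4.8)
The plan is to assemble the three results immediately preceding the statement together with the implications recorded in Figure~\ref{figure01}; no new argument is needed. The starting point is Proposition~\ref{prop:L1-spaces-previous}, which already provides the equivalence of three conditions on a norm one $f\in L_1(\mu)$: that $f$ is a Daugavet point, that $f$ is a $\Delta$-point, and that the support of $f$ contains no atom. It therefore suffices to slot the two ``super'' notions into this chain of equivalences.

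To do so I would close a loop of implications. If the support of $f$ contains no atom, then Theorem~\ref{thrm:L_1_ccs_Delta}, applied with $X=\K$, shows that $f$ is a super Daugavet point. From Figure~\ref{figure01} we have super Daugavet $\Rightarrow$ super $\Delta$ $\Rightarrow$ $\Delta$, so both ``super Daugavet'' and ``super $\Delta$'' are sandwiched between ``support atomless'' and ``$\Delta$-point''; since these two outer conditions coincide by Proposition~\ref{prop:L1-spaces-previous}, all four notions are pairwise equivalent. For the ``moreover'' part in the real case, I would add one more arrow: Proposition~\ref{proposition:realcaseL1mu-ccs-delta} gives that an atomless support forces $f$ to be a ccs $\Delta$-point, while Figure~\ref{figure01} gives ccs $\Delta$ $\Rightarrow$ $\Delta$. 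Thus ``support atomless'' $\Rightarrow$ ccs $\Delta$-point $\Rightarrow$ $\Delta$-point $\Leftrightarrow$ ``support atomless'', so the ccs $\Delta$ notion joins the other four.

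Since every nontrivial step has already been carried out in the cited statements, there is no genuine obstacle here; the only points worth double-checking are that Theorem~\ref{thrm:L_1_ccs_Delta} and Proposition~\ref{proposition:realcaseL1mu-ccs-delta} are indeed formulated for arbitrary (not necessarily $\sigma$-finite) measure spaces, and that the scalar-valued case is recovered from the vector-valued statement by taking $X=\K$ — both of which hold as stated. One should also keep in mind that ccs Daugavet points are deliberately left out of the equivalence, in line with the obstruction coming from $\oplus_1$-sums recorded in Proposition~\ref{prop:oplus1stronglyexposedpoint}.
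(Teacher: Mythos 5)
Your proof is correct and takes exactly the route the paper does: the corollary is stated with ``Putting together Proposition~\ref{prop:L1-spaces-previous}, Theorem~\ref{thrm:L_1_ccs_Delta}, and Proposition~\ref{proposition:realcaseL1mu-ccs-delta}'' as its entire justification, which is precisely the loop of implications you close using Figure~\ref{figure01}.
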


We now deal with ccs Daugavet points in $L_1(\mu)$-spaces. Observe that if $\Omega$ admits an atom $A$ of finite measure, then we have $L_1(\mu)\equiv L_1(\mu_{\lvert \Omega\backslash A})\oplus_1 \K$. In particular, in this case $L_1(\mu)$ fails to have ccs Daugavet points by Proposition~\ref{prop:oplus1stronglyexposedpoint}. We then have the following characterization of the presence of a ccs Daugavet point in an $L_1$-space.

\begin{prop}\label{prop:L_1_ccs_Daugavet}
Let $(\Omega,\Sigma,\mu)$ be a measure space. Then, the following assertions are equivalent.
\begin{enumerate}
\item $L_1(\mu)$ has the Daugavet property.
\item $L_1(\mu)$ contains a ccs Daugavet point.
\item $L_1(\mu)$ has the SD2P.
\item $\mu$ admits no atom of finite measure.
\end{enumerate}
\end{prop}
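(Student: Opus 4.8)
The plan is to prove the cycle of implications $(1)\Rightarrow(2)\Rightarrow(3)\Rightarrow(4)\Rightarrow(1)$, the first and the last of which are essentially known, so that the real content lies in $(2)\Rightarrow(3)$ and, to a lesser extent, in $(3)\Rightarrow(4)$.

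First, $(1)\Rightarrow(2)$ is immediate: if $L_1(\mu)$ has the Daugavet property then every point of $S_{L_1(\mu)}$ is a Daugavet point, hence a ccs Daugavet point (both super Daugavet and ccs Daugavet points are localizations of the Daugavet property, as recalled after Definition~\ref{def:defiDauganot}, via \cite[Lemma~3]{shv00}); and $L_1(\mu)$ is nonzero, so such a point exists. For $(4)\Rightarrow(1)$ I would invoke the classical fact that $L_1(\mu)$ has the Daugavet property if and only if $\mu$ has no atom of finite measure; this is the standard Daugavet-property characterization for $L_1$ (see e.g.\ the reference to \cite{werner} used in the proof of Theorem~\ref{thrm:L_1_ccs_Delta}): decompose $\mu$ into its purely atomic part supported on finite-measure atoms and the rest, note that absence of finite-measure atoms makes the whole space atomless in the relevant sense, and atomless $L_1$-spaces have the Daugavet property.

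The implication $(3)\Rightarrow(4)$ is the contrapositive of the observation made just before the statement: if $\mu$ admits an atom $A$ of finite measure, then $L_1(\mu)\equiv L_1(\mu_{\lvert \Omega\setminus A})\oplus_1\K$, and $\K$ (one-dimensional) trivially contains a strongly exposed point of its unit ball, so by Proposition~\ref{prop:oplus1stronglyexposedpoint} the space $L_1(\mu)$ has convex combinations of slices of its unit ball of arbitrarily small diameter; in particular it fails the SD2P. So $(3)\Rightarrow(4)$ is already in hand.

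The heart of the matter is $(2)\Rightarrow(3)$, and since $(4)\Rightarrow(1)\Rightarrow(2)$ will be established, it suffices to prove $(2)\Rightarrow(4)$ and then deduce $(3)$ from $(1)$ via the known implication that the Daugavet property implies the SD2P (recalled in Section~\ref{sec:notation_and_prelimiary_results}). So the key step is: \emph{if $L_1(\mu)$ contains a ccs Daugavet point, then $\mu$ has no finite-measure atom.} I would argue by contradiction: suppose $A$ is a finite-measure atom, write $L_1(\mu)\equiv L_1(\mu_{\lvert\Omega\setminus A})\oplus_1\K$ as above. Then by Proposition~\ref{prop:oplus1stronglyexposedpoint} there are convex combinations of slices of $B_{L_1(\mu)}$ of arbitrarily small diameter; but a space containing a ccs Daugavet point has the SD2P by Proposition~\ref{prop:ccs_Daugavet_SR}, so every ccs of the unit ball has diameter $2$, a contradiction. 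Thus $(2)\Rightarrow(4)$, and combined with $(4)\Rightarrow(1)$ and ``Daugavet property $\Rightarrow$ SD2P'' we also get $(2)\Rightarrow(3)$ and close the loop. The main obstacle, such as it is, is making sure the $\oplus_1$-decomposition in the presence of a finite-measure atom is set up correctly (integrable functions being constant on atoms, and only finite-measure atoms being relevant for $L_1$), together with citing the correct form of the $L_1$ Daugavet-property characterization for fully general $\mu$; none of these is deep, but care is needed to handle arbitrary (not $\sigma$-finite) measures.
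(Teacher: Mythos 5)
Your proposal is correct and uses essentially the same ingredients as the paper's proof: Proposition~\ref{prop:ccs_Daugavet_SR} for passing from a ccs Daugavet point to the SD2P, Proposition~\ref{prop:oplus1stronglyexposedpoint} together with the atom-splitting $L_1(\mu)\equiv L_1(\mu_{\lvert \Omega\setminus A})\oplus_1\K$ for passing from the SD2P to the absence of finite-measure atoms, and the classical characterization of the Daugavet property in $L_1$-spaces. The one stylistic difference is that you declare $(2)\Rightarrow(3)$ to be ``the heart of the matter'' and arrange to deduce it via the detour $(2)\Rightarrow(4)\Rightarrow(1)\Rightarrow \text{SD2P}$; however, Proposition~\ref{prop:ccs_Daugavet_SR}, which you already invoke inside that detour, is literally the implication $(2)\Rightarrow(3)$, so the detour is redundant. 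The paper simply records $(2)\Rightarrow(3)$ as an immediate consequence of Proposition~\ref{prop:ccs_Daugavet_SR} and closes the cycle $(4)\Rightarrow(1)\Rightarrow(2)\Rightarrow(3)\Rightarrow(4)$, with $(3)\Rightarrow(4)$ coming exactly as in your contrapositive via Proposition~\ref{prop:oplus1stronglyexposedpoint}.
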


\begin{proof}
(1)$\Leftrightarrow$(4) is well known (see \cite[Section 2, Example (b)]{werner}); (1)$\Rightarrow$(2) is also known; (2)$\Rightarrow$(3) is contained in Proposition~\ref{prop:ccs_Daugavet_SR}. Finally,  (3)$\Rightarrow$(4) follows from Proposition~\ref{prop:oplus1stronglyexposedpoint} and the comment before the statement of this proposition.
\end{proof}

\subsection{Remarks on some examples from the literature}\label{subsect:fromliterature}
\subsubsection{Two examples in Lipschitz-free spaces}\label{subsubsec:extreme_molecules}
In \cite{veefree}, Veeorg constructed a surprising example of a space satisfying the Radon-Nikod\'{y}m property and containing a Daugavet point. We slightly improve this result by showing that this point is also a ccs $\Delta$-point by proving a general fact about extreme $\Delta$-molecules in Lipschitz-free spaces. For the necessary definitions we refer to the cited paper \cite{veefree} and to \cite{AGPP-TAMS2022,ANPP,jr22}; for further background on Lipschitz-free spaces, we refer to the book \cite{wea2}.

For this purpose, we start by recalling the following characterization of molecules which are $\Delta$-points on Lipschitz-free spaces from \cite{jr22}.

\begin{prop}[\mbox{\textrm{\cite[Theorem 4.7]{jr22}}}]
\label{prop:Delta_molecules_free_spaces}
Let $M$ be a pointed metric space and let $x\neq y\in M$. The molecule $m_{x,y}$ is a $\Delta$-point if and only if every slice $S$ of $B_{\F(M)}$ containing $m_{x,y}$ also contains for every $\eps>0$ a molecule $m_{u,v}$ with $u\neq v\in M$ satisfying $d(u,v)<\eps$.
\end{prop}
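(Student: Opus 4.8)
\emph{Plan of proof.} The statement is an equivalence, and I would establish the two implications separately. Throughout, identify $\F(M)^* = \mathrm{Lip}_0(M)$, so that every slice of $B_{\F(M)}$ has the form $S(f,\alpha)$ with $f \in S_{\mathrm{Lip}_0(M)}$ (then $\sup_{B_{\F(M)}} f = \norm{f}_{\mathrm{Lip}} = 1$) and $\langle f, m_{u,v}\rangle = \frac{f(u)-f(v)}{d(u,v)}$; recall also that $B_{\F(M)} = \overline{\operatorname{co}}\{m_{u,v} : u \neq v \in M\}$. The engine of the ``if'' direction is a purely metric estimate on distances between molecules, whereas the ``only if'' direction is where the genuine difficulty lies.

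For the \emph{sufficiency} (``if''), the plan is to reduce everything to the following lemma: there is an absolute constant $C$ (in fact one can take $C = 2$) such that
\[
\norm{m_{x,y} - m_{u,v}}_{\F(M)} \;\geq\; 2 - \frac{C\, d(u,v)}{d(x,y)} \qquad \text{for all } u \neq v \in M.
\]
I would prove this by building, for each $\eta > 0$, a function $g \in \mathrm{Lip}_0(M)$ with $\norm{g}_{\mathrm{Lip}} \leq 1$ that is \emph{simultaneously} nearly steepest along the pair $(x,y)$ and nearly steepest along the pair $(u,v)$ in the reversed orientation; concretely, one first defines $g$ on the four points $\{x,y,u,v\}$ by prescribing $g(x) - g(y)$ and $g(v) - g(u)$ as close to $d(x,y)$ and $d(u,v)$, respectively, as the triangle inequality allows (a short case analysis shows the unavoidable deficit in $g(x)-g(y)$ is at most $2\,d(u,v)$), and then extends $g$ to all of $M$ by a McShane extension; testing $m_{x,y} - m_{u,v}$ against $g$ gives the bound. (Alternatively one can phrase this through Kantorovich duality for the optimal transport expression of $\norm{\cdot}_{\F(M)}$.) Granting the lemma, the ``if'' direction is immediate: if $S$ is a slice of $B_{\F(M)}$ containing $m_{x,y}$ and $\eps > 0$, apply the hypothesis with $\eps' := \frac{\eps\, d(x,y)}{C}$ to get a molecule $m_{u,v} \in S$ with $d(u,v) < \eps'$; then $\norm{m_{x,y} - m_{u,v}} > 2 - \eps$, so $\sup_{z \in S} \norm{m_{x,y} - z} = 2$. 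Since this holds for every slice containing $m_{x,y}$, the molecule is a $\Delta$-point.

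For the \emph{necessity} (``only if''), I would argue by contraposition. Suppose some slice $S(f,\alpha)$ with $f \in S_{\mathrm{Lip}_0(M)}$ and $m_{x,y} \in S(f,\alpha)$ contains no molecule $m_{u,v}$ with $d(u,v) < \eps_0$. Testing both orientations of a small pair, this says precisely that $\abs{f(u) - f(v)} \leq (1-\alpha)\, d(u,v)$ whenever $0 < d(u,v) < \eps_0$ --- that is, $f$ has Lipschitz slope at most $1-\alpha$ at all scales below $\eps_0$, while $\frac{f(x)-f(y)}{d(x,y)} > 1-\alpha$. One then has to produce a \emph{single} slice $S' \ni m_{x,y}$ with $\sup_{z \in S'} \norm{m_{x,y} - z} < 2$, which exhibits $m_{x,y}$ as not a $\Delta$-point. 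The natural candidate is a small perturbation of $S(f,\alpha)$ towards a functional that (nearly) peaks at $m_{x,y}$, in the spirit of the proof of Lemma~\ref{lemma:diminition}: chaining $f$ along $\eps_0$-hops shows that $f$ is $(1-\alpha)$-Lipschitz for the $\eps_0$-chain distance $\rho_{\eps_0} \geq d$, whence the non-chainability inequality $\rho_{\eps_0}(x,y) > d(x,y)$ holds, and one uses this gap quantitatively to keep the distance from $m_{x,y}$ to every element of the perturbed slice away from $2$. (Alternatively, this direction can be routed through the de Leeuw embedding $\mathrm{Lip}_0(M) \hookrightarrow C_b(\widetilde{M})$, $\widetilde{M} = \{(p,q) : p \neq q\}$, under which $m_{x,y}$ becomes a point evaluation and ``every slice around $m_{x,y}$ contains arbitrarily small molecules'' becomes a statement about $(x,y)$ being an accumulation point; the argument then parallels the $C(K)$ characterisation recorded in Proposition~\ref{proposition:delta-superDaugavet-L1predual-old} and Theorem~\ref{thm:C(K)-deltaimpliesccsDaugavet}.)

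The main obstacle is the ``only if'' direction. The tempting shortcut --- use the $\Delta$-point property to extract, in each slice around $m_{x,y}$, an element far from $m_{x,y}$, decompose it into molecules, and hope one of them has small support --- fails, because a slice can simultaneously contain large-support molecules at distance exactly $2$ from $m_{x,y}$ and small-support ones, so far-ness from $m_{x,y}$ does not by itself force small support. Thus the argument must genuinely exploit the equivalence between ``no molecule of small support in $S(f,\alpha)$'' and ``local flatness of $f$'' and convert that flatness into a quantitative upper bound strictly below $2$ for a well-chosen slice; carrying this out uniformly, via the chaining/$\eps_0$-net considerations above, is the technical heart of the proof.
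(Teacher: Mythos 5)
This proposition is only cited in the paper (from \cite[Theorem~4.7]{jr22}); there is no in-house proof, so your proposal has to stand on its own. Your ``if'' direction is correct: the estimate $\norm{m_{x,y}-m_{u,v}}\geq 2 - 2\,d(u,v)/d(x,y)$, obtained by gluing a $1$-Lipschitz $g$ that is near-steepest on $(x,y)$ and reverse-steepest on $(u,v)$, is a quantitative molecule-specific case of what the paper records as Lemma~\ref{lemma:distance_to_closely_supported_molecules} (which is exactly \cite[Theorem~2.6]{jr22}), and plugging it in gives the sufficiency precisely as in the original argument.

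The ``only if'' direction, however, has a genuine gap. You correctly translate ``$S(f,\alpha)$ has no $\eps_0$-small molecules'' into the local flatness $\abs{f(u)-f(v)}\leq(1-\alpha)\,d(u,v)$ for $0<d(u,v)<\eps_0$, and from the steepness of $f$ on $(x,y)$ you deduce the chain inequality $\rho_{\eps_0}(x,y)>d(x,y)$. But the crucial remaining step --- producing a concrete slice $S'\ni m_{x,y}$ with $\sup_{z\in S'}\norm{m_{x,y}-z}<2$ --- is only asserted: you say one ``uses this gap quantitatively to keep the distance from $m_{x,y}$ to every element of the perturbed slice away from $2$'' without specifying the perturbation or how the bound follows. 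Lemma~\ref{lemma:diminition} merely shrinks a slice inside a given slice and carries no distance information, so it cannot do this job by itself; and the scalar gap $\rho_{\eps_0}(x,y)-d(x,y)>0$ does not obviously control $\norm{m_{x,y}-z}$ for arbitrary $z\in B_{\F(M)}$ in a slice, since such $z$ need not be small molecules (nor molecules at all) and can lie near distance $2$ from $m_{x,y}$ for reasons unrelated to local flatness of the slicing functional. The de Leeuw alternative you mention has a parallel mismatch: the $C(K)$-criterion in item~(6) of Proposition~\ref{proposition:delta-superDaugavet-L1predual-old} characterises $\Delta$-points among \emph{elements} of a $C(K)$-space, whereas under the de Leeuw embedding $m_{x,y}$ becomes a \emph{functional} on a subspace of $C(\beta\widetilde{M})$, so the analogy does not transfer directly. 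In short, you have reduced the necessity to the unproven claim that ``$\rho_{\eps_0}(x,y)>d(x,y)$ for some $\eps_0>0$ forces $m_{x,y}$ not to be a $\Delta$-point,'' which is essentially as hard as what you set out to prove; you flag this as the ``technical heart,'' and indeed it is exactly what is missing.
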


In the case in which the molecule is an extreme point, we have the following improved result.

\begin{theorem}\label{thrm:extreme_Delta_molecules}
Let $M$ be a pointed metric space, and let $x\neq y\in M$. If the molecule $m_{x,y}$ is an extreme point and a $\Delta$-point, then $m_{x,y}$ is a ccs $\Delta$-point.
\end{theorem}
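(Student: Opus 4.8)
The plan is to combine Proposition~\ref{prop:Delta_molecules_free_spaces} (the slice characterization of $\Delta$-molecules) with Proposition~\ref{prop:extreme_diametral_delta}(2), which says that an \emph{extreme} super $\Delta$-point is automatically a ccs $\Delta$-point. So it suffices to upgrade the hypothesis ``$m_{x,y}$ is a $\Delta$-point'' to ``$m_{x,y}$ is a super $\Delta$-point''. By Lemma~\ref{lem:super_Delta_set_characterizations}(2) this amounts to showing that $m_{x,y}\in\overline{\Delta_\eps(m_{x,y})}^w$ for every $\eps>0$, i.e.\ that every non-empty relatively weakly open subset $W$ of $B_{\F(M)}$ containing $m_{x,y}$ contains a point at distance greater than $2-\eps$ from $m_{x,y}$.

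First I would fix $\eps>0$ and such a $W$. By Bourgain's lemma (Lemma~\ref{lem:bourgain_lemma}), $W$ contains a convex combination of slices $\sum_{i=1}^n\lambda_i S_i$ of $B_{\F(M)}$; but more usefully, since $m_{x,y}$ is a \emph{preserved} extreme point of $B_{\F(M)}$ (extreme points of $B_{\F(M)}$ of molecular type are automatically preserved — this is part of the circle of results around \cite{jr22,veefree} that one may invoke, or can be argued directly from the fact that $m_{x,y}$ being extreme forces it to be a strongly exposed-type point in the relevant sense), slices containing $m_{x,y}$ form a neighbourhood basis of $m_{x,y}$ in the relative weak topology. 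Hence there is a single slice $S$ with $m_{x,y}\in S\subseteq W$. Now apply Proposition~\ref{prop:Delta_molecules_free_spaces}: since $m_{x,y}$ is a $\Delta$-point and $m_{x,y}\in S$, the slice $S$ contains a molecule $m_{u,v}$ with $0<d(u,v)<\delta$ for $\delta>0$ as small as we wish. Finally I would estimate $\norm{m_{x,y}-m_{u,v}}$: a standard computation in Lipschitz-free spaces (testing against a $1$-Lipschitz function that is constant on $\{u,v\}$ and separates it well from $\{x,y\}$, using that $m_{u,v}$ has diameter-like support shrinking to a point) shows $\norm{m_{x,y}-m_{u,v}}\to 2$ as $d(u,v)\to 0$. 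Choosing $\delta$ small enough gives $m_{u,v}\in S\subseteq W$ with $\norm{m_{x,y}-m_{u,v}}>2-\eps$, so $m_{x,y}$ is a super $\Delta$-point.

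With $m_{x,y}$ now established as an extreme super $\Delta$-point, Proposition~\ref{prop:extreme_diametral_delta}(2) immediately yields that $m_{x,y}$ is a ccs $\Delta$-point, completing the argument. The main obstacle I anticipate is the justification that an extreme molecule is a \emph{preserved} extreme point (equivalently, that slices containing it form a weak neighbourhood basis), which is what lets us replace the convex combination of slices coming from Bourgain's lemma by a single slice and thereby feed the situation into Proposition~\ref{prop:Delta_molecules_free_spaces}; the distance estimate $\norm{m_{x,y}-m_{u,v}}\to 2$ is routine and the reduction via Proposition~\ref{prop:extreme_diametral_delta}(2) is immediate. One should double-check that the $\delta$ controlling $d(u,v)$ can be chosen uniformly, but since we only need a single molecule in each $W$, no uniformity beyond one choice per $(\eps,W)$ is required.
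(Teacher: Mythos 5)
There is a fatal gap: your claim that an extreme molecule $m_{x,y}$ is automatically a \emph{preserved} extreme point is false, and in fact the paper explicitly rules out this route in the remark immediately following the theorem: ``molecules of Lipschitz-free spaces which are preserved extreme points are denting points, hence very far from being $\Delta$-points.'' So if $m_{x,y}$ were a preserved extreme point it would be a denting point of $B_{\F(M)}$, contradicting the hypothesis that it is a $\Delta$-point. The paper even supplies a concrete instance in Example~\ref{example:Lipfree-ANPP} where $m_{0,q}$ is extreme and a $\Delta$-point but is explicitly \emph{not} a preserved extreme point. Consequently slices containing $m_{x,y}$ do not form a weak neighbourhood basis, you cannot replace a generic weakly open $W$ by a single slice, and your proof that $m_{x,y}$ is a super $\Delta$-point collapses. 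Worse, the paper remarks after Example~\ref{example:veefree} that it is \emph{not known} whether these extreme $\Delta$-molecules are super $\Delta$-points at all, so the intermediate target you chose may not even be true; ``ccs $\Delta$ via super $\Delta$'' is exactly the reduction the authors warn cannot be made here.

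The paper's actual proof avoids any weak-topology upgrade. It fixes a ccs $C=\sum_{i=1}^n\lambda_iS_i$ containing $m_{x,y}$, uses extremality only in the elementary form $m_{x,y}\in\bigcap_iS_i$, then invokes Proposition~\ref{prop:Delta_molecules_free_spaces} to find molecules $m_{u_i,v_i}\in S_i$ with $d(u_i,v_i)$ arbitrarily small, and combines these via Lemma~\ref{lemma:distance_to_closely_supported_molecules} in an inductive homogeneity argument (normalizing the running difference $m_{x,y}-\sum_{j\le k}\lambda_jm_{u_j,v_j}$ at each step) to produce $\mu=\sum_i\lambda_im_{u_i,v_i}\in C$ with $\norm{m_{x,y}-\mu}>2-\eps$. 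Notice that the distance estimate you sketch (``test against a $1$-Lipschitz function constant on $\{u,v\}$'') also does not work as stated: the point of Lemma~\ref{lemma:distance_to_closely_supported_molecules} is precisely that, for a \emph{fixed} $\mu\in S_{\F(M)}$, closely supported molecules are nearly at distance $2$ from $\mu$, and this requires the (nontrivial) argument from \cite{jr22}, not the ad hoc function you describe. You will need to rework the proof along the lines of the inductive construction rather than through the super $\Delta$ / preserved extreme point route.
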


Observe that this result cannot be obtained from  Proposition~\ref{prop:extreme_diametral_delta}: molecules of Lipschitz-free spaces which are preserved extreme points are denting points, hence very far from being $\Delta$-points.

To give the proof of the theorem, we need a result which is just an  equivalent reformulation of a result in \cite{jr22}.

\begin{lemma}[\mbox{\textrm{\cite[Theorem 2.6]{jr22}}}]
\label{lemma:distance_to_closely_supported_molecules}
Let $M$ be a pointed metric space, and let $\mu\in S_{\F(M)}$. For every $\eps>0$, there exists $\delta>0$ such that given $u\neq v\in M$ with $d(u,v)<\delta$ we have $\norm{\mu\pm m_{u,v}}>2-\eps$.
\end{lemma}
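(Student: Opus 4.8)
The plan is to estimate $\norm{\mu\pm m_{u,v}}$ from below by duality, using that $\F(M)^{*}=\mathrm{Lip}_{0}(M)$ with the pairing $\langle f,\delta_{x}\rangle=f(x)$, so that $\langle f,m_{u,v}\rangle=\frac{f(u)-f(v)}{d(u,v)}$ and $\norm{\mu+m_{u,v}}=\sup\bigl\{\langle f,\mu\rangle+\frac{f(u)-f(v)}{d(u,v)}:f\in\mathrm{Lip}_{0}(M)\text{ is }1\text{-Lipschitz}\bigr\}$. Since $m_{v,u}=-m_{u,v}$ and $d(v,u)=d(u,v)$, it suffices to find $\delta>0$ such that $\norm{\mu+m_{u,v}}>2-\eps$ for \emph{every} ordered pair $(u,v)$ with $0<d(u,v)<\delta$; applying this to both $(u,v)$ and $(v,u)$ then yields the two-sided bound. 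In turn, for a fixed such pair it is enough to produce a $1$-Lipschitz $f\in\mathrm{Lip}_{0}(M)$ with $\langle f,\mu\rangle$ close to $1$ and with $f(u)-f(v)$ close to $d(u,v)$.

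First I would fix $\eta:=\eps/2$ and choose, once and for all, a $1$-Lipschitz $f_{0}\in\mathrm{Lip}_{0}(M)$ with $\langle f_{0},\mu\rangle>1-\eta$ (possible as $\norm{\mu}=1$). Given $u\neq v$, the central step is a local surgery on $f_{0}$ near $v$: put $g(x):=f_{0}(u)-d(u,v)+d(x,v)$ and $f:=\min\{f_{0},g\}$, then subtract the constant $f(0)$ so that $f\in\mathrm{Lip}_{0}(M)$ (this affects neither the Lipschitz constant of $f$, nor $f(u)-f(v)$, nor $\langle f,\mu\rangle$, since subtracting a constant does not change the pairing with elements of $\F(M)$). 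Then $f$ is $1$-Lipschitz; using $f_{0}(v)\geq f_{0}(u)-d(u,v)$ one checks $f(u)=f_{0}(u)$ and $f(v)=f_{0}(u)-d(u,v)$, hence $f(u)-f(v)=d(u,v)$ \emph{exactly}; and the perturbation $h:=f_{0}-f$ equals $(f_{0}-g)^{+}$ up to that additive constant, so $h\in\mathrm{Lip}_{0}(M)$, its Lipschitz constant is $\leq 2$, and $0\leq(f_{0}-g)^{+}(x)\leq d(x,u)+d(u,v)-d(x,v)\leq 2d(u,v)$ for all $x$, giving $\norm{h}_{\infty}\leq 4d(u,v)$. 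Inserting $f$ into the duality formula yields $\norm{\mu+m_{u,v}}\geq\langle f,\mu\rangle+1=1+\langle f_{0},\mu\rangle-\langle h,\mu\rangle\geq 2-\eta-\abs{\langle h,\mu\rangle}$.

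It then remains to show that $\abs{\langle h,\mu\rangle}<\eta$ once $d(u,v)$ is small, \emph{uniformly} over all such pairs, and this is the only non-formal point. If it failed, there would exist pairs $(u_{n},v_{n})$ with $d(u_{n},v_{n})\to 0$ and $\abs{\langle h_{n},\mu\rangle}\geq\eta$, where $h_{n}$ is the corresponding perturbation; but $\norm{h_{n}}_{\infty}\leq 4d(u_{n},v_{n})\to 0$ and each $h_{n}$ has Lipschitz constant $\leq 2$, so $(h_{n})$ is a bounded sequence in $\mathrm{Lip}_{0}(M)=\F(M)^{*}$ converging pointwise to $0$. Since the Dirac functionals $\{\delta_{x}:x\in M\}$ have dense linear span in $\F(M)$, pointwise convergence of this norm-bounded sequence forces weak$^{*}$ convergence, whence $\langle h_{n},\mu\rangle\to 0$, a contradiction. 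The resulting $\delta$ then works in the lemma. (Alternatively, as the surrounding text notes, the statement is just a reformulation of \cite[Theorem~2.6]{jr22}, which may simply be invoked.)

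The step I expect to be the crux is precisely this last uniform estimate: one cannot bound $\abs{\langle h,\mu\rangle}$ in terms of $\norm{h}_{\infty}$ for an arbitrary functional, so it is essential that $\mu$ lies in $\F(M)$, i.e.\ in the closed span of the Dirac functionals, which makes weak$^{*}$ convergence of norm-bounded sequences in $\mathrm{Lip}_{0}(M)$ coincide with pointwise convergence; given that, the uniformity over all pairs $(u,v)$ with $d(u,v)$ small comes for free from the sequential contradiction argument, with no compactness or separability hypothesis on $M$.
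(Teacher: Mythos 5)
Your proof is correct. Note, however, that the paper does not give its own argument for this lemma: it is presented as ``just an equivalent reformulation of a result in \cite{jr22}'' and the justification is the citation of \cite[Theorem~2.6]{jr22}, so there is no internal proof to compare against; what you have written is a self-contained alternative to the citation. The local flat modification $f:=\min\{f_{0},g\}$ with $g(x)=f_{0}(u)-d(u,v)+d(x,v)$ is a standard McShane-type surgery, and you verify correctly that $f$ stays $1$-Lipschitz, that $\langle f,m_{u,v}\rangle=1$ exactly, and that the perturbation $h=f_{0}-f$ has Lipschitz constant at most $2$ and sup norm bounded by a multiple of $d(u,v)$. The crux, as you identify, is getting $\abs{\langle h,\mu\rangle}<\eta$ uniformly over pairs $(u,v)$ with $d(u,v)$ small; the sequential contradiction, combined with the observation that a norm-bounded sequence in $\mathrm{Lip}_{0}(M)=\F(M)^{*}$ converging pointwise to $0$ converges weak$^{*}$ to $0$ (because the Dirac functionals span a dense subspace of $\F(M)$), is exactly right and uses only that $\mu\in\F(M)$, with no separability or compactness hypothesis on $M$.
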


Using this result and a homogeneity argument similar to the one from \cite[Lemma 2.3]{blr14}, we can provide the pending proof.

\begin{proof}[Proof of Theorem~\ref{thrm:extreme_Delta_molecules}]
Let $C:=\sum_{i=1}^n\lambda_iS_i$ be a ccs of $B_{\F(M)}$ containing $m_{x,y}$ and let $\eps>0$. Since $m_{x,y}$ is extreme, we have that $m_{x,y}\in \bigcap_{i=1}^n S_i$, and by Proposition \ref{prop:Delta_molecules_free_spaces} every $S_i$ contains molecules of $\F(M)$ supported at arbitrarily close points. Using Lemma \ref{lemma:distance_to_closely_supported_molecules}, we construct inductively for every $\eta>0$ a finite sequence $(m_{u_i,v_i})_{i=1}^n$ of molecules in $\F(M)$ such that
\begin{enumerate}
\item $m_{u_i,v_i}\in S_i$ for every $i$.
\item  $\norm{m_{x,y}-\sum_{i=1}^k\lambda_im_{u_i,v_i}} >1+\sum_{i=1}^k\lambda_i-\frac{k\eps}{n}$ for every $k\leq n$.
\end{enumerate}
Indeed, since $S_1$ contains molecules of $\F(M)$ supported at arbitrarily close points, we can find by Lemma \ref{lemma:distance_to_closely_supported_molecules} $u_1\neq v_1\in M$ such that $m_{u_1,v_1}\in S_1$ and $\norm{m_{x,y}-m_{u_1,v_1}}>2-\frac{\eps}{n}$. It follows that $\norm{m_{x,y}-\lambda_1m_{u_1,v_1}}\geq \norm{m_{x,y}-m_{u_1,v_1}}-(1-\lambda_1)>1+\lambda_1-\frac{\eps}{n}$. Let us assume that $m_{u_1,v_1},\dots, m_{u_k,v_k}$ are constructed as desired for a given $k\in\{1,\dots, n-1\}$. Since $S_{k+1}$ contains molecules of $\F(M)$ supported at arbitrarily close points, we can find by Lemma \ref{lemma:distance_to_closely_supported_molecules} $u_{k+1}\neq v_{k+1}\in M$ such that $m_{u_{k+1},v_{k+1}}\in S_{k+1}$ and $$\norm{\frac{m_{x,y}-\sum_{i=1}^k\lambda_im_{u_i,v_i}}{\norm{m_{x,y}-\sum_{i=1}^k\lambda_im_{u_i,v_i}}}-m_{u_{k+1},v_{k+1}}}>2-\frac{\eps}{n\norm{m_{x,y}-\sum_{i=1}^k\lambda_im_{u_i,v_i}}}.$$ Then, \begin{align*}\norm{\frac{m_{x,y}-\sum_{i=1}^{k+1}\lambda_im_{u_i,v_i}}{\norm{m_{x,y}-\sum_{i=1}^k\lambda_im_{u_i,v_i}}}} &\geq \norm{\frac{m_{x,y}-\sum_{i=1}^k\lambda_im_{u_i,v_i}}{\norm{m_{x,y}-\sum_{i=1}^k\lambda_im_{u_i,v_i}}}-m_{u_{k+1},v_{k+1}}} \\
& \quad -\left(1-\frac{\lambda_{k+1}}{\norm{m_{x,y}-\sum_{i=1}^k\lambda_im_{u_i,v_i}}}\right)\\
&>1+\frac{\lambda_{k+1}}{\norm{m_{x,y}-\sum_{i=1}^k\lambda_im_{u_i,v_i}}}- \frac{\eps}{n\norm{m_{x,y}-\sum_{i=1}^k\lambda_im_{u_i,v_i}}}.
\end{align*}
By the assumption,
\begin{align*}
\norm{m_{x,y}-\sum_{i=1}^{k+1}\lambda_im_{u_i,v_i}}> \norm{m_{x,y}-\sum_{i=1}^k\lambda_im_{u_i,v_i}}+\lambda_{k+1}-\frac{\eps}{n} >1+\sum_{i=1}^{k+1}\lambda_i-\frac{(k+1)\eps}{n}.
\end{align*}

As a consequence, $\mu:=\sum_{i=1}^n\lambda_im_{u_i,v_i}$ belongs to $C$ and satisfies $\norm{m_{x,y}-\mu}>2-\eps$.
\end{proof}

In particular, we have, as announced, that the molecule $m_{x,y}$ in the example from \cite{veefree} is a ccs $\Delta$-point. Note that it cannot be a ccs Daugavet point by Proposition \ref{prop:ccs_Daugavet_SR} since the space has the RNP, but we do not know whether it is a super $\Delta$-point or even a super Daugavet point. Let us state the result for further reference.

\begin{example}\label{example:veefree}
Let $M$ be the metric space constructed in \cite[Example 3.1]{veefree} and let $x,y$ be the points described there. Then, $\mathcal{F}(M)$ has the RNP, the molecule $m_{x,y}$ is an extreme point of the unit ball of $\mathcal{F}(M)$ which is a Daugavet point. Hence, by our Theorem~\ref{thrm:extreme_Delta_molecules}, $m_{x,y}$ is a ccs-$\Delta$-point.
\end{example}

Another interesting example in the Lipschitz-free space setting is the following one which uses a metric space constructed by Aliaga, No\^{u}s, Petitjean, and Proch\'{a}zka \cite{ANPP}.

\begin{example}\label{example:Lipfree-ANPP}
Let $M$ be the metric space from \cite[Examples 4.2]{ANPP}. Then one can check that the molecule $m_{0,q}$ is an extreme point of $B_{\F(M)}$ and, since the points $0$ and $q$ are discretely connectable, it follows from an easy adjustment of \cite[Proposition 4.2]{jr22} that this molecule is a $\Delta$-point. In particular, it follows from Theorem~\ref{thrm:extreme_Delta_molecules} that this molecule is also a ccs $\Delta$-point. However, it is not difficult to show that there exists denting points in $B_{\F(M)}$ that are at distance strictly less than $2$ to $m_{0,q}$ (take any among the molecules $m_{x_i^n,x_{i+1}^n}$), so this molecule is not a Daugavet point. Also observe that this space has the RNP since the metric space $M$ is countable and complete \cite[Theorem 4.6]{AGPP-TAMS2022}.
\end{example}

Let us finally remark that the spaces $\mathcal{F}(M)$ of Examples \ref{example:veefree} and \ref{example:Lipfree-ANPP} have the RNP, so they are strongly regular and hence strongly regular points are norm dense, but both examples have ccs $\Delta$-points. They cannot contain ccs Daugavet points by Proposition~\ref{prop:ccs_Daugavet_SR}.

Let us also comment that the use of Theorem~\ref{thrm:extreme_Delta_molecules} above cannot be omitted, as the molecule $m_{0,q}$ is not a preserved extreme point, hence Proposition~\ref{prop:extreme_diametral_delta} is again not applicable.

\subsubsection{An example of a Banach space with the DD2P, the restricted DSD2P, but containing ccs of arbitrarily small diameter}\label{subsubsect:MLUR}

In \cite[Theorem~2.12]{ahntt16}, Abrahamsen, H\'ajek, Nygaard, Talponen, and Troyanski constructed a space $X$ which has the DLD2P, which is midpoint locally uniformly rotund (in particular, satisfying that $\preext{B_X}=S_X$), and such that $B_X$ contains convex combinations of slices of arbitrarily small diameter. It then follows from Proposition~\ref{prop:extreme_diametral_delta} that every element of $S_X$ is actually a super $\Delta$-point and a ccs $\Delta$-point (that is, $X$ has the DD2P and the restricted DSD2P). But containing ccs of arbitrarily small diameter, $X$ fails the SD2P. The obvious explanation for the failure of the SD2P and the fact that every element in the unit sphere is a ccs $\Delta$-point is that none of the convex combinations of slices of diameter strictly smaller than 2 intersects the unit sphere.
On the other hand, the space $X$ is constructed as the $\ell_2$-sum of spaces, and so $X$ does not contain Daugavet points by \cite[Proposition 4.6]{ahlp20} (see Proposition~\ref{prop:absolutesums-Daugavetpoints}).

Observe further that $X$ has the restricted DSD2P and the DD2P, but fails the DSD2P (which is equivalent to the Daugavet property by \cite{kadets20}).

\subsubsection{An example in a space with one-unconditional basis}\label{subsubsec:oneunconditionalbasis}

Abrahamsen, Lima, Martiny, and Troyanski constructed in \cite[Section 4]{almt21} a Banach space $X_{\mathfrak{M}}$ with one-unconditional basis which contains a subset $D_B\subseteq S_{X_{\mathfrak{M}}}$ satisfying:
\begin{itemize}
  \item Every element in $D_B$ is both a Daugavet point and a point of continuity;
  \item $B_{X_{\mathfrak{M}}}=\cconv(D_B)$;
  \item $D_B$ is weakly dense in the unit ball.
\end{itemize}
Observe that no element of $D_B$ is a super $\Delta$-point (it is exactly the opposite!). By Theorem~\ref{thm:one-unconditional_no_ccs_nor_super_Delta}, no element of $D_B$ is a ccs $\Delta$-point.

\subsection{A super \texorpdfstring{$\Delta$}{Delta}-point which fails to be a Daugavet point in an extreme way}\label{subsect:superdeltanotdauga}
In order to put into a context the following result, let us recall that Daugavet points are at distance $2$ from any denting point (see \cite[Proposition 3.1]{jr22}). With this in mind, the following result can be interpreted as the existence of super $\Delta$-points which fail to be Daugavet points in an extreme way.

\begin{theorem}\label{theo:renormingDP}
Let $X$ be a Banach space with the Daugavet property. Then, for every $\varepsilon>0$, there exists an equivalent norm $\vert\cdot\vert$ and two points $x,y\in B_{(X,\vert\cdot\vert)}$ such that
\begin{enumerate}
\item $y$ is a super $\Delta$-point.
\item $x$ is strongly exposed.
\item $\vert x-y\vert<\varepsilon$.
\end{enumerate}
\end{theorem}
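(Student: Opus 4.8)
The plan is to modify the norm of $X$ by adding a small "bump" in one direction so as to create a strongly exposed point $x$, while keeping a point $y$ very close to $x$ which still "sees" the Daugavet behaviour of the original space. Concretely, fix a norm-one functional $x^*\in S_{X^*}$ and a point $x_0\in S_X$ with $x^*(x_0)$ close to $1$ (by the Daugavet property we may even take $x^*(x_0)=1$, but we will want $x_0$ to also be a Daugavet point of $X$, which every point of $S_X$ is). Consider for small $\lambda>0$ the equivalent norm on $X$ whose unit ball is
$$
B:=\overline{\conv}\bigl(B_X\cup\{\pm(1+\lambda)x_0\}\bigr),
$$
or dually the norm $|z|:=\max\{\norm{z}_X,\ (1+\lambda)^{-1}|x^*(z)|\cdot(\text{something})\}$; I would work with the primal description since it makes the strongly exposed point transparent. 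The point $x:=(1+\lambda)x_0$ is an extreme point of $B$, and it is strongly exposed by (a suitable rescaling of) $x^*$ precisely because near $x$ the ball $B$ looks like a cone with vertex $x$ over the "slab" $\{x^*<1\}\cap B_X$; this gives (2). Taking $\eps$ small forces $\lambda$ small, and then $y:=x_0$ lies in $B_{(X,|\cdot|)}$ with $|x-y|=\lambda\norm{x_0}_X\cdot(\text{normalizing factor})<\eps$, which is (3) — here one must be a little careful that $x_0$ still has $|\cdot|$-norm close to $1$, so one may need to renormalize $y$ to sit on the new unit sphere, but since super $\Delta$-points are only defined for sphere elements this is just a matter of replacing $y$ by $y/|y|$ and absorbing the error into $\eps$.

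The heart of the argument is (1): showing $y=x_0/|x_0|$ is a super $\Delta$-point for $|\cdot|$. By Proposition~\ref{prop:net_characterizations} it suffices to produce a net $(y_s)$ in $B_{(X,|\cdot|)}$ with $y_s\to y$ weakly and $|x_0-y_s|\to 2$. Since $x_0$ is a Daugavet point of $X$ (every point of $S_X$ is, by \cite[Lemma~2.1]{KSSW}), for every $\eps'>0$ and every weak neighbourhood $V$ of any prescribed target there are elements of $B_X$ in $V$ that are $(2-\eps')$-far from $x_0$ in $\norm{\cdot}_X$; in fact by Lemma~\ref{lem:super_Delta_set_characterizations} applied in $X$ (or rather its trivial localization, since $X$ has the Daugavet property it has the DD2P, so every point of $S_X$ is a super $\Delta$-point of $X$) we can take such a net converging weakly to $x_0$ itself. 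The key inequality to check is that the new norm does not shrink these distances: since $B_X\subseteq B$, we have $|z|\le\norm{z}_X$ for all $z$, so distances can only get \emph{smaller}, which is the wrong direction. The fix is that we need a \emph{lower} bound on $|x_0-y_s|$, and for that we use a separating functional: choose $g\in S_{(X,|\cdot|)^*}$ with $g$ nearly norming $x_0-y_s$ in the old norm and check that $g$ still has $|\cdot|$-dual-norm close to $1$ — equivalently, that the enlargement of the ball by the single pair of points $\pm x$ does not increase the dual norm of $g$ by much, which holds provided $|g(x)|\le 1$, i.e. provided we pick the net so that the witnessing functionals are small on $x_0$. This is arrangeable because $x^*(x_0)$ can be taken close to $1$ while the Daugavet witnesses $g$ for points weakly close to $x_0$ can be taken close to $-x^*$ in an appropriate sense, or simply because $g(x_0)$ is forced to be close to $-1$ when $g$ norms $x_0-y_s\approx x_0-(\text{far point})$ and...

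Let me restate the mechanism more carefully, since this is the delicate point. Given the target net $(z_t)$ in $S_X$ with $z_t\to x_0$ weakly and $\norm{x_0-z_t}_X\to 2$, pick $h_t\in S_{X^*}$ with $\re h_t(x_0-z_t)\to 2$; then necessarily $\re h_t(x_0)\to 1$ and $\re h_t(z_t)\to -1$. Now $|h_t(x)|=(1+\lambda)|h_t(x_0)|$ could be as large as $1+\lambda>1$, so $h_t$ need not be in $B_{(X,|\cdot|)^*}$; however $\tilde h_t:=h_t/(1+\lambda)$ is, since $B\supseteq B_X$ gives $\sup_B\re h_t\le(1+\lambda)$ using $\re h_t(\pm x)\le 1+\lambda$. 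Then $\re\tilde h_t(x_0-z_t)\to 2/(1+\lambda)$, which only yields $|x_0-z_t|\ge 2/(1+\lambda)$ — not good enough on its own, but combined with $\lambda\to 0$ (forced by $\eps\to 0$) it gives exactly what we want in the limit regime of the statement: for the fixed small $\eps$ in the theorem we get a super $\Delta$-point up to the corresponding small error, and then one upgrades to an exact super $\Delta$-point by a further soft argument (e.g. observing that the set of super $\Delta$-points is weakly closed, or by choosing the functionals $h_t$ more cleverly to be nearly orthogonal to $x_0$, using that in a Daugavet space one can find the far points $z_t$ with witnessing functionals $h_t$ arbitrarily close to any prescribed $x^*$-independent direction). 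I expect \textbf{this last upgrade — producing the distance $2$ exactly rather than $2-O(\lambda)$ — to be the main obstacle}, and the cleanest route is probably to not enlarge $B_X$ by literal points $\pm(1+\lambda)x_0$ but by a thin "lens" tangent to $B_X$ along a whole face, so that the new strongly exposed point $x$ is at $|\cdot|$-distance $<\eps$ from a genuine point of $S_X$ which is a super $\Delta$-point of the \emph{unchanged} part of the ball; then (1) is inherited directly from the Daugavet property of $X$ via Proposition~\ref{prop:net_characterizations} with no loss, because the modification only affects a neighbourhood of $x$ disjoint from the relevant weakly open sets witnessing the super $\Delta$ property of $y$.
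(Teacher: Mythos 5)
There is a genuine gap, and it comes from deforming the ball in the wrong direction. You enlarge $B_X$ to $B:=\cconv\bigl(B_X\cup\{\pm(1+\lambda)x_0\}\bigr)$, so the new norm $|\cdot|$ satisfies $|\cdot|\leq\|\cdot\|$; consequently, any Daugavet witnesses $z_t$ with $\|x_0-z_t\|\to 2$ only give the useless upper bound $|x_0-z_t|\leq\|x_0-z_t\|$, and your attempted lower bound via $\tilde h_t=h_t/(1+\lambda)$ gives only $|x_0-z_t|\geq 2/(1+\lambda)<2$. You rightly call this the main obstacle, but the proposed ``lens'' repair does not close it: any functional nearly norming $y-z_t$ with $y$ near $x_0$ must take value near $1$ at $x_0$, so it necessarily feels the enlargement near $\pm(1+\lambda)x_0$ and loses dual norm. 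There is also a more basic degeneracy: with your $B$, $|x_0|=1/(1+\lambda)$, so the normalization $y:=x_0/|x_0|$ is exactly $(1+\lambda)x_0=x$; the construction produces one point, not two, and one point cannot be simultaneously strongly exposed and super $\Delta$.

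The paper's proof goes the opposite way and thereby avoids the loss entirely: it \emph{shrinks} the ball. Fix a one-codimensional subspace $Y\subseteq X$ (still with the DPr), $x\in S_X$ with $0<d(x,Y)<\varepsilon$, and $y\in S_Y$ with $\|x-y\|<\varepsilon$; after choosing $x^*\in S_{X^*}$ and $\delta>0$ with $x\in S(x^*,\delta;B_X)$ and $S(x^*,\delta;B_X)\cap B_Y=\emptyset$, one sets $B:=\cco\bigl(B_Y\cup(1-\delta)B_X\cup\{\pm x\}\bigr)$. Since $B\subseteq B_X$, the new norm satisfies $|\cdot|\geq\|\cdot\|$, so a Daugavet net $(y_s)\subseteq B_Y\subseteq B$ for $y$ (coming from the DPr of $Y$) gives $|y-y_s|\geq\|y-y_s\|\to 2$, and because $y,y_s\in B$ this forces $|y-y_s|\to 2$ with no $O(\lambda)$ loss. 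The point $x$ is added back into $B$ and is strongly exposed there by $\re x^*$. The key structural idea you are missing is the use of a hyperplane section $B_Y$ that is preserved whole inside the new (smaller) ball, so that the Daugavet behaviour is inherited with the inequality in the favorable direction.
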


\begin{proof}
Take a subspace $Y\subseteq X$ with $\dim(X/Y)=1$. Observe that $Y$ has the Daugavet property (see e.g.\ \cite[Theorem 6 (a)]{shv00}). Take $x\in S_X$ with $0<d(x,Y)<\varepsilon$ (this can be settled taking a non-zero element $v\in X/Y$ with quotient norm smaller than $\varepsilon$). Now, we can find an element $y\in S_Y$ such that $\Vert x-y\Vert<\varepsilon$. By the Hahn-Banach theorem, we can take $f\in S_{X^*}$ with $\re f(x)>0$ and $f=0$ on $Y$. This means that $x$ belongs to the slice $T:=\{z\in B_X\colon \re f(z)>\alpha\}$ for some $\alpha>0$. Take $\delta>0$ such that $\frac{\Vert x-y\Vert}{1-\delta}<\varepsilon$. By Lemma \ref{lemma:diminition} we can find $x^*\in S_{X^*}$ such that $x\in S(x^*,\delta;B_X)\subseteq T$. By the above inclusion we conclude that $S(x^*,\delta;B_X)\cap B_Y=\emptyset$ or, in other words, that $\re x^*(z)\leq 1-\delta$ for every $z\in B_Y$. Set
$$B:=\cco(B_Y\cup (1-\delta)B_X\cup \{\pm x\}).$$
$B$ is the unit ball of an equivalent norm $\vert\cdot \vert$ which satisfies, in view of the inclusions $(1-\delta)B_X\subseteq B\subseteq B_X$, that
$$\Vert x\Vert\leq \vert x\vert\leq \frac{1}{1-\delta}\Vert x\Vert$$
for every $x\in X$. Let us prove that $\vert \cdot\vert$, $x$ and $y$ satisfies our requirements. First, observe that
$$\vert x-y\vert\leq \frac{\Vert x-y\Vert}{1-\delta}<\varepsilon.$$
Next, we claim that $y$ is a super $\Delta$ point. Indeed, since $Y$ has the Daugavet property we can find a net $\{y_s\}\subseteq B_Y$ with $\{y_s\}\longrightarrow y$ weakly and $\Vert y-y_s\Vert\longrightarrow 2$. Notice that the weak convergence $\{y_s\}\longrightarrow y$ is still guaranteed on $X$ because $i\colon(Y,\Vert\cdot\Vert)\longrightarrow (X,\vert \cdot\vert)$ is weak to weak continuous as $\Vert\cdot\Vert$ and $\vert\cdot\vert$ are equivalent. Moreover, notice that $y_s\in B_Y\subseteq B$ for every $s$, so $\vert y_s\vert\leq 1$ for every $s$. Finally,
$$\vert y_s-y\vert\geq \Vert y_s-y\Vert\longrightarrow 2,$$
and since $y\in B_Y\subseteq B$, we conclude $\vert y_s-y\vert\longrightarrow 2$. From there, $y$ is clearly a super $\Delta$-point for the norm $\vert\cdot\vert$.

It remains to prove that $x$ is strongly exposed. Indeed, we will prove that $\re x^*$ strongly exposes $B$ at $x$, for which it is enough to prove that $\re x^*$ strongly exposes $\co(B_Y\cup (1-\delta)B_X\cup \{\pm x\})$ at $x$.
Take $z:=\alpha u+\beta (1-\delta)v+(\gamma-\omega)x\in \co(B_Y\cup (1-\delta)B_X\cup \{\pm x\})$ with $\alpha+\beta+\gamma+\omega=1$. Observe that $1-\delta<\re x^*(x)\leq \vert x^*\vert\leq \Vert x^*\Vert$ due to the inclusion $B\subseteq B_X$. Taking into account that $\re x^*(u)\leq 1-\delta$ since $u\in B_Y$ as $B_Y\cap S(x^*,\delta;B_X)=\emptyset$, we conclude
$$
\re x^*(z)\leq (1-\delta)(\alpha+\beta)+(\gamma-\omega)\re x^*(x).
$$
Since $\re x^*(x)>1-\delta$, we get that   $\sup\bigl\{\re x^*(z)\colon z\in \conv(B_Y\cup (1-\delta)B_X\cup \{\pm x\})\bigr\}=\re x^*(x)$. If we take a
sequence
$$
z_n:=\alpha_n u_n+\beta_n (1-\delta)v_n+(\gamma_n-\omega_n)x\in \co(B_Y\cup (1-\delta)B_X\cup \{\pm x\})
$$
with $\alpha_n+\beta_n+\gamma_n+\omega_n=1$ such that $\re x^*(z_n)\longrightarrow \re x^*(x)$, it follows from the previous argument that $\alpha_n\rightarrow 0, \beta_n\rightarrow 0, \omega_n\rightarrow 0$ and $\gamma_n\rightarrow 1$, which means $z_n\rightarrow x$ in norm.
\end{proof}

\begin{remark}\label{remark:super-delta-closed-to-strongly-exposed}
Using the previous theorem and Proposition~\ref{prop:super_Delta_abs_sums} it is easy to construct (considering $\ell_2$-sums, for instance) a Banach space $X$ containing a sequence of super $\Delta$-points $(y_n)$ such that the distance from $y_n$ to the set of strongly exposed points is going to zero.
\end{remark}

\subsection{A super \texorpdfstring{$\Delta$}{Delta}-point which is a strongly regular point}\label{subsect:superDeltastronglyregular}

In the present subsection, as well as in the next, we aim to distinguish the super and ccs notions of $\Delta$- and Daugavet points. The following result shows that there are plenty of examples of spaces containing super $\Delta$-points which are strongly regular points (hence far from being ccs $\Delta$-points). We do the construction in for real spaces for simplicity.

\begin{theorem}\label{theorem_super-Delta-pointofSR}
Every real Banach space with the Daugavet property can be equivalently renormed so that the new unit ball has a point which is simultaneously super-$\Delta$ and a point of strong regularity (hence, far away of being ccs $\Delta$-point).
\end{theorem}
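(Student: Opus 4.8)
The goal is to start from a real Banach space $X$ with the Daugavet property and produce an equivalent norm on $X$ for which some point of $B_X$ is simultaneously a super $\Delta$-point and a point of strong regularity. The obstruction to combining these two features naturally is that, in $X$ itself, every point of $S_X$ is a Daugavet point (hence far from being strongly regular), while strong regularity wants small-diameter convex combinations of slices through the point. The strategy is therefore to renorm so that the new unit ball $B$ is \emph{sandwiched} between two balls, $(1-\delta)B_X\subseteq B\subseteq B_X$, in such a way that one specific point $y$ retains enough of the Daugavet behaviour to be super $\Delta$ (this only needs weak-approximation by points \emph{close to norm $2$ away}, which survives the renorming exactly as in the proof of Theorem~\ref{theo:renormingDP}), while simultaneously $y$ is moved onto a ``flat'' part of $B$ built as a convex hull so that small convex combinations of slices of $B$ can be squeezed around $y$.

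\textbf{Construction.} Mimicking the proof of Theorem~\ref{theo:renormingDP}, I would pick a hyperplane $Y\subseteq X$ with $\dim(X/Y)=1$; then $Y$ has the Daugavet property by \cite[Theorem 6(a)]{shv00}. Choose $y\in S_Y$, and choose a functional $f\in S_{X^*}$ vanishing on $Y$. The new unit ball should be taken as something like
$$
B:=\cco\bigl(B_Y\cup (1-\delta)B_X\cup [-x,x]\bigr)
$$
for a suitable auxiliary point $x$ with $\re f(x)>0$ and a small $\delta>0$; the inclusions $(1-\delta)B_X\subseteq B\subseteq B_X$ guarantee that $|\cdot|$ is an equivalent norm. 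The point $y$ lies in $B_Y\subseteq B$, and as in Theorem~\ref{theo:renormingDP}, using a net $\{y_s\}\subseteq B_Y$ with $y_s\to y$ weakly in $Y$ (hence weakly in $X$) and $\|y-y_s\|\to 2$, one gets $|y-y_s|\ge\|y-y_s\|\to 2$ while $|y_s|\le 1$, so $y$ is a super $\Delta$-point of $(X,|\cdot|)$ by Proposition~\ref{prop:net_characterizations}. The new ingredient is that, because $y$ sits inside the \emph{flat piece} $B_Y$ of $B$ (a set of codimension one in $X$), there should be a convex combination of a slice determined by $f$ and a slice determined by $-f$ that is trapped in a thin slab $\{|f(z)|<\text{small}\}$, and one arranges the geometry so that this slab has small $|\cdot|$-diameter near $y$: this is precisely what makes $y$ a point of strong regularity.

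\textbf{Main obstacle.} The delicate point is the simultaneous control: the renorming must not destroy the Daugavet-type oscillation of $y$ inside $B_Y$ (which needs the inclusion $B_Y\subseteq B$ and the weak-to-weak continuity of the identity between the equivalent norms), while at the same time the convex combination $\tfrac12\bigl(S(f,\alpha;B)+S(-f,\alpha;B)\bigr)$ must genuinely have small diameter — this requires that both slices $S(\pm f,\alpha;B)$ are ``captured'' by the pieces $B_Y$ and $(1-\delta)B_X$ and the segment $[-x,x]$ in a way that forces any element of the convex combination to be $|\cdot|$-close to $y$. Balancing $\delta$, the choice of $x$, and the slab width so that these two requirements do not conflict is the heart of the argument; the calculation is analogous in spirit to the strong-exposedness computation at the end of the proof of Theorem~\ref{theo:renormingDP}, but run ``from both sides'' of the hyperplane $\ker f$ so as to produce a small-diameter ccs rather than a strongly exposing slice. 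Once the slab estimate is in place, the fact that $y$ lies in every such ccs (because $f(y)=0=\sup_{B}\pm f$ up to the slab width, and $y\in B_Y$) together with the diameter bound gives that $y$ is a point of strong regularity, and hence not a ccs $\Delta$-point.
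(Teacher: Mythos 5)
Your high-level plan is sound: choose a hyperplane $Y$ with the Daugavet property, pick $y\in S_Y$, and design a new unit ball that keeps $B_Y$ inside (so that the net argument from Theorem~\ref{theo:renormingDP} still makes $y$ a super $\Delta$-point) while creating ``flatness'' near $y$ so that strong regularity can be checked there. The super $\Delta$ half of your sketch is essentially the paper's, and that part is fine. The strong-regularity half, however, has a genuine gap, and the specific ball you propose does not do the job.

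With $B:=\cco\bigl(B_Y\cup (1-\delta)B_X\cup[-x,x]\bigr)$ and $f$ vanishing on $Y$, consider the slices $S(\pm f,\alpha;B)$. Since $f\equiv 0$ on $Y$ and $f(x)>0$, we have $\sup_B f=f(x)>0$, and for any small $\alpha$ the slice $S(f,\alpha;B)$ clusters near $x$ while $S(-f,\alpha;B)$ clusters near $-x$. Their average $\tfrac12\bigl(S(f,\alpha;B)+S(-f,\alpha;B)\bigr)$ is therefore a small set \emph{around $0$}, not around $y$. In particular your claim that ``$y$ lies in every such ccs because $f(y)=0=\sup_B\pm f$ up to the slab width'' is false: $f(y)=0$ but $\sup_B f=f(x)$ is bounded away from $0$, so $y\notin S(\pm f,\alpha;B)$, and there is no obvious way to write $y$ as $\tfrac12(s_1+s_2)$ with $s_1,s_2$ in the two slices. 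Also, the thin slab $\{z:|f(z)|<\alpha\}$ contains all of $B_Y$ and hence has $|\cdot|$-diameter $2$, so it cannot certify strong regularity directly. The real obstruction is geometric: your two ``privileged'' directions $\pm x$ are \emph{antipodal}, so the small ccs they produce passes through $0$, not through the point $y$ you want to localize.

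The paper fixes this by placing the new vertices \emph{on the same side of the hyperplane over the chosen point}: writing $X=Y\oplus\mathbb{R}$, it takes
$$
B:=\cco\bigl(B_Y\times\{0\}\cup\{\pm(y_0,r)\}\cup\{\pm(y_0,-r)\}\bigr),
$$
so that $(y_0,r)$ and $(y_0,-r)$ are strongly exposed points of $B$ (by $\re(f,1)$ and $\re(f,-1)$), and, crucially, $(y_0,0)=\tfrac12(y_0,r)+\tfrac12(y_0,-r)$ is their midpoint. The remaining ingredient you are missing is a soft but decisive lemma (Lemma~\ref{lemma_SRPisconvex} in the paper): \emph{the set of strongly regular points of a closed, convex, bounded set is convex}, because a convex combination of ccs's is again a ccs. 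Strongly exposed points are trivially strongly regular points, and the lemma instantly upgrades the midpoint $(y_0,0)$ to a strongly regular point, with no explicit diameter estimate on a ccs containing $(y_0,0)$ at all. This is cleaner than, and essentially inaccessible to, the ``estimate the diameter of a two-slice ccs through $y$'' route you attempted. If you want to salvage your approach, you must (i) shift the special vertices so they are $y\pm z$ for a small transversal $z$ rather than $\pm x$, and (ii) either carry out the two-sided slice diameter estimate honestly or invoke the convexity lemma; as written, the proposal does not establish strong regularity of $y$.
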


We will use the following immediate result which follows from the fact that a convex combination of ccs is again a ccs.

\begin{lemma}\label{lemma_SRPisconvex}
Let $X$ be a Banach space and let $C$ be a closed, convex, bounded subset of $X$. Then the set of strongly regular points of $C$ is a convex set.
\end{lemma}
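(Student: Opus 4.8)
The plan is to unwind the definitions and use the elementary fact already flagged in the statement: a convex combination of convex combinations of slices is again a convex combination of slices. Let $x_0$ and $x_1$ be two points of strong regularity of $C$, and fix $\lambda\in(0,1)$ (the cases $\lambda=0$ and $\lambda=1$ being trivial); I want to show that $x_\lambda:=\lambda x_0+(1-\lambda)x_1$ is again a point of strong regularity of $C$. So I would fix $\varepsilon>0$ and, using the definition, pick a ccs $C_0$ of $C$ with $x_0\in C_0$ and $\diam(C_0)<\varepsilon$, and a ccs $C_1$ of $C$ with $x_1\in C_1$ and $\diam(C_1)<\varepsilon$.

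The natural candidate is then $D:=\lambda C_0+(1-\lambda)C_1$. First I would check that $D$ is a ccs of $C$: writing $C_0=\sum_{i=1}^{n}\alpha_iS_i$ and $C_1=\sum_{j=1}^{m}\beta_jT_j$ with the $S_i$, $T_j$ slices of $C$ and $\sum_i\alpha_i=\sum_j\beta_j=1$, we have $D=\sum_{i=1}^{n}(\lambda\alpha_i)S_i+\sum_{j=1}^{m}((1-\lambda)\beta_j)T_j$, and the new weights are all in $(0,1]$ and sum to $\lambda+(1-\lambda)=1$; moreover $D\subseteq C$ since $C$ is convex. Next, $x_\lambda=\lambda x_0+(1-\lambda)x_1\in D$ because $x_0\in C_0$ and $x_1\in C_1$. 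Finally, for any $\lambda z_0+(1-\lambda)z_1$ and $\lambda z_0'+(1-\lambda)z_1'$ in $D$ (with $z_0,z_0'\in C_0$ and $z_1,z_1'\in C_1$), the triangle inequality gives
\[
\norm{(\lambda z_0+(1-\lambda)z_1)-(\lambda z_0'+(1-\lambda)z_1')}\leq \lambda\norm{z_0-z_0'}+(1-\lambda)\norm{z_1-z_1'}<\lambda\varepsilon+(1-\lambda)\varepsilon=\varepsilon,
\]
so $\diam(D)\leq\varepsilon$. Since $\varepsilon>0$ was arbitrary, $x_\lambda$ is a point of strong regularity of $C$, and hence the set of such points is convex.

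There is no genuine obstacle here: the only points requiring a line of verification are that the Minkowski-type combination $\lambda C_0+(1-\lambda)C_1$ is literally a convex combination of slices of $C$ (which is just bookkeeping of the weights, together with the convexity of $C$ to stay inside $C$), and that its diameter is controlled by the convex combination of the two diameters (which is the triangle inequality). So the proof is short; I would present it essentially as the paragraph above, possibly stating the ``convex combination of ccs is a ccs'' observation explicitly first so that it can be reused elsewhere in the paper.
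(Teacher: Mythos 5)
Your proof is correct and is exactly the argument the paper has in mind: the paper simply remarks that the lemma "follows from the fact that a convex combination of ccs is again a ccs," and your writeup supplies the routine verification of that fact (weights rescale and still sum to $1$, the Minkowski combination contains $\lambda x_0+(1-\lambda)x_1$, and the diameter is controlled by the triangle inequality). No discrepancy.
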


\begin{proof}[Proof of Theorem~\ref{theorem_super-Delta-pointofSR}]
Let $X$ be a Banach space with the Daugavet property. Take a $1$-codimensional subspace $Y$ of $X$. Since $Y$ is complemented in $X$ then $X=Y\oplus \mathbb R$, so we will see $X$ in such way. Take $r>0$, $y_0\in S_Y$ and $f\in S_{X^*}$ such that $f(y_0)=1$, and consider on $X=Y\oplus \mathbb R$ the equivalent norm $\vert\cdot\vert$ whose unit ball is $B:=\cco\bigl(B_Y\times\{0\}\cup \{\pm (y_0,r)\}\cup\{\pm (y_0,-r)\}\bigr)$. It readily follows that $\vert\cdot\vert$ agrees with the original norm $\Vert \cdot\Vert$ on the elements of the form $(y,0)$.

We claim that $(y_0,0)$ satisfies our requirements. First of all, let us prove that $(y_0,0)$ is a super-$\Delta$ point. Since $Y$ is one-codimensional, it has the Daugavet property (see e.g.\ \cite[Theorem 6 (a)]{shv00}). Consequently, there exists a net $(y_s)\longrightarrow y_0$ weakly in $B_Y$ such that $\Vert y_0-y_s\Vert\longrightarrow 2$. Then, $(y_s,0)\longrightarrow (y_0,0)$ weakly in $(X,\vert\cdot\vert)$. Moreover, it is clear that $(y_s,0)\in B$ for every $s$. Finally,
$$
\vert (y_s,0)-(y_0,0)\vert=\vert (y_s-y_0,0)\vert=\Vert y_s-y_0\Vert\longrightarrow 2.
$$
Let us now prove that $(y_0,0)$ is a point of strong regularity. To do so, it is enough, in view of Lemma~\ref{lemma_SRPisconvex}, to show that $(y_0,\pm r)$ is a strongly exposed point (we will prove that for $(y_0,r)$, being the other case completely analogous). Let us prove that $\re(f,1)$ strongly exposes $(y_0,r)$ in the set $B_Y\times\{0\}\cup \{\pm (y_0,r)\}\cup\{\pm (y_0,-r)\}$. On the one hand, we have
$$
\re (f,1)(y_0,r)=\re f(y_0)+r=1+r.
$$
On the other hand, given $(y,0)\in B_Y\times 0$ we have $\re(f,1)(y,0)=f(y)\leq 1<1+r$. Moreover, $\re(f,1)(y_0,-r)=1-r$ and $\re (f,1)(-y_0,\pm r)=-1\pm r<1+r$. Consequently,
\begin{align*}
\sup\{\re (f,1)(a,b)\colon  (a,b)\in B_Y\times\{0\}\cup \{\pm (y_0,r)\}\cup\{\pm (y_0,-r)\},\, (a,b)\neq (y_0,r)\} \\  \leq 1<1+r=\re (f,1)(y_0,r).
\end{align*}
This is enough to guarantee that $\re (f,1)$ strongly exposes $(y_0,r)$ in $B$, so we are done.
\end{proof}

\subsection{A super Daugavet point which is not ccs \texorpdfstring{$\Delta$}{Delta}-point}\label{subsect:superdauganotccsdelta}

The previous example shows that we can distinguish the notion of super $\Delta$-point and the one of ccs $\Delta$-point. It seems natural then that we should be able to distinguish the notions of super Daugavet point and the one of ccs $\Delta$-point. In order to do so, we need to consider an involved construction but, as a consequence, we will prove that there are super Daugavet points which are contained in convex combinations of slices of small diameter. The construction will be very similar to that of \cite[Theorem~2.4]{blradv}, with a slight variation which makes the resulting norm with a stronger Daugavet flavour. As in the previous subsection, we will only work with real spaces here.

In order to do so, let us recall a construction from Argyros, Odell, and Rosenthal \cite{aor}. Pick a nonincreasing null sequence $\{\varepsilon_n\}$ in $\mathbb R^+$. We construct an increasing sequence of closed, bounded and convex subsets $\{K_n\}$ in the real space $c_0$ and a sequence $\{g_n\}$ in $c_0$ as follows: First define $K_1=\{e_1\}$, $g_1=e_1$ and $K_2=\conv(e_1, e_1+e_2)$. Choose $l_2>1$ and $g_2,\ldots ,g_{l_2}\in K_2$ an $\varepsilon_2$-net in $K_2$. Assume that $n\geq 2$ and that $m_n,\ l_n
,\ K_n$, and $\{g_1,\ldots ,g_{l_n}\}$ have been constructed, with
$K_n\subseteq B_{\spn\{e_1,\ldots ,e_{m_n}\}}$ and $g_i\in K_n$ for every
$1\leq i\leq l_n$. Define $K_{n+1}$ as $$K_{n+1}=\conv(K_n\cup
\{g_i+e_{m_n+i}\colon 1\leq i\leq l_n\}).$$ Consider $m_{n+1}=m_n+l_n$ and
choose $\{g_{l_{n}+1},\ldots ,g_{l_{n+1}}\}\in K_{n+1}$ so that
$\{g_1 ,\ldots ,g_{l_{n+1}}\}$ is an $\varepsilon_{n+1}$-net in
$K_{n+1}$. Finally, we define $K_0=\overline{\cup_n K_n}$. Then it
follows that $K_0$ is a non-empty closed, bounded and convex subset
of $c_0$ such that $x(n)\geq 0$ for every $n\in \natu$ and $\Vert
x\Vert_{\infty}\leq 1$ for every $x\in K_0$ and so $\diam(K_0)\leq 1$.

Now, for a fixed $i$, we have from the construction that
$\{g_i+e_{m_n+i}\}_n$ is a sequence in $K_0$ (for $n$ large enough) which is weakly convergent to
$g_i$, and $\Vert (g_i-e_{m_n+i})-g_i\Vert=\Vert e_{m_n+i}\Vert=1$
holds for every $n$. Then $\diam(K_0)=1$. We will freely use the set $K_0$ and the above construction throughout the subsection. Observe that, from the
above construction, it follows that
$$K_0=\overline{\{g_i\colon i\in\natu\}}^{w}=\overline{\{g_i\colon i\in\natu\}}.$$
Observe finally that, by the inductive construction, $g_i$ has finite support for every $i\in\mathbb N$.

By \cite[Theorem 1.2]{aor} we have that $K_0$ contains convex combinations of slices of arbitrarily small diameter. However, all the points in $K_0$ are ``super Daugavet points'' in the following sense.

\begin{proposition}\label{prop:examk0superdauga}
For every $x_0\in K_0$, every $\varepsilon>0$, and every non-empty weakly open subset $W$ of $K_0$, there exists $y\in W$ satisfying that $\Vert x_0-y\Vert>1-\varepsilon=\diam(K_0)-\varepsilon$.
\end{proposition}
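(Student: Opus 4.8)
The plan is to exploit the explicit inductive structure of $K_0$, namely the fact that for each fixed index $i$ the sequence $(g_i + e_{m_n+i})_n$ lies in $K_0$ for $n$ large, converges weakly to $g_i$, and satisfies $\|e_{m_n+i}\| = 1$. First I would reduce to the case where $x_0$ is one of the net points $g_i$: given an arbitrary $x_0 \in K_0 = \overline{\{g_j : j \in \N\}}$, pick $j$ with $\|x_0 - g_j\|$ as small as we like, so it suffices to find $y \in W$ with $\|g_j - y\|$ close to $1$, losing only a controllable error. (Alternatively one can work directly with $x_0$; the net-point reduction is just notational convenience since the perturbations $e_{m_n+k}$ are eventually disjointly supported from any fixed finitely-supported vector.)

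Next I would analyze the weakly open set $W$. Since $K_0 = \overline{\{g_k : k \in \N\}}^w$, the set $W$ contains some net point $g_k$; by the weak continuity of the addition map and the fact that $g_k + e_{m_n+k} \overset{w}{\to} g_k$, for all $n$ large enough we have $g_k + e_{m_n+k} \in W$. So the candidate elements $y := g_k + e_{m_n+k}$ (for suitable large $n$) all lie in $W$. Now the point is to estimate $\|x_0 - y\|$ from below. Write $y = g_k + e_{m_n+k}$; then
\[
\|x_0 - y\| \geq \|x_0 - y\|_\infty \geq |e_{m_n+k}^*(x_0 - y)| = |e_{m_n+k}^*(x_0) - 1|,
\]
using that $e_{m_n+k}^*(g_k) = 0$ because $g_k$ has support contained in $\{1,\dots,m_n\}$ for $n$ large, while $e_{m_n+k}^*(e_{m_n+k}) = 1$. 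Since $x_0 \in K_0 \subset c_0$, we have $e_{m_n+k}^*(x_0) \to 0$ as $n \to \infty$, so $\|x_0 - y\| \geq 1 - \varepsilon$ once $n$ is large enough (and $n$ can always be chosen large enough to also guarantee $g_k + e_{m_n+k} \in W$, the intersection of two cofinite conditions on $n$).

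The main obstacle — really the only thing requiring care — is making sure that the \emph{same} large $n$ simultaneously satisfies all three requirements: that $g_k + e_{m_n+k}$ actually belongs to $K_0$ (true for $n$ beyond the stage at which $g_k$ was introduced, by the inductive definition $K_{n+1} = \conv(K_n \cup \{g_i + e_{m_n+i} : 1 \le i \le l_n\})$), that it lies in the prescribed weakly open set $W$ (true for $n$ large by weak convergence to $g_k \in W$), and that $|e_{m_n+k}^*(x_0)| < \varepsilon$ (true for $n$ large since $x_0 \in c_0$). Each is a cofinality condition on $n$, so a common $n$ exists, and then $y := g_k + e_{m_n+k}$ does the job, giving $\|x_0 - y\| > 1 - \varepsilon = \diam(K_0) - \varepsilon$. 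This completes the proof.
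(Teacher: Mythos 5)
Your proposal is correct and takes essentially the same approach as the paper: pick $g_k\in W$ by weak density, observe that $y:=g_k+e_{m_n+k}\in K_0\cap W$ for all large $n$, and bound $\|x_0-y\|$ from below using the coordinate $m_n+k$. The only (cosmetic) difference is that the paper first approximates $x_0$ by a finitely supported $g_i$ and chooses $n$ so that $m_n+k\notin\supp(g_i)\cup\supp(g_k)$, whereas you use directly that $x_0\in c_0$ forces $e_{m_n+k}^*(x_0)\to 0$, which slightly streamlines the estimate.
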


\begin{proof}
Take $\varepsilon>0$ and a non-empty relatively weakly open subset of $K_0$. By a density argument, we can find $i\in\mathbb N$ satisfying that $\Vert x_0-g_i\Vert<\varepsilon$. Again by a density argument there exists $g_k\in W$ for certain $k\in\mathbb N$.

As we explained above, by the definition of $K_0$ we have that the sequence $g_k+e_{m_n+k}\in K_0$ for every $n\in\mathbb N$. Since $\bigl(g_k+e_{m_n+k}\bigr)_{n\in \N}\longrightarrow g_k$ weakly, we can find $n\in\mathbb N$ large enough so that $g_k+e_{m_n+k}\in W$ and $m_n+k\notin \supp(g_i)\cup \supp(g_k)$ (this is possible because the previous set is finite). So taking $y=g_k+e_{m_n+k}$, we get $y(m_m+k)=1$ and so
$$\Vert g_i-y\Vert\geq y(m_n+k)-g_i(m_n+k)=1-0=1.$$
As a consequence, $\Vert x_0-y\Vert\geq \Vert g_i-y\Vert-\Vert g_i-x_0\Vert>1-\varepsilon$, and the proof is finished.
\end{proof}

It is time to construct the announced renorming of $C[0,1]$. Take a sequence of non-empty pairwise disjoint open subsets $V_n$ of $[0,1]$ satisfying that $0\notin \bigcup\limits_{n\in\mathbb N} V_n$. By Urysohn lemma, we can find, for every $n\in\mathbb N$, a function $h_n\in S_{C[0,1]}$ with $0\leq h_n\leq 1$ and such that $\supp(h_n)\subseteq V_n$. If we consider $Z:=\cspan\{h_n\colon n\in \N\}$, we get that $Z$ is lattice isometrically isomorphic to $c_0$ (indeed, the mapping $e_n\longmapsto h_n$ is an isometric Banach lattice isomorphism). Consequently, we can consider the set $K_0$ constructed in $Z$, obtaining that $K_0\subseteq B_{C[0,1]}$ is a set of positive functions (because the latter linear isometry preserves the lattice structure) which contains convex combination of slices of arbitrarily small diameter but enjoying the property exhibited in Proposition \ref{prop:examk0superdauga}. Moreover, by the construction of the functions $h_n$, $f(0)=0$ for every $f\in Z$ so, in particular, $f(0)=0$ for every $f\in K_0$.

Now, take $0<\varepsilon<1$ and write
$$
B_\varepsilon:=\cco\left(2\left(K_0-\frac{\mathds{1}}{2}\right)\cup 2\left(-K_0+\frac{\mathds{1}}{2}\right) \cup ((1-\varepsilon)B_{C[0,1]}+\varepsilon B_{\ker(\delta_0)}) \right),$$
where $\mathds{1}$ stands for the constant function $1$ in $C[0,1]$.

Consider $\Vert\cdot\Vert_\varepsilon$ the norm on (the real version of) $C[0,1]$ whose unit ball is $B_\varepsilon$. As we have indicated, the renorming technique follows the scheme of the renorming given in \cite[Theorem 2.4]{blradv} with the difference that we use $B_{\ker(\delta_0)}$ instead of $B_{c_0}$ in the last term because $\ker(\delta_0)$ is a Banach space with the Daugavet property.

We have the following result.

\begin{theorem}\label{theo:superdaunoccwdelta}
The space $(X,\Vert\cdot\Vert_\varepsilon)$ satisfies that:
\begin{enumerate}
    \item Every element of $2(K_0-\frac{\mathds{1}}{2})$ is a super Daugavet point.
    \item For every $\eta>0$ there exists a convex combination of slices $D$ of $B_\varepsilon$ with $D\cap 2(K_0-\frac{\mathds{1}}{2})\neq \emptyset$ and such that $\diam(D)<\eta$.
\end{enumerate}
In particular, there are super Daugavet points which are not ccs$-\Delta$ points.
\end{theorem}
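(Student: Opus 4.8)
The plan is to prove the two assertions (1) and (2) separately and then combine them: applying (2) with, say, $\eta=1$ produces a convex combination of slices $D$ of $B_\varepsilon$ with $\diam(D)<1$ containing some $w\in 2(K_0-\tfrac{\mathds{1}}{2})$; by (1) this $w$ is a super Daugavet point of $(X,\Vert\cdot\Vert_\varepsilon)$, yet it sits inside the convex combination of slices $D$, which has diameter $<2$, so it cannot be a ccs $\Delta$-point. Throughout one uses that $\Vert\cdot\Vert_\infty\leq\Vert\cdot\Vert_\varepsilon\leq\tfrac{1}{1-\varepsilon}\Vert\cdot\Vert_\infty$ (immediate from $(1-\varepsilon)B_{C[0,1]}\subseteq B_\varepsilon\subseteq B_{C[0,1]}$), so that $(X,\Vert\cdot\Vert_\varepsilon)$ has the same weak topology and the same closed bounded convex sets as $C[0,1]$; that, writing $A:=2(K_0-\tfrac{\mathds{1}}{2})$, $-A=2(-K_0+\tfrac{\mathds{1}}{2})$ and $G:=(1-\varepsilon)B_{C[0,1]}+\varepsilon B_{\ker(\delta_0)}$, one has $B_\varepsilon=\cco(A\cup(-A)\cup G)$; and that each $z\in A$ lies on the unit sphere of $(X,\Vert\cdot\Vert_\varepsilon)$ with $z(0)=-1$, while every element of $K_0$ has coordinates tending to $0$.

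For (1), fix $z_0\in A$; by Lemma~\ref{lem:super_Delta_set_characterizations} it suffices to prove $\overline{\Delta_\eta(z_0)}^w=B_\varepsilon$ for every $\eta>0$, where $\Delta_\eta(z_0):=\{y\in B_\varepsilon\colon\Vert z_0-y\Vert_\varepsilon>2-\eta\}$. As $\conv(A\cup(-A)\cup G)$ is weakly dense in $B_\varepsilon$, it is enough to produce, for each $w_0=\lambda a+\mu(-a')+\nu b$ in that set (with $a,a'\in A$, $b\in G$, some weights possibly $0$) and each basic relatively weakly open neighbourhood $V$ of $w_0$, a point $w_*\in V\cap\Delta_\eta(z_0)$. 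If $\lambda=\nu=0$ then $w_0=-a'$ and $\Vert z_0-w_0\Vert_\varepsilon\geq\Vert z_0-w_0\Vert_\infty\geq\abs{z_0(0)+a'(0)}=2$, so $w_0$ works. Otherwise we perturb $w_0$ by pushing a $+1$-bump out to a far coordinate, in the spirit of the proof of Proposition~\ref{prop:examk0superdauga}: pick a large index $J=m_n+i$, replace the $A$-component $a$ by $a_*:=2(g_i+e_J)-\mathds{1}\in A$ where $g_i$ is a net element of $K_n$ norm-approximating the $K_0$-representative of $a$ (so $g_i+e_J\in K_0$ and $a_*\equiv1$ on the interval $I_J$ where $h_J\equiv1$); replace the $G$-component $b$ by some $b_*\in G$ equal to $b$ off a small neighbourhood $N\subseteq I_J$ of a point $\tau\in I_J$ chosen to avoid the finitely many atoms of the measures defining $V$, and equal to $+1$ near $\tau$; and keep $-a'$ unchanged, obtaining $w_*:=\lambda a_*+\mu(-a')+\nu b_*\in\conv(A\cup(-A)\cup G)\subseteq B_\varepsilon$. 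Since $e_J\to0$ weakly, the norm-error in approximating $a$ tends to $0$, and $b_*-b$ is supported on the shrinking atom-free set $N$, we get $w_*\to w_0$ weakly as $J\to\infty$, hence $w_*\in V$ eventually; and at $\tau$ one has $a_*(\tau)=b_*(\tau)=1$, $(-a')(\tau)\to1$ and $z_0(\tau)\to-1$ (the last two because $K_0\subseteq B_{c_0}$), so that $\Vert z_0-w_*\Vert_\varepsilon\geq\Vert z_0-w_*\Vert_\infty\geq\abs{z_0(\tau)-w_*(\tau)}>2-\eta$ for $J$ large. (When $\nu=0$, resp.\ $\lambda=0$, only the perturbation of $a$, resp.\ of $b$, is carried out.) Hence $\overline{\Delta_\eta(z_0)}^w\supseteq\conv(A\cup(-A)\cup G)$, so it equals $B_\varepsilon$, and $z_0$ is a super Daugavet point.

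For (2), fix $\eta>0$. By \cite[Theorem~1.2]{aor}, transported through the lattice isometry $c_0\cong Z$, $K_0$ contains a convex combination of slices $C:=\sum_{i=1}^n\lambda_iS(\phi_i,\alpha_i;K_0)$ of $\Vert\cdot\Vert_\infty$-diameter $<\delta$, with $\delta$ to be fixed later. The affine homeomorphism $T(f):=2f-\mathds{1}$ sends $K_0$ onto $A$ and $S(\phi,\alpha;K_0)$ onto the slice $S(\phi,2\alpha;A)$ of $A$, and $A$ is exposed in $B_\varepsilon$ by $f^*:=-\delta_0$, with $\sup_{B_\varepsilon}f^*=1$ attained exactly on $A$ (since $f^*=-1$ on $-A$ and $f^*\leq1-\varepsilon$ on $G$). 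For each $i$ put $\psi_i:=\phi_i+t_if^*$ with $t_i$ large; a routine estimate splitting an element of $B_\varepsilon$ along $A$, $-A$, $G$ gives $\sup_{B_\varepsilon}\psi_i=\sup_A\phi_i+t_i$, $S(\psi_i,\beta;B_\varepsilon)\cap A=S(\phi_i,\beta;A)$, and $S(\psi_i,\beta;B_\varepsilon)\subseteq S(\phi_i,K_\varepsilon\beta;A)+\gamma_iB_\varepsilon$, where $K_\varepsilon$ depends only on $\varepsilon$ and $\gamma_i\to0$ as $t_i\to\infty$ (for fixed $\beta$). Choosing $\beta_i:=2\alpha_i/K_\varepsilon$, the set $D:=\sum_{i=1}^n\lambda_iS(\psi_i,\beta_i;B_\varepsilon)$ is a convex combination of slices of $B_\varepsilon$; it contains the nonempty set $\sum_i\lambda_iS(\phi_i,\beta_i;A)\subseteq A$, so $D\cap A\neq\emptyset$; and $D\subseteq\sum_i\lambda_iS(\phi_i,2\alpha_i;A)+(\max_i\gamma_i)B_\varepsilon=T(C)+(\max_i\gamma_i)B_\varepsilon$, whence $\diam_{\Vert\cdot\Vert_\varepsilon}(D)\leq2\diam_{\Vert\cdot\Vert_\varepsilon}(C)+2\max_i\gamma_i\leq\tfrac{2\delta}{1-\varepsilon}+2\max_i\gamma_i$, which is $<\eta$ once $\delta$ is small and the $t_i$ large.

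The genuinely delicate step is (1): the perturbation producing $w_*$ must simultaneously (i) stay inside $\conv(A\cup(-A)\cup G)$, which forces its $A$-part to be an honest element of $K_0$ — hence of the special shape $g_i+e_J$ — so one has to approximate the $K_0$-representative of $a$ by the Argyros--Odell--Rosenthal net elements \emph{before} attaching the far bump; (ii) be weakly small, which for the modification of the $G$-component requires locating the bump inside a nondegenerate interval where $h_J\equiv1$ and away from the finitely many atoms of the measures that cut out $V$; and (iii) create a coordinate where $z_0\approx-1$ but $w_*\approx+1$, which rests on $z_0(0)=-1$ and on the vanishing of the tails of elements of $K_0$. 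A lesser technical point in (2) is the fixed multiplicative loss $K_\varepsilon=O(1/\varepsilon)$ incurred when passing from a slice of $A$ to a slice of $B_\varepsilon$ via the boosted functional $\psi_i$; it is absorbed by shrinking the $B_\varepsilon$-slice widths by $K_\varepsilon$ beforehand, so that the blown-up slices of $K_0$ are exactly the small-diameter ones supplied by \cite{aor}.
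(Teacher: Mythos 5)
Your proof is correct and, for part~(2), takes a genuinely different route from the paper's.

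For part~(1) the two arguments are conceptually close but differ in structure. The paper first uses the fact (from \cite{blradv}) that $\frac{A-A}{2}\subseteq G$ to split $\conv(A\cup(-A)\cup G)=\conv(A\cup G)\cup\conv(-A\cup G)$ and then treats the two halves separately; the $A\cup G$ case is handled by Proposition~\ref{prop:examk0superdauga} (the ``Daugavet-ness'' of $K_0$ relative to itself) combined with a coordinate $t_0$ where the two $A$-pieces differ by almost $2$, and the $-A\cup G$ case is trivial via $\delta_0$. You instead work directly with a generic combination $\lambda a+\mu(-a')+\nu b$ and realize the large norm via a bump at a far-out AOR coordinate $J$, exploiting that $z_0$, $a'\in A$ both have vanishing $h_J$-coordinates as $J\to\infty$. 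Both routes are valid and roughly equally long. One small point to tighten: the Urysohn functions $h_n$ are only guaranteed to satisfy $0\leq h_n\leq 1$, $\|h_n\|=1$ and $\supp h_n\subseteq V_n$, so $h_J$ need not be identically $1$ on an interval $I_J$; you should either build the $h_n$ to be $\equiv1$ on a nondegenerate subinterval of $V_n$ (always possible), or replace $a_*(\tau)=1$ by $a_*(\tau)>1-\delta'$ on the nonempty open set $\{h_J>1-\delta'\}$, which still leaves room to dodge the countably many atoms of the measures defining $V$ and feeds into the same estimate.

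For part~(2) the approaches genuinely diverge. The paper first shows that the convex combination of \emph{relatively weakly open} sets $\frac1n\sum U_i$ (each $U_i$ the intersection of a slice with $\{\delta_0<-1+\rho^2\}$) has small $\|\cdot\|_\varepsilon$-diameter via the explicit $\lambda_{(i,j)}$-bookkeeping, then invokes Bourgain's lemma to replace each $U_i$ by a convex combination of slices, and finally prunes the slices whose weight escapes $A$ and renormalizes, checking that the pruning costs only $O(\rho^2/\varepsilon)$. Your boosting argument replaces all of this: because $f^*=-\delta_0$ exposes $A$ in $B_\varepsilon$ with a quantitative modulus $\dist_{\|\cdot\|_\varepsilon}(x,A)\leq(1+2/\varepsilon)(1-f^*(x))$ on $B_\varepsilon$, adding a large multiple $t_if^*$ to $\phi_i$ makes each $B_\varepsilon$-slice $S(\psi_i,\beta_i;B_\varepsilon)$ trace $S(\phi_i,\beta_i;A)$ exactly on $A$ and remain in a $\gamma_i$-neighbourhood of a slightly wider slice of $A$, with $\gamma_i\to 0$ as $t_i\to\infty$. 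This produces a genuine ccs $D$ of $B_\varepsilon$ directly, with $D\cap A\neq\emptyset$ and $\diam(D)$ controlled by $\diam T(C)+2\max_i\gamma_i$, and avoids Bourgain's lemma and the renormalization step entirely. The only point that needs to be stated a bit more carefully is that the estimates ($\sup_{B_\varepsilon}\psi_i=\sup_A\phi_i+t_i$, the inclusion into $S(\phi_i,K_\varepsilon\beta_i;A)+\gamma_iB_\varepsilon$) hold once $t_i$ exceeds a threshold depending on $\|\phi_i\|$ and $\beta_i$; with that proviso your $K_\varepsilon$ can actually be taken close to $1$, not $O(1/\varepsilon)$ — the $1/\varepsilon$ only enters $\gamma_i$, which is crushed by large $t_i$.

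Overall this is a clean alternative proof; the boosting mechanism in part~(2) in particular shortens the argument considerably compared with the paper.
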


\begin{proof}
(1). Take $a\in K_0$, and let us prove that $2a-\mathds{1}$ is a super Daugavet point. In order to do so, pick a non-empty relatively weakly open subset $W$ of $B_\varepsilon$. Write
$$
A:=2(K_0-\tfrac{{\mathds{1}}}{2}) \ \text{ and } \   B:=(1-\varepsilon)B_{C[0,1]}+\varepsilon B_{\ker(\delta_0)}.
$$
Since  $B_\varepsilon=\cco(A\cup-A\cup B)$ we have that $W$ has non-empty intersection with $\conv(A\cup -A\cup B)$. Now observe that $\frac{A-A}{2}=K_0-K_0\subseteq B_{\ker(\delta_0)}\subseteq B$ so that $\conv(A\cup -A\cup B)=\conv(A\cup B)\cup \conv({-A}\cup B)$ by \cite[Lemma 2.4]{blradv}.
Consequently, either $W\cap \co(A\cup B)$ or $W\cap \co(-A\cup B)$ is non-empty. Let us distinguish by cases.

Assume first that $W\cap \co(A\cup B)$ is non-empty, so find $a'\in K_0$, $f\in B_{C[0,1]}$, $g\in B_{\ker(\delta_0)}$, and $\alpha,\beta\in [0,1]$ with $\alpha+\beta=1$ satisfying that
$$\alpha(2a'-\mathds{1})+\beta((1-\varepsilon)f+\varepsilon g)\in W.$$
Take $\eta>0$. By Proposition \ref{prop:examk0superdauga}, there exists a net $(a_s)\longrightarrow a'$ weakly with $a_s\in K_0$ for every $s$ and satisfying that $\Vert a-a_s\Vert\longrightarrow 1$. Since $(2a_s-\mathds{1})\longrightarrow 2a'-\mathds{1}$ weakly, we can find $s$ large enough so that
$$
\alpha(2a_s-\mathds{1})+\beta((1-\varepsilon)f+\varepsilon g)\in W
$$
and
$$\Vert (2a-\mathds{1})-(2a_s-\mathds{1})\Vert=2\Vert a-a_s\Vert>2-\eta.$$
Observe that $2a-\mathds{1}$ and $2a_s-\mathds{1}$ are functions in $B_{C[0,1]}$ since $a,a_s$ are positive functions of norm at most one.
Since $\Vert (2a-\mathds{1})-(2a_s-\mathds{1})\Vert>2-\eta$, there exists $t_0\in [0,1]$ and $\theta\in\{-1,1\}$ such that $\theta(2a-\mathds{1})(t_0)>1-\eta$ and $\theta(2a_s-\mathds{1})(t_0)<-1+\eta$ (observe that $t_0\neq 0$ since $a(t_0)=a_s(t_0)=0$ by construction). Consequently, the set
$$U:=\{t\in [0,1]\colon \theta(2a-\mathds{1})(t)>1-\eta \,\text{ and } \, \theta(2a_s-1)(t)<-1+\eta\}$$
is a non-empty open subset of $[0,1]$, and we can construct a sequence of non-empty pairwise disjoint open sets $W_n\subseteq U$. Observe that $0\notin \bigcup\nolimits_{n\in\mathbb N} W_n$ since $0\notin U$. Take $p_n\in W_n$ for every $n\in\mathbb N$. We can construct, for every $n\in\mathbb N$, two functions $f_n$ and $g_n$ in the unit ball of $C[0,1]$ satisfying $f_n=f$ and $g_n=g$ in $[0,1]\setminus W_n$ and $f_n(p_n)=g_n(p_n)=-\theta$. Observe that the sequence of functions $(f-f_n)$ have pairwise disjoint supports, so $(f-f_n)\longrightarrow 0$ weakly or, in other words, $(f_n)\longrightarrow f$ weakly. A similar argument shows that $(g_n)\longrightarrow g$ weakly. Notice also that, given $n\in\mathbb N$, since $0\notin W_n$ then $g_n(0)=g(0)=0$, so $(g_n)\subseteq \ker(\delta_0)$. Henceforth $\alpha(2a_s-\mathds{1})+\beta((1-\varepsilon)f_n+\varepsilon g_n)$ is a sequence in $B_\varepsilon$ which converges in $n$ weakly to $\alpha(2a_s-\mathds{1})+\beta((1-\varepsilon)f+\varepsilon g)\in W$. Consequently, we can find $n$ large enough such that $\alpha(2a_s-\mathds{1})+\beta((1-\varepsilon)f_n+\varepsilon g_n)\in W$. Finally, observe that the inclusion $B_\varepsilon\subseteq B_{C[0,1]}$ implies that $\Vert z\Vert\leq \Vert z\Vert_\varepsilon$, so
\begin{align*}
\bigl\Vert (2a-\mathds{1})-\alpha(2a_s-\mathds{1})-\beta((1-\varepsilon)f_n+\varepsilon g_n)\bigr\Vert_\varepsilon & \geq \bigl\Vert (2a-\mathds{1})-\alpha(2a_s-\mathds{1})-\beta((1-\varepsilon)f_n+\varepsilon g_n)\bigr\Vert\\
& \geq
\theta((2a-\mathds{1})-\alpha(2a_s-\mathds{1})-\beta((1-\varepsilon)f_n)(p_n)\\
& =\theta(2a-\mathds{1})(p_n)-\theta\alpha(2a_s-\mathds{1})(p_n) \\ &\quad -\theta\beta((1-\varepsilon)f_n(p_n)+\theta\varepsilon g_n(p_n))\\
& >1-\eta-\alpha(-1+\eta)-\beta (-1)\\
& =1+\alpha+\beta -(1+\alpha)\eta=2-2\eta.
\end{align*}
Since $\eta>0$ was arbitrary this finishes the case $W\cap \co(A\cup B)\neq \emptyset$.

For the case $W\cap \co(-A\cup B)\neq \emptyset$, find $a'\in K_0$, $f\in B_{C[0,1]}$, $g\in B_{\ker(\delta_0)}$, and $\alpha,\beta\in [0,1]$ with $\alpha+\beta=1$ satisfying that
$$
\alpha(-2a'+\mathds{1})+\beta((1-\varepsilon)f+\varepsilon g)\in W.
$$
This case is simpler because $\Vert (2a-\mathds{1})-(-2a'+\mathds{1})\Vert\geq (2a-\mathds{1})-(-2a'+\mathds{1})(0)=2$. Now, an approximation argument for $f_n$ and $g_n$ similar to that of the above case (working on a non-empty open subset of $(0,1)$ in order to get $g_n(0)=0$) finishes this case and, consequently, the proof of (1).

(2). The first part of the proof will be a repetition of the argument of \cite[Theorem 2.4]{blradv}. Fix $\gamma >0$. From \cite[Theorem 1.2]{aor} there exist slices $S_1,\cdots ,S_n$ of $K_0$ such that
$$\diam \left(\frac{1}{n}\sum_{i=1}^nS_i \right)<\frac{1}{4}(1-\varepsilon )\gamma.$$ We
can assume that $S_i=\{x\in K_0\colon x_i^*(x)>1-\widetilde{\delta }\}$
where $0< \widetilde{\delta } <1$, $x_i^*\in C[0,1]^*$ and $\sup x_i^*(K_0)=1$ holds for every $i=1,\ldots ,n$. It is
clear that $$\sup x_i^*\bigl(2(K_0-\frac{\mathds{1}}{2})\bigr)=2(1-x_i^*(\frac{\mathds{1}}{2})),
$$ for all $i=1,\cdots ,n$.
We put $\rho ,\delta>0$ such that $\frac{1}{2}\rho \Vert
x_i^*\Vert +\delta <\widetilde{\delta }$, $2\rho <\varepsilon$,
$\rho \Vert x_i^*\Vert < 4\delta$, and $\frac{(7-2\varepsilon
)\rho }{(1-\varepsilon )}< \gamma $, for all $i=1,\ldots ,n$. We
consider the relatively weakly open set of $B_\varepsilon $ given
by
$$ U_i:=\left\{x\in B_\varepsilon \colon x_i^*(x)>2\left(1-\delta -x_i^*\left(\frac{\mathds{1}}{2}\right)\right)+\frac{1}{2}\rho \Vert x_i^*\Vert , \ x(0)=\delta_0(x)<-1+\rho^2\right\}$$ for every $i=1,\ldots ,n$. It is clear that $\Vert x_i^*\Vert_{\varepsilon}\leq  \Vert x_i^*\Vert $ for every $i=1,\ldots , n$ and $\Vert \delta_0\Vert_{\varepsilon}=\Vert\delta_0 \Vert =1$.

Since $\rho \Vert x_i^*\Vert < 4\delta$, we have that
$2(1-x_i^*(\frac{\mathds{1}}{2}))>2(1-\delta -x_i^*(\frac{{\bf
1}}{2}))+\frac{1}{2}\rho \Vert x_i^*\Vert$. Now, we have that
$\sup x_i^*(2(K_0-\frac{\mathds{1}}{2}))=2(1-x_i^*(\frac{{\bf
1}}{2}))$, then there exists $x\in K_0$ such that
$$x_i^*(2(x-\frac{\mathds{1}}{2}))>2(1-\delta -x_i^*(\frac{{\bf
1}}{2}))+\frac{1}{2}\rho \Vert x_i^*\Vert\ \text{ and } \ \delta_0( 2(
x-\frac{1}{2}))=-1<-1+\rho^2.$$ This implies that $U_i  \neq
\emptyset$ for every $i=1,\ldots ,n$. In order to estimate the
diameter of $\frac{1}{n}\sum_{i=1}^nU_i $, it is enough to compute
the diameter of
$$\frac{1}{n}\sum_{i=1}^n U_i \cap \conv\left(2\left(K_0- \frac{\mathds{1}}{2}\right)\cup
{-2}\left(K_0- \frac{\mathds{1}}{2}\right)\cup [(1-\varepsilon) B_X+\varepsilon
B_{\ker(\delta_0)}]\right).$$ Since $2(K_0- \frac{\mathds{1}}{2})$ and $(1-\varepsilon
)B_{C[0,1]}+\varepsilon B_{\ker(\delta_0)}$ are convex subsets of $B_\varepsilon$,
given $x\in B_\varepsilon$, we can assume that $x=\lambda _1 2(a-
\frac{\mathds{1}}{2})+\lambda _2 2 (-b+ \frac{\mathds{1}}{2})+\lambda _3
[(1-\varepsilon )x_0+\varepsilon y_0]$, where $\lambda _i\in
[0,1]$ with $\sum_{i=1}^3\lambda_i=1$ and $a,b\in K_0$, $x_0\in
B_{C[0,1]}$, and $y_0\in B_{\ker(\delta_0)}$.

So given $x,y\in \frac{1}{n}\sum_{i=1}^nU_i$, for $i=1,\cdots , n$,
there exist $a_i,a'_i,b_i,b'_i\in K_0$, $\lambda _{(i,j)},\lambda
'_{(i,j)}\in [0,1]$ with $j=1,2,3$ and, $x_i, x_i'\in B_{C[0,1]}$, and
$y_i,y_i'\in B_{Ker(\delta_0)}$, such that
$$u_i:=2\lambda_{(i,1)} \left(a_i-\frac{{\bf
1}}{2}\right)+2\lambda_{(i,2)}\left(-b_i+\frac{{\bf
1}}{2}\right)+\lambda_{(i,3)}[(1-\varepsilon )x_i+\varepsilon y_i]$$
$$u_i':=2\lambda_{(i,1)}' \left(a_i'-\frac{{\bf
1}}{2}\right)+2\lambda_{(i,2)}'\left(-b_i'+\frac{{\bf
1}}{2}\right)+\lambda_{(i,3)}'[
(1-\varepsilon )x_i'+\varepsilon y_i']$$
belong to $U_i$ for every $i\in\{1,\ldots, n\}$, and such that
$$x=\frac{1}{n}\sum_{i=1}^n u_i\ \text{ and }\ y=\frac{1}{n}\sum_{i=1}^n u_i'.$$
For $i\in\{1,\ldots,n\}$ we have that $u_i\in U_i$ so $$\delta_0(u_i)=\delta_0\left(2\lambda_{(i,1)} \left(a_i-\frac{{\bf
1}}{2}\right)+2\lambda_{(i,2)}\left(-b_i+\frac{{\bf
1}}{2}\right)+\lambda_{(i,3)}[(1-\varepsilon )x_i+\varepsilon
y_i]\right)<-1+\rho^2.$$

Observe that, by construction, $$\delta_0\left(a_i-\frac{{\bf
1}}{2}\right)=-\frac{1}{2}, \delta_0\left(-b_i+\frac{{\bf
1}}{2}\right)=\frac{1}{2}\ \text{ and }\ \delta_0((1-\varepsilon )x_i+\varepsilon
y_i)=\delta_0((1-\varepsilon) x_i)\geq -(1-\varepsilon).$$ This implies that
$$2\lambda _{(i,2)}+\lambda _{(i,3)}\varepsilon -1= -\lambda _{(i,1)}+\lambda _{(i,2)}
-\lambda _{(i,3)}(1-\varepsilon )<-1+\rho ^2.$$ Since
$2\rho<\varepsilon$, we deduce that
$\lambda_{(i,2)}+\lambda_{(i,3)}<\frac{1}{2}\rho$. As a
consequence we get that
\begin{equation}\label{landaeje2}
\lambda_{(i,1)}>1-\frac{1}{2}\rho ,
\end{equation}
and, similarly, we get that
\begin{equation}\label{landaprimaeje2}
\lambda'_{(i,1)}>1-\frac{1}{2}\rho ,
\end{equation}
for every $i=1,\ldots ,n$. Now, the previous inequalities imply that
\begin{align*}
\Vert x-y\Vert_\varepsilon&  \leq \frac{1}{n}\left\Vert \sum_{i=1}^n  2\lambda_{(i,1)} \left(a_i-\frac{\mathds{1}}{2}\right) -2\lambda_{(i,1)}' \left(a_i'-\frac{\mathds{1}}{2}\right)\right\Vert
_\varepsilon \\
& \qquad + \frac{1}{n}\sum_{i=1}^n \left\Vert
2\lambda_{(i,2)}\left(-b_i+\frac{\mathds{1}}{2}\right)\right\Vert _\varepsilon +
\frac{1}{n}\sum_{i=1}^n \left\Vert 2\lambda_{(i,2)}'\left(-b_i'+ \frac{\mathds{1}}{2}\right)\right\Vert _\varepsilon\\
& \qquad \qquad +\frac{1}{n}\sum_{i=1}^n \Vert
\lambda_{(i,3)}[(1-\varepsilon )x_i+\varepsilon
y_i]\Vert_\varepsilon + \frac{1}{n}\sum_{i=1}^n \Vert
\lambda_{(i,3)}'[(1-\varepsilon )x_i'+\varepsilon y_i']\Vert
_\varepsilon\\
& \leq \frac{1}{n}\left\Vert \sum_{i=1}^n
2\lambda_{(i,1)} \left(a_i-\frac{\mathds{1}}{2}\right)-2\lambda_{(i,1)}'
\left(a_i'-\frac{\mathds{1}}{2}\right)\right\Vert _\varepsilon\\
& \qquad +\frac{1}{n}\sum_{i=1}^n
\left(\lambda_{(i,2)}+\lambda_{(i,3)}
\right)+\frac{1}{n}\sum_{i=1}^n
\left(\lambda_{(i,2)}'+
\lambda_{(i,3)}'\right)
\intertext{and, by using \eqref{landaeje2},\eqref{landaprimaeje2},}
& \leq \frac{1}{n}\left\Vert \sum_{i=1}^n
 2\lambda_{(i,1)} \left(a_i-
 \frac{\mathds{1}}{2}\right)-2
 \lambda_{(i,1)}'
\left(a_i'-\frac{\mathds{1}}{2}\right)\right\Vert _\varepsilon +\rho\\
& \leq \frac{2}{n}\left\Vert \sum_{i=1}^n
 \lambda_{(i,1)} a_i-\lambda_{(i,1)}' a_i'\right\Vert _\varepsilon +
\frac{1}{n}\sum_{i=1}^n \vert
\lambda_{(i,1)}-\lambda_{(i,1)}'\vert \Vert {\mathds{1}}\Vert_\varepsilon +\rho\\
& \leq \frac{2}{n}\left\Vert \sum_{i=1}^n
\lambda_{(i,1)} a_i-\lambda_{(i,1)}' a_i'\right\Vert _\varepsilon +
\frac{(3-2\varepsilon)}{2(1-\varepsilon)}\rho .
\end{align*}
Now,
\begin{align*}
\left\Vert \sum_{i=1}^n \lambda_{(i,1)}
a_i-\lambda_{(i,1)}'a'_i\right
\Vert_\varepsilon\\
\leq \left\Vert
\sum_{i=1}^n (\lambda_{(i,1)}-1) a_i\right\Vert_\varepsilon & +\left\Vert
\sum_{i=1}^n a_i-a'_i\right\Vert_\varepsilon +\left\Vert \sum_{i=1}^n
(\lambda'_{(i,1)}-1) a'_i\right\Vert_\varepsilon\\
\leq \frac{1}{1-\varepsilon}\left\Vert \sum_{i=1}^n a_i-a'_i\right\Vert & +\sum_{i=1}^n
\frac{1}{1-\varepsilon}\vert\lambda_{(i,1)}-1\vert \Vert a_i\Vert
+\sum_{i=1}^n \frac{1}{1-\varepsilon}\vert\lambda_{(i,1)}'-1\vert
\Vert a_i'\Vert\\
\leq \frac{1}{1-\varepsilon}\left\Vert \sum_{i=1}^n a_i-a'_i\right\Vert & +\frac{1}{1-\varepsilon}n\rho.
\end{align*}
(In the previous estimate observe that $\Vert a_i\Vert\leq 1$ and $\Vert a_i'\Vert\leq 1$ since $a_i,a_i'\in K_0\subseteq B_{\ker(\delta_0)}\subseteq B_\varepsilon$).
Hence,
\begin{equation}\label{acotado}
\Vert x-y\Vert_\varepsilon \leq \frac{2}{1-\varepsilon}\left\Vert
\frac{1}{n} \sum_{i=1}^n a_i-a'_i\right\Vert
+\frac{(7-2\varepsilon)}{2(1-\varepsilon )}\rho .
\end{equation}
Now, in order to prove that the previous norm is small we will prove that both elements $\frac{1}{n}\sum_{i=1}^n a_i, \frac{1}{n}\sum_{i=1}^n a'_i$ are elements of $\frac{1}{n}\sum_{i=1}^n S_i$, which has small diameter. To this end, note that
\begin{align*}
x_i^*\left(2\lambda_{(i,1)}
\left(a_i-\frac{\mathds{1}}{2}\right) +2\lambda_{(i,2)}\left(-b_i+\frac{\mathds{1}}{2}\right)+\lambda_{(i,3)}[(1-\varepsilon )x_i+\varepsilon
y_i]\right)\\
\qquad >2\left(1-\delta -x_i^*\left(\frac{\mathds{1}}{2}\right)\right)+\frac{\rho}{2}
\Vert x_i^*\Vert ,
\end{align*}
for every $i\in\{1,\ldots,n\}$. Then,
\begin{align*}
x_i^*\left(2\lambda_{(i,1)}
\left(a_i-\frac{\mathds{1}}{2}\right)\right) +\frac{1}{2}\rho \Vert x_i^*\Vert &
\geq x_i^*\left(2\lambda_{(i,1)}
\left(a_i-\frac{\mathds{1}}{2}\right)\right) +\lambda_{(i,2)}\Vert x_i^*\Vert
 +\lambda_{(i,3)}\Vert x_i^*\Vert\\
& \geq x_i^*\left(2\lambda_{(i,1)}
\left(a_i-\frac{\mathds{1}}{2}\right)\right) +\lambda_{(i,2)}\Vert x_i^*\Vert_\varepsilon
 +\lambda_{(i,3)}\Vert x_i^*\Vert_\varepsilon\\
&\geq x_i^*\left(2\lambda_{(i,1)} \left(a_i-\frac{\mathds{1}}{2}\right)+ 2\lambda_{(i,2)}\left(-b_i+\frac{\mathds{1}}{2}\right)+\lambda_{(i,3)}[(1-\varepsilon )x_i+\varepsilon y_i]\right).
\end{align*}
We have that $$x_i^*\left(2\lambda_{(i,1)} \left(a_i-\frac{\mathds{1}}{2}\right)\right) >2\left(1-\delta -x_i^*\left(\frac{\mathds{1}}{2}\right)\right),$$ and hence
$$x_i^*(\lambda_{(i,1)}a_i)>1-\delta -(1-\lambda_{(i,1)})x_i^*\left(\frac{\mathds{1}}{2}\right)\geq  1-\delta -\frac{1}{2}\rho \Vert x_i^*\Vert.
$$
We recall that $\delta +\frac{1}{2}\rho \Vert x_i^*\Vert <
\widetilde{\delta } $, so
$x_i^*(\lambda_{(i,1)}a_i)>1-\widetilde{\delta }$. It follows that
$x_i^*(a_i)>1-\widetilde{\delta }$. Then, $a_i\in K_0\cap S_i$ and,
similarly, we get that $a'_i\in K_0\cap S_i$, for every $i=1,\ldots
,n$. Therefore, $$\frac{1}{n}\sum_{i=1}^n a_i,\ \frac{1}{n}\sum_{i=1}^n
a'_i\in \frac{1}{n}\sum_{i=1}^n S_i.$$ Since the diameter of
$\frac{1}{n}\sum_{i=1}^n S_i$ is less than
$\frac{1}{4}(1-\varepsilon )\gamma $, we deduce that
$\frac{1}{n}\Vert \sum_{i=1}^n a_i-a'_i\Vert
<\frac{1}{4}(1-\varepsilon )\gamma $. Finally, we conclude from
\eqref{acotado} and the above estimate that $\Vert
x-y\Vert_\varepsilon \leq \gamma$. Hence, the set
$C:=\frac{1}{n}\sum_{i=1}^n U_i$ has diameter at most $\gamma$ for
the norm $\Vert\cdot\Vert_{\varepsilon}$.

Now, Bourgain's lemma (see Lemma~\ref{lem:bourgain_lemma}) ensures the existence of a convex combination of slices $\sum_{j=1}^{p_i} \alpha_{ij} T_{ij}\subseteq U_i$ for every $1\leq i\leq n$. Using this fact, we will find a convex combination of slices of $B$ of diameter smaller than $\gamma+\frac{4\rho^2}{(1-\frac{\rho^2}{\varepsilon})\varepsilon}$ and such that every slice contains points of $2(K_0-\frac{\mathds{1}}{2})$. Since $\rho$ and $\gamma$ can be taken as small as we wish, we will be done.
In order to do so, fix $1\leq i\leq n$ and define
$$A_i:=\left\{j\in\{1,\ldots, p_i\}\colon  T_{ij}\cap \left(2K_0-\frac{\mathds{1}}{2}\right)=\emptyset\right\};\quad B_i:=\{1,\ldots, p_i\}\setminus A_i.$$
Given $x_{ij}\in T_{ij}$ we have that, for $j\in A_i$, that $\delta_0(x_{ij})\geq -1+\varepsilon$ by the definition of the unit ball $B_\varepsilon$. Since $\sum_{j=1}^{p_i}\alpha_{ij} x_{ij}\in \sum_{j=1}^{p_i} \alpha_{ij} T_{ij}\subseteq U_i$ we derive $-1+\rho^2> \delta_0\left( \sum_{j=1}^{p_i}\alpha_{ij}x_{ij}\right)$. Hence
$$-1+\rho^2>\sum_{j\in A_i}\alpha_{ij}\delta_0(x_{ij})+\sum_{i\in B_i}\alpha_{ij}\delta_0(x_{ij})\geq (-1+\varepsilon)\sum_{j\in A_i}\alpha_{ij}-\sum_{j\in B_i}\alpha_{ij}=-1+\varepsilon\sum_{j\in A_i}\alpha_{ij}.$$
From the above inequality we infer that $\sum_{j\in A_i}\alpha_{ij}<\frac{\rho^2}{\varepsilon}$ holds for every $1\leq i\leq n$. Now, we set $\Lambda_i:=\sum_{j\in B_i}\lambda_{ij}$, which belongs to the interval $[1-\frac{\rho^2}{\varepsilon},1]$ for $1\leq i\leq n$ and set
$$
D:=\frac{1}{n}\sum_{i=1}^n\sum_{j\in B_i} \frac{\alpha_{ij}}{\Lambda_I}T_{ij}.
$$
Observe that $D$ is a convex combination of slices of $B_\varepsilon$ since every $T_{ij}$ is a slice of $B_\varepsilon$ and since
$$\frac{1}{n}\sum_{i=1}^n\sum_{j\in B_I}\frac{\alpha_{ij}}{\Lambda_i}\alpha_{ij}=1.$$
We claim that $D\subseteq C+\frac{2}{1-\frac{\rho^2}{\varepsilon}}\frac{\rho^2}{\varepsilon} B_\varepsilon$. This is enough to finish the proof because the above condition implies that
$$
\diam(D)\leq \diam(C)+\frac{4}{1-\frac{\rho^2}{\varepsilon}}\frac{\rho^2}{\varepsilon}\leq \gamma+\frac{4}{1-\frac{\rho^2}{\varepsilon}}\frac{\rho^2}{\varepsilon}.
$$
So let us prove the above inclusion. Take $z:=\frac{1}{n}\sum_{i=1}^n \sum_{j\in B_i}\frac{\alpha_{ij}}{\Lambda_i}x_{ij}\in D$ for certain $x_{ij}\in T_{ij}$. Write $z':=\frac{1}{n}\sum_{i=1}^n \sum_{j\in B_i} \alpha_{ij}x_{ij}$. Then
$$\vert z-z'\vert\leq \frac{1}{n}\sum_{i=1}^n \sum_{j\in B_{ij}} \left\vert 1-\frac{1}{\Lambda_i}\right\vert \alpha_{ij}\vert x_{ij}\vert<\frac{1}{1-\frac{\rho^2}{\varepsilon}}\frac{\rho^2}{\varepsilon}.$$
On the other hand, for $1\leq i\leq n$ and $j\in A_i$ take $x_{ij}\in T_{ij}$. Define
$$
z'':=\frac{1}{n}\sum_{i=1}^n \sum_{j=1}^{p_i}\alpha_{ij}x_{ij}\in\frac{1}{n}\sum_{i=1}^n \sum_{j=1}^{p_i}\alpha_{ij}T_{ij}\subseteq \frac{1}{n}\sum_{i=1}^n U_i=C.
$$
Moreover, we have
$$\vert z'-z''\vert\leq \frac{1}{n}\sum_{i=1}^n\sum_{j\in A_i}\alpha_{ij}<\frac{\rho^2}{\varepsilon}.$$
Consequently $z=z''+(z-z'')\in C+\frac{2}{1-\frac{\rho^2}{\varepsilon}}\frac{\rho^2}{\varepsilon} B_\varepsilon$ since
\begin{equation*}
\vert z-z''\vert\leq \vert z-z'\vert+\vert z'-z''\vert<\frac{1}{1-\frac{\rho^2}{\varepsilon}} \frac{\rho^2}{\varepsilon}+\frac{\rho^2}{\varepsilon} <\frac{2}{1-\frac{\rho^2}{\varepsilon}}\frac{\rho^2}{\varepsilon}.\qedhere
\end{equation*}
\end{proof}

\subsection{A summary of relations between the properties}\label{subsection:schemeofcounterexamples}

Figure~\ref{figure02} below is an scheme which complements Figure~\ref{figure01} with the counterexamples following from known results and from the results in this section.

\begin{figure}[hbt!]
    \begin{tikzcd}
\fbox{\text{ccs Daugavet}}
\arrow[dd, Rightarrow, shift left=1ex]
\arrow[r, Rightarrow] &
\fbox{\text{ccs $\Delta$}}
\arrow[ddr, Rightarrow,shift right=-0.5ex]
\arrow[dd, Rightarrow,shift left=1.5ex, "\text{\large ?}" label]
\arrow[dddd, Rightarrow, "/" marking, bend left=90,looseness=2,swap,"\text{(d)}"']
&  & & & \\
& & & & \\
\fbox{\text{super Daugavet}}
\arrow[uur, Rightarrow,"/" marking,"\text{\!\! (a)}"']
\arrow[r, Rightarrow]
\arrow[ddr, Rightarrow]& \fbox{\text{super $\Delta$}}
\arrow[uu, Rightarrow, shift left=1ex, "/" marking,swap,"\text{(b)\  }"']
\arrow[dd, Rightarrow,shift right=1.5ex, "/" marking,"\text{(e) }"']
\arrow[r, Rightarrow,yshift=-0.5ex] & \fbox{\text{$\Delta$}}
\arrow[l, Rightarrow, "/" marking, yshift=1ex,"\text{\!\!\!\!\!\!(c)}"']
 & \\
& & &  &  \\
& \fbox{\text{Daugavet}}
\arrow[uu, Rightarrow, shift right=1ex, "/" marking,"\text{ (f)}"']
\arrow[uur, Rightarrow]
& &  & &
\end{tikzcd}
\caption{Scheme of all relations between the diametral notions}
\label{figure02}
\end{figure}
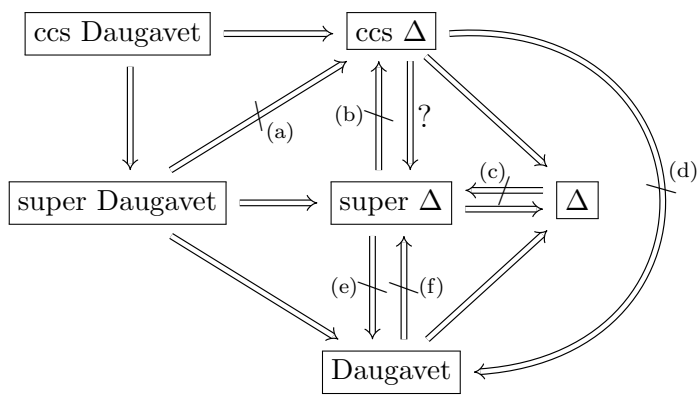

Let us list the corresponding counterexamples.
\begin{enumerate}
  \item[(a)] The example in Subsection~\ref{subsect:superdauganotccsdelta}.
  \item[(b)] The example in Subsection~\ref{subsect:superDeltastronglyregular} negates this implication in the strongest possible way.
  \item[(c)] Any of the elements in $D_B$ in Paragraph~\ref{subsubsec:oneunconditionalbasis}. They also show directly that $\Delta$-points are not necessarily ccs $\Delta$-points. 
  \item[(d)] Every element of the unit sphere of the space $X$ given in Paragraph~\ref{subsubsect:MLUR} is ccs $\Delta$-point but not Daugavet point. Another example is the molecule $m_{0,q}$ of Example~\ref{example:Lipfree-ANPP}.
  \item[(e)] In $X=C[0,1]\oplus_2 C[0,1]$, every element in the unit sphere is super $\Delta$-point (Proposition~\ref{prop:super_Delta_abs_sums}); but $X$ contains no Daugavet point (Proposition~\ref{prop:absolutesums-Daugavetpoints}). Also, every element of the unit sphere of the space $X$ given in Paragraph~\ref{subsubsect:MLUR} is super $\Delta$-point but not Daugavet point.
  \item[(f)] Any of the elements in $D_B$ in Paragraph~\ref{subsubsec:oneunconditionalbasis}. 
\end{enumerate}


\section{Diametral-properties for elements of the open unit ball}\label{sec:inside_of_the_unit_ball}

As mentioned in Section \ref{sec:notation_and_prelimiary_results}, the DSD2P is equivalent to the Daugavet property by \cite{kadets20}, but the ccs $\Delta$-points on the unit sphere of a Banach space do not characterize the DSD2P, but the restricted DSD2P, which is not equivalent to the Daugavet property (see Paragraph~\ref{subsubsect:MLUR}). Actually, the elements in the open unit ball play a decisive role in the proof in \cite{kadets20} of the equivalence between the DSD2P and the Daugavet property. Our objective here is to introduce and study the diametral notions for interior points, providing interesting applications, and to investigate the behavior of Daugavet- and $\Delta$-elements on rays in the unit ball of a given Banach space.

The definition of the Daugavet notions for elements in the open unit ball is the natural extension of the definitions for elements of norm one given in Definition~\ref{def:defiDauganot}.

\begin{defn}\label{defn:unit_ball_Daugavet} Let $X$ be a Banach space and let $x\in B_X$. We say that
\begin{enumerate}
\item $x$ is a \emph{Daugavet point} if $\sup_{y\in S}\norm{x-y}=\norm{x}+1$ for every slice $S$ of $B_X$,
\item $x$ is a \emph{super Daugavet point} if $\sup_{y\in V}\norm{x-y}=\norm{x}+1$ for every non-empty relatively weakly open subset $V$ of $B_X$,
\item $x$ is a \emph{ccs Daugavet point} if $\sup_{y\in C}\norm{x-y}=\norm{x}+1$ for every ccs $C$ of $B_X$.
\end{enumerate}
\end{defn}

It turns out that the existence of a non-zero Daugavet kind element actually forces the whole ray to which it belongs to be composed of similar elements.

\begin{prop}\label{prop:Daugavet_rays}
Let $X$ be a Banach space, and let $x\in S_X$. The following assertions are equivalent.
\begin{enumerate}
\item $x$ is a Daugavet- (resp.\ super Daugavet-, resp.\ ccs Daugavet-) point.
\item $rx$ is a Daugavet- (resp.\ super Daugavet-, resp.\ ccs Daugavet-) point for every $r\in [0,1]$.
\item $rx$ is a Daugavet- (resp.\ super Daugavet- , resp.\ ccs Daugavet-) point for some $r\in (0,1)$.
\end{enumerate}
\end{prop}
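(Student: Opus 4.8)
The plan is to prove the cyclic chain of implications $(1)\Rightarrow(2)\Rightarrow(3)\Rightarrow(1)$, treating the three variants (Daugavet, super Daugavet, ccs Daugavet) in parallel since the arguments are formally identical once one works at the level of ``test sets'': slices, relatively weakly open subsets, and convex combinations of slices respectively. The implication $(2)\Rightarrow(3)$ is trivial. The implication $(3)\Rightarrow(1)$ is the heart of the matter, and $(1)\Rightarrow(2)$ will follow from a similar but easier computation (or even be deduced from $(3)\Rightarrow(1)$ applied to rescalings), so I would in fact organize the write-up around the single key lemma below.

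The key step is the following observation. Fix a test set $A$ of $B_X$ (a slice, a relatively weakly open subset, or a ccs — note that in each of the three cases $A$ is convex, so it is harmless to assume convexity here; for the non-convex ``relatively weakly open'' case one can still argue with a point $y\in A$ directly). Suppose $rx$ is a Daugavet-type point for some $r\in(0,1)$, so $\sup_{y\in A}\norm{rx-y}=r+1$. Given $\eps>0$ pick $y\in A$ with $\norm{rx-y}>r+1-\eps$. The idea is to push $y$ ``away from $0$ along the direction opposite to $x$'': writing $z:=rx-y$ we have $\norm{z}>r+1-\eps$, and I want to produce from $y$ a point $y'\in A$ with $\norm{x-y'}$ close to $2$. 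The natural candidate uses convexity of $A$ together with the fact that $0$ or some small-norm element lies in a controlled position; more precisely, if $A$ contains a point of small norm then averaging $y$ with it keeps us in $A$ while allowing us to rescale. The cleanest route is: consider $y' := y/\norm{y}$ if $\norm{y}\le 1$ is close to $1$; since $\norm{y}\ge \norm{rx-y}-\norm{rx} > 1-\eps$, the point $y$ already has norm close to $1$, and $\norm{x-y} \ge \norm{rx - y} - (1-r)\norm{x} = \norm{rx-y} - (1-r) > 2 - \eps$. Thus \emph{any} $y\in A$ nearly realizing the diameter $r+1$ against $rx$ automatically lies nearly at distance $2$ from $x$ itself — no rescaling is needed at all. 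This immediately gives $\sup_{y\in A}\norm{x-y}=2$, i.e.\ $(3)\Rightarrow(1)$, and the same one-line inequality run in the other direction (start from $y\in A$ with $\norm{x-y}>2-\eps$, deduce $\norm{rx-y}\ge \norm{x-y}-(1-r) > r+1-\eps$) gives $(1)\Rightarrow(2)$ for every $r\in[0,1]$.

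For the super Daugavet case one must additionally check that ``the test set'' can be taken around any prescribed point of $B_X$: but that is built into the definition (the supremum is over $y$ in the single fixed relatively weakly open $V$, with $V$ arbitrary), so the same inequality applies verbatim with $A=V$. For the ccs case, $A=C=\sum_{i=1}^n\lambda_i S_i$ is a fixed ccs and again the inequality $\big|\,\norm{x-y} - \norm{rx-y}\,\big|\le (1-r)\norm{x} = 1-r$ valid for all $y\in B_X$ does the whole job. The main (and only) subtlety to state carefully is that the elementary estimate $\norm{y}>1-\eps$ forces $y$ to be genuinely close to the sphere, which is what lets the distance-to-$rx$ information transfer to distance-to-$x$ information; I expect no real obstacle beyond writing the three cases without redundancy. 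I would present it as: first record the two-sided inequality $\bigl|\norm{x-y}-\norm{sx-y}\bigr| \le (1-s)$ for all $s\in[0,1]$, $x\in S_X$, $y\in B_X$; then observe that if $\norm{sx-y}>s+1-\eps$ then $\norm{y}>1-\eps$ and hence by lower semicontinuity-type bookkeeping $\norm{x-y}>2-\eps$, and symmetrically; then apply this to the three families of test sets to close the cycle.
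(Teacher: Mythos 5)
Your directions $(2)\Rightarrow(3)$ (trivial) and $(1)\Rightarrow(2)$ are fine, and the $(1)\Rightarrow(2)$ computation is in fact a bit more elementary than the paper's (the paper invokes Kadets' lemma there as well, whereas your direct triangle inequality $\norm{rx-y}\geq\norm{x-y}-(1-r)>r+1-\eps$ already suffices). However, the implication $(3)\Rightarrow(1)$, which you rightly call the heart of the matter, contains a genuine error. You write
\[
\norm{x-y}\ \geq\ \norm{rx-y}-(1-r)\norm{x}\ =\ \norm{rx-y}-(1-r)\ >\ 2-\eps,
\]
but from $\norm{rx-y}>r+1-\eps$ this last inequality actually yields only
\[
\norm{x-y}\ >\ (r+1-\eps)-(1-r)\ =\ 2r-\eps,
\]
which is strictly weaker than $2-\eps$ for every $r\in(0,1)$ and degenerates as $r\to 0$. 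The phrase ``lower semicontinuity-type bookkeeping'' is glossing over exactly this gap. The triangle inequality transfers closeness-to-the-sphere information from $rx$ to $x$ in the \emph{inward} direction (from $1$ down to $r$) but not in the \emph{outward} direction (from $r$ up to $1$): knowing $\norm{y}>1-\eps$ and $\norm{rx-y}$ is large does not, by a crude norm estimate, force $\norm{x-y}$ to be large, because you also need $y$ to be ``almost opposite'' $x$ and the triangle inequality loses that directional information when you rescale.

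The missing ingredient is precisely Kadets' codirection lemma (Lemma~\ref{lemma:Kadets-codirected}): if $\norm{u+v}>\norm{u}+\norm{v}-\eps$ then $\norm{au+bv}>a\norm{u}+b\norm{v}-\max\{a,b\}\eps$ for all $a,b>0$. Applied with $u=rx$, $v=-y$, $a=1/r$, $b=1$, this upgrades the near-codirectedness of $rx$ and $-y$ to the near-codirectedness of $x$ and $-y$, with an error blow-up by the factor $1/r$. Because that error does blow up, you also need the small $\eps$-rescaling the paper performs (choosing $y$ with $\norm{rx-y}>\norm{rx}+1-r\eps$ so that the final estimate is $\norm{x-y}>2-(1+r)\eps$). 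Your framework for handling the three variants in parallel via ``test sets'' is sound and matches the paper's intent, but the $(3)\Rightarrow(1)$ step needs to be redone with Kadets' lemma rather than the bare triangle inequality.
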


Let us recall the following elementary but very useful result from \cite{kadets20} due to Kadets.

\begin{lem}[\mbox{\textrm{\cite[Lemma 2.2]{kadets20}}}] \label{lemma:Kadets-codirected}
Let $X$ be a normed space. If $x,y\in X$ and $\eps>0$ satisfies that $$\norm{x+y}> \norm{x}+\norm{y}-\eps,$$ then for every $a,b>0$, it is satisfied that
$$\norm{ax+by}> a\norm{x}+b\norm{y}-\max\{a,b\}\eps.$$
\end{lem}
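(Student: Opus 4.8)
The plan is to reduce to a single triangle-inequality estimate after choosing the right decomposition of $ax+by$. First I would note that the asserted inequality is symmetric under exchanging the pairs $(a,x)$ and $(b,y)$ simultaneously, so there is no loss of generality in assuming $a\geq b$; then $\max\{a,b\}=a$, and the goal becomes $\norm{ax+by}> a\norm{x}+b\norm{y}-a\eps$.

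The key step is the identity
\[
ax+by=a(x+y)-(a-b)y,
\]
which is legitimate precisely because $a-b\geq 0$ (this is where the case split pays off). Applying the triangle inequality in the form $\norm{a(x+y)}\leq \norm{ax+by}+(a-b)\norm{y}$ gives $\norm{ax+by}\geq a\norm{x+y}-(a-b)\norm{y}$, and then the hypothesis $\norm{x+y}>\norm{x}+\norm{y}-\eps$ yields
\[
\norm{ax+by}> a\bigl(\norm{x}+\norm{y}-\eps\bigr)-(a-b)\norm{y}=a\norm{x}+b\norm{y}-a\eps,
\]
after cancelling the $a\norm{y}$ terms. This is exactly the claim in the case $a\geq b$. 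For the remaining case $b\geq a$ I would run the mirror-image argument, using instead $ax+by=b(x+y)-(b-a)x$ and arriving at $\norm{ax+by}>a\norm{x}+b\norm{y}-b\eps$.

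Honestly there is no serious obstacle here: the whole content is the bookkeeping in choosing to peel off $a(x+y)$ (rather than $b(x+y)$) when $a$ is the larger coefficient, so that the leftover term $(a-b)y$ enters with a harmless sign and the error scales by $\max\{a,b\}$ rather than by $a+b$. The only point to be careful about is that the strict inequality in the hypothesis is preserved after multiplying by the positive constant $a$ (resp.\ $b$), which it is.
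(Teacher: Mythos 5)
Your proof is correct. The paper simply cites this lemma from Kadets \cite{kadets20} without reproducing the argument, so there is no in-paper proof to compare against; but your decomposition $ax+by=a(x+y)-(a-b)y$ (for $a\geq b$), combined with a single reverse triangle inequality and the hypothesis, is exactly the right elementary argument, and the algebra checks out, including the cancellation of the $a\norm{y}$ terms and the preservation of the strict inequality under multiplication by $a>0$.
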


\begin{proof}[Proof of Proposition~\ref{prop:Daugavet_rays}]
We will only do the proof for Daugavet points, being the other cases completely analogous. So let us first assume that $x$ is a Daugavet point. Take $r\in [0,1]$, $\eps>0$, and $S$ a slice of $B_X$. Then, there exists $y\in S$ such that $\norm{x-y}>2-\eps$. In particular, $\norm{y}>1-\eps$. As $\|y\|\leq 1$,
$$
\|x-y\|>2-\eps\geq \|x\|+\|y\|-\eps.
$$
It follows from Lemma~\ref{lemma:Kadets-codirected} that
$$
\|rx-y\|>\|rx\|+\|y\|-\eps>\|rx\|+1-2\eps.
$$
Hence, $rx$ is also a Daugavet point.

Now, let us assume that $rx$ is a Daugavet point for some $r\in (0,1)$. Again take $\eps>0$ and $S$ slice of $B_X$, and pick $y\in S$ such that
$\|rx-y\|>\|rx\|+1-r\eps$.
In particular, $\|y\|>1-r\eps$. As $\|y\|\leq 1$, we have
$$
\|rx-y\|>\|rx\|+\|y\|-r\eps.
$$
Hence, by Lemma~\ref{lemma:Kadets-codirected}, we get that
$$
\|x-y\|>\|x\|+\|y\|-\eps>2-(1+r)\eps
$$
and so $x$ is a Daugavet point.
\end{proof}

As mentioned in the discussion preceding Proposition \ref{prop:ccs_Daugavet_SR}, the presence of a ccs Daugavet point in a given Banach space forces the space to satisfy the SD2P. This can now be viewed as a consequence of the previous proposition and the following immediate reformulation of \cite[Theorem 3.1]{lmr19}.

\begin{prop}[\mbox{\textrm{\cite[Theorem 3.1]{lmr19}}}]\label{prop:theorem31}
Let $X$ be a Banach space. Then, $X$ has the SD2P if and only if $0$ is a ccs Daugavet point.
\end{prop}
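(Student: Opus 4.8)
The plan is to unwind the definition of ``ccs Daugavet point'' at the point $0$ and match it with \cite[Theorem~3.1]{lmr19}. By Definition~\ref{defn:unit_ball_Daugavet}, $0$ is a ccs Daugavet point if and only if $\sup_{y\in C}\norm{0-y}=\norm{0}+1=1$ for every ccs $C$ of $B_X$; since $C\subseteq B_X$ forces $\sup_{y\in C}\norm{y}\leq 1$ anyway, this is precisely the statement that every convex combination of slices of $B_X$ contains elements of norm arbitrarily close to $1$. Thus the proposition is exactly the reformulation of \cite[Theorem~3.1]{lmr19} quoted in the discussion preceding Proposition~\ref{prop:ccs_Daugavet_SR}, and it suffices to record why that equivalence holds.

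The implication ``SD2P $\Rightarrow$ $0$ is a ccs Daugavet point'' is a triangle inequality: given a ccs $C$ and $\eps>0$, pick $x,y\in C$ with $\norm{x-y}>2-\eps$ (possible since $\diam(C)=2$); then $2-\eps<\norm{x}+\norm{y}$, so $\max\{\norm{x},\norm{y}\}>1-\eps$, and hence $C$ contains a point of norm $>1-\eps$.

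For the converse, given a ccs $C=\sum_{i=1}^n\lambda_iS_i$ with $S_i=S(f_i,\alpha_i;B_X)$, I would use that $-S_i=S(-f_i,\alpha_i;B_X)$ is again a slice of $B_X$; this follows at once from the symmetry $-B_X=B_X$, which gives $\sup_{B_X}\re(-f_i)=\sup_{B_X}\re f_i$ and hence $-\{x\in B_X:\re f_i(x)>M_i-\alpha_i\}=\{y\in B_X:\re(-f_i)(y)>M_i-\alpha_i\}$. Then $D:=\sum_{i=1}^n\tfrac{\lambda_i}{2}S_i+\sum_{i=1}^n\tfrac{\lambda_i}{2}(-S_i)$ is a convex combination of $2n$ slices of $B_X$ with weights in $(0,\tfrac12]$ adding up to $\sum_{i}\lambda_i=1$, hence a ccs of $B_X$. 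By hypothesis, for each $\delta>0$ there is $z\in D$ with $\norm{z}>1-\delta$; writing $z=\sum_i\tfrac{\lambda_i}{2}u_i-\sum_i\tfrac{\lambda_i}{2}v_i$ with $u_i,v_i\in S_i$, and setting $u:=\sum_i\lambda_iu_i\in C$ and $v:=\sum_i\lambda_iv_i\in C$, we get $z=\tfrac12(u-v)$, so $\norm{u-v}=2\norm{z}>2-2\delta$. Letting $\delta\to 0$ yields $\diam(C)=2$ (the reverse inequality being trivial from $C\subseteq B_X$), i.e.\ $X$ has the SD2P.

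There is no genuine obstacle here; the only points that deserve a line of verification are the identity $-S(f,\alpha;B_X)=S(-f,\alpha;B_X)$ and the check that the doubled sum $D$ is a bona fide convex combination of slices. Since both are already subsumed in \cite[Theorem~3.1]{lmr19}, one may alternatively simply cite that result together with the elementary identification of ``$0$ is a ccs Daugavet point'' with ``every ccs of $B_X$ contains elements of norm arbitrarily close to $1$'' made in the first paragraph.
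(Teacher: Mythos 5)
Your proof is correct and matches the paper's intent: the paper treats this as an immediate reformulation of \cite[Theorem~3.1]{lmr19} after noting that ``$0$ is a ccs Daugavet point'' unwinds to ``every ccs of $B_X$ contains elements of norm arbitrarily close to $1$,'' which is exactly your first paragraph. Where you supply a self-contained proof of the underlying equivalence, the symmetrization $D=\tfrac12 C+\tfrac12(-C)$ you use is the same device the paper deploys in the very next result (the scaled version, Proposition~\ref{prop:scaledversionthr31lmr19}, which considers $D=\tfrac12(C-C)$), so your argument is faithful to the paper's own method rather than a genuinely different route.
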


Note that $c_0$ has the SD2P but has no non-zero Daugavet points (use Proposition~\ref{proposition:delta-superDaugavet-L1predual-old}, for instance).

Compare Proposition~\ref{prop:theorem31} with the following obvious remark.

\begin{remark}\label{remark:0Daugavetpoint}
A Banach space $X$ is infinite-dimensional if and only if $0$ is a super Daugavet point.
\end{remark}

We also mention that $0$ is always a Daugavet point (in finite or infinite dimension), as every slice of the unit ball has to intersect the unit sphere.

Let us also point out that \cite[Theorem 3.1]{lmr19} admits the following scaled version.

\begin{prop}\label{prop:scaledversionthr31lmr19}
Let $X$ be a Banach space, and let $r\in(0,1]$. Then, the following assertions are equivalent.
\begin{enumerate}
    \item Every ccs of $B_X$ has diameter greater than or equal to $2r$.
    \item $\sup\{\|x\|\colon x\in C\}\geq r$ for every ccs $C$ of $B_X$.
    \item $\sup\{\|x\|\colon x\in D\}\geq r$ for every symmetric ccs $D$ of $B_X$  (so containing $0$).
\end{enumerate}
\end{prop}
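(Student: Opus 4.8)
The plan is to prove the cyclic chain of implications $(1)\Rightarrow(2)\Rightarrow(3)\Rightarrow(1)$, modelling the argument on the proof of Proposition~\ref{prop:theorem31} (i.e.\ \cite[Theorem~3.1]{lmr19}), but keeping careful track of the scaling parameter $r$. The key elementary fact, as in the $\Delta$-point setting, is that for a ccs $C=\sum_{i=1}^n\lambda_i S_i$ and a point $x=\sum_{i=1}^n\lambda_i x_i\in C$ with $x_i\in S_i$, the distance $\|x-y\|$ for $y=\sum_{i=1}^n\lambda_i y_i\in C$ is controlled by $\sum_i\lambda_i\|x_i-y_i\|\leq 2$, so that $\diam(C)\leq 2$ automatically; the content of the statement is that, under any of the three hypotheses, this trivial bound is essentially attained, and moreover one can force it using points of large norm.

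First I would show $(2)\Rightarrow(3)$: if every ccs $C$ of $B_X$ contains a point of norm $\geq r$, then given an arbitrary ccs $C$, the symmetric ccs $D:=\frac12 C+\frac12(-C)$ is again a ccs of $B_X$ (since $-C$ is a ccs when $C$ is), it contains $0$, and picking $x\in C$ with $\|x\|\geq r-\eps$ we get $\frac12 x+\frac12(-x)$... that gives $0$, so instead I use that $D$ contains $\frac12 x+\frac12 y$ for any $x,y\in C$; taking $y=-x$ is wrong, so the right move is: $D$ contains $\frac12 x - \frac12 x'$ for $x,x'\in C$, and choosing $x$ of norm close to $r$ and $x'$ suitably small in that coordinate is not available. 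The cleaner route is $(3)\Rightarrow(1)$ and $(1)\Rightarrow(2)$ directly, deducing $(2)\Leftrightarrow(3)$ from those together with $(1)$. For $(3)\Rightarrow(1)$: let $C=\sum_{i=1}^n\lambda_i S_i$ be any ccs of $B_X$; then $D:=\frac12 C+\frac12(-C)=\sum_{i=1}^n\frac{\lambda_i}{2}S_i+\sum_{i=1}^n\frac{\lambda_i}{2}(-S_i)$ is a symmetric ccs of $B_X$ containing $0$, so by (3) it contains $z$ with $\|z\|\geq r-\eps$; writing $z=\frac12 u-\frac12 v$ with $u,v\in C$ gives $\|u-v\|=2\|z\|\geq 2r-2\eps$, hence $\diam(C)\geq 2r$, which is (1). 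For $(1)\Rightarrow(2)$ I would argue by contradiction exactly as in \cite{lmr19}: if some ccs $C$ has $\sup\{\|x\|:x\in C\}<r$, i.e.\ $C\subseteq r' B_X$ for some $r'<r$, then $\diam(C)\leq 2r'<2r$, contradicting (1). The remaining implication $(2)\Rightarrow(3)$ then follows since $(2)\Rightarrow(1)\Rightarrow$ (via the symmetrization just used) every symmetric ccs has diameter $\geq 2r$, and a symmetric ccs $D$ containing $0$ has $\diam(D)\leq 2\sup\{\|x\|:x\in D\}$, giving (3).

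The main obstacle — really the only subtlety — is the correct bookkeeping in the symmetrization step, namely checking that $\frac12 C+\frac12(-C)$ is genuinely a convex combination of slices of $B_X$ (one must note $-S(x^*,\delta;B_X)=S(-x^*,\delta;B_X)$ and that the coefficients $\{\lambda_i/2\}\cup\{\lambda_i/2\}$ sum to $1$), and the two-sided estimate $\|u-v\|=2\|\tfrac12(u-v)\|$ relating the norm of an element of the symmetric ccs to a distance inside $C$; the scaling relations "$\diam(C)\leq 2\sup_{x\in C}\|x\|$ when $0\in C$" and "$C\subseteq r'B_X\Rightarrow\diam(C)\leq 2r'$" are immediate. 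No new idea beyond \cite[Theorem~3.1]{lmr19} is needed; one simply carries the factor $r$ through.
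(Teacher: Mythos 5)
Your proof is correct and rests on the same key idea as the paper's, namely the symmetrization $D:=\frac12(C-C)=\frac12 C+\frac12(-C)$, which is again a ccs since $-S(x^*,\delta;B_X)=S(-x^*,\delta;B_X)$. The implication structure you settle on — $(1)\Rightarrow(2)$ elementary, $(2)\Rightarrow(3)$ immediate (a symmetric ccs is a ccs, so nothing to prove; the detour you sketch through $(2)\Rightarrow(1)$ is unnecessary), and the symmetrization argument linking $(3)$ and $(1)$ — matches the paper's. The one place you genuinely diverge is that you prove $(3)\Rightarrow(1)$ directly (pick $z\in D$ with $\norm{z}\geq r-\eps$, write $z=\tfrac12(u-v)$ with $u,v\in C$, conclude $\diam(C)\geq 2\norm{z}$), whereas the paper proves the contrapositive $\lnot(1)\Rightarrow\lnot(3)$ and in doing so needs the extra observation $\diam(D)\leq\diam(C)$, established via the identity $\norm{\tfrac12(x-y)-\tfrac12(x'-y')}=\norm{\tfrac{x+y'}{2}-\tfrac{x'+y}{2}}$ together with convexity of $C$. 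Your direct route sidesteps that identity entirely, so it is a bit leaner; both are fine.
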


\begin{proof}
Suppose (1) holds. Then, for any given ccs $C$ of $B_X$, and for any fixed $\eps>0$, there exists $x,y\in C$ such that $\norm{x-y}>2r-2\eps$. In particular, it follows that $\norm{x}>r-\eps$ or $\norm{y}>r-\eps$, giving (2). (2)$\Rightarrow$(3) is immediate. Suppose that (1) fails, that is, that  there exists a ccs $C$ of $B_X$ and $\eps>0$ such that $\diam(C)\leq 2r-2\eps$. We consider the ccs $D$ of $B_X$ given by $D:=\frac{1}{2}(C-C)$. Then $D$ is symmetric, and for every $u:=\frac{x-y}{2}$ and $u':=\frac{x'-y'}{2}$ in $D$ we have $\norm{u-u'}=\norm{\frac{x+y'}{2}-\frac{x'+y}{2}}$. Now, $x,x',y,y'$ belong to $C$, and $C$ is convex, so $\frac{x+y'}{2}$ and $\frac{x'+y}{2}$ do also belong to $C$, so $\norm{u-u'}\leq 2r-2\eps$. Being $D$ symmetric, it implies that $D\subset (r-\eps)B_X$, hence (3) fails.
\end{proof}

The following is a nice consequence of the proposition above outside the diametral notions.

\begin{cor}
Let $X$ be a Banach space. Then, $B_X$ contains ccs of arbitrarily small diameter if and only if $0$ is a strongly regular point of $B_X$.
\end{cor}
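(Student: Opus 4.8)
The plan is to deduce this from the symmetrization trick already used in the proof of Proposition~\ref{prop:scaledversionthr31lmr19}. One implication is immediate from the definitions: if $0$ is a strongly regular point of $B_X$, then there are convex combinations of slices of $B_X$ containing $0$ of arbitrarily small diameter, and these in particular witness that $B_X$ contains ccs of arbitrarily small diameter.

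For the converse, assume that $B_X$ contains ccs of arbitrarily small diameter and fix $\eps>0$. First I would choose a ccs $C:=\sum_{i=1}^n\lambda_i S_i$ of $B_X$ with $\diam(C)<\eps$, and then pass to the symmetrized set $D:=\tfrac12(C-C)=\tfrac12 C+\tfrac12(-C)$. The three things to check are: (i) $D$ is again a ccs of $B_X$, because $B_X$ is balanced, so $-S_i$ is a slice of $B_X$ for each $i$ (indeed $-S(x^*,\delta)=S(-x^*,\delta)$ since $\sup_{x\in B_X}\re x^*(x)=\norm{x^*}=\norm{-x^*}$), whence $D=\sum_{i=1}^n\tfrac{\lambda_i}{2}S_i+\sum_{i=1}^n\tfrac{\lambda_i}{2}(-S_i)$ is a convex combination of $2n$ slices with positive weights summing to $1$; (ii) $0\in D$, since $D$ is symmetric and convex; and (iii) $\diam(D)\le\diam(C)<\eps$, since for $z=\tfrac12(u-v)$ and $z'=\tfrac12(u'-v')$ in $D$ with $u,v,u',v'\in C$ one has $z-z'=\tfrac{u+v'}{2}-\tfrac{u'+v}{2}$, and both $\tfrac{u+v'}{2}$ and $\tfrac{u'+v}{2}$ lie in $C$ by convexity. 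Thus for every $\eps>0$ we have produced a ccs $D$ of $B_X$ with $0\in D$ and $\diam(D)<\eps$, i.e.\ $0$ is a strongly regular point of $B_X$.

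I do not expect a genuine obstacle here; the only step requiring a moment's care is verifying that $\tfrac12(C-C)$ is literally a convex combination of slices (and not merely of relatively weakly open subsets or of arbitrary convex sets), which is precisely where the symmetry $B_X=-B_X$ is used. Alternatively, the statement can be read off from the equivalence (1)$\Leftrightarrow$(3) in Proposition~\ref{prop:scaledversionthr31lmr19} by negating both conditions and quantifying over all $r\in(0,1]$, but the direct symmetrization argument above seems the most transparent presentation.
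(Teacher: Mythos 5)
Your proof is correct and follows essentially the same route as the paper: the corollary is a direct consequence of Proposition~\ref{prop:scaledversionthr31lmr19}, whose proof uses precisely the symmetrization $D:=\tfrac12(C-C)$ together with the diameter estimate $\norm{z-z'}=\norm{\tfrac{u+v'}{2}-\tfrac{u'+v}{2}}\leq\diam(C)$ that you reproduce. You add a bit of explicit justification that the paper leaves implicit, namely that $\tfrac12(C-C)$ is literally a ccs of $B_X$ because $-S(x^*,\delta)=S(-x^*,\delta)$, which is a worthwhile detail to record.
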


Now let us consider the $\Delta$ notions for points of the open unit ball which are just the adaptation of the notions given in Definition~\ref{def:defiDeltanot}.

\begin{defn}\label{defn:unit_ball_Delta}
Let $X$ be a Banach space and let $x\in B_X$. We say that
\begin{enumerate}
\item $x$ is a \emph{$\Delta$-point} if  $\sup_{y\in S}\norm{x-y}=\norm{x}+1$ for every slice $S$ of $B_X$ containing $x$,
\item $x$ is a \emph{super $\Delta$-point} if  $\sup_{y\in V}\norm{x-y}=\norm{x}+1$ for every non-empty relatively weakly open subset $V$ of $B_X$ containing $x$,
\item $x$ is a \emph{ccs $\Delta$-point} if  $\sup_{y\in C}\norm{x-y}=\norm{x}+1$ for every slice ccs $C$ of $B_X$ containing $x$.
\end{enumerate}
\end{defn}

With this definitions in hands, we may get an improvement of  Proposition~\ref{prop:theorem31} from Proposition~\ref{prop:scaledversionthr31lmr19}.

\begin{cor}
Let $X$ be a Banach space. Then, $X$ has the SD2P if and only if $0$ is ccs $\Delta$-point.
\end{cor}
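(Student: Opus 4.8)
The plan is to deduce this corollary from the combination of Proposition~\ref{prop:theorem31} and Proposition~\ref{prop:scaledversionthr31lmr19} together with a simple observation relating the ccs $\Delta$-point condition at $0$ to the scaled statements. First I would note that, by definition (Definition~\ref{defn:unit_ball_Delta}(3) with $x=0$), $0$ is a ccs $\Delta$-point if and only if $\sup_{y\in C}\norm{y}=1$ for every ccs $C$ of $B_X$ that contains $0$, i.e.\ for every \emph{symmetric-admitting} ccs, but more precisely for every ccs containing the origin. The key link is then: every ccs containing $0$ has diameter $2$ if and only if $0$ is ccs $\Delta$, and by Proposition~\ref{prop:scaledversionthr31lmr19} (applied with $r=1$, using the equivalence (1)$\Leftrightarrow$(3)) this is equivalent to every ccs of $B_X$ having diameter $2$, which is exactly the SD2P.

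More carefully, I would argue as follows. Suppose $X$ has the SD2P. Then every ccs of $B_X$ has diameter $2$; in particular every ccs $C$ of $B_X$ containing $0$ has diameter $2$, and for such a $C$ and any $\eps>0$ we can pick $y,z\in C$ with $\norm{y-z}>2-\eps$, whence $\max\{\norm{y},\norm{z}\}>1-\eps$, so $\sup_{y\in C}\norm{0-y}=1=\norm{0}+1$. Thus $0$ is a ccs $\Delta$-point. Conversely, suppose $0$ is a ccs $\Delta$-point. Given an arbitrary ccs $C$ of $B_X$, form the symmetric ccs $D:=\frac12(C-C)$, which is a ccs of $B_X$ containing $0$ (a convex combination of slices of the form $\frac12 S_i + \frac12(-S_j)$, which are themselves slices); by hypothesis $\sup_{y\in D}\norm{y}=1$, so condition (3) of Proposition~\ref{prop:scaledversionthr31lmr19} holds with $r=1$, hence condition (1) holds with $r=1$, i.e.\ $\diam(C)\geq 2$; since trivially $\diam(C)\leq 2$, we get $\diam(C)=2$, so $X$ has the SD2P. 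Alternatively and even more directly: a ccs $\Delta$-point is in particular a point of the ccs it sits in, so $0$ being a ccs $\Delta$-point forces $\sup_{y\in C}\norm{y}=1$ for every ccs $C$ containing $0$; applying Proposition~\ref{prop:scaledversionthr31lmr19}((3)$\Rightarrow$(2)$\Rightarrow$(1)) with $r=1$ again yields the SD2P; and the converse is Proposition~\ref{prop:theorem31} combined with the trivial fact that $0$ being a ccs Daugavet point implies $0$ is a ccs $\Delta$-point (the $\Delta$ condition only quantifies over ccs containing the point).

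I do not expect any genuine obstacle here: the statement is essentially a bookkeeping corollary, and the only point requiring the tiniest care is that the ccs $\Delta$-point condition at $0$ ranges only over ccs \emph{containing} $0$, so one must invoke the symmetrization trick (the passage $C\mapsto \frac12(C-C)$, exactly as in the proof of Proposition~\ref{prop:scaledversionthr31lmr19}) — or equivalently cite item (3) of that proposition — to upgrade from symmetric ccs to arbitrary ccs. Once that is in place the equivalence is immediate in both directions, so the write-up is just two short implications. The cleanest presentation is probably to simply say: by Definition~\ref{defn:unit_ball_Delta}, $0$ is a ccs $\Delta$-point if and only if $\sup\{\norm{x}\colon x\in D\}=1$ for every ccs $D$ of $B_X$ containing $0$, and then invoke the equivalence (1)$\Leftrightarrow$(3) of Proposition~\ref{prop:scaledversionthr31lmr19} with $r=1$ to conclude that this holds if and only if every ccs of $B_X$ has diameter $2$, i.e.\ $X$ has the SD2P.
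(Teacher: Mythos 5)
Your proof is correct and takes exactly the route the paper intends: the corollary is stated there as an immediate consequence of Proposition~\ref{prop:scaledversionthr31lmr19} (applied with $r=1$, using (1)$\Leftrightarrow$(3)) together with Proposition~\ref{prop:theorem31}, and your write-up fills in precisely that bookkeeping, including the symmetrization $C\mapsto\frac12(C-C)$. One small slip worth flagging: $\frac12 S_i+\frac12(-S_j)$ is \emph{not} itself a slice (a convex combination of two distinct slices is generally not a slice); what is true, and what your argument actually needs, is that $\frac12(C-C)=\sum_i\frac{\lambda_i}{2}S_i+\sum_j\frac{\lambda_j}{2}(-S_j)$ is a ccs of $B_X$ because each $-S_j$ is a slice and the weights $\frac{\lambda_i}{2},\frac{\lambda_j}{2}$ sum to $1$ — so the conclusion stands unchanged.
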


Compare the previous corollary with the following obvious remark which is analogous to Remark~\ref{remark:0Daugavetpoint}.

\begin{remark}
A Banach space $X$ is infinite-dimensional if and only if $0$ is a super $\Delta$-point.
\end{remark}

Observe that the definition of ccs $\Delta$-points for elements in $B_X$ gives a localization of the DSD2P, that is, $X$ has the DSD2P (and hence the Daugavet property \cite{kadets20}) if and only if all the elements of $B_X$ are ccs $\Delta$-points. Recall that the DSD2P is not equivalent to the restricted DSD2P (meaning that all points in $S_X$ are ccs $\Delta$-points), see Paragraph~\ref{subsubsect:MLUR}.

The following result is a localization of Kadets' theorem \cite{kadets20} on the equivalence of the DSD2P and the DPr.

\begin{thm}\label{thm:Daugavet=DSD2P}
Let $X$ be a Banach space and let $x\in S_X$. If $rx$ is a ccs $\Delta$-point for every $r\in (0,1)$, then $x$ is a ccs Daugavet point. Moreover, it is enough that $\inf\{r\in (0,1)\colon rx\text{ is a ccs $\Delta$-point}\}=0$.
\end{thm}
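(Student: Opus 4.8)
The goal is to deduce the \emph{ccs Daugavet} property of $x$ from the \emph{ccs $\Delta$} property of the scaled points $rx$. The strategy follows the spirit of Kadets' proof but localized. Fix an arbitrary ccs $C:=\sum_{i=1}^n\lambda_i S_i$ of $B_X$ and $\eps>0$; we must produce $y\in C$ with $\norm{x-y}>2-\eps$. The key idea is to build a \emph{new} ccs of $B_X$ which contains a small multiple $rx$ of $x$ (so that we may invoke the hypothesis that $rx$ is a ccs $\Delta$-point) and which is at the same time ``close'' to $C$ in the sense that any point far from $rx$ inside the new ccs yields a point of $C$ far from $x$.

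\textbf{Key steps.} First I would choose $r\in(0,1)$ close to $1$ (using $\inf\{r: rx\text{ is ccs }\Delta\}=0$ only in the last sentence; for the main statement any $r$ works, but to get distance $2$ we want $r$ near $1$, so one picks a sequence $r_k\to 1$ of admissible radii, or simply notes $1$ is in the closure of admissible radii). Next, the crucial construction: since $rx\in rB_X\subseteq B_X$, consider the ccs
$$
D:=r\,C+(1-r)\{x\}=\sum_{i=1}^n\lambda_i\bigl(rS_i+(1-r)x\bigr).
$$
Each $rS_i+(1-r)x$ is \emph{not} literally a slice of $B_X$, so this naive averaging does not immediately work; instead, as in Kadets' argument, one should realize $rx$ as belonging to an honest ccs of $B_X$ built from the functionals defining $C$. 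Concretely, for each $i$ write $S_i=S(x_i^*,\delta_i)$, pick $s_i\in S_i$, set $z_i:=rs_i+(1-r)x$, note $z_i\in B_X$ (convexity) and $z_i$ lies in the slice $S(x_i^*,\delta_i')$ for a suitable $\delta_i'$ once $r$ is close enough to $1$ and the $s_i$ are chosen with $x_i^*(s_i)$ near $\sup x_i^*(B_X)$; then $\sum_i\lambda_i z_i = rx + r\sum_i\lambda_i s_i - r x + (1-r)x$... — this bookkeeping has to be arranged so that $rx$ genuinely sits in a ccs $C'$ of $B_X$. Then apply the hypothesis: there is $y'=\sum_i\lambda_i y_i'\in C'$ with $\norm{rx-y'}>r+1-\eta$ for $\eta$ small. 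Finally, I would use Lemma~\ref{lemma:Kadets-codirected} to upgrade $\norm{rx-y'}>\norm{rx}+\norm{y'}-\eta$ (noting $\norm{y'}$ is forced close to $1$) into $\norm{x-y''}>2-\eps$ for a corresponding point $y''\in C$ obtained by ``unscaling'' $y'$, and let $r\to1$, $\eta\to0$.

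\textbf{Main obstacle.} The delicate point is the construction of the auxiliary ccs $C'$ of $B_X$ through which $rx$ factors: one must simultaneously ensure (a) $C'$ is genuinely a convex combination of \emph{slices of $B_X$}, with the same weights $\lambda_i$, (b) $rx\in C'$, and (c) every element of $C'$ at distance $>r+1-\eta$ from $rx$ projects back, via the affine correspondence between $C'$ and $C$, to an element of $C$ at distance $>2-\eps$ from $x$. This requires carefully exploiting Lemma~\ref{lemma:diminition} to shrink slices so that $rx$ and the chosen representatives land in them, and then Lemma~\ref{lemma:Kadets-codirected} to convert the codirectionality estimate $\norm{rx-y'}\approx r+1$ at scale $r$ into the estimate $\norm{x-y''}\approx 2$ at scale $1$; the content of the "Moreover" is that one only needs arbitrarily small admissible $r$ because rescaling a ccs $\Delta$-estimate at radius $r$ up to radius $1$ loses a factor controlled by $\max\{1,1/r\}$ in the Kadets lemma, which is harmless once combined with Proposition~\ref{prop:Daugavet_rays}-type ray arguments. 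I expect the technical heart to be precisely this passage between scales while keeping the combinatorial structure of the convex combination intact.
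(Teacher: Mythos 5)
You have correctly identified both the main obstacle (producing an honest ccs of $B_X$ that contains some scalar multiple $rx$) and the right downstream plan (apply the ccs $\Delta$ hypothesis inside that ccs, then use Lemma~\ref{lemma:Kadets-codirected} to rescale). But the constructions you sketch for that ccs do not close, and your guess of taking $r$ close to $1$ is actually the wrong direction.

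The missing idea is a symmetrization. Given the ccs $C=\sum_{i=1}^n\lambda_iS_i$, set $\tilde{C}:=\frac{1}{2}(C-C)$. This is again a ccs of $B_X$: it is a convex combination of the $2n$ slices $S_i$ and $-S_i=S(-x_i^*,\delta_i)$. By \cite[Proposition 2.1]{lmr19}, $0$ is a \emph{norm interior} point of $\tilde{C}$, so $rx\in\tilde{C}$ for all sufficiently \emph{small} $r$. This is precisely where the hypothesis $\inf\{r\colon rx\text{ is ccs }\Delta\}=0$ is used: one needs admissible radii near $0$, not near $1$. Applying the ccs $\Delta$ property of $rx$ inside $\tilde{C}$ gives some $y=y_1-y_2$ with $y_1,y_2\in C$ and $\norm{rx-y}>r+1-r\eps$; the triangle inequality forces one of $\norm{rx-y_1}$ or $\norm{rx-y_2}$ to exceed $r+1-r\eps$. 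That already places a point of $C$ itself (no ``unscaling'' map is needed) which is codirectional with $rx$ up to error $r\eps$ and has norm close to $1$. Lemma~\ref{lemma:Kadets-codirected}, applied with $a=1/r$, $b=1$, then rescales this codirectionality estimate to scale $1$ at a cost of $\max\{1/r,1\}\cdot r\eps=\eps$, giving $\norm{x-y_i}>2-2\eps$.

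Your alternative route — forming $D:=rC+(1-r)\{x\}$ or shrinking the defining slices via Lemma~\ref{lemma:diminition} so that $rx$ lands in each $S_i$ — is not repairable as stated: nothing in the hypotheses controls $\re x_i^*(x)$ relative to $\sup_{B_X}\re x_i^*$, so there is no reason any multiple of $x$ should ever enter a shrinkage of $S_i$. The symmetrization $\tilde{C}$ sidesteps exactly this problem, since $\frac{1}{2}(C-C)$ always contains a ball around $0$, and hence $rx$ for small $r$, regardless of what $C$ is. That step is the genuine gap in your argument.
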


\begin{proof}
Fix a ccs $C$ of $B_X$ and $\eps>0$. Since $\tilde{C}:=\frac{1}{2}(C-C)$ is also a ccs of $B_X$ and since $0\in \tilde{C}$ is a norm interior point of $\tilde{C}$ by \cite[Proposition 2.1]{lmr19}, we have that $rx$ belongs to $\tilde{C}$ for every $r\in (0,\delta)$ for some $\delta>0$. By hypothesis, there is $r>0$ such that $rx$ is a ccs $\Delta$-point and $rx\in \tilde{C}$. So there exists $y\in \tilde{C}$ such that $\norm{rx-y}>r+1-r\eps$.  Then if we write $y:=y_1-y_2$ with $y_1,y_2\in C$, we have $\norm{rx-y_1}>r+1-r\eps$ or $\norm{rx-y_2}>r+1-r\eps$ by the triangle inequality; in particular, $\norm{y_1}>1-r\eps$ and $\norm{y_2}>1-r\eps$. In both cases, we have that there is $y\in C$ such that $\norm{rx-y}>r\|x\|+\|y\|-r\eps$ and it follows from Lemma~\ref{lemma:Kadets-codirected} that
\begin{equation*}
\norm{x-y}>\|x\|+\|y\|-\eps >2-(1+r)\eps>2-2\eps. \qedhere
\end{equation*}
\end{proof}

At this point, it is natural to ask whether an equivalent formulation of Proposition \ref{prop:Daugavet_rays} is valid for some of the various $\Delta$-notions. For ccs $\Delta$-points, the answer is negative as follows from Theorem~\ref{thm:Daugavet=DSD2P} and, for instance, the example in Paragraph~\ref{subsubsect:MLUR}. Another, maybe simpler, example showing that is the following one.

\begin{expl}
Let us assume that a positive measure $\mu$ admits an atom of finite measure and also has a non-empty non-atomic part. Then, the real space $L_1(\mu)$ contains no ccs Daugavet point by Proposition \ref{prop:L_1_ccs_Daugavet}. However, it contains elements in the unit sphere which are ccs $\Delta$-points and super Daugavet points by Theorem~\ref{thrm:L_1_ccs_Delta} and Proposition~\ref{proposition:realcaseL1mu-ccs-delta}. In particular, as a consequence of Theorem~\ref{thm:Daugavet=DSD2P} and of Proposition~\ref{prop:Daugavet_rays}, there must exist $f$ in the unit sphere which is  a ccs $\Delta$-point and $t\in(0,1)$ such that $tf$ is not a ccs $\Delta$-point but it is a super $\Delta$-point.
\end{expl}

For $\Delta$-points, we have the following result.

\begin{prop}
Let $X$ be a Banach space, and let $x\in S_X$. If $x$ is a $\Delta$-point, then $rx$ is a $\Delta$-point for every $r\in(0,1)$.
\end{prop}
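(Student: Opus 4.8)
The plan is to mimic the proof of Proposition~\ref{prop:Daugavet_rays} (the Daugavet case), where the passage from $x$ to $rx$ is carried out through Kadets' Lemma~\ref{lemma:Kadets-codirected}, but with an additional preliminary step to handle the fact that here the defining slice is required to contain the point. So fix $r\in(0,1)$ and a slice $S$ of $B_X$ with $rx\in S$. Since $\sup_{y\in S}\norm{rx-y}\le\norm{rx}+1=r+1$ is automatic, the task is to produce, for every $\eps>0$, a point $y\in S$ with $\norm{rx-y}>r+1-\eps$.

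First I would normalize and shrink the slice. Writing $S=S(f,\alpha;B_X)$ and replacing $f$ by $f/\norm{f}$, we may assume $f\in S_{X^*}$; then, applying Lemma~\ref{lemma:diminition} with some $0<\beta<\min\{\alpha,1\}$, we may further assume that $S=S(g,\beta;B_X)$ for some $g\in S_{X^*}$ and some $0<\beta\le 1$, still with $rx\in S$. The membership $rx\in S$ now reads $r\,\re g(x)>1-\beta$; since $\re g(x)\le 1$ this forces $\beta>1-r$, so that $\gamma:=1-\frac{1-\beta}{r}=\frac{\beta-(1-r)}{r}$ is strictly positive, and moreover $\gamma\le\beta$ exactly because $\beta\le 1$. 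From $\re g(x)>\frac{1-\beta}{r}=1-\gamma$ together with $\sup_{B_X}\re g=1$, we get $x\in S(g,\gamma;B_X)\subseteq S(g,\beta;B_X)=S$. This is the crucial point: the ``inflated'' slice witnessing the membership of $x$ is still contained in the original slice, which is precisely what the reduction to $\beta\le1$ buys us.

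Now, since $x$ is a $\Delta$-point and $x\in S(g,\gamma;B_X)$, for every $\eps>0$ there is $y\in S(g,\gamma;B_X)\subseteq S$ with $\norm{x-y}>2-\eps$; in particular $\norm{y}>1-\eps$, and hence $\norm{x-y}>2-\eps\ge\norm{x}+\norm{y}-\eps$. Applying Lemma~\ref{lemma:Kadets-codirected} to $u=x$, $v=-y$ with coefficients $a=r\le1$ and $b=1$ then yields $\norm{rx-y}>r\norm{x}+\norm{y}-\eps>r+1-2\eps$. As $\eps>0$ was arbitrary and each such $y$ lies in $S$, this gives $\sup_{y\in S}\norm{rx-y}\ge r+1$, hence equality; therefore $rx$ is a $\Delta$-point.

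I expect the only genuine obstacle to be the one addressed in the second paragraph: without first reducing to a slice of ``width'' $\beta\le 1$, the natural slice around $x$ obtained by rescaling the functional could be strictly larger than $S$, and then the $\Delta$-point property of $x$ would only produce far-away points possibly lying outside $S$. Everything else is the same Kadets-lemma bookkeeping as in Proposition~\ref{prop:Daugavet_rays}.
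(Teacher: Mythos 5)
Your proof is correct, and it reaches the same conclusion as the paper's but by a genuinely different preliminary reduction. The paper's proof is noticeably shorter: it observes directly that if $rx\in S$ then either $x\in S$ or $-x\in S$ (if $x\notin S$ one deduces $\re f(x)<0$ from $rx\in S$, and then $-x\in S$ because the slice must reach below $0$), and handles the second case trivially via $\norm{rx-(-x)}=(r+1)\norm{x}=\norm{rx}+1$, so that only the first case needs the $\Delta$-point hypothesis and Kadets' Lemma. You instead first normalize $f$ and invoke Lemma~\ref{lemma:diminition} to replace $S$ by a subslice $S(g,\beta;B_X)$ with $\beta\le 1$ still containing $rx$, and then argue quantitatively that this forces $x$ itself to lie in a further subslice $S(g,\gamma;B_X)\subseteq S$; from there the Kadets bookkeeping is the same as in the paper. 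Your route costs an extra appeal to Lemma~\ref{lemma:diminition} and the $\gamma\le\beta$ computation, but it avoids the (slightly non-obvious) dichotomy ``$x\in S$ or $-x\in S$'' and makes the reduction to ``$x$ belongs to the slice'' completely explicit; either is a reasonable way to present the argument. One very small inaccuracy in your write-up: after applying Lemma~\ref{lemma:diminition} with $0<\beta<\min\{\alpha,1\}$ you get $\beta<1$ strictly, so writing ``some $0<\beta\le 1$'' is slightly loose, though harmless since your subsequent inequalities only need $\beta\le 1$.
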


\begin{proof}
Let us assume that $x$ is a $\Delta$-point and let us fix $r\in(0,1)$.  Take $\eps>0$ and a slice $S$ of $B_X$ containing $rx$. Now, either $x$ belongs to $S$ or $-x$ belongs to $S$. In the first case, we can find $y\in S$ such that $\norm{x-y}\geq 2-\eps$, and using Lemma~\ref{lemma:Kadets-codirected}, we get $\norm{rx-y}\geq \norm{rx}+1-2\eps$. Else, $\norm{rx-(-x)}=r+1=\norm{rx}+1$ and we are done.
\end{proof}

For super $\Delta$-points, it is currently quite obscure whether they behave like $\Delta$-points up on rays.


\section{Kuratowski measure and large diameters}\label{section:Kuratowski_measure}

Let $M$ be a metric space. The \emph{Kuratowski measure of non-compactness} $\alpha(A)$ of a non-empty bounded subset $A$ of $M$ is defined as the infimum of all real numbers $\eps>0$ such that $A$ can be covered by a finite number of subsets of $M$ of diameter smaller than or equal to $\eps$.

From the definition, we clearly have $\alpha(A)=0$ if and only if $A$ is \emph{totally bounded} (a.k.a.\ \emph{precompact}). It follows that every complete subset $A$ of $M$ with $\alpha$-measure $0$ is compact, and in particular, if $M$ is a complete metric space, that $\alpha(A)=0$ if and only if $\overline{A}$ is compact, where $\overline{A}$ stands for the closure of the set $A$. The $\alpha$-measure can be thus seen as a way to measure how far a given (non-empty) bounded and closed subset of $M$ is from being a compact space. It was introduced by C. Kuratowski in \cite{Kuratowski30} in order to provide a generalization of the famous intersection theorem from Cantor. A general theory on measures of non-compactness was later developed, and it turned out to provide important results in metric fixed point theory,  and in particular to have applications in functional equations or optimal control. We refer e.g. to \cite{noncompactnessbook} for an introduction to the topic and for more precise applications. 

Observe that $A\subseteq B$ implies $\alpha(A)\leq \alpha(B)$, and that $\alpha(\overline{A})=\alpha(A)$. Also note that $\alpha(A\cup B)=\max\{\alpha(A),\alpha(B)\}$ for every non-empty bounded subsets $A,B$ of $M$. Furthermore, if $M=X$ is a normed space, then $\alpha$ is known to enjoy additional useful properties: it is symmetric, translation invariant, positively homogeneous, sub-additive, and satisfies $\alpha(\conv{A})=\alpha(A)$. The $\alpha$-measure has proved to be a powerful tool for the study of the geometry of Banach spaces and we refer e.g. to the works \cite{rolewicz86}, \cite{rolewicz87} and \cite{montesinos87} in connection with \emph{property $(\alpha)$}, with \emph{drop property}, and with an isomorphic characterization of reflexive Banach spaces. 

From the definition it is clear that the Kuratowski measure of $A$ is smaller than or equal to its diameter.  Obviously, equality does not always hold, but a fruitful relationship between the notion of $\Delta$-points and the Kuratowski measure of slices was discovered in \cite{almp22} and completed in \cite{veelipfunc}. In particular, the following result was obtained (see \cite[Corollary~2.2]{veelipfunc}).

\begin{thm}\label{thm:alpha_size_delta_points}
Let $X$ be a Banach space and let $x\in S_X$. If $x$ is a $\Delta$-point, then $\alpha(S)=2$ for every slice $S$ of $B_X$ containing $x$. Besides, $\alpha(S(x,\delta;B_{X^*}))=2$ in $B_{X^*}$ for every $\delta>0$.
\end{thm}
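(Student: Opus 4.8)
The statement has two parts: (i) if $x \in S_X$ is a $\Delta$-point, then every slice $S$ of $B_X$ containing $x$ has $\alpha(S) = 2$; and (ii) the weak$^*$ slice $S(J_X(x),\delta;B_{X^*})$ also has Kuratowski measure $2$ for every $\delta > 0$. Since in every case $\alpha(A) \leq \diam(A) \leq 2$ when $A \subseteq B_X$ (or $A \subseteq B_{X^*}$), only the lower bound $\alpha(S) \geq 2$ needs work, and it suffices to show $\alpha(S) \geq 2 - \eps$ for every $\eps > 0$.

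\textbf{Main argument for (i).} Fix $\eps > 0$ and a slice $S := S(x^*,\alpha) $ of $B_X$ containing $x$; I will produce, inside $S$, an infinite sequence that is $(2-\eps)$-separated in a suitable sense, forcing any finite cover by sets of diameter $< 2 - 2\eps$ to be impossible. First, apply Lemma~\ref{lemma:diminition} to shrink the slice: since $x \in S(x^*,\alpha)$, for any $0 < \beta < \alpha$ there is $y^* \in S_{X^*}$ with $x \in S(y^*,\beta) \subseteq S(x^*,\alpha)$. So I may as well assume the slice is defined by a norming functional that almost attains its norm at $x$, and is as thin as I like. Now iterate the $\Delta$-point property together with this localization lemma: because $x$ is a $\Delta$-point, inside any slice $T$ of $B_X$ containing $x$ there is $y \in T$ with $\|x - y\| > 2 - \eps/2$; moreover, by the lower semicontinuity of the norm and Lemma~\ref{lemma:diminition} applied carefully, one can find such a $y$ which in turn lies in a thinner subslice $T'$ of $T$ still containing $x$ (shrink $T'$ so that $x \in T' \subseteq T \cap \{z : \|x - z\| > 2 - \eps\}$ — here one uses that $\Delta_{\eps}(x)$ is relatively weakly open, from Proposition~\ref{prop:net_characterizations}, hence contains a slice around no point a priori, but around the \emph{witnesses}). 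The cleaner route: build inductively slices $S = T_0 \supseteq T_1 \supseteq \cdots$, all containing $x$, and points $y_n \in T_{n-1} \setminus T_n$ with $\|x - y_n\| > 2 - \eps$; then for $m < n$ one has $y_n \in T_m$ while $y_m \notin T_m$, and choosing the functionals defining the $T_k$ so that each $y_m$ is separated from $T_m$ by a definite amount in the pairing, one deduces $\|y_m - y_n\|$ is bounded below — in fact, by using that $\|x - y_n\|$ and $\|x - y_m\|$ are both close to $2$ together with a convexity/Kadets-type estimate (Lemma~\ref{lemma:Kadets-codirected} is in the same spirit), one forces $\{y_n\}$ to be $(2 - \eps')$-separated. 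An infinite $(2-\eps')$-separated subset of $S$ cannot be covered by finitely many sets of diameter $\leq 2 - \eps'$, so $\alpha(S) \geq 2 - \eps'$. Letting $\eps' \to 0$ gives $\alpha(S) = 2$.

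\textbf{Part (ii) and the main obstacle.} For the weak$^*$ slice statement, I would use that $x$ being a $\Delta$-point of $B_X$ is equivalent (Remark~\ref{rem:weak*_diametral_points}(3)) to $J_X(x)$ being a $\Delta$-point — or directly a weak$^*$-$\Delta$-point — of $B_{X^{**}}$, and that weak$^*$ slices of $B_{X^*}$ defined by $J_X(x)$ correspond under the pairing to the data needed; then part (i) applied in the appropriate dual setting yields $\alpha(S(J_X(x),\delta;B_{X^*})) = 2$. The genuinely delicate point — and the step I expect to be the main obstacle — is the inductive separation estimate: ensuring that the witnesses $y_n$ extracted from successive shrinking slices are pairwise far apart, not merely each far from $x$. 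Two points each at distance $\approx 2$ from $x$ need not be far from each other, so one must exploit the slice structure: arrange that $y_m$ fails the linear inequality defining $T_m$ by a fixed margin while all later $y_n$ satisfy it, and combine this with $\|x - y_n\| \approx 2$, $\|x\| = 1$ to squeeze out a lower bound on $\|y_m - y_n\|$ via the triangle inequality and the functional evaluation. Making the bookkeeping of the shrinking parameters $\beta_n$, the margins, and $\eps$ consistent is where the real care is needed; everything else is routine once that separation is in hand.
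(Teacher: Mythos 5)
Your plan has a genuine gap at precisely the step you flag as ``the main obstacle,'' and the gap cannot be repaired the way you suggest. You want to build nested slices $T_0\supseteq T_1\supseteq\cdots$ all containing $x$, extract witnesses $y_m\in T_{m-1}$ with $\norm{x-y_m}>2-\eps$, and arrange the functionals so that later witnesses are automatically far from earlier ones. The natural device for pairwise separation is: choose $g_m\in S_{X^*}$ with $\re g_m(x-y_m)>2-\eps$, so that $\re g_m(x)>1-\eps$ and $\re g_m(y_m)<-1+\eps$, and then pass to the set $T_{m-1}\cap S(g_m,\eps)$; any $y_{m+1}$ produced there satisfies $\re g_m(y_{m+1})>1-\eps$, hence $\norm{y_{m+1}-y_m}\geq \re g_m(y_{m+1}-y_m)>2-2\eps$. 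But $T_{m-1}\cap S(g_m,\eps)$ is only a relatively weakly open neighborhood of $x$; it need not contain any \emph{slice} of $B_X$ through $x$. By Choquet's lemma (the same fact used in Proposition~\ref{prop:extreme_diametral_delta}), slices form a weak neighborhood base of $x$ exactly when $x$ is a preserved extreme point, which a $\Delta$-point in general is not. Since the $\Delta$-point hypothesis only delivers far witnesses inside \emph{slices} containing $x$, the induction has no legal next step. Lemma~\ref{lemma:Kadets-codirected} does not help: it transfers norm estimates along rays from $x$ and cannot force two witnesses taken in unrelated directions to be far from each other. Lemma~\ref{lemma:diminition} lets you thin a slice through $x$ into a subslice through $x$, but gives no control on whether that subslice sits inside a prescribed half-space $S(g_m,\eps)$.

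Note also that the paper does not prove Theorem~\ref{thm:alpha_size_delta_points} at all: it is cited from \cite[Corollary~2.2]{veelipfunc} (with a precursor in \cite{almp22}), so there is no internal proof to compare against. What the paper \emph{does} prove, in Theorem~\ref{thm:alpha_size_superdelta_points} and its preparatory proposition (and a ccw analogue), is exactly the induction you describe, carried out for \emph{super} $\Delta$-points: there one may shrink to arbitrary relatively weakly open neighborhoods of $x$, finite intersections of slices are admissible, and the $(2-\eps)$-separated sequence drops out verbatim. Your sketch would be a correct proof of the super $\Delta$-point statement; for the ordinary $\Delta$-point statement a genuinely different argument is needed, and one should consult \cite{veelipfunc} rather than try to mimic the super case.
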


Observe that the converse does not hold in general, as the following example shows.

\begin{example}\label{exam:counteralphadelta}
Consider $X:=L_1([0,1])\oplus_\infty \ell_1$. It follows that both $X$ and $X^*$ enjoy the SD2P \cite[Remark 2.6]{blr14}, so Theorem \ref{thm:alpha_size_slices} below implies that given any slice $S=S(x^*,\delta;B_X)$ we have $\alpha(S)=2$ and the same holds for the slices in the dual. However, there are points which are not $\Delta$-points because $X$ fails the DLD2P \cite[Theorem 3.2]{ik} since $\ell_1$ fails it, so it remains to take any point $x\in S_X$ which is not a $\Delta$-point to get the desired counterexample.
\end{example}

Observe that the connection between having big slices in diameter and having big slices in Kuratowski index goes beyond Theorem \ref{thm:alpha_size_delta_points}. The following result was first pointed out in \cite{DGKR}.

\begin{thm}[\mbox{\textrm{\cite[Proposition~3.1]{DGKR}}}]
\label{thm:alpha_size_slices}
Let $X$ be a Banach space and let $\beta\in(0,2]$. The following assertions are equivalent.
\begin{enumerate}
    \item Every slice of $B_X$ has diameter greater than or equal to $\beta$. 
    \item Every slice of $B_X$ has Kuratowski measure greater than or equal to $\beta$.   
\end{enumerate}
\end{thm}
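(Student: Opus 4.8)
For the easy direction $(2)\Rightarrow(1)$ there is nothing to do: the one-element cover $\{S\}$ shows $\alpha(S)\le\diam(S)$ for every slice $S$ of $B_X$, so $(2)$ forces $\diam(S)\ge\alpha(S)\ge\beta$. For $(1)\Rightarrow(2)$ I would argue by contraposition: assuming that some slice $S_0$ of $B_X$ satisfies $\alpha(S_0)<\beta$, I want to produce a slice of $B_X$ of diameter $<\beta$, contradicting $(1)$. Fix $\eps$ with $\alpha(S_0)<\eps<\beta$ and a finite cover $S_0\subseteq A_1\cup\dots\cup A_n$ with $\diam(A_i)\le\eps$. Replacing each $A_i$ by $\cconv(A_i\cap S_0)$ — which does not increase the diameter and stays inside $B_X$, since $S_0\subseteq B_X$ and $B_X$ is closed and convex — I may assume the sets $A_i=:C_i$ are closed convex subsets of $B_X$ of diameter at most $\eps$ still covering $S_0$. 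It then suffices to prove, by induction on $n$, the following statement: \emph{if a slice $S$ of $B_X$ is contained in a union of $n$ closed convex subsets of $B_X$ of diameter $\le\eps$, then $B_X$ has a slice of diameter $\le\eps$} (since $\eps<\beta$, this is the desired contradiction with $(1)$).

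The base case $n=1$ is immediate, $S$ itself being such a slice. For the inductive step write $S=S(f,\delta;B_X)$, $M=\sup_{B_X}f$, and $S\subseteq C_1\cup\dots\cup C_n$. If some $C_j$ fails to approach the value $M$ on the closed slice $\overline S=\{x\in B_X:f(x)\ge M-\delta\}$, i.e.\ if $C_j\cap\overline S=\emptyset$ or $v:=\sup\{f(z):z\in C_j\cap\overline S\}<M$, then for $\delta'=\min\{\delta,(M-v)/2\}>0$ the thinner slice $S(f,\delta';B_X)$ is contained in $S$ and disjoint from $C_j$ (a common point would lie in $C_j\cap\overline S$ with $f$-value $>M-\delta'>v$), hence is covered by the remaining $n-1$ sets, and the inductive hypothesis applies. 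So the whole difficulty concentrates in the remaining case, where \emph{every} $C_j$ satisfies $\sup\{f(z):z\in C_j\cap\overline S\}=M$.

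To handle that case my plan is to pass to the bidual. The set $K:=\{z^{**}\in B_{X^{**}}:z^{**}(f)\ge M-\delta\}$ is weak$^*$-compact and convex, and by Goldstine's theorem $J_X(S)$ is weak$^*$-dense in $\{z^{**}(f)>M-\delta\}\cap B_{X^{**}}$, so $K$ is covered by the weak$^*$-closures $\widetilde{C_i}:=\overline{J_X(C_i)}^{\,w^*}$, which are weak$^*$-closed, convex, and still of diameter $\le\eps$ (the norm of $X^{**}$ is weak$^*$-lower semicontinuous). Using that weak$^*$-compact spaces are Baire, and running the argument on the nested weak$^*$-compact convex sets $K_t:=\{z^{**}(f)\ge M-t\}\cap B_{X^{**}}$ for $t\downarrow0$ together with a pigeonhole over $i$, one of the $\widetilde{C_i}$ has non-empty weak$^*$-interior relative to $K_t$ for arbitrarily small $t$; this produces a small weak$^*$-open piece of $K$ sitting near a point of norm close to $1$, which by a tilting argument I would convert into a weak$^*$-slice $S(g,\rho;B_{X^{**}})$ with $g\in X^*$ of diameter $\le\eps$. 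Restricting to $B_X$ finishes, since $S(g,\rho;B_X)\subseteq S(g,\rho;B_{X^{**}})$ (both slices being cut at $\|g\|$), so $\diam S(g,\rho;B_X)\le\eps<\beta$.

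The main obstacle is exactly this last case. Even in the bidual, the delicate point is the tilting step: turning a small relatively weak$^*$-open subset of the top face of $K$ into a genuine small weak$^*$-slice of $B_{X^{**}}$, since weak$^*$-open sets need not contain weak$^*$-slices. I would attack it either by locating an extreme point of $K$ inside the weak$^*$-interior of one of the covering pieces and invoking Choquet's lemma there, or by organizing the induction so that this bad case strictly simplifies the problem — replacing $S$ by the (weak$^*$-compact, convex) top face of $K$, which is covered by the same finitely many small pieces but is ``one step smaller'' — so that the process terminates; the bookkeeping that makes this termination rigorous is the part requiring real care. (An alternative route worth trying for this case is to take the cover to be of minimal cardinality and apply Bourgain's Lemma to the ``private'' relatively weakly open set $S\setminus\bigcup_{j\ne 1}C_j\subseteq C_1$; this readily yields a convex combination of slices of diameter $\le\eps$, and one would then have to upgrade such a small convex combination of slices to a small slice.)
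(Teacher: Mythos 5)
Your reduction, the base case, and the ``easy case'' of the inductive step (replacing each covering set by the closed convex hull of its intersection with $S_0$, and shaving $S$ down to $S(f,\delta')$ whenever some $C_j$ fails to approach the value $M$ on $\overline S$) are all correct. The genuine gap is exactly where you flag it: the ``remaining case'' is not resolved, and the tools you propose do not close it. Converting a small relatively weak$^*$-open subset of $B_{X^{**}}$ into a small weak$^*$-slice is not possible in general — relatively weak$^*$-open subsets of $B_{X^{**}}$ need not contain any weak$^*$-slice, which is precisely the phenomenon behind the fact, emphasised in Section~\ref{sec:notation_and_prelimiary_results} via \cite{blradv}, that the LD2P does not imply the D2P. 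Your first fix (locate an extreme point inside the weak$^*$-interior of one of the covering pieces, then invoke Choquet) fails at the very first step: a non-empty relatively weak$^*$-open subset of $B_{X^{**}}$ need not contain any extreme point at all. For instance, with $X=c_0$, so $X^{**}=\ell_\infty$, the extreme points of $B_{\ell_\infty}$ are the $\pm1$-sequences, and the weak$^*$-open set $\{z:z_1>1-\delta,\ \abs{z_2}<1/2\}\cap B_{\ell_\infty}$ misses all of them. What Choquet's lemma \emph{does} give is weaker: take an extreme point $e$ of the top face $F:=\{z\in B_{X^{**}}:z(f)=M\}$ (so $e\in\ext{B_{X^{**}}}$), put $I:=\{j:e\in\widetilde{C_j}\}$, note that $U:=\{z(f)>M-\delta\}\cap\bigcap_{j\notin I}\bigl(X^{**}\setminus\widetilde{C_j}\bigr)$ is a weak$^*$-open neighbourhood of $e$, and take a weak$^*$-slice $T^{**}$ with $e\in T^{**}\subseteq U\cap B_{X^{**}}$. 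Then $T^{**}\subseteq\bigcup_{j\in I}\widetilde{C_j}$, and since each of these contains $e$ and has diameter $\leq\eps$, you only obtain $\diam(T^{**})\leq 2\eps$, not $\leq\eps$. That factor of two means this route can only rule out $\alpha(S_0)<\beta/2$, i.e.\ it proves half of the implication.

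The Bourgain's Lemma alternative has the same defect. From a minimal cover one does get a non-empty relatively weakly open set $W:=S\setminus\bigcup_{j\geq 2}C_j\subseteq C_1$ of diameter $\leq\eps$, and Lemma~\ref{lem:bourgain_lemma} yields a ccs $D\subseteq W$ of diameter $\leq\eps$; but there is no mechanism to promote $D$ (or $W$) to a small slice. Writing $D=\sum_i\lambda_i S_i$ and using that each $S_i$ has diameter $\geq\beta$ by (1), one only deduces $\lambda_i\leq\eps/\beta$ for every $i$, which is no contradiction; and the existence of spaces with the LD2P but without the D2P again shows that a small weakly open set (hence a small ccs) can coexist with uniformly big slices. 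Note that the paper does not prove Theorem~\ref{thm:alpha_size_slices} — it cites \cite{DGKR} — and its own proofs of the analogues for weakly open sets and for ccs (Theorems~\ref{thm:alpha_size_weakly_open_sets} and~\ref{thm:alpha_size_ccw}) hinge precisely on the fact that deleting a weakly closed set from a relatively weakly open set (resp.\ a convex combination of relatively weakly open sets) stays in the same class, which is exactly the step that fails for slices. Your ``easy case'' is the right substitute when it applies; what is still missing is an argument for the case in which every covering piece accumulates at the top face.
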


In this section, we aim to prove analogues to this result for relative weakly open subsets, as well as for convex combinations of slices or of weakly open sets, and to extend Veeorg's result to super $\Delta$-points and ccw $\Delta$-points.

\subsection{Kuratowski measure and diameter two properties.}
The analogue to Theorem \ref{thm:alpha_size_slices} for non-empty weakly open subsets is the following.

\begin{thm}\label{thm:alpha_size_weakly_open_sets}
Let $X$ be a Banach space and let $\beta\in(0,2]$. The following assertions are equivalent.
\begin{enumerate}
    \item Every non-empty relatively weakly open subset of $B_X$ has diameter greater than or equal to $\beta$.   
    \item Every non-empty relatively weakly open subset of $B_X$ has Kuratowski measure greater than or equal to $\beta$. 
\end{enumerate}
\end{thm}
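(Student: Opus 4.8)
The plan is to establish the two implications separately: $(2)\Rightarrow(1)$ is immediate, and for $(1)\Rightarrow(2)$ I would argue by induction on the size of a finite convex cover, mimicking (and incidentally re-proving) the slice case of Theorem~\ref{thm:alpha_size_slices}.

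For $(2)\Rightarrow(1)$: since $\alpha(A)\leq\diam(A)$ for every non-empty bounded set $A$ (the set $A$ is itself a one-element cover of diameter $\diam(A)$), any non-empty relatively weakly open $W\subseteq B_X$ with $\alpha(W)\geq\beta$ also satisfies $\diam(W)\geq\beta$.

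For $(1)\Rightarrow(2)$: assume every non-empty relatively weakly open subset of $B_X$ has diameter at least $\beta$, and suppose towards a contradiction that some non-empty relatively weakly open $W\subseteq B_X$ has $\alpha(W)<\beta$. Then $W$ admits a finite cover by sets of diameter at most some $\gamma<\beta$; replacing each such set by the closure of its convex hull (which preserves its diameter and keeps it inside $B_X$), I may assume $W\subseteq K_1\cup\dots\cup K_n$ with each $K_i$ closed, convex and of diameter $<\beta$. I would then prove by induction on $n$ the statement: \emph{if a non-empty relatively weakly open subset $W$ of $B_X$ is contained in a union $K_1\cup\dots\cup K_n$ of closed convex sets, then $\diam(K_{i_0})\geq\beta$ for some $i_0$.} The base case $n=1$ is exactly the hypothesis, since $W\subseteq K_1$ forces $\diam(K_1)\geq\diam(W)\geq\beta$. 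For the inductive step, if $W\subseteq K_1$ I conclude as before; otherwise I pick $x\in W\setminus K_1$ and separate the closed convex set $K_1$ from $x$ via Hahn--Banach, obtaining $g\in X^*$ and $c\in\R$ with $\re g(x)>c\geq\sup_{z\in K_1}\re g(z)$. Then $H:=\{z\in B_X\colon \re g(z)>c\}$ is a relatively weakly open subset of $B_X$ that contains $x$ and misses $K_1$, so $W\cap H$ is a non-empty relatively weakly open subset of $B_X$ contained in $K_2\cup\dots\cup K_n$, and the inductive hypothesis yields some $i_0\in\{2,\dots,n\}$ with $\diam(K_{i_0})\geq\beta$. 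This contradicts $\diam(K_i)<\beta$ for all $i$, so in fact $\alpha(W)\geq\beta$, which is $(2)$.

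I do not expect a genuine obstacle here; the only points needing (routine) care are that passing to the closure of the convex hull preserves diameters, that the class of non-empty relatively weakly open subsets of $B_X$ is stable under intersecting with a weakly open half-space — which is precisely why the induction remains within the class, in contrast with what would happen if one tried to run it with plain slices — and the separation step, which in the complex case is applied to the real part of a functional. An alternative route would be to start from Theorem~\ref{thm:alpha_size_slices} and upgrade it, but the self-contained induction above seems cleaner and simultaneously covers that case.
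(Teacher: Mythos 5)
Your proof is correct and follows essentially the same strategy as the paper: an induction on the number $n$ of covering sets, peeling one set off at a time by intersecting $W$ with a weakly open set that avoids it. The only (cosmetic) difference is in how you arrange for the covering sets to be weakly closed: the paper simply replaces each $C_i$ by its weak closure (same diameter, by weak lower semicontinuity of the norm) and takes $W\setminus\overline{C_{n+1}}^{\,w}$ directly, while you convexify and norm-close each piece and then invoke Hahn--Banach to manufacture the separating half-space $H$ — which is a slightly longer route to the same weakly open set.
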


\begin{proof}
(2)$\Rightarrow$(1) is immediate, so let us prove (1)$\Rightarrow$(2). To this end, fix $\beta\in(0,2]$ and assume that every non-empty relatively weakly open subset of $B_X$ has diameter greater than or equal to $\beta$. Then pick $\varepsilon>0$, and let us prove by induction on $n$ that for every non-empty relatively weakly open subset $W$ of $B_X$ and for every finite collection $C_1,\ldots, C_n$ of subsets of $X$ with $\diam(C_i)\leq \beta-\varepsilon$ for every $i$, we have that $W\not\subset \bigcup\limits_{i=1}^n C_i$. 

For $n=1$, it is clear since by assumption $\diam(W) \geq \beta > \beta-\eps $ for every non-empty relatively weakly open subset $W$ of $B_X$. 

So assume that the result is true for every non-empty relatively weakly open subset $W$ of $B_X$ and for every collection of $n$ sets, and let us prove the result for collections of $n+1$ sets.
To this end, consider $C_1,\ldots, C_n, C_{n+1}$ be subsets of $X$ with $\diam(C_i)\leq \beta-\varepsilon$ for every $i$. Observe that $\diam(C_i)=\diam(\overline{C_i}^w)\leq \beta-\varepsilon$ by $w$-lower semicontinuity of the norm of $X$, so that we may and do assume that $C_i$ is weakly closed for every $i$.

Observe that by the case $n=1$ we have that $W\not\subset C_{n+1}$, which means that $W\setminus C_{n+1}$ is non-empty. Moreover, it is a weakly open subset of $B_X$ since $C_{n+1}$ is assumed to be weakly closed, and by induction hypothesis we conclude that $W\backslash C_{n+1} \not\subset \bigcup\limits_{i=1}^n C_i$. In particular $W \not\subset \bigcup\limits_{i=1}^{n+1} C_i$ and the theorem is proved.
\end{proof}

Next, let us establish the analogue Theorem \ref{thm:alpha_size_slices} for convex combinations of slices. To this end, observe that by Bourgain lemma (see Lemma~\ref{lem:bourgain_lemma}) every convex combination of non-empty relatively weakly open subsets of $B_X$ contains a convex combination of slices of $B_X$. This assertion makes valid the following lemma which allows us to focus our attention in convex combination of weakly open subsets.

\begin{lemma}
    Let $X$ be a Banach space and $r>0$.
    \begin{enumerate}
        \item The following are equivalent:
        \begin{itemize}
            \item [(a)] Every convex combination of slices of $B_X$ has diameter greater than or equal to $r$.
            \item [(b)] Every convex combination of non-empty relatively weakly open subsets of $B_X$ has diameter greater than or equal to $r$.
        \end{itemize}
        \item The following are equivalent:
        \begin{itemize}
        \item [(a)] $\alpha(C)\geq r$ holds for every convex combination $C$ of slices of $B_X$.
            \item [(b)] $\alpha(D)\geq r$ holds for every convex combination $D$ of non-empty relatively weakly open subsets of $B_X$.
            \end{itemize}
    \end{enumerate}
\end{lemma}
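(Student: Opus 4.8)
The plan is to deduce the whole statement from Bourgain's lemma (Lemma~\ref{lem:bourgain_lemma}) together with the elementary monotonicity of the diameter and of the Kuratowski measure under set inclusion.

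First I would record the trivial half of each equivalence. Every slice $S(x^*,\delta;B_X)$ is the intersection of $B_X$ with the weakly open half-space $\{x\in X\colon\re x^*(x)>M-\delta\}$, hence is a non-empty relatively weakly open subset of $B_X$; consequently every convex combination of slices of $B_X$ is, in particular, a convex combination of non-empty relatively weakly open subsets of $B_X$. This gives at once the implications $(b)\Rightarrow(a)$ in both items $(1)$ and $(2)$: if all convex combinations of relatively weakly open subsets of $B_X$ have diameter (respectively, Kuratowski measure) at least $r$, then so do all convex combinations of slices of $B_X$.

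For the converse implications $(a)\Rightarrow(b)$, the key step is the following reduction: given a convex combination of relatively weakly open subsets $D=\sum_{i=1}^{n}\lambda_iW_i$ of $B_X$, there is a convex combination of slices $C$ of $B_X$ with $C\subseteq D$. Indeed, applying Bourgain's lemma to the closed bounded convex set $B_X$, each relatively weakly open subset $W_i$ contains a convex combination of slices $\sum_{j=1}^{m_i}\mu_{ij}S_{ij}$ of $B_X$. Setting $C:=\sum_{i=1}^{n}\sum_{j=1}^{m_i}\lambda_i\mu_{ij}S_{ij}$, the coefficients $\lambda_i\mu_{ij}$ are positive and sum to $1$, so $C$ is indeed a convex combination of slices of $B_X$ (this is the same bookkeeping already used implicitly for Lemma~\ref{lemma_SRPisconvex}), and clearly $C=\sum_{i=1}^{n}\lambda_i\bigl(\sum_{j=1}^{m_i}\mu_{ij}S_{ij}\bigr)\subseteq\sum_{i=1}^{n}\lambda_iW_i=D$.

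With this in hand the proof closes immediately: for item $(1)$ one has $\diam(D)\geq\diam(C)\geq r$, and for item $(2)$, by monotonicity of the Kuratowski measure under inclusion, $\alpha(D)\geq\alpha(C)\geq r$. I do not expect any genuine obstacle here: the content is entirely carried by Bourgain's lemma, and the only point deserving a line of care is the purely formal verification that a convex combination of convex combinations of slices is again a convex combination of slices.
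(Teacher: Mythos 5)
Your proposal is correct and follows essentially the same route as the paper: the paper states this lemma without a written proof, immediately after observing that, by Bourgain's lemma, every convex combination of relatively weakly open subsets of $B_X$ contains a convex combination of slices, and your argument is precisely the careful spelling-out of that observation (including the easy bookkeeping showing that a convex combination of convex combinations of slices is again a convex combination of slices, and the monotonicity of $\diam$ and $\alpha$ under inclusion).
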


Now we are able to give the following result.

\begin{thm}\label{thm:alpha_size_ccw}
Let $X$ be a Banach space and let $\beta\in(0,2]$. The following assertions are equivalent.
\begin{enumerate}
    \item Every convex combination of slices of $B_X$ has diameter greater than or equal to $\beta$.    
    \item Every convex combination of slices of $B_X$ has Kuratowski measure greater than or equal to $\beta$. 
\end{enumerate}
\end{thm}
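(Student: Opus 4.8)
The plan is to follow the proof of Theorem~\ref{thm:alpha_size_weakly_open_sets}, but first one must pass from convex combinations of slices to convex combinations of relatively weakly open subsets, since the inductive argument crucially uses that weakly open sets form a neighbourhood basis of their points (whereas slices do not). The implication (2)$\Rightarrow$(1) is immediate from $\alpha(C)\leq\diam(C)$. For (1)$\Rightarrow$(2), the preceding lemma reduces the task to the following statement: \emph{if every ccw of $B_X$ has diameter greater than or equal to $\beta$, then $\alpha(D)\geq\beta$ for every ccw $D$ of $B_X$.}

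To prove this, fix $\varepsilon>0$ and show by induction on $n\geq 1$ that \emph{no} ccw $D$ of $B_X$ can be covered by subsets $C_1,\dots,C_n$ of $X$ with $\diam(C_j)\leq\beta-\varepsilon$ for every $j$. As in the proof of Theorem~\ref{thm:alpha_size_weakly_open_sets}, replacing each $C_j$ by its weak closure does not increase its diameter (by $w$-lower semicontinuity of the norm), so we may assume every $C_j$ is weakly closed. The case $n=1$ is clear since $\diam(D)\geq\beta>\beta-\varepsilon\geq\diam(C_1)$ forces $D\not\subseteq C_1$. Assume the statement for collections of $n$ sets (and all ccw's), and let $C_1,\dots,C_{n+1}$ be weakly closed sets of diameter $\leq\beta-\varepsilon$. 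Write $D=\sum_{i=1}^m\lambda_i W_i$ with the $W_i$ non-empty relatively weakly open subsets of $B_X$; by the case $n=1$ we may pick $d=\sum_{i=1}^m\lambda_i w_i\in D\setminus C_{n+1}$ with $w_i\in W_i$. Since $C_{n+1}$ is weakly closed and the map $(z_1,\dots,z_m)\mapsto\sum_i\lambda_i z_i$ is weak-to-weak continuous, there are relatively weakly open sets $V_i$ with $w_i\in V_i\subseteq W_i$ such that $D':=\sum_{i=1}^m\lambda_i V_i$ is a ccw of $B_X$ with $D'\subseteq D$ and $D'\cap C_{n+1}=\emptyset$. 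Applying the induction hypothesis to $D'$ and $C_1,\dots,C_n$ yields $d'\in D'\subseteq D$ with $d'\notin\bigcup_{j=1}^n C_j$; moreover $d'\notin C_{n+1}$ because $d'\in D'$. Hence $D\not\subseteq\bigcup_{j=1}^{n+1}C_j$, which closes the induction. Consequently $D$ cannot be covered by finitely many sets of diameter $\leq\beta-\varepsilon$, so $\alpha(D)\geq\beta-\varepsilon$; letting $\varepsilon\to 0^+$ gives $\alpha(D)\geq\beta$, and invoking the preceding lemma once more yields (2).

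The one genuine obstacle is that $D\setminus C_{n+1}$ is \emph{not} a convex combination of slices (nor of weakly open sets), so, unlike in Theorem~\ref{thm:alpha_size_weakly_open_sets}, one cannot feed it directly into the induction hypothesis. The remedy is to shrink the open pieces of a representation of $D$ around a point $d\notin C_{n+1}$, producing an honest ccw $D'\subseteq D$ disjoint from $C_{n+1}$; this is precisely the step that requires working with ccw's rather than ccs's, and hence the preceding lemma.
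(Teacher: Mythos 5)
Your proof is correct and follows essentially the same route as the paper: you invoke the preceding lemma to pass from ccs to ccw, run the same induction on the number of covering sets, and in the inductive step you shrink the open pieces of a representation of $D$ around a point outside $C_{n+1}$ to produce a smaller ccw disjoint from $C_{n+1}$, exactly as the paper does. You also correctly isolate the obstacle (that $D\setminus C_{n+1}$ is not itself a ccw) that makes this shrinking step necessary, which is the heart of the argument.
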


\begin{proof}
    (2)$\Rightarrow$(1) is immediate, so let us prove (1)$\Rightarrow$(2). To this end, fix $\beta\in(0,2]$ and assume that every convex combination of non-empty relatively weakly open subsets of $B_X$ has diameter greater than or equal to $\beta$. Then pick $\varepsilon>0$, and let us prove by induction on $n$ that for every $D$ convex combination of non-empty relatively weakly open subsets of $B_X$ and for every finite collection $C_1,\ldots, C_n$ of subsets of $X$ with $\diam(C_i)\leq \beta-\varepsilon$ for every $i$, we have that $D\not\subset \bigcup\limits_{i=1}^n C_i$.

    For $n=1$ it is clear since by assumption $\diam(D) \geq \beta > \beta-\eps $ for every  convex combination of non-empty relatively weakly open subsets of $D$ of $B_X$. 

    Assume by inductive step that the result stands for $n$.

    Now pick $D$ convex combination of non-empty relatively weakly open subsets of $B_X$ and a finite collection $C_1,\ldots, C_{n+1}$ of subsets of $X$ with $\diam(C_i)\leq \beta-\varepsilon$ for every $i$. We can assume as in the proof of Theorem \ref{thm:alpha_size_weakly_open_sets} that every $C_i$ is weakly closed. Write $D=\sum_{i=1}^k \lambda_i W_i$. Observe that by the case $n=1$ we have that $D\not\subseteq C_{n+1}$, so there exists $z\in D\setminus C_{n+1}$. Since $z\in D$ we can write $z=\sum_{i=1}^k \lambda_i x_i$ where $x_i\in W_i$ holds for every $1\leq i\leq k$. Moreover, since $z=\sum_{i=1}^k \lambda_i x_i\notin C_{n+1}$, this means that $z=\sum_{i=1}^k\lambda_i x_i\in X\setminus C_{n+1}$, and the latter is a weakly open set. By a weak-continuity argument of the sum we can find weakly open subsets $V_i$ of $B_X$, with $x_i\in V_i$ for every $1\leq i\leq k$, satisfying that $z=\sum_{i=1}^k \lambda_i x_i\in \sum_{i=1}^n \lambda_i V_i\subseteq X\setminus C_{n+1}$. Up to taking smaller $V_i$, we can assume $V_i\subseteq W_i$ for every $i$. Now call $\tilde{D}:=\sum_{i=1}^k \lambda_i V_i$, which is a convex combination of weakly open subsets of $B_X$. By the inductive step we get that $\tilde{D}\not\subseteq \bigcup\limits_{j=1}^n C_j$, so there exists $y\in \tilde{D}$ with $y\notin C_j$ for $1\leq j\leq n$. Observe that the condition $V_i\subseteq W_i$ implies $\tilde{D}\subseteq D$, so $y\in D$ indeed. Moreover, $\tilde{D}\subseteq X\setminus C_{n+1}$ implies in particular $y\notin C_{n+1}$. This implies that $y\in D\setminus \bigcup\limits_{i=1}^{n+1} C_i$, which is precisely what we wanted to prove.
\end{proof}

\subsection{Kuratowski measure and $\Delta$-notions}

We now prove an analogue to Theorem \ref{thm:alpha_size_delta_points} for super $\Delta$-points.

\begin{theorem}\label{thm:alpha_size_superdelta_points}
Let $X$ be a Banach space and let $x\in S_X$ be a super $\Delta$-point. Then every non-empty relatively weakly open subset $W$ of $B_X$ containing $x$ satisfies that $\alpha(W)=2$.
\end{theorem}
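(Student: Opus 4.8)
The plan is to combine the inductive covering argument used in the proof of Theorem~\ref{thm:alpha_size_weakly_open_sets} with the net characterization of super $\Delta$-points from Proposition~\ref{prop:net_characterizations} (equivalently, with Lemma~\ref{lem:super_Delta_set_characterizations}(2)). Fix a super $\Delta$-point $x\in S_X$ and a non-empty relatively weakly open subset $W$ of $B_X$ with $x\in W$. Since $\alpha(W)\leq\diam(W)\leq 2$ trivially, it suffices to show $\alpha(W)\geq 2$, i.e.\ that for every $\eps>0$, $W$ cannot be covered by finitely many sets of diameter at most $2-\eps$.

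First I would record the key consequence of being a super $\Delta$-point in the form needed: by Lemma~\ref{lem:super_Delta_set_characterizations}(2), $x\in\overline{\Delta_\eps(x)}^w$ for every $\eps>0$, and moreover (as noted in the proof of Proposition~\ref{prop:net_characterizations}) each $\Delta_\eps(x)=\{y\in B_X:\norm{x-y}>2-\eps\}$ is itself relatively weakly open in $B_X$. Hence for every relatively weakly open $U\subseteq B_X$ containing $x$ and every $\eps>0$, the set $U\cap\Delta_\eps(x)$ is a non-empty relatively weakly open subset of $B_X$ which again contains points arbitrarily $w$-close to $x$; in particular it is non-empty and I can iterate. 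The second ingredient is the elementary fact that $\diam(C)=\diam(\overline{C}^w)$ by $w$-lower semicontinuity of the norm, so in any covering $W\subseteq\bigcup_{i=1}^n C_i$ with $\diam(C_i)\leq 2-\eps$ I may assume each $C_i$ is weakly closed.

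Next I would run the induction on $n$: I claim that for every non-empty relatively weakly open $U\subseteq B_X$ with $x\in U$ and every finite family $C_1,\dots,C_n$ of weakly closed sets with $\diam(C_i)\leq 2-\eps$, one has $U\not\subseteq\bigcup_{i=1}^nC_i$. For $n=1$: since $x\in U\cap\Delta_\eps(x)$, which is a non-empty relatively weakly open subset of $B_X$, Proposition~\ref{prop:net_characterizations}(2) (or the definition of super $\Delta$-point) gives $\sup_{y\in U\cap\Delta_\eps(x)}\norm{x-y}=2$, hence $\diam(U)\geq 2>2-\eps\geq\diam(C_1)$, so $U\not\subseteq C_1$. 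For the inductive step, given $C_1,\dots,C_{n+1}$, the case $n=1$ yields a point of $U$ outside $C_{n+1}$, so $U\setminus C_{n+1}$ is non-empty; it is relatively weakly open since $C_{n+1}$ is weakly closed, but it need not contain $x$. Here is where super-$\Delta$-ness is used again: pick any $z\in U\setminus C_{n+1}$ — actually I instead pick $z\in U\cap\Delta_\eps(x)\setminus C_{n+1}$, which is non-empty non-empty relatively weakly open, and then ``slide'' back towards $x$. More cleanly: apply the inductive hypothesis not to $U\setminus C_{n+1}$ (which may miss $x$) but observe that the whole argument only ever needs \emph{some} non-empty relatively weakly open set witnessing large distance; so I restate the induction hypothesis as ``every non-empty relatively weakly open $U$ on which $\sup_{y\in U}\norm{x-y}=2$ satisfies $U\not\subseteq\bigcup_{i\leq n}C_i$''. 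The set $U\setminus C_{n+1}$ contains $U\cap\Delta_{\eps}(x)\setminus C_{n+1}$, and since $U\cap\Delta_{\eps'}(x)\setminus C_{n+1}$ is non-empty and relatively weakly open for every $\eps'\leq\eps$, we get $\sup_{y\in U\setminus C_{n+1}}\norm{x-y}=2$; by the inductive hypothesis $U\setminus C_{n+1}\not\subseteq\bigcup_{i=1}^{n}C_i$, hence $U\not\subseteq\bigcup_{i=1}^{n+1}C_i$. Applying this with $U=W$ proves $\alpha(W)\geq 2-\eps$ for all $\eps>0$, i.e.\ $\alpha(W)=2$.

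The main obstacle is the bookkeeping in the inductive step just flagged: after removing the weakly closed set $C_{n+1}$ the remaining open set $W\setminus C_{n+1}$ may no longer contain the point $x$, so the naive induction on ``relatively weakly open sets containing $x$'' breaks. The fix — which I expect to be the only genuinely non-routine point — is to carry through the induction with the slightly stronger hypothesis that the open set merely \emph{has sup-distance $2$ to $x$} (rather than containing $x$), and to check that $W\setminus C_{n+1}$ retains this property because it still contains the non-empty relatively weakly open tails $W\cap\Delta_{\eps'}(x)\setminus C_{n+1}$; this is exactly parallel to how the proof of Theorem~\ref{thm:alpha_size_weakly_open_sets} handles the analogous difficulty for the D2P. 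Everything else ($w$-lower semicontinuity reductions, the $n=1$ base case, the relative weak openness of $\Delta_\eps(x)$) is immediate from the results already established in the excerpt.
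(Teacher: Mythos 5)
The induction scheme you borrow from Theorem~\ref{thm:alpha_size_weakly_open_sets} does not transfer cleanly, and the fix you propose leaves the crucial step unjustified. After replacing the hypothesis ``$x\in U$'' with ``$\sup_{y\in U}\norm{x-y}=2$'', the base case $n=1$ of the strengthened induction is no longer immediate: a non-empty relatively weakly open set $U$ with $\sup_{y\in U}\norm{x-y}=2$ need not have diameter $\geq 2-\eps$, since all the points of $U$ witnessing distance close to $2$ from $x$ could \emph{a priori} be close to each other (the condition controls distances to $x$, not mutual distances). For the same reason, the assertion that $W\cap\Delta_{\eps'}(x)\setminus C_{n+1}$ is non-empty for all small $\eps'$ — on which your inductive step rests — is not a consequence of the $n=1$ case you proved (which required $x\in U$, and $x\notin\Delta_{\eps'}(x)$ for $\eps'\leq 2$). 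It \emph{is} true, but proving it is exactly the genuinely non-routine point: given $y_1\in W\cap\Delta_{\eps'}(x)\cap C_{n+1}$, one must pick $g\in S_{X^*}$ with $\re g(x-y_1)>2-\eps'$, pass to the smaller relatively weakly open set $W\cap S(g,\eps')\ni x$, and invoke super-$\Delta$-ness \emph{again} to produce $y_2\in W\cap\Delta_{\eps'}(x)$ with $\re g(y_2)>1-\eps'$, whence $\norm{y_1-y_2}>2-2\eps'$ and $y_2\notin C_{n+1}$. Without this slice step, your argument has a hole at precisely the point you flagged as ``the only genuinely non-routine point''. (A side remark: the sentence ``$x\in U\cap\Delta_\eps(x)$'' in your $n=1$ case is false; what you need there is simply $\sup_{y\in U}\norm{x-y}=2$ together with $x\in U$.)

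For comparison, the paper sidesteps the covering induction altogether: it directly constructs a sequence $(x_n)\subseteq W$ that is $(2-\eps)$-separated, by repeatedly intersecting $W$ with the slices $S(g_i,\eps/2)$ through $x$ determined by the previously chosen points and applying the super-$\Delta$ property to these shrinking neighbourhoods of $x$. The existence of such a sequence immediately forces $\alpha(W)\geq 2-\eps$ by pigeonhole, with no bookkeeping about which covering set is removed. Your approach can be made to work, but only after inserting the very same slice argument; once you do, the covering induction becomes a detour around a construction that already proves the statement directly.
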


The proof will be an obvious consequence of the following result.

\begin{proposition}
Let $X$ be a Banach space, $x\in S_X$ be a super $\Delta$ point, and $W$ be a weakly open subset of $B_X$ such that $x\in W$. 
Then, for every $\varepsilon>0$, there exists a sequence $\{x_n\}\subseteq W$ such that $\Vert x_i-x_j\Vert>2-\varepsilon$ holds for every $i\neq j$.   
\end{proposition}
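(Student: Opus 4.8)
The plan is to construct the sequence $\{x_n\}$ by induction, feeding progressively smaller relatively weakly open neighbourhoods of $x$ into the defining property of super $\Delta$-points. Two elementary facts will do the work. First, for any $z\in B_X$ the set $A_z:=\{y\in B_X\colon\norm{z-y}>2-\varepsilon\}$ is relatively weakly open in $B_X$, because its complement $\{y\in B_X\colon\norm{z-y}\leq 2-\varepsilon\}$ is weakly closed by the weak lower semicontinuity of the norm. Second --- and this is the point that makes the induction self-sustaining --- if $\norm{x-x_i}>2-\varepsilon$ for finitely many points $x_1,\dots,x_n$, then $x$ itself lies in $W\cap\bigcap_{i=1}^n A_{x_i}$, so this is a \emph{non-empty} relatively weakly open subset of $B_X$ containing $x$, and the super $\Delta$-point property applies to it once more.

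Carrying this out: I would first fix $\varepsilon>0$ and, using that $\sup_{y\in W}\norm{x-y}=2$ (here $W$ is a relatively weakly open subset of $B_X$ containing $x$ and $x$ is a super $\Delta$-point), pick $x_1\in W$ with $\norm{x-x_1}>2-\varepsilon$. For the inductive step I would assume $x_1,\dots,x_n\in W$ have already been found with $\norm{x-x_i}>2-\varepsilon$ for every $i$ and $\norm{x_i-x_j}>2-\varepsilon$ for every $i\neq j$, and then set
$$W_n:=W\cap\bigcap_{i=1}^n\{y\in B_X\colon\norm{x_i-y}>2-\varepsilon\}.$$
By the first fact this is relatively weakly open, and by the second fact $x\in W_n$; applying the super $\Delta$-point property to $W_n$ yields $x_{n+1}\in W_n$ with $\norm{x-x_{n+1}}>2-\varepsilon$, which by construction satisfies $x_{n+1}\in W$, $\norm{x-x_{n+1}}>2-\varepsilon$, and $\norm{x_i-x_{n+1}}>2-\varepsilon$ for all $i\leq n$. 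This closes the induction and produces the desired sequence.

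As a consequence one obtains Theorem~\ref{thm:alpha_size_superdelta_points}: since $\alpha(W)\leq\diam(W)\leq 2$ always holds, it suffices to observe that if $\alpha(W)<2$ then $W$ could be covered by finitely many sets of diameter at most $2-\varepsilon$ for some $\varepsilon>0$, and the pigeonhole principle applied to the sequence $\{x_n\}$ built above for that $\varepsilon$ would force two of its distinct terms into one such set, contradicting $\norm{x_i-x_j}>2-\varepsilon$.

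I do not expect a genuine obstacle in this argument; the only subtlety to keep in mind is to include the auxiliary clause $\norm{x-x_i}>2-\varepsilon$ in the induction hypothesis, since it is precisely what keeps $x$ inside the shrunk neighbourhood $W_n$ and therefore keeps the super $\Delta$-point property available at every step.
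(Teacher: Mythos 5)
Your proof is correct, and it takes a slightly cleaner route than the paper's. The overall strategy is the same (an induction that shrinks the weakly open neighbourhood and re-applies the super $\Delta$-point property), but the two differ in how the shrinking is done. The paper, upon having built $x_1,\dots,x_n$, invokes Hahn--Banach at each step to produce functionals $g_i\in S_{X^*}$ with $\re g_i(x-x_i)$ close to $\norm{x-x_i}$, then intersects $W$ with the slices $S(g_i,\varepsilon/2)$; to make this work it runs the induction at the finer level $2-\varepsilon/2$ for the distances $\norm{x-x_i}$, and derives $\norm{x_i-x_j}>2-\varepsilon$ afterwards by pairing off the two $1-\varepsilon/2$ inequalities. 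You instead intersect $W$ directly with the sets $A_{x_i}=\{y\in B_X\colon\norm{x_i-y}>2-\varepsilon\}$, which are relatively weakly open by weak lower semicontinuity of the norm --- an observation the paper itself already uses in the proof of the net-characterisation of super points. This avoids both the functional-picking step and the halving of $\varepsilon$, since membership $x_{n+1}\in A_{x_i}$ gives the required inequality immediately. Both arguments are sound; yours is a bit shorter and more self-contained.
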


\begin{proof} Set $\varepsilon>0$. Let us construct by induction a sequence $\{x_n\}$ satisfying that $\Vert x-x_i\Vert>2-\frac{\varepsilon}{2}$ and such that $\Vert x_i-x_j\Vert>2-\varepsilon$ for $i\neq j$.

Using that $x$ is a super $\Delta$ point select, by the definition of super $\Delta$, a point $x_1\in W$ with $\Vert x-x_1\Vert>2-\frac{\varepsilon}{2}$. 

Now, assume that $x_1,\ldots, x_n$ have been constructed and let us construct $x_{n+1}$. By the properties defining the sequence observe that, given $1\leq i\leq n$, we have $\Vert x-x_i\Vert>2-\frac{\varepsilon}{2}$, so we can find $g_i\in S_{X^*}$ with $\re g_i(x-x_i)>2-\frac{\varepsilon}{2}$, which implies $\re g_i(x)>1-\frac{\varepsilon}{2}$ and $\re g_i(x_i)<-1+\frac{\varepsilon}{2}$. Consequently
$$x\in V:=W\cap \bigcap\limits_{i=1}^n S\left(g_i,\frac{\varepsilon}{2}\right),$$
which is a weakly open set. Since $x$ is super $\Delta$ we can find $x_{n+1}\in V$ such that $\Vert x-x_{n+1}\Vert>2-\frac{\varepsilon}{2}$.  In order to finish the construction we only must to prove that $\Vert x_i-x_{n+1}\Vert>2-\varepsilon$ holds for every $1\leq i\leq n$. But this is clear because, given $1\leq i\leq n$, the condition $x_{n+1}\in V$ implies that $\re g_i(x_{n+1})>1-\frac{\varepsilon}{2}$, so 
$$\Vert x_{n+1}-x_i\Vert\geq \re g_i(x_{n+1}-x_i)>1-\frac{\varepsilon}{2}+1-\frac{\varepsilon}{2}=2-\varepsilon,$$
and the proof is finished.
\end{proof}

Note that a similar statement than Theorem~\ref{thm:alpha_size_superdelta_points} can be established for ccw $\Delta$ points.

\begin{theorem}
Let $X$ be a Banach space and let $x\in S_X$ be a ccw  $\Delta$-point. Then every non-empty convex combination $D$ of relatively weakly open subsets of $B_X$ containing $x$ satisfies that $\alpha(D)=2$.
\end{theorem}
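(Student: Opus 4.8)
The plan is to reduce the statement to the existence of an infinite separated subset of $D$, exactly as in the proof of Theorem~\ref{thm:alpha_size_superdelta_points}. Recall that $x$ being a ccw $\Delta$-point means that $\sup_{y\in D}\norm{x-y}=2$ for every ccw $D$ of $B_X$ containing $x$. First I would show: \emph{for every $\varepsilon\in(0,2)$ and every ccw $D$ of $B_X$ with $x\in D$, there is a sequence $\{x_n\}_{n\geq 1}\subseteq D$ with $\norm{x_i-x_j}>2-\varepsilon$ for all $i\neq j$.} Granting this, any subset of $X$ of diameter at most $2-\varepsilon$ contains at most one $x_n$, so $D$ cannot be covered by finitely many such sets; hence $\alpha(D)\geq 2-\varepsilon$ for every $\varepsilon>0$, and since $\alpha(D)\leq\diam(D)\leq 2$ we conclude $\alpha(D)=2$.

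To produce the separated sequence I would argue by induction, following the scheme of the proof of Theorem~\ref{thm:alpha_size_superdelta_points} but replacing the step ``shrink the weakly open neighbourhood'' by the step ``shrink each weakly open summand of the ccw'', which is the weak-continuity-of-the-sum device already used in the proof of Theorem~\ref{thm:alpha_size_ccw} and in Remark~\ref{rem:bourgain}. Fix $\varepsilon\in(0,2)$ and write $D=\sum_{l=1}^{k}\lambda_l W_l$ with $W_l$ non-empty relatively weakly open in $B_X$ and $\sum_l\lambda_l=1$; fix once and for all a representation $x=\sum_{l=1}^{k}\lambda_l x^{(l)}$ with $x^{(l)}\in W_l$. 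Since $x$ is a ccw $\Delta$-point and $x\in D$, pick $x_1\in D$ with $\norm{x-x_1}>2-\varepsilon/2$. Assume $x_1,\dots,x_n\in D$ have been chosen with $\norm{x-x_i}>2-\varepsilon/2$ for $i\leq n$ and $\norm{x_i-x_j}>2-\varepsilon$ for $i\neq j\leq n$. For each $i\leq n$, by the Hahn-Banach theorem choose $g_i\in S_{X^*}$ with $\re g_i(x-x_i)>2-\varepsilon/2$; since $\re g_i(x)\leq 1$ and $\re g_i(x_i)\geq -1$, this gives $\re g_i(x)>1-\varepsilon/2$ and $\re g_i(x_i)<-1+\varepsilon/2$. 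Put $\eta:=\min_{i\leq n}\bigl(\re g_i(x)-(1-\varepsilon/2)\bigr)>0$, pick $0<\delta<\eta$, and set
\[
V_l:=W_l\cap\bigcap_{i=1}^{n}\bigl\{z\in B_X\colon \re g_i(z)>\re g_i(x^{(l)})-\delta\bigr\}\qquad(1\leq l\leq k),
\]
which is a non-empty relatively weakly open subset of $B_X$ containing $x^{(l)}$, and let $\tilde D:=\sum_{l=1}^{k}\lambda_l V_l$. Then $\tilde D$ is a ccw of $B_X$ with $x\in\tilde D\subseteq D$. For any $y=\sum_l\lambda_l y^{(l)}\in\tilde D$ (with $y^{(l)}\in V_l$) and any $i\leq n$, $\re g_i(y)=\sum_l\lambda_l\re g_i(y^{(l)})>\sum_l\lambda_l\bigl(\re g_i(x^{(l)})-\delta\bigr)=\re g_i(x)-\delta>1-\varepsilon/2$. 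Now apply the ccw $\Delta$-property to $\tilde D$ to get $x_{n+1}\in\tilde D\subseteq D$ with $\norm{x-x_{n+1}}>2-\varepsilon/2$; and for each $i\leq n$,
\[
\norm{x_{n+1}-x_i}\geq\re g_i(x_{n+1}-x_i)=\re g_i(x_{n+1})-\re g_i(x_i)>(1-\varepsilon/2)-(-1+\varepsilon/2)=2-\varepsilon.
\]
This completes the induction, hence the construction of the separated sequence, and hence the proof.

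The only point that goes beyond Theorem~\ref{thm:alpha_size_superdelta_points} is the passage from $D$ to the smaller ccw $\tilde D$ on which the functionals $g_1,\dots,g_n$ are uniformly bounded below; I expect this to be routine, as it is precisely the argument used to shrink convex combinations of relatively weakly open sets in the proof of Theorem~\ref{thm:alpha_size_ccw}. The two mild points of care are to fix a representation $x=\sum_l\lambda_l x^{(l)}$ inside $D$ \emph{before} shrinking, and to note that $\tilde D\subseteq D$ because $V_l\subseteq W_l$ for every $l$; everything else transcribes verbatim the super $\Delta$-point argument.
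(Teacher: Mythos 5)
Your proof is correct, and at the structural level it matches the paper's: in both, one inductively builds a $(2-\varepsilon)$-separated sequence $\{x_n\}\subseteq D$, passing at each step to a smaller ccw $\tilde D\subseteq D$ that still contains $x$ and applying the ccw $\Delta$-property to $\tilde D$. The local mechanism for shrinking differs in a way worth noting. The paper fixes a tolerance $\delta$ scaled to $\min_l\lambda_l$, works with the slices $S(g_i,\varepsilon/2)$, and uses a component-wise convexity argument to pass from $\re g_i(x-x_i)>2-\delta$ to $\re g_i(x_j-x_j^i)>2-\varepsilon/2$ on each summand. You instead fix a representation $x=\sum_l\lambda_l x^{(l)}$ once and for all and shrink each $W_l$ to a weakly open neighbourhood $V_l$ of $x^{(l)}$ on which each $\re g_i$ stays within $\delta<\eta$ of its value at $x^{(l)}$; this gives $\re g_i(y)>1-\varepsilon/2$ for \emph{every} $y\in\tilde D$ at once, so you only ever need the uniform tolerance $\varepsilon/2$. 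This is precisely the device used in Remark~\ref{rem:bourgain} and in the paper's own proof of Theorem~\ref{thm:alpha_size_ccw}, and it is a little cleaner here: you avoid the component-wise convexity step altogether, and incidentally sidestep an apparent misprint in the paper's choice $\delta=\varepsilon/(2\min_l\lambda_l)$, which should read $\delta=\varepsilon\min_l\lambda_l/2$ for that convexity step to close.
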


As in the previous case, the proof follows directly from the next result.

\begin{proposition}
Let $X$ be a Banach space, $x\in S_X$ be a ccw $\Delta$ point, and $D$ a ccw of $B_X$ such that $x\in D$. 
Then, for every $\varepsilon>0$, there exists a sequence $\{x_n\}\subseteq D$ such that $\Vert x_i-x_j\Vert>2-\varepsilon$ holds for every $i\neq j$.
\end{proposition}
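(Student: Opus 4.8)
The plan is to run the argument of the previous proposition (the super $\Delta$-point case) almost verbatim, the one genuinely new point being that a convex combination of relatively weakly open subsets of $B_X$ need not remain a convex combination of relatively weakly open subsets once it is intersected with finitely many slices; so the ``cut by slices'' step will be replaced by a continuity-of-the-sum argument in the spirit of Remark~\ref{rem:bourgain}.

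First I would fix $\varepsilon>0$ and build by induction a sequence $\{x_n\}\subseteq D$ with $\norm{x-x_n}>2-\frac{\varepsilon}{2}$ for every $n$ and $\norm{x_i-x_j}>2-\varepsilon$ for all $i\neq j$. The base case uses the ccw $\Delta$-point hypothesis directly: since $x\in D$, we have $\sup_{y\in D}\norm{x-y}=2$, so I pick $x_1\in D$ with $\norm{x-x_1}>2-\frac{\varepsilon}{2}$. Assuming $x_1,\dots,x_n$ are built, for each $\ell\leq n$ the inequality $\norm{x-x_\ell}>2-\frac{\varepsilon}{2}$ gives $g_\ell\in S_{X^*}$ with $\re g_\ell(x-x_\ell)>2-\frac{\varepsilon}{2}$, hence $\re g_\ell(x)>1-\frac{\varepsilon}{2}$ and $\re g_\ell(x_\ell)<-1+\frac{\varepsilon}{2}$, so that $x$ lies in the relatively weakly open set
\[
N:=\Bigl\{y\in B_X\colon \re g_\ell(y)>1-\tfrac{\varepsilon}{2}\ \text{ for all }\ell\in\{1,\dots,n\}\Bigr\}.
\]

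The key step is then to manufacture a sub-ccw of $D$ around $x$ inside $N$. Writing $D=\sum_{i=1}^k\lambda_iW_i$ (with $W_i$ relatively weakly open in $B_X$, $\lambda_i\in(0,1]$, $\sum_i\lambda_i=1$) and $x=\sum_{i=1}^k\lambda_ix^{(i)}$ with $x^{(i)}\in W_i$, I would use that the summation map $(y_1,\dots,y_k)\mapsto\sum_i\lambda_iy_i$ is weak-to-weak continuous from $B_X^k$ into $B_X$ and carries $(x^{(1)},\dots,x^{(k)})$ into the weakly open set $N$, so there are relatively weakly open $V_i$ with $x^{(i)}\in V_i\subseteq W_i$ and $\sum_{i=1}^k\lambda_iV_i\subseteq N$. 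Then $\widetilde D:=\sum_{i=1}^k\lambda_iV_i$ is a ccw of $B_X$ with $x\in\widetilde D\subseteq D\cap N$, and applying the ccw $\Delta$-point hypothesis to $\widetilde D$ yields $x_{n+1}\in\widetilde D$ with $\norm{x-x_{n+1}}>2-\frac{\varepsilon}{2}$. Since $x_{n+1}\in N$, for each $\ell\leq n$ one has
\[
\norm{x_{n+1}-x_\ell}\geq\re g_\ell(x_{n+1})-\re g_\ell(x_\ell)>\bigl(1-\tfrac{\varepsilon}{2}\bigr)-\bigl(-1+\tfrac{\varepsilon}{2}\bigr)=2-\varepsilon,
\]
which closes the induction (for $i<j$ the bound $\norm{x_i-x_j}>2-\varepsilon$ is the one arranged at stage $j$). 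Since such a sequence is $(2-\varepsilon)$-separated for every $\varepsilon>0$ and $D\subseteq B_X$, this also gives $\alpha(D)=2$, recovering the preceding theorem.

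The step I expect to be the only real obstacle is the extraction of $\widetilde D\subseteq D\cap N$ with $x\in\widetilde D$: this is exactly the point where the failure of Bourgain's lemma to localize would bite if we only knew $x\in D$ abstractly, but it is harmless here because $x$ is presented to us as an explicit convex combination $\sum_i\lambda_ix^{(i)}$ with $x^{(i)}\in W_i$, and weak-to-weak continuity of the summation map then produces the required neighbourhoods $V_i$ directly. Everything else is a routine copy of the super $\Delta$-point argument.
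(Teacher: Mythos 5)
Your proof is correct. It takes essentially the same inductive skeleton as the paper's (build a $(2-\varepsilon)$-separated sequence in $D$, each step producing functionals $g_\ell$ that certify the separation and then extracting the next point from a sub-ccw of $D$ cut down by those functionals), but the key intermediate step — producing a ccw $\widetilde D$ with $x\in\widetilde D\subseteq D\cap N$ — is handled by a different mechanism. You invoke weak-to-weak continuity of the summation map $(y_1,\dots,y_k)\mapsto\sum_i\lambda_i y_i$ to pull a product neighbourhood of $(x^{(1)},\dots,x^{(k)})$ back from the weakly open set $N$, exactly in the spirit of Remark~\ref{rem:bourgain}. The paper instead rescales the separation parameter to $\delta=\tfrac{\varepsilon}{2}\min_i\lambda_i$ and runs a convexity (averaging) argument to force each component $x_j$ of $x$ individually into the slices $S(g_\ell,\varepsilon/2)$, and then simply sets $V_j:=W_j\cap\bigcap_\ell S(g_\ell,\varepsilon/2)$. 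Both routes work; yours avoids the $\lambda_{\min}$-dependent tolerance and keeps the constants uniform at $\varepsilon/2$, at the cost of the slightly softer topological step, while the paper's version is entirely explicit and elementary (Chebyshev-type averaging) but must carry the scaled $\delta$ through the induction.
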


\begin{proof} Set $\varepsilon>0$. Write $D:=\sum_{i=1}^k \lambda_i W_i$ with $\lambda_i\neq 0$ for every $i$. Set $\delta:=\frac{\varepsilon}{2\min_{1\leq i\leq k}\lambda_i}$. Let us construct by induction a sequence $\{x_n\}\subseteq D$ satisfying that $\Vert x-x_i\Vert>2-\delta$ and such that $\Vert x_i-x_j\Vert>2-\varepsilon$ for $i\neq j$. Using that $x$ is a ccw $\Delta$ point select, by the definition of ccw $\Delta$, a point $x_1\in D$ with $\Vert x-x_1\Vert>2-\delta$. 

Now assume that $x_1,\ldots, x_n$ have been constructed and let us construct $x_{n+1}$. We can write $x=\sum_{j=1}^k \lambda_j x_j$ and $x_i:=\sum_{j=1}^k\lambda_j x_j^i$ as being elements of $D$.

By the properties defining the sequence, observe that, given $1\leq i\leq n$ we have $\Vert x-x_i\Vert>2-\delta$, so we can find $g_i\in S_{X^*}$ with $$\re g_i(x-x_i)=\sum_{j=1}^k \lambda_j \re g_i(x_j-x_j^i)>2-\delta=2-\frac{\varepsilon}{2\min_{1\leq j\leq n}\lambda_j}.$$ A convexity argument implies that $\re g_i(x_j-x_j^i)>2-\frac{\varepsilon}{2}$ holds for every $1\leq j\leq k$, which implies that  
$$
\re g_i(x_j)>1-\frac{\varepsilon}{2}\ \text{ and } \ \re g_i(x_j^i)<-1+\frac{\varepsilon}{2}.
$$
Observe that
$$x_j\in V_i:=W_i\cap \bigcap\limits_{i=1}^n S\left(g_i,\frac{\varepsilon}{2}\right),$$
which is a weakly open set. Since $x$ is a ccw $\Delta$-point and $x\in \sum_{j=1}^k \lambda_j V_j$, we can find a point  $x_{n+1}=\sum_{j=1}^k \lambda_j z_j\in \sum_{j=1}^k \lambda_j V_j\subseteq D$ such that $\Vert x-x_{n+1}\Vert>2-\delta$.  In order to finish the construction we only must to prove that $\Vert x_i-x_{n+1}\Vert>2-\varepsilon$ holds for every $1\leq i\leq n$. Given $1\leq j\leq k$, the condition $z_j\in V_j$ implies $\re g_i(z_j)>1-\frac{\varepsilon}{2}$. On the other hand, $\re g_i(x_j^i)<-1+\frac{\varepsilon}{2}$, so 
\[\begin{split}\Vert x_{n+1}-x_i\Vert\geq \re g_i(x-x_i)=\sum_{j=1}^k \lambda_j \re g_i(z_j-x_j^i)>(2-\varepsilon)\sum_{j=1}^k\lambda_j=2-\varepsilon,
\end{split}\]
and the proof is finished.
\end{proof}


\section{Commented open questions}\label{section:conclusionopenquestions}

The only implications between properties which is not known to hold or not is the following one (see Figure~\ref{figure02} in page~\pageref{figure02}).

\begin{quest}\label{problem:ccs-delta-implies-super-delta}
Let $X$ be a Banach space and let $x\in S_X$ be a ccs $\Delta$-point. Is $x$ a super $\Delta$-point?
\end{quest}

Let us give some comments on this question. On the one hand, it may look that the answer is positive by Bourgain's lemma (Lemma~\ref{lem:bourgain_lemma}), but this lemma \emph{does not say} that, in general, given an element $x$ of a relative weak open subset $W$ of $B_X$, there is a convex combination of slices of $B_X$ contained in $W$ and \emph{containing $x$}. The later happens when $x\in \co(\preext{B_X})$ (see Remark~\ref{rem:bourgain}) so, the answer to Question~\ref{problem:ccs-delta-implies-super-delta} is positive in this case.
On the other hand, a possible counterexample to this problem could be the molecules in Examples \ref{example:veefree} or \ref{example:Lipfree-ANPP}, which are known to be ccs $\Delta$-points and are extreme points but not preserved extreme points (hence they do not belong to the convex hull of the set of preserved extreme points).
A way to show that these molecules are not super $\Delta$-points would be to investigate whether RNP spaces may contains super $\Delta$-points.

Theorem~\ref{thm:one-unconditional_no_ccs_nor_super_Delta} states that \emph{real} Banach spaces with a one-unconditional basis do neither contain super $\Delta$-points nor ccs $\Delta$-points. It is likely that such result also holds true in the complex setting since we do believe  that the preliminary results from \cite{almt21} are also valid for complex scalars, provided that one works with the suitable notion of one-uncondional bases (for which \cite[Proposition~2.3]{almt21} holds). Also, we also expect that the results there can be easily extended to one-uncondional FDDs. Yet, since a sharper version of this result was obtained in Proposition~\ref{prop:operators_and_super_Delta_points} for super $\Delta$-points in a very general setting, it is natural to ask whether improved results could be simultaneously obtained in both directions for ccs $\Delta$-points by proving an analogue to Proposition~\ref{prop:operators_and_super_Delta_points}. So let us ask the following.

\begin{quest}
Let $X$ be a Banach space, and let us assume that there exists a subset $\mathcal{A}\subseteq \mathcal{F}(X,X)$ satisfying that $\sup\bigl\{\norm{\Id-T}\colon T\in \mathcal{A}\bigr\}<2$ and that for every $\eps>0$ and every $x\in X$, there exists $T\in\mathcal{A}$ such that $\norm{x-T x}<\eps$. Can $X$ contain a ccs $\Delta$-point?
\end{quest}

A negative answer to this question would be interesting, since it would provide an example of a ccs $\Delta$-point that is not a super $\Delta$-point, hence a negative answer to Question~\ref{problem:ccs-delta-implies-super-delta}.

Another interesting question could be if a point of continuity could be a ccs $\Delta$-point. Let us formalize the questions.

\begin{quest}
Let $X$ be a Banach space.
\begin{enumerate}
  \item Does $X$ fail the RNP (or even the CPCP) if contains a super $\Delta$-point or a super Daugavet point?
  \item Is it possible for a point of continuity being a ccs $\Delta$-point?
\end{enumerate}
\end{quest}

The surprising examples given in Section~\ref{sec:examples_and_counterexamples} shows that the mere existence of some diametral notions (but ccs Daugavet points) on a Banach space does not imply that the whole space has any diameter two property nor the Daugavet property. Our question here is how many diametral points has to contain a Banach space to have any diameter two property or the Daugavet property or fails to have the RNP or one-unconditional basis. 

\begin{quest}
How big can be the set of Daugavet points, super Daugavet points, $\Delta$-points, super $\Delta$-points, or ccs $\Delta$-points in a Banach space with the Radon-Nikod\'{y}m property, or with the CPCP, or being strongly regular, or having one-unconditional basis?
\end{quest}

Concerning isometric consequences of the existence of diametral points, there are some recent results showing that a Banach space containing a $\Delta$-point cannot be uniformly non-square \cite{almp22} or even locally uniformly non-square \cite{KaLeeTag2022}, or asymptotic uniformly smooth \cite{almp22,veelipfunc}. Also, a Banach space having an unconditional basis with suppression-unconditional constant less that $2$ cannot contains super $\Delta$-points and a Banach space containing a ccs Daugavet point has the SD2P. Taking into account that it is not known if there exists an stictly convex Banach space with the Daugavet property (see \cite[Section 5]{Kadets96}), the following question makes sense. Recall that Paragraph~\ref{subsubsect:MLUR} shows an example of an strictly convex Banach space in which every norm-one element is a ccs $\Delta$-point and a super $\Delta$-point, but it does not contain any Daugavet point by the way in which it is constructed.

\begin{quest}
Is there an strictly convex Banach space containing a Daugavet point?
\end{quest}

In view of Proposition~\ref{prop:extreme_diametral_delta} and of Theorem~\ref{thrm:extreme_Delta_molecules}, the following question makes sense.

\begin{quest}
Let $X$ be a Banach space. Suppose that $x\in \ext{B_X}$ is a $\Delta$-point, does this imply that $x$ is a ccs $\Delta$-point or a super $\Delta$-point?
\end{quest}

By now, the only \emph{isomorphic} restriction which is known for a Banach space to contain $\Delta$-points or even Daugavet points is that it cannot be finite-dimensional. It would be interesting to find some more.

\begin{quest}
Find isomorphic restrictions for a Banach space to contain $\Delta$-points or any of the other diametral notions. In particular, is it possible for a reflexive or even super-reflexive Banach space to contain $\Delta$-, super $\Delta$-, ccs $\Delta$-, Daugavet or super Daugavet points?
\end{quest}

The results about absolute sums in Subsection~\ref{subsec:absolutesums} are not complete in the case of super Daugavet points and they are even less clear in the case of ccs notions. Here are two possible questions.

\begin{quest}
Let $X$, $Y$ be Banach spaces and let $N$ be an absolute sum.
\begin{enumerate}
  \item If $N$ is $A$-octahedral, $x\in S_X$ and $y\in S_Y$ are super Daugavet points, is $(ax,by)$ a super Daugavet point in $X\oplus_N Y$ when $a,b$ satisfy the conditions in the definition of $A$-octahedrality?
  \item If $N$ is the $\ell_\infty$-sum, $x\in S_X$ and $y\in S_Y$ are ccs $\Delta$-points, are the elements of the form $(a x,b y)$ ccs $\Delta$-points in $X\oplus_\infty Y$ for $a,b\in [0,1]$ with $\max\{a,b\}=1$?
\end{enumerate}
\end{quest}

It would be also desirable to study the reversed results to those in Subsection~\ref{subsec:absolutesums} as it is done in \cite{pirkthesis} for $\Delta$-points and Daugavet points (see the tables in pages 86 and 87 of 
\cite{pirkthesis}).

\begin{quest}
Let $X$, $Y$ be Banach spaces, let $N$ be an absolute sum, $x\in S_X$, $y\in S_Y$, and $a,b\geq 0$ such that $N(a,b)=1$. Discuss what happens with $x$ and $y$ supposing that $(ax,by)$ satisfies any of the six diametral notions.
\end{quest}

It maybe the case that some of the arguments given in Subsections \ref{subsec:L_1_preduals} and \ref{subsec:L_1_spaces} 
can be adapted to other classes of Banach spaces. We propose some possibilities.

\begin{quest}
Characterize the six diametral notions in uniform algebras, in Lorentz spaces and their isometric preduals, and in some vector-valued function spaces as $C(K,X)$ or $L_\infty(\mu,X)$ spaces.
\end{quest}

The relations between the weak-star versions of the diametral points (see Remark~\ref{rem:weak*_diametral_points}) are not yet clear. For instance, the following questions arises.

\begin{quest}
Let $X$ be a Banach space and $x\in S_X$.
\begin{enumerate}
  \item Is $J_X(x)$ a ccs $\Delta$-point in $X^{**}$ if $x$ is a ccs $\Delta$-point?
  \item Is there any relationship between the DD2P in $X$ and the weak-star super $\Delta$-points in $S_{X^*}$?
\end{enumerate}
\end{quest}

As commented in Remark~\ref{remark:super-delta-closed-to-strongly-exposed}, a Banach space $X$ containing a sequence $(y_n)$ of super $\Delta$-points such that the distance of $y_n$ to the set of strongly exposed points of $B_X$ is going to zero. But the following question remains open.

\begin{quest}
Can a super $\Delta$-point (or even a $\Delta$-point) belong to the closure of the set of denting points?
\end{quest}

The answer to the next question on the behaviour of $\Delta$- and super $\Delta$-points in rays is still unknown, as we commented in Section~\ref{sec:inside_of_the_unit_ball}.

\begin{quest}
Let $X$ be a Banach space and let $x\in S_X$.
\begin{enumerate}
    \item If $rx$ is a $\Delta$-point for some $0<r<1$, does this imply that $x$ is a $\Delta$-point?
    \item If $rx$ is a super $\Delta$-point for some $0<r<1$, does this imply that $x$ is a auper $\Delta$-point?
    \item If $x$ is a super $\Delta$-point, does this imply that $rx$ is a super $\Delta$-point for all $0<r<1$?
\end{enumerate}
\end{quest}

As we proved in Section~\ref{section:Kuratowski_measure}, every relative weakly open subset which contains a super $\Delta$-point (respectively, a ccw $\Delta$-point) has Kuratowski measure $2$. Our proofs do not seem to work for convex combination of slices, so let us ask the following.  

\begin{quest}
If a $ccs$ of the unit ball contains a ccs $\Delta$-point, does it necessarily have maximal Kuratowski measure? 
\end{quest}

\section*{Acknowledgments}

Part of this work was done during the visit of the second named author at the University of Granada in September 2022. He wishes to thank his colleagues for the warm welcome he received, and to thank all the people that made his visit possible.

The authors thank Gin\'es L\'opez-P\'erez for fruitful conversations on the topic of the paper, specially for providing enlightening ideas in connection with the example of Subsection \ref{subsect:superdauganotccsdelta}.

The authors also thank Trond A. Abrahamsen, Andr\'e Martiny, and Vegard Lima for valuable discussions on the topic of the paper, and in particular for pointing out the ccs version of \cite[Proposition~2.12]{almt21} that is presented in Subsection~\ref{subsec:one-unconditional_bases}.


\begin{thebibliography}{999999}

\bibitem{abhlp} T.\ A.\ Abrahamsen, J.\ Becerra Guerrero, R.\ Haller, V.\ Lima, and M. P\"oldvere, \textit{Banach spaces where convex combinations of relatively weakly open subsets of the unit ball are relatively weakly open}, Studia Math. \textbf{250} (2020), 297--320.

\bibitem{ahntt16} T.\ A.\ Abrahamsen, P.\ H\'ajek, O.\ Nygaard, J.\ Talponen, and S.\ Troyanski, \textit{Diameter 2 properties and convexity}. Studia Math. \textbf{232} (2016), 227--242.

\bibitem{ahlp20} T.\ A.\ Abrahamsen, R.\ Haller, V.\ Lima, and K.\ Pirk, \textit{Delta- and Daugavet points in Banach spaces}, Proc. Edinb. Math. Soc.  \textbf{63} (2020), 475--496.

\bibitem{al18} T.\ A.\ Abrahamsen and V.\ Lima, \textit{Relatively weakly open convex combinations of slices}, Proc. Amer. Math. Soc. \textbf{146} (2018), 4421--4427.

\bibitem{almp22} T.\ A.\ Abrahamsen, V.\ Lima, A.\ Martiny, and Y.\ Perreau, \textit{Asymptotic geometry and Delta-points}, Banach J. Math. Anal. \textbf{16} (2022), article 57.

\bibitem{almt21} T.\ A.\ Abrahamsen, V.\ Lima, A.\ Martiny, and S.\ Troyanski, \textit{Daugavet- and delta-points in Banach spaces with unconditional bases}, Trans. Amer. Math. Soc. Ser B (2021), 379--398.

\bibitem {aln13} T.\ A.\ Abrahamsen, V.\ Lima and O.\ Nygaard, \textit{Remarks on diameter two properties}, J. Convex Anal. 20, 2 (2013), 439--452.

\bibitem{alka} F.\ Albiac and N.\ J.\ Kalton, \textit{Topics in Banach space theory}, Second edition. Graduate Texts in Mathematics, 233. Springer, Cham, 2016.

\bibitem{AGPP-TAMS2022}
R.\ J.\ Aliaga, C.\ Gartland, C.\ Petitjean, and A.\ Proch\'{a}zka,
\emph{Purely 1-unrectifiable metric spaces and locally flat Lipschitz functions}, Trans. Amer. Math. Soc. \textbf{375} (2022), 3529--3567.

\bibitem{ANPP} R.\ J.\ Aliaga, C.\ No\^{u}s, C.\ Petitjean, and A.\ Proch\'{a}zka, \emph{Compact reduction in {L}ipschitz-free spaces}, Studia Math. \textbf{260} (2021), 341--359.

\bibitem{aor} S.\ Argyros, E.\ Odell,  and H.\ Rosenthal, \textit{On certain convex subsets of $c_0$}, Lecture Notes in Math. 1332, Functional Analysis, ed. by E.\ Odell and H.\ Rosenthal, Berlin (1988), 80--111.

\bibitem{noncompactnessbook} J.\ M.\ Ayerbe Toledano, T.\ Dom\'{\i}nguez Benavides, and G.\ L\'{o}pez Acedo, \emph{Measures of noncompactness in metric fixed point theory}, Birkh\"{a}user Verlag, 99 (1997).


\bibitem{blr18} J.\ Becerra Guerrero, G.\ L\'opez-P\'erez, and A.\ Rueda Zoca, \textit{Diametral diameter two properties in Banach spaces}, J. Conv. Anal. \textbf{25}, 3 (2018), 817--840.

\bibitem{blradv} J.\ Becerra Guerrero, G.\ L\'opez-P\'erez, and A.\ Rueda Zoca, \textit{Extreme differences between weakly open subsets and convex combinations of slices in Banach spaces}, Adv. Math. \textbf{269} (2015), 56--70.

\bibitem{blr14} J.\ Becerra Guerrero, G.\ L\'opez-P\'erez, and A.\ Rueda Zoca, \textit{Octahedral norms and convex combinations of slices in Banach spaces}, J. Funct. Anal. \textbf{266} (2014), 2424--2435.

\bibitem{BD} F.\ F.\ Bonsall and J.\ Duncan, \emph{Numerical ranges. {II}}, Cambridge University Press, New York-London (1973).

\bibitem{Bourgain80} J.\ Bourgain, \emph{Dentability and finite-dimensional decompositions}, Studia Math. \textbf{67} (1980), 135--148.

\bibitem{bour} R.\ D.\ Bourgin, \textit{Geometric Aspects of Convex Sets with the Radon-Nikodym Property}, Springer-Verlag Berlin Heidelberg (1983).


\bibitem{DGZ} R.\ Deville, G.\  Godefroy, and V.\ Zizler, \emph{Smoothness and renormings in {B}anach spaces}, Pitman Monographs and Surveys in Pure and Applied Mathematics, 64 (1993).

\bibitem{DGKR} S.\ J.\ Dilworth, C.\ Gartland, D.\ Kutzarova, and N.\ L.\
              Randrianarivony, \emph{Nondentable sets in {B}anach spaces}, J. Convex Anal. 28, 1 (2021), 31--40.
            
\bibitem{dis} J.\ Distel and J.\ J.\ Uhl, \textit{Vector measures}, American Matematical Society Providence, Rhode Island (1977).

\bibitem{ew} G.\ A.\ Edgar and R.\ F.\ Wheeler, \textit{Topological properties of Banach spaces}, Pacific J. Math. \textbf{115} (1984), 317--350.

\bibitem{checos} M.\ Fabian, P.\ Habala, P.\ H\'ajek, V.\ Montesinos, J.\ Pelant, and V.\ Zizler, \textit{Functional Analysis and Infinite dimensional Geometry}, CMS Books in Mathematics, Springer-Verlag, New York, 2001.

\bibitem{fab} M.\ Fabian, P.\ Habala, P.\ H\'ajek, V.\ Montesinos, and V.\ Zizler, \textit{Banach space theory}, Springer Science+Business Media, LLC 2011.

\bibitem{ggms} N.\ Ghoussoub, G.\ Godefroy, B.\ Maurey, and W.\ Schachermayer, \textit{Some topological and geometrical structures in Banach spaces}, Mem. Amer. Math. Soc. \textbf{387} (1987), 116 p.

\bibitem{ht14} P.\ H\'ajek and J.\ Talponen, \textit{Note on Kadets Klee property and Asplund spaces}, Proc. Amer. Math. Soc. \textbf{142} (2014), 3933--3939.

\bibitem{hln18} R.\ Haller, J.\ Langemets and R.\ Nadel, \emph{Stability of average roughness, octahedrality, and strong
              diameter 2 properties of {B}anach spaces with respect to
              absolute sums}, Banach J. Math. Anal., 1, 12 (2018), 222--239.

\bibitem{hpv21} R.\ Haller, K.\ Pirk, and T.\ Veeorg, \textit{Daugavet- and delta-points in absolute sums of Banach spaces}, J. Convex
Anal. \textbf{28} (2021), 41--54.

\bibitem{HM} R.\ E.\ Huff and P.\ D.\ Morris, \emph{Dual spaces with the {K}rein-{M}ilman property have the {R}adon-{N}ikod\'{y}m property}, Proc. Amer. Math. Soc., \textbf{49} (1975), 104--108.

\bibitem{ik} Y.\ Ivakhno and V.\ Kadets, \textit{Unconditional sums of spaces with bad projections}, Visn. Khark. Univ., Ser. Mat. Prykl. Mat. Mekh. \textbf{645} (2004), 30--35.

\bibitem {jlps} W.\ B.\ Johnson, J.\ Lindenstrauss, D.\ Preiss, and G.\ Schechtman, \textit{Almost
Fr\'echet differentiability of Lipschitz mappings between infinite dimensional Banach spaces}, Proc. London Math. Soc. \textit{84}, 3 (2002), 711--746.

\bibitem{jr22} M.\ Jung and A.\ Rueda Zoca, \textit{Daugavet points and $\Delta$-points in Lipschitz-free spaces}, Studia Math. \textbf{265}  (2022), 37--55.

\bibitem{Kadets96} V.\ Kadets, \emph{Some remarks concerning the Daugavet equation}, Quaestiones Math. \textbf{19} (1996), 225--235.

\bibitem{kadets20} V.\ Kadets, \textit{The diametral strong diameter 2 property of Banach spaces is the same as the Daugavet property}, Proc. Amer. Math. Soc. \textbf{149} (2021), 2579--2582.

\bibitem{KSSW} V.\ Kadets, R.\ V.\ Shvidkoy, G.\ G.\ Sirotkin, and D.\ Werner, \textit{Banach spaces with the Daugavet property}, Trans. Amer. Math. Soc. \textbf{352} (2000), 855--873.

\bibitem{kw04} V.\ Kadets and D.\ Werner, \textit{A Banach space with the Schur and the Daugavet property}, Proc. Amer. Math. Soc. \textbf{132} (2004), 1765--1773.

\bibitem{KaLeeTag2022} A.\ Kaminska, H.-L.\ Lee, and H.-J.\ Tag, \emph{Daugavet and diameter two properties in Orlicz-Lorentz spaces}, preprint. Arxiv: \url{https://arxiv.org/abs/2212.12149v1}

\bibitem{Kuratowski30} C.\ Kuratowski, \emph{Sur les espaces complets}, Fundamenta Mathematicae, 1, 15 (1930), 301-309.


\bibitem{llt88} B.\ L.\ Lin, P.\ K.\ Lin, and S.\ Troyanski, \textit{Characterizations of denting points}, Proc. Amer. Math. Soc. \textbf{102} (1988), 526--528.

\bibitem{LT} J.\ Lindenstrauss and  L.\ Tzafriri, \emph{Classical {B}anach spaces {I} ({S}equence spaces)}, Springer-Verlag (1977).

\bibitem{lmr19} G.\ L\'opez-P\'erez, M.\ Mart\'in, and A.\ Rueda Zoca, \textit{Strong diameter two property and convex combinations of slices reaching the unit sphere}, Mediterr. J. Math., \textbf{16} (2019), article 122.

\bibitem{mr22} M.\ Mart\'in and A.\ Rueda Zoca, \textit{Daugavet property in projective symmetric tensor products of Banach spaces}, Banach J. Math. Anal. \textbf{16} (2022), article 35.

\bibitem{MenaPayaRodr1989}  J.\ F.\  Mena, R.\ Pay\'{a}, and A. Rodr\'{\i}guez, \emph{Absolute subspaces of Banach spaces}, Quart. J. Math. \textbf{40} (1989), 33--37.

\bibitem{montesinos87} V.\ Montesinos, \emph{Drop property equals reflexivity}, Studia Math., 1, 87 (1987), 93--100.

\bibitem{pirkthesis} K.\ Pirk, \textit{Diametral diameter two properties, Daugavet-, and $\Delta$-points in Banach spaces}, Dissertationes Mathematicae Universitatis Tartuensis 133 (2020),
\url{https://dspace.ut.ee/handle/10062/68458}

\bibitem{rolewicz86} S.\ Rolewicz, \emph{On drop property}, Studia Math., 1, 85 (1986), 27--35 (1987).
		
\bibitem{rolewicz87} S.\ Rolewicz, \emph{On {$\Delta$}-uniform convexity and drop property}, Studia Math., 2, 87 (1987), 181--191.

\bibitem{Schachermayer87} W.\ Schachermayer, \emph{The {R}adon-{N}ikod\'{y}m property and the {K}re\u{\i}n-{M}ilman property are equivalent for strongly regular sets}, Trans. Amer. Math. Soc. \textbf{303} (1987), 673--687.

\bibitem{Schachermayer88} W.\ Schachermayer, \emph{An example concerning strong regularity and points of continuity in {B}anach spaces},
    Lecture Notes in Math. 1332, Functional Analysis, ed. by E.\ Odell and H.\ Rosenthal, Berlin (1988).

\bibitem{shv00} R.\ V.\ Shvydkoy, \textit{Geometric aspects of the Daugavet property}, J. Funct. Anal., \textbf{176} (2000), 198--212.

\bibitem{veefree} T.\ Veeorg, \textit{Characterizations of Daugavet- and delta-points in Lipschitz-free space}, Studia Math. \textbf{268} (2023), 213--233.

\bibitem{veelipfunc} T.\ Veeorg, \textit{Daugavet- and delta-points in spaces of Lipschitz functions}, preprint.\newline Arxiv: \url{https://arxiv.org/abs/2206.03475}

\bibitem{wea2} N.\ Weaver, \textit{Lipschitz algebras} (Second edition),
World Scientific Publishing Co., Inc., River Edge, NJ, 2018.

\bibitem{werner} D.\ Werner, \textit{Recent progress on the Daugavet property}, Irish Math. Soc. Bulletin \textbf{46} (2001), 77--97.
\end{thebibliography}
\end{document}